\newcommand{\vol}{{\rm vol}}
\newcommand{\Ric}{{\rm Ric}}
\numberwithin{equation}{section}
\newtheorem{thm}{Theorem}[section]
\newtheorem{defn}[thm]{Definition}
\newtheorem{lem}[thm]{Lemma}
\newtheorem{prop}[thm]{Proposition}
\newtheorem{cor}[thm]{Corollary}
\newtheorem{remark}[thm]{Remark}
\newtheorem{ex}[thm]{Example}
\begin{document}

\title[]{Perelman's functionals on manifolds with non-isolated conical singularities}
\author{Xianzhe Dai}
\address{
Department of Mathematics, 
University of California, Santa Barbara
CA93106,
USA}
\email{dai@math.ucsb.edu}

\author{Changliang Wang}
\address{School of Mathematical Sciences and Institute for Advanced Study, Tongji University, Shanghai 200092, China}
\email{wangchl@tongji.edu.cn}
\date{}

\keywords{Perelman functionals, non-isolated conical singularities, eigenvalues of Schr\"odinger operator, singular potential, Sobolev inequality, weighted Sobolev inequality.}

\footnote{X. D. was partially supported by the Simons Foundation. C.W. was partially supported by the Fundamental Research Funds for the Central Universities.}

\begin{abstract}
In this article, we define Perelman's functionals on manifolds with non-isolated conical singularities by starting from a spectral point of view for the Perelman's $\lambda$-functional. (Our definition of non-isolated conical singularities includes isolated conical singularities.) We prove that the spectrum of Schr\"odinger operator $-4\Delta + R$ on manifolds with non-isolated conical singularities consists of discrete eigenvalues with finite multiplicities, provided that scalar curvatures of cross sections of cones have a certain lower bound. This enables us to define the $\lambda$-functional on these singular manifolds, and further, to prove that the infimum of $W$-functional is finite, with the help of some weighted Sobolev inequalities. Furthermore, we obtain some asymptotic behavior of eigenfunctions and the minimizer of the $W$-functional near the singularity, and a more refined optimal partial asymptotic expansion for eigenfunctions near isolated conical singularities. We also study the spectrum of $-4\Delta + R$ and Perelman's functionals on manifolds with more general singularities, i.e. the $r^{\alpha}$-horn singularities which serve as prototypes of algebraic singularities. 
\end{abstract}

\maketitle

\section{Introduction}
\noindent Perelman's functionals are some of the crucial ingredients introduced by Perelman \cite{Per02} in his famous work on 
Ricci flow. These functionals play critical roles in his celebrated proof of Thurston's geometrization conjecture and Poincar\'e conjecture, as well as in later studies of the Ricci flow.

We briefly recall Perelman's functionals on compact manifolds. Let $(M,g)$ be a compact Riemannian manifold without boundary. The $\mathcal{F}$-functional is given as
\begin{equation}\label{eqn: F-functional2}
\mathcal{F}(g,u)=\int_{M}(4|\nabla u|^{2}+R_{g}u^{2})d\vol_{g},
\end{equation}
where $R_{g}$ is the scalar curvature of $g$ and $u>0$ is a smooth function.
Perelman's $\lambda$-functional is then defined as
\begin{equation}\label{eqn: lambda-functional}
\lambda(g)=\inf\left\{\mathcal{F}(g,u) \mid \int_{M}u^{2}d\vol_{g}=1\right\}.
\end{equation}

Clearly, from $(\ref{eqn: lambda-functional})$ and $(\ref{eqn: F-functional2})$,  $\lambda(g)$ is the smallest eigenvalue of the Schr\"odinger operator $-4\Delta_{g}+R_{g}$. Starting from this spectral viewpoint, we developed Perelman's $\lambda$-functional on compact manifolds with isolated conical singularities in \cite{DW18}.

To take the scale into account, Perelman also introduced the $W$-functional and $\mu$-functional on smooth compact manifolds in \cite{Per02}. They play important roles in the study of singularities of the Ricci flow. The $W$-functional is given by
\begin{equation*}
W(g, u, \tau)=\frac{1}{(4\pi\tau)^{\frac{n}{2}}}\int_{M}[\tau(R_{g}u^{2}+4|\nabla u|^{2})-2u^{2}\ln u-nu^{2}]d\vol_{g},
\end{equation*}
where $\tau>0$ is a scale parameter, and $u>0$ is a smooth function.
The $\mu$-functional is defined by, for each $\tau>0$,
\begin{equation}\label{eqn: mu-functional}
\mu(g, \tau)=\inf\left\{W(g, u, \tau) \mid u\in C^{\infty}(M),\ \  u>0, \ \  \frac{1}{(4\pi\tau)^{\frac{n}{2}}}\int_{M}u^{2}d\vol_{g}=1\right\}.
\end{equation}
 It is well-known that 
 the finiteness of the infimum in $(\ref{eqn: mu-functional})$ follows from the logarithmic Sobolev inequality on smooth compact Riemannian manifolds, while the regularity of the minimizer follows from the elliptic estimates and Sobolev embedding. 
 
 In view of the applications to Ricci flow, it is a natural question to develop these functionals for singular spaces. We refer to the excellent survey \cite{Bam21} for some recent developments in Ricci flow and the role played by singular spaces. The possibility of conical singularity appearing in type I singularities of Ricci flow is indicated by the recent work \cite{MW17}. The work \cite{GS18} studies the Ricci flow coming out of manifolds with isolated conical singularities.
 Conical singularity can also be viewed as a generalization of orbifold singularity and we refer to \cite{KL14} for Ricci flow on orbifolds and its applications. 
 
 In \cite{DW20}, based on the work in \cite{DW18}, we studied the $W$-functional and $\mu$-functional on manifolds with isolated conical singularities. In \cite{Ozu20}, Ozuch developed Perelman's functionals on manifolds with isolated conical singularities from a different point of view. In \cite{KV21}, Kr\"oncke and Vertman obtained a partial asymptotic expansion for functions $u$, satisfying $(\Delta_g + c R_g)u=F(u)$ with $F(u) \leq C |u\ln u|$, near isolated conical singularities whose cross sections have scalar curvature the same as the round sphere, and further studied singular Ricci solitons in this case, see Remark 1.7 for more details.

Non-isolated conical singularity plays a crucial role in the resolution of the Yau-Tian-Donaldson conjecture \cite{CDS,T}; see also \cite {S,LZ} for some related work on Ricci flow. In this paper, we develop Perelman's functionals on compact manifolds with non-isolated conical singularities as defined in Definition \ref{defn: ManifoldWithNonisolatedConicalSingularities} below, and also on manifolds with $r^{\alpha}$-horn singularities as defined in Definition \ref{defn: ManifoldWith-alpha-ConicalSingularities} below. Our definition of non-isolated singularities includes isolated singularities. In particular, we establish the existence of the minimizers and obtain asymptotic behavior of the minimizers near the singularity. These asymptotic behaviors are not optimal for non-isolated conical singularity. On the other hand, we obtain an optimal partial asymptotic expansion for eigenfunctions of $-\Delta + c R$ near a general isolated conical singularity.

\subsection{Spectral properties of the operator $-4\Delta + R$ on manifolds with non-isolated conical singularities}
Roughly speaking, a compact manifold with non-isolated conical singularities $(M^{n}, g)$ is a compact manifold $M$ with boundary $\partial M$ and a Riemannian metric $g$ degenerating in a specific way at the boundary. The boundary $\partial M$ is the total space of the fibration $\pi: \partial M \rightarrow B$ with the $f$-dimensional compact manifold $F^{f}$ as a typical fiber, and in a collar neighborhood $(0, 1)\times \partial M$ of $\partial M$ the metric $g$ is asymptotic to $g_{0}=dr^{2}+r^{2}\hat{g}+\pi^{*}\check{g}$, where $r$ is a coordinate on $(0, 1)$, $\check{g}$ is a metric on the base $B$ and $\hat{g}$ is the restricted metric on fibers $F$. These singularities are also called wedge singularities (e.g. \cite{Maz91}) or edge singularities (e.g. \cite{Ver21}). Vertman \cite{Ver21} established the existence of Ricci de Turk flow within the class of these singular manifolds whose curvature satisfy some asymptotic control near singularities. Note that when $B$ is a single point, we are reduced to manifolds with isolated conical singularity.

Our first main result is the following spectral property of the operator $-4\Delta+R$ on these singular manifolds. This enables us to define Perelman's $\lambda$-functional on these singular manifolds as the first eigenvalue of $-4\Delta + R$.

\begin{thm}[Theorem \ref{thm: spectrum property} below]\label{MainResult1}
Let $(M^{n}, g)$, $n\geq 4$, be a compact Riemannian manifold with non-isolated conical singularities as defined in Definition $ \ref{defn: ManifoldWithNonisolatedConicalSingularities} $ below. If $\min\limits_{\partial M}\{R_{\hat{g}}\}>(f-1)$, then the operator $-4\Delta_{g}+R_{g}$ with domain $C^{\infty}_{0}(\mathring{M})$ is semi-bounded, and the spectrum of its Friedrichs extension consists of discrete eigenvalues with finite multiplicities
$\lambda_{1}<\lambda_{2}\leq\lambda_{3}\leq\cdots,$ and $\lambda_{k}\rightarrow+\infty$, as $k\rightarrow+\infty$. The corresponding eigenfunctions form a complete basis of $L^{2}(M)$
\end{thm}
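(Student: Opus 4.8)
\noindent\textit{Plan of proof.} The plan is to localize $-4\Delta_g+R_g$ near the singular stratum $\partial M$ and reduce matters to two facts about the model collar, in which $g$ is asymptotic to $g_0=dr^2+r^2\hat g+\pi^*\check g$. First, in a local trivialization of the fibration $\pi:\partial M\to B$ the model metric $g_0$ is, up to lower-order terms coming from the twisting of the bundle, a product of the cone $(C(F),dr^2+r^2\hat g)$ with the base $(B,\check g)$; since scalar curvature is local one obtains the expansion $R_{g_0}=\big(R_{\hat g}-f(f-1)\big)r^{-2}+O(r^{-1})$ as $r\to0$, where $R_{\hat g}$ is the scalar curvature of the fiber. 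The asymptotic control built into Definition~\ref{defn: ManifoldWithNonisolatedConicalSingularities} then yields $R_g=R_{g_0}+o(r^{-2})$, so for every $\varepsilon>0$ there are $r_0>0$ and $C>0$ with
\[
R_g\ \ge\ \frac{\min_{\partial M}R_{\hat g}-f(f-1)-\varepsilon}{r^2}-C\qquad\text{on }(0,r_0)\times\partial M .
\]
Second, I would invoke the one-dimensional Hardy inequality in the cone variable: for $u\in C_0^\infty$ supported in $(0,r_0)\times\partial M$, the pointwise bound $|\nabla u|_g^2\ge(1-\varepsilon)|\partial_r u|^2$ together with $\int_0^\infty|u'|^2 r^f\,dr\ge\tfrac{(f-1)^2}{4}\int_0^\infty r^{-2}u^2\,r^f\,dr$ applied fiberwise gives $4\int|\nabla u|^2\,d\vol_g\ge\big((f-1)^2-\varepsilon\big)\int r^{-2}u^2\,d\vol_g$ (the hypothesis forces $f\ge2$, so the Hardy constant is positive).

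Combining the two estimates, for $u$ supported near $\partial M$ one gets
\[
\mathcal F(g,u)=\int\big(4|\nabla u|^2+R_g u^2\big)\,d\vol_g\ \ge\ \int\Big(\frac{(f-1)^2+\min_{\partial M}R_{\hat g}-f(f-1)-2\varepsilon}{r^2}-C\Big)u^2\,d\vol_g .
\]
The decisive point is the cancellation $(f-1)^2-f(f-1)=-(f-1)$, which reduces the numerator to $\min_{\partial M}R_{\hat g}-(f-1)-2\varepsilon$; by the hypothesis $\min_{\partial M}R_{\hat g}>f-1$ this is a \emph{positive} constant $\delta$ once $\varepsilon$ is small, so $\mathcal F(g,u)\ge\delta\int r^{-2}u^2\,d\vol_g-C\int u^2\,d\vol_g$ for $u$ supported near the singular stratum. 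An IMS-type localization $1=\phi^2+\psi^2$, with $\phi=\phi(r)$ supported in the collar and $|\nabla\phi|,|\nabla\psi|$ bounded, gives the identity $\mathcal F(g,u)=\mathcal F(g,\phi u)+\mathcal F(g,\psi u)-4\int\big(|\nabla\phi|^2+|\nabla\psi|^2\big)u^2\,d\vol_g$; the first term is controlled as above, the second by the boundedness of $R_g$ on the smooth region, and the last by $C'\|u\|_{L^2}^2$. Hence $\mathcal F(g,u)\ge-C''\|u\|_{L^2}^2$ for all $u\in C_0^\infty(\mathring M)$, i.e.\ the operator is semi-bounded; its Friedrichs extension $L$ is then defined as usual, with form domain $\mathcal Q(L)$ the completion of $C_0^\infty(\mathring M)$ in the norm $\big(\mathcal F(g,u)+(C''+1)\|u\|_{L^2}^2\big)^{1/2}$.

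For discreteness it suffices that $\mathcal Q(L)\hookrightarrow L^2(M)$ be compact. Splitting the gradient term as $4=4\theta+4(1-\theta)$ with $0<\theta<\delta/(f-1)^2$ and applying Hardy only to the $4(1-\theta)|\nabla u|^2$ piece, the estimate above improves near $\partial M$ to $4|\nabla u|^2+R_g u^2\ge4\theta|\nabla u|^2+\delta'r^{-2}u^2-Cu^2$ for some $\delta'>0$; combined with the localization this gives $\mathcal F(g,u)+C''\|u\|_{L^2}^2\gtrsim\int_M|\nabla u|^2+\int_{\{r<r_0\}}r^{-2}u^2+\int_M u^2$, so $\mathcal Q(L)$ embeds continuously into the corresponding $r^{-2}$-weighted Sobolev space. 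Let $(u_j)$ be bounded there. For $\eta\in(0,r_0)$ one has $\int_{\{r<\eta\}}u_j^2\,d\vol_g\le\eta^2\int_{\{r<\eta\}}r^{-2}u_j^2\,d\vol_g=O(\eta^2)$ uniformly in $j$, so the $L^2$-mass near $\partial M$ is uniformly small, while on the smooth compact region $\{r\ge\eta\}$ the Rellich theorem extracts an $L^2$-convergent subsequence; a diagonal argument over $\eta\to0$ produces a subsequence Cauchy in $L^2(M)$. Thus $L$ has compact resolvent, so its spectrum consists of eigenvalues $\lambda_1\le\lambda_2\le\cdots\to+\infty$ of finite multiplicity whose eigenfunctions form a complete orthonormal basis of $L^2(M)$.

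Finally, $\lambda_1$ is simple: a minimizer of the Rayleigh quotient may be taken nonnegative (replace $u$ by $|u|$), and an interior Harnack/unique-continuation argument, together with the control of the minimizer near the conical singularity obtained as in the isolated case \cite{DW18}, shows it is positive on $\mathring M$; since two orthogonal eigenfunctions cannot both be nonnegative, the first eigenspace is one-dimensional, whence $\lambda_1<\lambda_2$. The genuinely new feature relative to the isolated conical case is the presence of the base directions, but I expect these to cause no serious trouble: the term $\pi^*\check g$ contributes only a nonnegative summand to $|\nabla u|_g^2$, and the leading $r^{-2}$ singularity of $R_g$ comes entirely from the cone over the fiber, so the one-dimensional Hardy inequality in $r$ already suffices and the base enters only through bounded terms of order at most one. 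The two steps I expect to require the most care are extracting $R_g=R_{g_0}+o(r^{-2})$ from Definition~\ref{defn: ManifoldWithNonisolatedConicalSingularities}, and verifying that the $r^{-2}$-weighted form norm controls the $L^2$-mass near $\partial M$ uniformly — which is exactly what makes the embedding compact.
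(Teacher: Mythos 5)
Your proof is correct and rests on the same two pillars as the paper's: the O'Neill/Besse expansion $R_{g_{0}}=\bigl(R_{\hat g}-f(f-1)\bigr)r^{-2}+O(1)$, and the one-dimensional Hardy inequality with constant $(f-1)^{2}/4$, whose combination $(f-1)^{2}-f(f-1)+\min R_{\hat g}=\min R_{\hat g}-(f-1)$ yields exactly the threshold in the hypothesis. The only differences are in the packaging: the paper proves a direct weighted estimate on the collar via the auxiliary operator $L_{\delta_{0}}=-(4-\delta_{0})\Delta+R-\delta_{0}r^{-2}$ (Lemma \ref{lem: cone semiboundedness estimate}) rather than through an IMS partition, and derives the compact embedding $W^{1,2}_{1-\frac{n}{2}}\hookrightarrow L^{2}$ by conjugating the cone to a cylinder with the unitary $u\mapsto r^{f/2}u$ and invoking Rellich there (Lemma \ref{lem: compactness on cones}), rather than by your tail-mass-plus-diagonal argument; both are equivalent ways of exploiting the same $r^{-2}$-weighted control. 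Your closing remark about $\lambda_{1}<\lambda_{2}$ via nonnegativity of the ground state actually supplies a detail stated in Theorem \ref{MainResult1} but not addressed in the paper's proof of Theorem \ref{thm: spectrum property}, which asserts only $\lambda_{1}\le\lambda_{2}\le\cdots$.
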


\begin{remark}
{\rm
Similarly Theorem \ref{MainResult1} extends to the Schr\"odinger operator $-\Delta_{g}+ c R_{g}$ for any constant $c$, where the scalar curvature condition becomes $\min\limits_{\partial M}\{R_{\hat{g}}\}> \frac{f[4c(f-1)-(f-2)]-1}{4c}$. For example, for the well known conformal Laplacian $-\Delta_{g}+ \frac{n-2}{4(n-1)} R_{g}$, the scalar curvature condition becomes $\min\limits_{\partial M}\{R_{\hat{g}}\}>\frac{(f-1)(n-f-1)}{n-2}$.
}
\end{remark}

\begin{remark}
{\rm The role of the Schr\"odinger operator $-\Delta_{g}+ \frac{1}{2} R_{g}$ in the study of scalar curvature has attracted a lot attention recently \cite{S17}.
In \cite{G21}, Gromov further highlighted the importance of the operator $-\Delta_{g}+ \frac{1}{2} R_{g}$ in the study of scalar curvature. In particular, he asks for what class of isolated conical singularity this operator will be positive. Our result gives an answer to this question for general conical singularity. Namely, for non-isolated conical singularity, a sufficient condition is
$$ \min\limits_{\partial M}\{R_{\hat{g}}\}> \frac{1}{2}(f^2 -1).
$$
For isolated conical singularity, this becomes
$$ \min\limits_{\partial M}\{R_{\hat{g}}\}> \frac{1}{2}(n -2)n.
$$}
\end{remark}

Moreover, we obtain the following asymptotic behavior for eigenfuctions of $-4\Delta_{g} + R_{g}$ near the singular set $\partial M$, by a Nash-Moser iteration argument.
\begin{thm}[Corollary \ref{cor: asymptotic of eigenfunctions} below]\label{thm: asymptotic of eigenfunctions}
Let $u$ be an eigenfunction of $-4\Delta_{g}+R_{g}$ on a $n$-dimensional manifold $(M^n, g)$ with non-isolated conical singularities.
Then
\begin{equation} \label{eq-eigenfunction-asymptotic}
|\nabla^{i}u|  = o(r^{-\frac{n-2}{2}-i})
\end{equation}
as $r\rightarrow 0$, i.e. approaching the singular set $\partial M$, for $i=0, 1$. Here $ r $ is the radial variable on a conical neighborhood of the singular set $ \partial M $.
\end{thm}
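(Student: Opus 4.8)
The plan is to run a Nash--Moser (De Giorgi--Nash--Moser) iteration on dyadic annuli shrinking toward $\partial M$, exploiting the scaling structure of the model metric $g_{0}=dr^{2}+r^{2}\hat g+\pi^{*}\check g$ together with the global $L^{2}$ membership of $u$ that comes from Theorem \ref{MainResult1}. First I would fix a point of $\partial M$, a collar $(0,1)\times\partial M$, and for $\rho\in(0,\tfrac12)$ consider the annular region $A_{\rho}=\{\rho<r<2\rho\}$. Since $u$ solves $-4\Delta_{g}u+R_{g}u=\lambda u$ in $\mathring M$, on $A_{\rho}$ it satisfies $-4\Delta_{g}u=(\lambda-R_{g})u$; the key structural input is that on a conical neighborhood $R_{g}=O(r^{-2})$ (the scalar curvature of the cross-section scales like $r^{-2}$, and the perturbation terms are lower order), so $|\lambda-R_{g}|\le C r^{-2}$ there. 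Rescaling $r=\rho s$ turns $A_{\rho}$ into a fixed annulus $A_{1}=\{1<s<2\}$ with a metric $\tilde g_{\rho}$ uniformly (in $\rho$) equivalent to a fixed smooth metric and with uniformly bounded geometry, and the potential becomes uniformly bounded: the rescaled equation reads $-4\Delta_{\tilde g_{\rho}}u=(\rho^{2}\lambda-\rho^{2}R_{g})u$ with right-hand side coefficient bounded independently of $\rho$.

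The core step is then standard Moser iteration applied to the rescaled equation on $A_{1}$ with uniform constants: it yields
\[
\sup_{\{5/4<s<7/4\}}|u|\ \le\ C\Big(\int_{A_{1}}u^{2}\,d\vol_{\tilde g_{\rho}}\Big)^{1/2},
\]
with $C$ independent of $\rho$ (here one uses $n\ge 4$, or more precisely the Sobolev inequality on the fixed annulus $A_{1}$, which holds with a uniform constant because of uniform bounded geometry). Undoing the scaling, $d\vol_{g}=\rho^{n}\,d\vol_{\tilde g_{\rho}}$ (up to a bounded factor coming from the warping), so this reads
\[
\sup_{\{5\rho/4<r<7\rho/4\}}|u|\ \le\ C\,\rho^{-n/2}\Big(\int_{\{\rho<r<2\rho\}}u^{2}\,d\vol_{g}\Big)^{1/2}.
\]
Because $u\in L^{2}(M)$, the quantity $\varepsilon(\rho):=\int_{\{r<2\rho\}}u^{2}\,d\vol_{g}\to 0$ as $\rho\to0$; hence the right-hand side is $C\rho^{-n/2}\,o(1)$. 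To upgrade $\rho^{-n/2}$ to $\rho^{-(n-2)/2}$, that is, to gain one power of $\rho$, I would observe that the gradient estimate on $A_{1}$ (interior Schauder/$W^{2,p}$ estimates for the rescaled equation, again with uniform constants) gives $\sup_{\{5/4<s<7/4\}}|\nabla_{\tilde g_{\rho}}u|\le C\,(\int_{A_{1}}u^{2})^{1/2}$, and then translate: $|\nabla_{g}u| = \rho^{-1}|\nabla_{\tilde g_{\rho}}u|$, producing
\[
\sup_{\{5\rho/4<r<7\rho/4\}}|\nabla_{g}u|\ \le\ C\,\rho^{-n/2-1}\Big(\int_{\{r<2\rho\}}u^{2}\,d\vol_{g}\Big)^{1/2}.
\]
The $o(\cdot)$ improvement over the bare power of $\rho$ is exactly the $\varepsilon(\rho)\to0$ factor; covering the dyadic scales $\rho\sim r$ and letting $r\to0$ gives $|\nabla^{i}u|=o(r^{-(n-2)/2-i})$ for $i=0,1$, which is the claim.

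The main obstacle I expect is \emph{uniformity of the Moser/elliptic constants as $\rho\to0$}: the rescaled annuli $A_{1}$ carry metrics $\tilde g_{\rho}$ that are only asymptotic to the model $ds^{2}+s^{2}\hat g+\check g$ (the error terms in Definition \ref{defn: ManifoldWithNonisolatedConicalSingularities} must be controlled so that $\tilde g_{\rho}\to$ model in $C^{1}$ or $C^{2}$, and all curvature and volume comparisons stay uniformly two-sided), and the potential coefficient $\rho^{2}(R_{g}-\lambda)$ must be bounded in $L^{p}$ for $p>n/2$ with a $\rho$-independent bound. One must check that the warping factor $s$ on the fiber directions, which stays bounded between $1$ and $2$ on $A_{1}$, does not degenerate the Sobolev inequality, and that the fibration structure $\pi:\partial M\to B$ does not obstruct the local Sobolev embedding — this is where the assumption $n\ge4$ and the precise asymptotic hypotheses on $g$ enter. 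Once the uniform local elliptic theory on the rescaled annuli is in place, the rest of the argument is the routine rescaling-and-covering bookkeeping sketched above, and the $L^{2}$-smallness $\varepsilon(\rho)\to0$ supplies the $o(\cdot)$ rather than $O(\cdot)$ decay.
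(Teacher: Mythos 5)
There are two genuine gaps in your argument; the first is fatal to the approach as written, the second is an arithmetic slip that nonetheless changes the conclusion.

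First, the rescaling $r=\rho s$ to a fixed annulus with ``uniformly (in $\rho$) equivalent'' metrics $\tilde g_\rho$ and uniformly bounded geometry is exactly the device the paper says cannot be used for \emph{non-isolated} conical singularities, and here the obstruction is concrete. Under $r\mapsto\rho s$ the model metric $g_0=dr^2+r^2\hat g+\pi^*\check g$ pulls back to $\rho^2(ds^2+s^2\hat g)+\pi^*\check g$: the base term $\pi^*\check g$ does not rescale, so normalizing by $\rho^2$ produces $\tilde g_\rho=ds^2+s^2\hat g+\rho^{-2}\pi^*\check g$, whose horizontal directions blow up as $\rho\to0$. The family $\{\tilde g_\rho\}$ is therefore \emph{not} uniformly quasi-isometric to any fixed metric, and the uniform local Sobolev/Moser constants you invoke are not in hand; your stated model $ds^2+s^2\hat g+\check g$ for $\tilde g_\rho$ is simply incorrect. (If $B$ is a point, $\pi^*\check g=0$, and your rescaling is the standard argument from \cite{DW18}; but the theorem here allows $\dim B>0$.) The paper sidesteps this entirely: it runs Moser iteration directly on dyadic annuli inside the collar, feeding in the \emph{global} Sobolev inequality of Proposition \ref{prop: SobolevInequality} (itself proved by a partition-of-unity plus Hardy inequality argument, with no rescaling) together with scale-adapted cutoffs, and it iterates not $|u|^2$ but the auxiliary function $w=|\nabla u|^2+r^{-2}u^2$; the correct $\epsilon$-dependence is extracted from the iteration bookkeeping, not from a blow-up.

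Second, even granting the rescaled uniform Moser estimate, your arithmetic lands one power of $r$ short of the claim. You derive $\sup_{A_\rho}|u|\le C\rho^{-n/2}\bigl(\int_{A_\rho}u^2\bigr)^{1/2}$ and $\sup_{A_\rho}|\nabla u|\le C\rho^{-n/2-1}\bigl(\int_{A_\rho}u^2\bigr)^{1/2}$, and the smallness of the $L^2$-tail gives only $|u|=o(r^{-n/2})$ and $|\nabla u|=o(r^{-n/2-1})$, both weaker than the stated $o(r^{-(n-2)/2})$ and $o(r^{-n/2})$; the paragraph where you ``gain one power of $\rho$'' in fact produces a gradient estimate at the anticipated order, not an improved bound on $|u|$. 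The missing input is that an eigenfunction lies in the \emph{weighted} space $W^{1,2}_{1-\frac n2}(M)$ (Proposition \ref{prop: semiboundedness estimate}), so $\int_M r^{-2}u^2<\infty$, which is strictly stronger than $u\in L^2$; on a dyadic shell $A_\rho$ this yields $\int_{A_\rho}u^2\le 4\rho^2\int_{A_\rho}r^{-2}u^2=\rho^2\cdot o(1)$, and that extra $\rho$ is exactly what you are missing. The paper encodes this from the outset by iterating on $w$, and gets the little-$o$ from the summability over disjoint dyadic shells of the $W^{2,2}_{1-\frac n2}$-norm, which is available after first bootstrapping $u$ into $W^{2,2}_{1-\frac n2}(M)$ via the weighted elliptic estimate of Proposition \ref{prop: weighted elliptic estimate}.
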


\begin{remark}
{\rm   \eqref{eq-eigenfunction-asymptotic}, as well as \eqref{eq-minimizer-asymptotic} below,  is in general not optimal as one expects that the exponent $-\frac{n-2}{2}-i$ should be replaced by $-\frac{f-2}{2}-i$. Later, we will establish an optimal partial asymptotic expansion for eigenfunctions of $-\Delta + c R$ for isolated conical singularity.}
\end{remark}


\subsection{The $W$-functional on manifolds with non-isolated conical singularities}

For the $W$-functional, we extend the results in \cite{DW20} and obtain:
\begin{thm}\label{thm: W-functional introduction}
Let $(M^{n}, g)$ be a compact manifold with non-isolated conical singularities. If $\min\limits_{\partial M}\{R_{\hat{g}}\}>(f-1)$, then for each fixed $\tau > 0$,
\begin{equation}\label{eqn: finite-infimum}
\mu(g, \tau) = \inf \left\{ W(g, u, \tau) \mid u \in W^{1,2}_{1-\frac{n}{2}}(M), u>0, \left\| \frac{1}{(4 \pi \tau)^{\frac{n}{2}}} u \right\|_{L^{2}(M)} = 1 \right\} > -\infty.
\end{equation}
Here $W^{1,2}_{1-\frac{n}{2}}(M)$ is a weighted Sobolev space defined in $ (\ref{eqn: WSN}) $ below.

Moreover, there exists a smooth function $u>0$ that realizes the infimum in $(\ref{eqn: finite-infimum})$. The minimizer satisfies
\begin{equation} \label{eq-minimizer-asymptotic}
|\nabla^{i} u| = o(r^{-\frac{n-2}{2}-i}),
\end{equation}
as $r \rightarrow 0$, 
for $i=0,1$. Here $r$ is the radial variable of a conical neighborhood of the singular set.
\end{thm}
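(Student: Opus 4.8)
\emph{Proof proposal.} The plan is to adapt the strategy of \cite{DW20} for isolated conical singularities, substituting the classical Sobolev and Rellich theorems by the weighted analogues valid in the non-isolated setting (established in the earlier sections), and to organize the argument in three steps: finiteness of $\mu(g,\tau)$; existence, positivity and interior regularity of a minimizer; and the asymptotic estimate \eqref{eq-minimizer-asymptotic}.

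\textit{Step 1 (finiteness).} The hypothesis $\min_{\partial M}\{R_{\hat g}\}>(f-1)$ --- precisely the condition used in Theorem \ref{MainResult1} --- gives, via a modewise Hardy inequality on the fibres of the model cone $dr^2+r^2\hat g+\pi^*\check g$, a coercivity estimate
\[
Q(u):=\int_M\bigl(4|\nabla u|^2+R_gu^2\bigr)\,d\vol_g\ \ge\ \delta_0\int_M\Bigl(|\nabla u|^2+\tfrac{u^2}{r^2}\Bigr)\,d\vol_g-C_0\int_M u^2\,d\vol_g
\]
for some $\delta_0>0$, valid on $W^{1,2}_{1-\frac{n}{2}}(M)$. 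Combined with the weighted Sobolev embedding into $L^{2n/(n-2)}(M)$ and the usual Jensen-inequality argument, this yields a logarithmic Sobolev inequality: for every $\varepsilon>0$ there is $\beta(\varepsilon)$ such that $\int_M u^2\ln u^2\,d\vol_g\le\varepsilon\int_M|\nabla u|^2\,d\vol_g+\beta(\varepsilon)$ whenever $\|u\|_{L^2(M)}=1$. Inserting this into $W$ and taking $\varepsilon$ a small fixed multiple of $\tau\delta_0$ absorbs the entropy term into $\tau Q(u)$, giving $(4\pi\tau)^{\frac{n}{2}}W(g,u,\tau)\ge c_\tau\int_M|\nabla u|^2\,d\vol_g-C_\tau$ on the constraint set; hence $\mu(g,\tau)>-\infty$, and any minimizing sequence is bounded in $W^{1,2}_{1-\frac{n}{2}}(M)$.

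\textit{Step 2 (existence, positivity, regularity).} Run the direct method: take a minimizing sequence $u_k>0$, replace it by $|u_k|\ge 0$ (still minimizing, still bounded in $W^{1,2}_{1-\frac{n}{2}}(M)$ by Step 1), and pass to a subsequence converging weakly in $W^{1,2}_{1-\frac{n}{2}}(M)$ and --- using the compact embedding $W^{1,2}_{1-\frac{n}{2}}(M)\hookrightarrow\hookrightarrow L^2(M)$, equivalently the discreteness of the spectrum in Theorem \ref{MainResult1} --- strongly in $L^2(M)$ and a.e.\ to some $u\ge 0$ satisfying the constraint. Since $Q$ is a closed quadratic form it is weakly lower semicontinuous, so $Q(u)\le\liminf Q(u_k)$; and $\{u_k^2\ln u_k\}$ is uniformly integrable (bound $u_k^2|\ln u_k|\le C(u_k^2+1)$ where $u_k\le 1$, and $u_k^2\ln u_k\le C_\delta u_k^{2+\delta}$ with $2+\delta<\tfrac{2n}{n-2}$ where $u_k>1$, using the $L^{2n/(n-2)}$-bound), so $\int_M u_k^2\ln u_k\to\int_M u^2\ln u$ by Vitali's theorem. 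Hence $W(g,u,\tau)\le\liminf W(g,u_k,\tau)=\mu(g,\tau)$, i.e.\ $u$ is a minimizer. It solves the Euler--Lagrange equation $-4\tau\Delta u+\tau R_g u=2u\ln u+\bigl(n+\mu(g,\tau)\bigr)u$ on $\mathring M$; elliptic bootstrap on the smooth part gives $u\in C^\infty(\mathring M)$, and the strong maximum principle (applied to $-\Delta u\le C u$, valid where $u\le 1$ since there $u\ln u\le 0$ and $R_g$ is locally bounded) together with unique continuation upgrades $u\ge 0$, $u\not\equiv 0$ to $u>0$ on $\mathring M$.

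\textit{Step 3 (asymptotics), and the main difficulty.} The minimizer satisfies $-4\Delta u+R_g u=F(u)$ with $|F(u)|\le C|u\ln u|+C|u|\le C_\epsilon(|u|^{1-\epsilon}+|u|^{1+\epsilon})$, a nonlinearity of exactly the structural type of the eigenfunction equation handled in Theorem \ref{thm: asymptotic of eigenfunctions}; the Nash--Moser iteration on the dyadic conical annuli $\{2^{-j-1}\le r\le 2^{-j}\}$ used there applies with only minor changes to accommodate the logarithmic factor, the weighted Sobolev inequality powering the iteration step, and since $u\in W^{1,2}_{1-\frac{n}{2}}(M)$ the scale-invariant $L^2$ norm of $u$ over the $j$-th annulus tends to $0$, which improves the resulting bound $O(r^{-\frac{n-2}{2}-i})$ to $o(r^{-\frac{n-2}{2}-i})$ for $i=0,1$. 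I expect the main obstacle, as in \cite{DW20}, to be the interplay of the unbounded potential $R_g$ with the lower-semicontinuity step and with the logarithmic nonlinearity, which is exactly what the coercivity estimate of Step 1 and the weighted Sobolev and Rellich theorems are designed to resolve; the asymptotics are necessarily non-optimal near a positive-dimensional singular set, since the Moser iteration sees only the ambient dimension $n$, not the fibre dimension $f$.
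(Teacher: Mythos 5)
Your proposal is correct and runs along essentially the same lines as the paper's: finiteness from the coercivity estimate of Proposition \ref{prop: semiboundedness estimate} together with a logarithmic Sobolev inequality, existence of a positive smooth minimizer by the direct method using the compact embeddings of Propositions \ref{prop: compactness on manifolds} and \ref{prop: compact-weighted-embedding2}, and asymptotics by weighted elliptic bootstrapping plus Nash--Moser iteration on dyadic conical annuli adapted to the $u\ln u$ term. Two small remarks on where the paper's execution differs from your phrasing: in the non-isolated setting a ``modewise'' spectral decomposition of the fibres is explicitly unavailable (this is noted in \S\ref{subsect: compact weighted Sobolev embeddings}), and the coercivity estimate of Lemma \ref{lem: cone semiboundedness estimate} is instead obtained by the unitary change of variable $\tilde u=r^{f/2}u$ to a cylinder followed by the one-dimensional Hardy inequality; and in the Moser step the logarithmic nonlinearity requires, beyond your polynomial bound $|F(u)|\le C_\epsilon(|u|^{1-\epsilon}+|u|^{1+\epsilon})$, the H\"older--Young splitting of $\int\eta^2v^q\ln u$ against the fixed $L^2(M)$ constraint that \S\ref{section: W-functional} spells out, as well as the preliminary verification that $u\ln u\in W^{0,2}_{1-\frac{n}{2}-2}(M)$ before Proposition \ref{prop: weighted elliptic estimate} may be invoked.
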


We would like to mention that Theorem \ref{thm: W-functional introduction} extends to the expander entropy $W_{+}$-functional introduced in \cite{FIN05}. In this paper we will only focus on $W$-functional, since the discussion for $W_{+}$-functional is the same.

The finiteness of infimums of $W$-functional and $W_{+}$-functional, and the existence of the minimizer follow from the direct method in the calculus of variations, with the help of the (weighted and unweighted) Sobolev embeddings and compactness of some weighted Sobolev embeddings on manifolds with non-isolated conical singularities obtained in \S\ref{section: Sobolev-weighted-Sobolev-spaces}. The asymptotic behavior of the minimizing function then follows from a Nash-Moser iteration argument as discussed in \S$\ref{section: W-functional}$.

In contrast to isolated conical singularities, the scaling technique, which is used in \cite{DW18}, does not work any more for deriving weighted Sobolev inequalities and weighted elliptic estimate on manifolds with non-isolated conical singularities. Instead, we adapt the partition argument for an isolated cone used in \cite{DW20} to the non-isolated conical singularities setting, and obtain the (unweighted) Sobolev inequality. A Hardy inequality plays a critical role in this derivation, as well as in the semi-boundedness estimate for the operator $-4\Delta + R$. Then we obtain the weighted Sobolev inequality from the Sobolev inequality. 

One can further analyze the asymptotic behavior of the functional $\mu(g, \tau)$ as $\tau \rightarrow +\infty$ and $\tau \rightarrow 0$ to obtain the finiteness for the $\nu$-entropy on manifolds with non-isolated conical singularities. Indeed, from the Sobolev inequality in Proposition \ref{prop: SobolevInequality}, the logarithmic Sobolev inequality follows. Then, as in the smooth case (see, e.g. Lemma 6.30 in \cite{C07}), one deduces from the logarithmic Sobolev inequality that, when $\min\limits_{\partial M}\{R_{\hat{g}}\}>f-2$,
\begin{equation} \label{eq-mu-asymptotic}
\mu(g, \tau)\rightarrow \left\{ \begin{array}{cc} +\infty, \ \  \mbox{if} \ \lambda(g)>0, \\ -\infty, \ \  \mbox{if} \ \lambda(g)<0. \end{array}  \right.
\end{equation}
On the other hand, one can show the finiteness of the limit $\lim\limits_{\tau\rightarrow 0} \mu(g, \tau)$ on manifolds with non-isolated 
conical singularities, by combining the scaling invariance property: $\mu(cg, c\tau)=\mu(g, \tau)$ for any $c>0$, with the fact that $\mu(g_{\mathbb{R}^n}, 1)=0$, as well as $\mu(g_{C(F)}, 1)< \infty$. Here $g_{\mathbb{R}^n}$ is the Euclidean metric on $\mathbb{R}^n$, and $g_{C(F)}$ is the model cone metric on $(0, +\infty)\times F$. The finiteness of $\mu(g_{C(F)}, 1)$ is due to Ozuch \cite{Ozu20}. Therefore, we obtain that 
\begin{equation}
    \nu(g) = \inf\{\mu(g, \tau) \mid \tau>0\} >-\infty,
\end{equation}
 on manifolds with non-isolated conical singularities $(M, g)$ satisfying $\min_{\partial M}\left\{ R_{\hat{g}} \right\} > f-2$, provided that $\lambda(g)>0$.  

The finiteness of $\nu$-entropy  and Theorems \ref{MainResult1} and \ref{thm: W-functional introduction}, as announced in \cite{DW20}, answer the existence problem of Perelman's functionals on manifolds with non-isolated (edge) conical singularities, which was also raised in the survey \cite{KV21b}.


\subsection{The horn singularity}

We also study the spectrum of Schr\"odinger operator $-\Delta + c R$ and Perelman's functionals on manifolds with (either non-isolated or isolated) more general singularities, i.e. $r^{\alpha}$-horn in \cite{Che80}, whose model is $((0, 1)\times F^{f}, g_{\alpha}= dr^{2} + r^{2\alpha}\hat{g})$ with $\alpha\in \mathbb{N}$ and $\hat{g}$ a Riemannian metric on $F^{f}$. This type of singularity occurs naturally in general relativity (e.g., the negative mass Schwarzschild metric has a horn singularity \cite{ST}) and singular projective variety \cite{HP}.

The study of analysis on manifolds with $r^{\alpha}$-horn singularities with $\alpha>1$ is very different from the case of conical singularities (i.e. the case of $\alpha=1$). For example, neither Hardy inequality nor the partition method used in \cite{DW20} and in the proof of Proposition \ref{prop: SobolevInequality} below works for $r^{\alpha}$-horn singularities with $\alpha>1$. However, we still establish some $\alpha$-weighted Sobolev inequalities in Proposition \ref{prop: alpha-weighted-Sobolev-inequality} below as well as some compact $\alpha$-weighted Sobolev embeddings in Proposition \ref{prop: compact-alpha-weighted embedding} below, by using some change of variables to relate the problem to that on cylinders. Then we obtain:

\begin{thm}\label{thm: horn}
Let $(M^n, g)$ be a compact Riemannian manifold with 
$r^{\alpha}$-horn singularities as in Definition $\ref{defn: ManifoldWith-alpha-ConicalSingularities}$  below. If $\min\limits_{\partial M}\{R_{\hat{g}}\}>0$, then 
\begin{enumerate}
    \item the Friedrichs extension of the Schr\"odinger operator 
          $-\Delta_{g} + c R_{g}\ (c>0)$ with the domain $C^{\infty}_{0}(\mathring{M})$ has the spectrum consisting of discrete eigenvalues with finite multiplicity $-\infty < \lambda_{1}<\lambda_{2}\leq\lambda_{3}\leq \cdots$, and the corresponding eigenfunctions are smooth and form a basis of $L^{2}(M, g)$,
   \item \begin{equation*}
            \inf\left\{W(g, \tau, u) \mid u\in W^{1, 2}_{\alpha-\frac{n\alpha}{2}, \alpha}(M), \ \ \|u\|_{L^{2}(M)}=(4\pi\tau)^{\frac{n}{2}}\right\} > -\infty,
         \end{equation*}
   \item and an eigenfuction of $-\Delta+cR$ or the minimizer of 
         the $W$-functional, $u$, has the asymptotic behavior
         \begin{equation*}
         |\nabla^{i}u| = o(r^{-\frac{n-2}{2}\alpha - i\alpha})
         \end{equation*}
         as $r\rightarrow 0$, i.e. approaching the singular set, for $i=0, 1$. Here $r$ is the radial variable of a horn-like neighborhood of the singular set.
\end{enumerate}
\end{thm}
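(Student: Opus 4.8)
The plan is to mirror the strategy already carried out for conical singularities, but with the cone metric $dr^2 + r^2\hat g$ replaced by the horn metric $g_\alpha = dr^2 + r^{2\alpha}\hat g$, and to compensate for the failure of the Hardy inequality and the partition argument (noted in the paragraph preceding the theorem) by the change-of-variables trick that turns the horn into a cylinder. Concretely, on the model horn $((0,1)\times F^f, g_\alpha)$ one has $d\vol_{g_\alpha} = r^{\alpha f}\,dr\,d\vol_{\hat g}$ and $|\nabla u|_{g_\alpha}^2 = (\partial_r u)^2 + r^{-2\alpha}|\nabla_F u|_{\hat g}^2$, and the scalar curvature is $R_{g_\alpha} = r^{-2\alpha}R_{\hat g} + (\text{lower order in } r^{-2\alpha})$, with the leading term $r^{-2\alpha}R_{\hat g}$ controlled from below by the hypothesis $\min_{\partial M}R_{\hat g} > 0$ together with the extra negative $r$-derivative terms that are $O(r^{-2})$ and hence dominated by $r^{-2\alpha}R_{\hat g}$ for $\alpha > 1$. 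I would first substitute a new radial variable $s = s(r)$ (roughly $s \sim -\log r$ for $\alpha=1$, but for $\alpha>1$ a power-type change $s \sim r^{1-\alpha}$) so that the horn neighborhood is quasi-isometric to a half-cylinder $[s_0,\infty)\times F$ with a metric that is a perturbation of $ds^2 + \hat g$, up to conformal factors that are powers of $s$; this is exactly the mechanism behind Proposition \ref{prop: alpha-weighted-Sobolev-inequality} and Proposition \ref{prop: compact-alpha-weighted embedding}.

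For part (1): after localizing near the singular set with a cutoff, I would establish semi-boundedness of $-\Delta_g + cR_g$ on $C_0^\infty(\mathring M)$ by writing, for $u$ supported in the horn neighborhood,
\[
\int_M \bigl(|\nabla u|^2 + cR_g u^2\bigr)\,d\vol_g \ \geq\ \int |\nabla u|^2 + c\int r^{-2\alpha}(R_{\hat g} - \epsilon)u^2\,d\vol_g - C\|u\|_{L^2}^2,
\]
where the error $-C\|u\|_{L^2}^2$ absorbs the lower-order curvature terms and the contribution away from the singularity; since $c>0$, $R_{\hat g} > 0$, and the $r^{-2\alpha}$ weight is (for $\alpha>1$) far more singular than what is needed, positivity of the Friedrichs form modulo a constant is immediate — in fact no Hardy inequality is needed here precisely because the potential is so strongly singular and positive. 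The discreteness of the spectrum and completeness of eigenfunctions then follow from the compactness of the relevant $\alpha$-weighted Sobolev embedding (Proposition \ref{prop: compact-alpha-weighted embedding}): the form domain embeds compactly into $L^2(M,g)$, so the resolvent of the Friedrichs extension is compact. Interior elliptic regularity gives smoothness of eigenfunctions on $\mathring M$.

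For part (2): I would run the direct method in the calculus of variations for $W(g,\tau,\cdot)$ on the weighted space $W^{1,2}_{\alpha - n\alpha/2,\alpha}(M)$. The key analytic input is the $\alpha$-weighted (and, via it, the logarithmic) Sobolev inequality of Proposition \ref{prop: alpha-weighted-Sobolev-inequality}, which lets me bound $\int u^2\ln u\,d\vol_g$ from above by $\epsilon\int(|\nabla u|^2 + R_g u^2) + C(\epsilon)$ on the constraint set $\|u\|_{L^2} = (4\pi\tau)^{n/2}$, whence $W(g,\tau,u) \geq -C > -\infty$. Lower semicontinuity of the remaining terms along a minimizing sequence, together with the compact weighted embedding to pass the $u^2\ln u$ term to the limit, yields a nonnegative minimizer, and the strong maximum principle (after establishing $u\in L^\infty_{loc}$ as in part (3)) gives $u>0$. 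For part (3): for an eigenfunction, or a weak solution of the Euler–Lagrange equation $-\Delta_g u + cR_g u = (\text{l.o.t.})$ where the right side is $O(|u\ln u| + u)$, I would run a Nash–Moser iteration on the rescaled cylindrical picture — the change of variables converts the degenerate weights into bounded geometry on $[s_0,\infty)\times F$ — to obtain an $L^\infty$ bound on $u$ relative to its $L^2$-mass over dyadic-in-$s$ annuli, then translate that bound back through the substitution to get $|u| = o(r^{-(n-2)\alpha/2})$; the gradient estimate $|\nabla u| = o(r^{-(n-2)\alpha/2 - \alpha})$ follows from the same iteration applied to $\nabla u$ or from interior Schauder estimates on each annulus combined with the scaling that the metric $g_\alpha$ imposes (the extra $r^{-\alpha}$ being the metric factor relating $\partial_r$ to $\partial_s$).

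The main obstacle I expect is part (3), specifically controlling the Nash–Moser iteration through the change of variables so that the constants do not degenerate as $r\to 0$: one must check that, after the substitution, the transformed equation has coefficients in a space where De Giorgi–Nash–Moser applies uniformly in $s$, and that the $u^2\ln u$-type nonlinearity — which is supercritical in the naive scaling but subcritical once one uses the room in the $\alpha$-weighted Sobolev inequality — can be iterated without loss; tracking the precise power $-(n-2)\alpha/2$ (rather than the sharp but harder $-(f-2)\alpha/2$, which, as Remark after Theorem \ref{thm: asymptotic of eigenfunctions} notes, is not attempted here in the non-isolated case) is the delicate bookkeeping step.
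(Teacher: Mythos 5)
Your proposal follows the paper's own strategy closely: semi-boundedness in part (1) is obtained exactly as you say, directly from the fact that for $\alpha>1$ the leading potential $cR_{\hat g}/r^{2\alpha}$ is positive and dominates the negative $O(r^{-2})$ terms in the scalar-curvature formula (so no Hardy inequality is needed), and the compact $\alpha$-weighted embedding then gives a compact resolvent; part (2) is handled by the same interpolation bound on $\int u^2\ln u$ via the $\alpha$-weighted Sobolev inequality; and part (3) uses the $\alpha$-weighted elliptic estimate plus Nash--Moser, with the $\alpha$-weighted Sobolev inequality itself derived through the change of variable $s=r^{1-\alpha}$ to a cylinder, just as you outline. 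This is essentially the paper's argument.
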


\begin{remark}
{\rm Note that the assumption for scalar curvature on the cross section in Theorem \ref{thm: horn} is weaker than that in the conical singularity case, Cf. Theorems $\ref{MainResult1}$ and $\ref{thm: W-functional introduction}$, and is independent of the dimension of the manifold and the constant $c$ in the Schr\"odinger operator.
}
\end{remark}


\subsection{A partial asymptotic expansion of eigenfunction on manifolds with isolated conical singularities}

As we mentioned, the asymptotic behaviors of the minimizers, important in application, are not optimal. This can be much improved for isolated conical singularities. Indeed we obtain the following optimal partial asymptotic expansion for the eigenfunctions, by using Melrose's b-calculus theory \cite{Mel93} and a weighted elliptic bootstrapping argument.
\begin{thm}[Theorem \ref{prop: partial asymptotic expansion} below]\label{thm: partial asymptotic expansion}
Let $u$ be an eigenfunciton of $-4\Delta_{g}+R_{g}$ on a $n$-dimensional manifold with an isolated conical singularity. Then 
\begin{equation}
    u=a(x)r^{\mu} + o(r^{\mu^{\prime}})
\end{equation}
as $r\rightarrow0$, where $\mu=-\frac{n-2}{2}+\frac{\sqrt{\nu-(n-2)}}{2}$ and $a(x)$ is an eigenfunction with eigenvalue $\nu$ of $-4\Delta_{\hat{g}}+R_{\hat{g}}$ on the cross section $(F, \hat{g})$ of the cone, and $\mu^{\prime}>\mu$. Here $r$ is the radial variable on the conical neighborhood of the singular point.
\end{thm}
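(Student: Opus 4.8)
The plan is to analyze the eigenvalue equation $(-4\Delta_g + R_g)u = \lambda u$ in the model conical neighborhood $\bigl((0,1)\times F, dr^2 + r^2\hat g\bigr)$, where $\Delta_g = \partial_r^2 + \frac{f}{r}\partial_r + \frac{1}{r^2}\Delta_{\hat g}$ and $R_g = \frac{1}{r^2}(R_{\hat g} - f(f-1)) + O(1)$ near $r=0$ (with $f = n-1$ here, the dimension of the cross section). The crucial structural point is that, after multiplying by $r^2$, the operator is a \emph{b-operator} (totally characteristic operator) in Melrose's sense on the manifold with boundary $[0,1)_r \times F$: writing $s = -\log r$ so that $r\partial_r = -\partial_s$, the equation becomes $\bigl((r\partial_r)^2 + (f-1)(r\partial_r) - 4\Delta_{\hat g} + R_{\hat g} - f(f-1) + O(r)\bigr)u = O(r^2)u$, i.e. a b-elliptic equation with indicial operator obtained by freezing coefficients at $r=0$. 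I would first record this reduction carefully, including the precise form of the error terms coming from the difference between $g$ and the exact cone metric $g_0$ (here I use the definition of isolated conical singularity from Definition~\ref{defn: ManifoldWithNonisolatedConicalSingularities} specialized to $B$ a point, which controls those errors by powers of $r$).

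Next I would compute the indicial roots. Expanding $u = r^\mu a(x) + \cdots$ with $a$ an eigenfunction of $-4\Delta_{\hat g} + R_{\hat g}$ with eigenvalue $\nu$ on $(F,\hat g)$, the indicial equation is $\mu(\mu-1) + (f-1)\mu + \nu - f(f-1) = 0$, that is $\mu^2 + (f-2)\mu + (\nu - f(f-1)) = 0$; with $f = n-1$ this gives $\mu^2 + (n-3)\mu + (\nu - (n-1)(n-2)) = 0$, whose roots are $\mu_\pm = -\frac{n-3}{2} \pm \frac12\sqrt{(n-3)^2 - 4\nu + 4(n-1)(n-2)}$. One checks $(n-3)^2 + 4(n-1)(n-2) = (n-2)^2 + \text{(something)}$ — this bookkeeping must be done to reconcile with the stated exponent $\mu = -\frac{n-2}{2} + \frac{\sqrt{\nu - (n-2)}}{2}$; I expect the discriminant to simplify to $(n-2)^2 - 4(\nu - (n-2))$ up to the exact normalization of $R_g$, and I would track this explicitly. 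The key spectral input is that the Friedrichs-extension eigenfunction $u$ lies in $W^{1,2}_{1-\frac n2}$ (from Theorem~\ref{thm: W-functional introduction}'s function space, or equivalently the $L^2$ mapping properties), which forces $u = O(r^{-\frac{n-2}{2}+\epsilon})$ for some $\epsilon > 0$ and hence selects only the indicial root(s) with $\mathrm{Re}\,\mu > -\frac{n-2}{2}$; together with the rough asymptotic $u = o(r^{-\frac{n-2}{2}})$ from Theorem~\ref{thm: asymptotic of eigenfunctions}, this pins down the leading exponent to be $\mu_+$ corresponding to the \emph{smallest} cross-sectional eigenvalue $\nu$ for which $\mu_+ > -\frac{n-2}{2}$.

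The engine of the argument is a weighted elliptic bootstrap: starting from $u \in r^{-\frac{n-2}{2}}\mathcal{A}^{\epsilon}$ (conormal/weighted-Sobolev), I invert the b-operator modulo its indicial roots. Because the b-Laplacian $r^2(-4\Delta_g + R_g - \lambda)$ is b-elliptic and its inverse (the generalized inverse, constructed from the resolvent family of the indicial operator $-4\Delta_{\hat g}+R_{\hat g}$ on $F$) gains two b-derivatives and shifts weights by any amount not hitting the discrete set of indicial roots, one obtains $u = a(x) r^{\mu} + v$ with $v \in r^{\mu'}\mathcal{A}$ for any $\mu' < $ (the next indicial root above $\mu$), as long as one stays below the next exceptional weight. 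The $o(r^{\mu'})$ rather than $O(r^{\mu'})$ in the statement comes from choosing $\mu'$ strictly below that next root, or from the polyhomogeneous expansion having no intermediate terms; I would phrase it as: there exists $\mu' > \mu$ such that $u - a(x)r^\mu \in r^{\mu'}L^\infty$, which is the claimed $o(r^{\mu'})$. Standard elliptic regularity on $F$ (compact, smooth) gives smoothness of $a(x)$ and that it is a genuine eigenfunction; the same bootstrap upgrades conormality to pointwise bounds via Sobolev embedding on the cross section.

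The main obstacle I anticipate is the careful handling of the \emph{error terms} $g - g_0 = O(r)\cdot(\text{tensor})$: these perturbations of the b-operator are lower-order in the b-calculus (they lie in $r\cdot\mathrm{Diff}^2_b$), so they do not change the indicial roots, but to run the bootstrap one must verify that multiplying by such an error term and applying the generalized inverse genuinely improves the weight — this requires the perturbation to map $r^{\mu}\mathcal{A}$ into $r^{\mu+\delta}\mathcal{A}$ for some definite $\delta>0$, and one must check that the accumulated weight shift reaches past $\mu$ before stalling at the next indicial root. A secondary subtlety is the possibility of a \emph{logarithmic} term: if two indicial roots coincide or differ by an integer (which can happen for special cross-sectional spectra), the expansion acquires a $\log r$ factor; I would either impose genericity, or absorb the $(\log r)$ factor into the $o(r^{\mu'})$ remainder by choosing $\mu'$ slightly smaller — since the theorem only asserts a \emph{partial} expansion with an $o(r^{\mu'})$ error, this is harmless. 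Everything else — the reduction to the model, the indicial computation, and the regularity of $a(x)$ — is routine once the b-calculus framework of \cite{Mel93} is in place.
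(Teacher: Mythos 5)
Your proposal follows the same overall route as the paper: rewrite the eigenvalue equation for the conformally rescaled operator $r^2 L_0$, recognize it as a b-elliptic operator in Melrose's sense, compute the indicial family and its roots, start from the weighted-Sobolev membership $u \in W^{1,2}_{1-\frac n2}$ forced by the Friedrichs construction, and then bootstrap through the parametrix to peel off the polyhomogeneous leading term. The arithmetic slips you flag (wrong coefficient in the reduced operator, wrong shape of the discriminant) are self-corrected in principle, since you explicitly note the bookkeeping must be reconciled with the stated $\mu = -\frac{n-2}{2} + \frac{\sqrt{\nu-(n-2)}}{2}$, so I won't dwell on them.

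There is, however, a genuine gap in your treatment of the logarithmic terms, and it is not a minor one. You write that a $\log r$ factor in the leading term could be ``absorbed into the $o(r^{\mu'})$ remainder by choosing $\mu'$ slightly smaller.'' This does not work: the theorem asserts $u = a(x) r^\mu + o(r^{\mu'})$ with $\mu' > \mu$, and if the true leading behavior were $a(x) r^\mu (\log r)^p$ with $p \geq 1$, the difference $u - a(x) r^\mu$ would be of size $r^\mu |\log r|^p$, which is \emph{not} $o(r^{\mu'})$ for any $\mu' > \mu$. Nor can you ``impose genericity'' — the theorem is stated unconditionally (apart from the hypothesis $R_{\hat g} > n-2$, which you also do not state explicitly). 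The paper rules out the log factor by an algebraic argument that you are missing: after obtaining the leading polyhomogeneous expansion of the parametrix remainder, $R_1 u \sim a_p(y) r^\mu (\log r)^p + a_{p-1}(y) r^\mu (\log r)^{p-1} + \cdots$, one applies $r^2 L_0$ and matches the two highest $\log$-orders. The top order gives the indicial equation (so $a_p$ is a $\nu$-eigenfunction), while the next order gives $\sigma a_p(y) - \nu a_{p-1}(y) + (-4\Delta_{\hat g}+R_{\hat g})a_{p-1}(y) = 0$ with $\sigma = -4p(2\mu-1)-4(n-1)p$. Since $-\nu a_{p-1} + (-4\Delta_{\hat g}+R_{\hat g})a_{p-1}$ is $L^2$-orthogonal to the $\nu$-eigenspace, this forces $\sigma = 0$, and plugging in $\mu = -\frac{n-2}{2}+\frac{\sqrt{\nu-(n-2)}}{2}$ gives $-4p\sqrt{\nu-(n-2)} = 0$; the hypothesis $R_{\hat g} > n-2$ guarantees $\nu > n-2$, so $\sqrt{\nu-(n-2)} > 0$ and hence $p = 0$. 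This two-step matching (not just the indicial root computation) is the load-bearing step you would need to supply.

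A second, smaller, inaccuracy: you assert the leading exponent ``corresponds to the smallest cross-sectional eigenvalue $\nu$ for which $\mu_+ > -\frac{n-2}{2}$.'' Under $R_{\hat g} > n-2$ every $\nu$ satisfies $\mu_+ > -\frac{n-2}{2}$, so that constraint is vacuous; and the theorem does not claim $\nu$ is the smallest eigenvalue, only that it is \emph{some} eigenvalue of $-4\Delta_{\hat g}+R_{\hat g}$ — which eigenvalue appears depends on which cross-sectional modes the given eigenfunction of $L$ actually excites. The paper uses $R_{\hat g} > n-2$ together with Theorem 1.4 of \cite{DW18} to select the $+$ root of the quadratic, not to pick out the smallest $\nu$.
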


This in particular implies the partial asymptotic expansion for eigenfunctions obtained by Kr\"oncke and Vertman in \cite{KV21}, where they considered the case where $(F, \hat{g})$ has the scalar curvature $R_{\hat{g}}=(n-1)(n-2)$, i.e., that of the round sphere. In this special case, one can easily see that $\mu\geq0$ 
in Theorem \ref{thm: partial asymptotic expansion}. 

On manifolds with non-isolated conical singularities with some additional assumptions, e.g. the metrics on the fibres are isospectral with respect to the operator $-4\Delta + 4 R$ and the difference between asymptotic conical metric and model conical metric is smooth up to the singular set, the work of Mazzeo in \cite{Maz91} gives a full asymptotic expansion for the eigenfunctions. Note that these restrictions are not imposed in our work for manifolds with non-isolated conical singularities.


\subsection{Gradient Ricci solitons with isolated conical singularities}
Gradient Ricci solitons are singularity models of Ricci flow. We show that there are no non-trivial gradient steady or expanding Ricci soliton on compact manifolds with isolated conical singularity, provided the scalar curvature of the cross sections of the conical singularities equals that of the sphere. For this purpose, we first obtain, with the help of Melrose's b-calculus, some asymptotic estimate as in Proposition \ref{prop: asymptotic for potential function of steady soliton} below for the potential functions of compact gradient Ricci solitons with isolated conical singularities. In particular, we show that the potential function of a compact gradient Ricci soliton with isolated conical singularities is bounded and goes to zero when one approaches the singular points, 
when the above condition on the scalar curvature is satisfied. Using this, if one further assumes that the soliton is either steady or expanding, then we show  that it must be Einstein. Here the subtlety is that we do not impose asymptotic condition for the potential functions, and thus, a priori, gradient Ricci solitons with conical singularities may not be critical points of Perelman's functionals, unlike the smooth case.

\begin{thm}\label{thm: soliton}
Let $(M^{n}, g, f)$ be a compact steady or expanding gradient Ricci soliton with isolated conical singularities, i.e.
\begin{equation}
    Ric_g + \nabla^2 f = \Lambda g,
\end{equation}
with constant $\Lambda \leq 0$. If the cross section of the model cone at each singularity has scalar curvature  $R_{\hat{g}}=(n-1)(n-2)$, then $(M^{n}, g)$ must be an Einstein manifold with isolated conical singularities, i.e. 
$
    Ric_g = \Lambda g.
$
\end{thm}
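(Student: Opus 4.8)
The plan is to push the classical argument that a compact smooth steady or expanding gradient Ricci soliton must be Einstein (Hamilton, Ivey) through to the conical setting; the only genuinely new point is the justification of the integrations by parts near the singular set $\partial M$, which is exactly where the asymptotic estimate for the potential function enters.

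On the smooth part $\mathring M$ the usual soliton identities hold, being purely local: tracing $\Ric_g+\nabla^2 f=\Lambda g$ gives $R_g+\Delta f=n\Lambda$; combining this with the contracted second Bianchi identity gives $\nabla R_g=2\,\Ric_g(\nabla f,\cdot)$; and differentiating $|\nabla f|^2$ then shows that $R_g+|\nabla f|^2-2\Lambda f$ is constant on $\mathring M$, say $\equiv C$. Writing $\Delta_f\,\cdot:=\Delta_g\,\cdot-\langle\nabla f,\nabla\,\cdot\rangle$ for the weighted Laplacian, these combine to
\begin{equation*}
\Delta_f f=(n\Lambda-C)-2\Lambda f \qquad\text{on }\mathring M .
\end{equation*}
Since the cross section satisfies $R_{\hat g}=(n-1)(n-2)$, the model cone is scalar‑flat, so $R_g$ stays bounded (indeed is $o(1)$ for the model) as $r\to 0$, and then the identity $|\nabla f|^2=C+2\Lambda f-R_g$ turns any bound on $f$ near $\partial M$ into a bound on $|\nabla f|$ near $\partial M$. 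That $f$ itself is bounded near $\partial M$ — and in fact $f\to 0$ there — is precisely the content of Proposition \ref{prop: asymptotic for potential function of steady soliton}, obtained from Melrose's b‑calculus and a weighted elliptic bootstrapping for the soliton equations, rewritten via the identities above as a semilinear elliptic equation $\Delta f+2\Lambda f-|\nabla f|^2=n\Lambda-C$ for $f$ coupled to the asymptotically conical metric; the hypothesis $R_{\hat g}=(n-1)(n-2)$ is used to locate the relevant indicial roots, in the spirit of Theorem \ref{thm: partial asymptotic expansion}. Granting this, $e^{-f}$, $f$, $|\nabla f|^2e^{-f}$ and $f^2e^{-f}$ are bounded, hence integrable on the finite‑volume manifold $M$, and on the spheres $\{r=\epsilon\}$, whose area is $O(\epsilon^{n-1})$, all of $e^{-f}$, $|\nabla f|$ and $|f|$ are bounded.

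With these ingredients I would run the Hamilton argument on $M\setminus B_\epsilon$, where $B_\epsilon=\{r<\epsilon\}$ is a union of small conical neighborhoods of the singular points, checking that every boundary integral produced on $\{r=\epsilon\}$ is $O(\epsilon^{n-1})\to0$. In the steady case $\Lambda=0$ one computes $\Delta_g e^{-f}=C\,e^{-f}$; integrating over $M\setminus B_\epsilon$ and letting $\epsilon\to0$ forces $C=0$ (as $\int_{\mathring M}e^{-f}\,d\vol_g>0$), so $e^{-f}$ is a bounded harmonic function on $\mathring M$, and then
\begin{equation*}
\int_{M\setminus B_\epsilon}|\nabla e^{-f}|^2\,d\vol_g=\int_{\{r=\epsilon\}} e^{-f}\,\partial_\nu e^{-f}\,dA ,
\end{equation*}
whose right side tends to $0$, forces $e^{-f}$, hence $f$, to be constant. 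In the expanding case $\Lambda<0$, put $w:=f-\frac{n\Lambda-C}{2\Lambda}$, so that $\Delta_f w+2\Lambda w=0$; integrating $w\,\Delta_f w\,e^{-f}$ over $M\setminus B_\epsilon$ and discarding the vanishing boundary term gives
\begin{equation*}
\int_{M\setminus B_\epsilon} e^{-f}\,|\nabla w|^2\,d\vol_g = 2\Lambda\int_{M\setminus B_\epsilon} w^2 e^{-f}\,d\vol_g + o(1),
\end{equation*}
in which the first term on the right is $\le 0$; letting $\epsilon\to0$ gives $\int_{\mathring M}e^{-f}|\nabla w|^2\,d\vol_g=0$, so $\nabla w\equiv0$ on $\mathring M$, and then $2\Lambda<0$ forces $w\equiv0$, i.e.\ $f$ is constant. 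In either case $\nabla^2 f\equiv0$ on $\mathring M$, hence $\Ric_g=\Lambda g-\nabla^2 f=\Lambda g$, i.e.\ $(M,g)$ is Einstein with isolated conical singularities.

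The main obstacle is the asymptotic input rather than the soliton algebra: in contrast to the smooth case one may not assume the potential function is bounded or decays near $\partial M$, and the entire integration‑by‑parts scheme collapses without such control. Thus the heart of the matter is Proposition \ref{prop: asymptotic for potential function of steady soliton}, that is, the b‑calculus and weighted elliptic bootstrapping analysis showing that the expansion of $f$ begins at a nonnegative order with vanishing constant term — and this is exactly the step where the condition $R_{\hat g}=(n-1)(n-2)$ on the cross section is essential. Once it is in place, the remainder is the routine bookkeeping sketched above.
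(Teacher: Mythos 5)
Your proof is correct, and for the steady case your argument agrees in substance with the paper's (modulo the cosmetic choice of weight: you compute with $e^{-f}$, where $\Delta(e^{-f})=(|\nabla f|^2-\Delta f)e^{-f}$, which is actually the correct form; the paper's displayed identity $\int(\Delta f-|\nabla f|^2)e^{f}=\int\Delta(e^{f})$ has a sign slip, since $\Delta(e^{f})=(\Delta f+|\nabla f|^2)e^{f}$). Where you genuinely diverge is in the expanding case. The paper integrates $\Delta(e^{f})$ to get $\int(n\Lambda-2\Lambda f-C)e^{f}=0$ and then runs a \emph{pointwise} maximum‑principle argument: because $f$ is bounded and tends to $0$ at the singular set, the maximum and minimum of $f$ are attained in $\mathring M$, and evaluating $|\nabla f|^2-\Delta f+n\Lambda-2\Lambda f-C=0$ at these points forces $n\Lambda-2\Lambda f-C$ to have a single sign (they split into three cases according to the sign of $f$); combined with the vanishing integral this gives $n\Lambda-2\Lambda f-C\equiv 0$ and hence $f$ constant. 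You instead pass to the drift Laplacian $\Delta_f$, set $w=f-\tfrac{n\Lambda-C}{2\Lambda}$ so that $\Delta_f w=-2\Lambda w$, and integrate $w\,\Delta_f w\,e^{-f}$ to get an $L^2(e^{-f})$‑Weitzenböck identity whose two sides have opposite signs when $\Lambda<0$, forcing $\nabla w\equiv 0$ and $w\equiv 0$. Both routes lean on exactly the same hard input, Proposition~\ref{prop: asymptotic for potential function of steady soliton} (boundedness of $f$, $f\to 0$, $|\nabla f|=o(r^{\delta_0-1})$), since the paper needs it to place the extrema of $f$ in the interior while you need it to kill the boundary integrals; your integral version is slightly more economical (no case split), whereas the paper's is more self‑contained in that it avoids the $\Delta_f$ calculus.

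One small precision worth noting: Proposition~\ref{prop: asymptotic for potential function of steady soliton} is obtained from the \emph{linear} traced equation $\Delta_g f=-R_g+n\Lambda$, whose right‑hand side lies in $W^{0,2}_{1-n/2}$ precisely because $R_{\hat g}=(n-1)(n-2)$ makes $r^2R_g$ decay; the semilinear form $\Delta f+2\Lambda f-|\nabla f|^2=n\Lambda-C$ you mention is not what the paper feeds into the b‑calculus, and the linear form is what makes the parametrix argument go through cleanly. Also, the "$f\to 0$ at the singularity" statement is after the normalization allowed by Remark~\ref{remark: non-uniqueness of potential function} (subtracting the constant leading term), which you are implicitly using.
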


\begin{remark}
{\rm More generally, Kr\"oncke and Vertman \cite{KV21} have proved that compact steady and expanding Ricci solitons with isolated conical singularities and dimension$\geq 4$ have to be Einstein, provided that the scalar curvature of cross section of model cones are the same as that of the round sphere. In their argument, in the 4 dimensional case, some asymptotic control for Ricci curvature near the singularities was assumed. Our results hold with neither dimensional restriction nor any extra condition other than the assumption on the scalar curvature. 
}
\end{remark}



\subsection{Organization of the paper}

The article is organized as follows. In \S\ref{section: manifolds with conical singularities}, we give the precise definition for manifolds with non-isolated conical singularities and recall some basic facts for Riemannian submersions. In \S\ref{section: Sobolev-weighted-Sobolev-spaces}, we derive the Sobolev inequality and some weighted Sobolev inequalities on manifolds with non-isolated conical singularities. We also derive weighted elliptic regularity estimate 
on manifolds with non-isolated conical singularities. In \S\ref{section: spectrum}, we establish the desired spectral property for the Schr\"odinger operator $-4\Delta + R$ on manifolds with non-isolated conical singularities. In \S\ref{section: asymptotic behavior of eigenfuctions} and \ref{section: W-functional}, we obtain asymptotic behavior for eigenfunctions of $-4\Delta+R$ and the minimizer of the $W$-functional by using Nash-Moser iteration. In \S\ref{section: horn}, we study the spectrum of $-\Delta + cR$ and Perelman's functionals on manifolds with $r^{\alpha}$-horn singularities. In \S\ref{section: partial asymptotic expansion isolated singularities}, we derive a partial asymptotic expansion for eigenfunctions of $-4\Delta+R$ on manifolds with isolated conical singularities by using the b-calculus theory of Melrose and a weighted elliptic bootstrapping argument. In \S\ref{section: gradient ricci solitions}, as another application of b-calculus theory and the asymptotic estimates for functions in weighted Sobolev spaces, we prove that steady and expanding gradient Ricci solitons with isolated conical singularities must be Einstein, provided that the scalar curvature of cross section of model cones are the same as that of the standard sphere. 

{\bf Acknowledgements:} The authors would like to thank an anonymous referee whose valuable comments and suggestions have been very helpful in improving the presentation of this work.

\section{Manifolds with non-isolated conical singularities}\label{section: manifolds with conical singularities}

\begin{defn}\label{defn: ManifoldWithNonisolatedConicalSingularities}
We say that $(M^{n},g)$ is a compact Riemannian manifold with non-isolated conical singularities, if
\begin{enumerate}
\item $M^{n}$ is a compact $n$-dimensional manifold with boundary $\partial M$.
\item The boundary $\partial M$ endowed with a Riemannian metric $g_{\partial M}$ is the total space of a Riemannian submersion
      \begin{equation}\label{eqn: Riemannian-submersion1}
      \pi: (\partial M, g_{\partial M}) \longrightarrow (B^{b}, \check{g})
      \end{equation}
      with a $b$-dimensional closed Riemannian manifold $(B^{b}, \check{g})$ as the base and a $f$-dimensional closed manifold $F^{f}$ as a typical fiber; in particular, $b+f=n-1$, and
      \begin{equation}\label{eqn: metric-on-total-space}
      g_{\partial M}=\hat{g}+\pi^{*}\check{g},
      \end{equation}
      where $\hat{g}$ denotes metrics on fibers induced by $g_{\partial M}$.
\item Moreover, there exists a collar neighborhood, 
      $
      U := (0, 1)\times \partial M,
      $
      of the boundary $\partial M$ in $M$ with the radial coordinate $r$ $(r=0$ corresponds to the boundary $\partial M)$, and the metric $g$ restricted on $U$ can be written as $g=g_{0}+h$, where the Riemannian metric $g_{0}$ on $U$ is given by
    \begin{equation}\label{eqn: model conical metric}
    g_{0}=dr^{2}+r^{2}\hat{g}+\pi^{*}\check{g},
    \end{equation}
    and 2-tensor $h$ satisfies
    \begin{equation}\label{eqn: metric asymptotic control}
    |r^{k} \nabla^{k}_{g_{0}} h|_{g_{0}} = O(r^{\alpha}), \quad \text{as} \ \ r\rightarrow 0,
    \end{equation}
    for some $\alpha>0$ and all $k \in \mathbb{N}$.
\end{enumerate}
\end{defn}

\begin{remark}
{\rm We do not require the high order term $h$ in Definition \ref{defn: ManifoldWithNonisolatedConicalSingularities} to be smooth up to singularities.
}
\end{remark}

\begin{remark}
{\rm Note that the Riemannian metric $g_{\partial M}$ on $\partial M$ is not the usual restriction of the Riemannian metric $g$ to the boundary. In fact, in metric sense, the boundary $\partial M$ is collapsed fiberwisely to the base $B$. Thus $M$ is metrically completed by adding $B$ which forms the non-isolated singularities of the completed space. Along the normal direction of each point of this singular stratum, the metric is asymptotic to a cone over a typical fiber.
}
\end{remark}

Now we briefly recall some basic facts about Riemannian submersions, and for more details we refer to \cite{ONe66} and Chapter 9 in \cite{Bes87}. We consider the Riemannian submersion given in (\ref{eqn: Riemannian-submersion1}). Let $\mathcal{H}$ and $\mathcal{V}$ denote respectively the horizontal and vertical projections. Let $\nabla^{\partial M}$ denote the Levi-Civita connections on $(\partial M, g_{\partial M})$. The O'Neill's $A$-tensor and $T$-tensor are given by:
\begin{equation*}
\begin{aligned}
A_{X}Y 
&= \mathcal{H}\nabla^{\partial M}_{\mathcal{H} X}\mathcal{V}Y
+\mathcal{V}\nabla^{\partial M}_{\mathcal{H} X}\mathcal{H}Y,\\
T_{X}Y 
&= \mathcal{H}\nabla^{\partial M}_{\mathcal{V} X}\mathcal{V}Y
+\mathcal{V}\nabla^{\partial M}_{\mathcal{V} X}\mathcal{H}Y,
\end{aligned}
\end{equation*}
for any vector fields $X$ and $Y$ on $\partial M$.
The tensor $T$ is related to the second fundamental form of fibers, and it vanishes identically if and only if each fiber is totally geodesic.

By the formula (9.37) in \cite{Bes87}, the scalar curvature of $g_{0}$ in $(\ref{eqn: model conical metric})$ is given by
\begin{equation}\label{eqn: scalar curvature formula}
R_{g_{0}}=\frac{1}{r^{2}}(R_{\hat{g}}-f(f-1))+R_{\check{g}}\circ\pi-|A|^{2}-|T|^{2}-|N|^{2}-\check{\delta}N,
\end{equation}
where the horizontal vector field $N=\sum\limits^{f}_{j=1}T_{V_{j}}V_{j}$ is the mean curvature vector of fibers, $\check{\delta}N=-\sum\limits^{b}_{i=1}g_{\partial M}(\nabla^{\partial M}_{X_{i}}N, X_{i})$, and $\{V_{1}, \cdots, V_{f}, X_{1}, \cdots, X_{b}\}$ is a local orthonormal frame for $T(\partial M)$ with respect to $g_{\partial M}=\hat{g}+\pi^{*}\check{g}$ such that $V_{1}, \cdots, V_{f}$ are vertical and $X_{1}, \cdots, X_{b}$ are projectable and horizontal.


\section{Sobolev and Weighted Sobolev spaces}\label{section: Sobolev-weighted-Sobolev-spaces}

\noindent In this section, we recall some weighted Sobolev spaces on compact Riemannian manifolds with non-isolated conical singularities. We also review and establish some (unweighted) Sobolev and weighted Sobolev embeddings on these singular manifolds that we will use in the following sections.

Various weighted Sobolev spaces have been introduced and studied on manifolds with conical singularities in \cite{Maz91}, \cite{BP03}, \cite{CLW12},  \cite{Beh13}, \cite{DW18}, \cite{DW20}, \cite{Oik20}, \cite{KV21} and \cite{Ver21}. These weighted Sobolev spaces turn out to be very useful in the study of elliptic and heat equation problems on conically singular manifolds. In particular, weighted Sobolev inequality and weighted elliptic regularity estimate are important tools in these studies.  

Besides weighted Sobolev inequality, in the study of $W$-functional the (unweighted) Sobolev inequality is also necessary (see, \S$\ref{section: W-functional}$ below, and \cite{DW20}).  
In \cite{DY}, the usual $L^{2}$-Sobolev inequality on compact Riemannian manifolds with isolated conical singularities has been established by choosing a suitable partition for a cone and applying a Hardy inequality. We then noticed that their idea also works for $L^{p}$-Sobolev inequality for $1 < p < n $ in \cite{DW20}. Later, Oikonomopoulos \cite{Oik20} employed this idea to study the $C^\infty_0$ density problem for weighted Sobolev spaces on manifolds with conical singularities.  Now we further adapt this idea to non-isolated conical singularities case, and obtain the Sobolev inequality in Proposition \ref{prop: SobolevInequality}. To our knowledge, the (unweighted) Sobolev inequality on manifolds with non-isolated conical singularities and its proof are not in the literature. So we provide that in \S$\ref{subsect: Sobolev inequality}$.

In the isolated conical singularities case, thanks to the homogeneity of cone metric along the radial direction, weighted Sobolev inequality and elliptic estimate were obtained in \cite{Beh13} (also see \cite{DW18}) by employing the scaling technique, which had been used by Bartnik \cite{Bar86} for asymptotically Euclidean manifolds. 
However, in the non-isolated conical singularities case, due to the presence of $\pi^* \check{g}$ term, the model singular metric $g_0$ in $(\ref{eqn: model conical metric})$ does not have such homogeneity, and as a result, the scaling technique does not work any more. Instead, in \S$\ref{subsect: weighted Sobolev spaces}$, we derive the weighted Sobolev inequality in the non-isolated conical singularities case from the (unweighted) Sobolev inequality, which is obtained in Proposition $\ref{prop: SobolevInequality}$. The local version of the weighted Sobolev inequality was obtained in \cite{CLW12} in a different way.

In \S$\ref{subsect: compact weighted Sobolev embeddings}$, we then provide two compact weighted Sobolev embeddings in Propositions $\ref{prop: compactness on manifolds}$ and $\ref{prop: compact-weighted-embedding2}$ that are used in the proof of Theorem $\ref{thm: spectrum property}$ and in the study of the $W$-functional in \S$\ref{section: W-functional}$. These compact embedding results are generalizations, to the non-isolated conical singularities, of our previous results in Theorem 3.1 in \cite{DW18} and Proposition 3.6 in \cite{DW20} for isolated conical singularities.

In \S$\ref{subsect: weighted elliptic estimate}$, we establish a weighted elliptic estimate in Proposition $\ref{prop: weighted elliptic estimate}$. Again because the lack of the homogeneity of the model singular metric $g_0$ in $(\ref{eqn: model conical metric})$, we are not able to prove a weighed elliptic estimate in the non-isolated conical singularities case by using scaling technique, which was used in the isolated conical singularities case in \cite{Beh13} (also see \cite{DW18}). We notice that a weighted elliptic estimate on manifolds with (either isolated or non-isolated) conical singularities can be proved by doing integration by parts, using the asymptotic control of Riemannian curvature tensor near the conical singularities, and an approximation argument. This idea also works for compact smooth manifolds, and should also work for various non-compact manifolds with certain asymptotic controls for the metric tensor. As this proof is not in the literature, we provide it in \S$\ref{subsect: weighted elliptic estimate}$.

\subsection{Sobolev spaces and embedding}\label{subsect: Sobolev inequality}
For any $k \in \mathbb{N}$ and $p \geq 1$, as usual, the Sobolev space $W^{k, p}(M)$ is the completion of $C^{\infty}_{0} (\mathring{M})$ with respect to the Sobolev norm
\begin{equation}\label{eqn: Sobolev norm}
\|u\|_{W^{k, p}(M)} = \left( \int_{M} \left( \sum^{k}_{i=0} |\nabla^{i} u|^{p} d\vol_{g} \right) \right)^{\frac{1}{p}},
\end{equation}
where $\mathring{M}:=M\setminus\partial M$ is the interior of $M$.

\begin{prop}\label{prop: SobolevInequality}
Let $(M^{n}, g)$ be a compact Riemannian manifold with non-isolated conical singularities as in Definition \ref{defn: ManifoldWithNonisolatedConicalSingularities}.
\begin{enumerate}
\item For each $1<p<n$ with $\frac{np}{n-ip} \neq f$, $i=0, \cdots, k-l-1$, one has
\begin{equation}\label{eqn: Sobolev inequality}
 \|u\|_{W^{l, q}(M)}\leq C(M, g, p, k)\|u\|_{W^{k, p}(M)},
\end{equation}
for any $u\in C^{\infty}_{0}(\mathring{M})$ and any $1\leq q\leq q_{l}$, where $C(M, g, p, k)$ is a constant, and $l<k$ and $q_{l}$ satisfy $\frac{1}{q_{l}}=\frac{1}{p}-\frac{k-l}{n}>0$. Hence, there is a continuous embedding $W^{k, p}(M)\subset W^{l, q}(M)$, for any $1\leq q\leq q_{l}$.

\item For $k, j\in\mathbb{N}$ with $k-j>\frac{n}{p}$, one has
     \begin{equation}\label{eqn: Sobolev Cr inequality}
     \|u\|_{C^{j}(M)}\leq C(M, g, p, k) \|u\|_{W^{k, p}(M)},
     \end{equation}
     for any $u\in C^{\infty}_{0}(\mathring{M})$. Hence, there is a continuous embedding $W^{k, p}(M)\subset C^{j}(M)$. Here $C^{j}(M)$ is the completion of $C^{\infty}_{0}(\mathring{M})$ with respect to the norm
     $$\|u\|_{C^{j}(M)}:=\max\limits_{0\leq l \leq j} \sup\limits_{M}|\nabla^{l} u|.$$

\item If $k, l, j \in\mathbb{N}$ satisfy $k-j-l>\frac{n}{p}$, then any $u\in W^{k, p}(M)$ satisfies
      \begin{equation}\label{eqn: Asymptotic of functions in Sobolev spaces}
      |\nabla^{j} u| \leq C(M, g, k, l, j,) \|u\|_{W^{k, p}(M)} r^{l},
      \end{equation}
      near the singular set $r=0$.
\end{enumerate}
\end{prop}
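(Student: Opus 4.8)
The plan is to establish the three items of Proposition~\ref{prop: SobolevInequality} by first proving the base case $k=1$, $l=0$ of the $L^p$--$L^q$ estimate \eqref{eqn: Sobolev inequality} — this is the analytic heart of the matter — and then bootstrapping to all higher orders by iteration, with the $C^j$ and asymptotic statements following as corollaries of Morrey-type exponents. For the base case, I would adapt the partition technique of \cite{DY}, \cite{DW20} to the non-isolated setting. Cover $M$ by the collar neighborhood $U=(0,1)\times\partial M$ together with a relatively compact smooth piece $M\setminus U'$ on which the classical Sobolev inequality holds; the only issue is near $\partial M$. In the collar, decompose $(0,1)$ into dyadic annuli $I_m=(2^{-m-1},2^{-m})$ and on each $A_m:=I_m\times\partial M$ compare the metric $g$ with the model $g_0=dr^2+r^2\hat g+\pi^*\check g$ of \eqref{eqn: model conical metric}, using \eqref{eqn: metric asymptotic control} to control the error uniformly in $m$. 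On $A_m$ rescale $r=2^{-m}s$: the rescaled region has bounded geometry (it looks like $(\tfrac12,1)\times F\times B$ with a fixed metric, up to the $2^{-m}$ scaling in the fiber direction, which one tracks carefully), so the Euclidean Sobolev inequality applies with a uniform constant. Summing the scaled inequalities over $m$ and using $\frac{np}{n-p}\neq f$ together with a Hardy inequality on the cone $(0,1)\times F$ to absorb the zeroth-order terms that the naive summation produces, one obtains the global estimate $\|u\|_{L^{q_0}(M)}\le C\|u\|_{W^{1,p}(M)}$ for $q_0=\frac{np}{n-p}$; the restriction $\frac{np}{n-p}\neq f$ is exactly what makes the relevant Hardy/weighted integral converge. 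Interpolating with $L^p$ (since $M$ has finite volume) gives all $1\le q\le q_0$.

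Next I would handle the higher-order case $k>1$ of \eqref{eqn: Sobolev inequality} by induction on $k$. Given $u\in C_0^\infty(\mathring M)$ with $\|u\|_{W^{k,p}}<\infty$, apply the $k=1$ inequality not to $u$ but to $u$ and to each component of $\nabla u$ (covariant derivatives of a compactly supported function are again compactly supported), obtaining $\|u\|_{W^{1,q_1}}\le C\|u\|_{W^{2,p}}$ where $\tfrac1{q_1}=\tfrac1p-\tfrac1n$; iterating this $k-l$ times raises the order of differentiability we can control and lowers $\tfrac1q$ by $\tfrac{k-l}{n}$ at each stage, yielding $\|u\|_{W^{l,q_l}}\le C\|u\|_{W^{k,p}}$ with $\tfrac1{q_l}=\tfrac1p-\tfrac{k-l}{n}$, which is \eqref{eqn: Sobolev inequality}. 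The exclusion conditions $\frac{np}{n-ip}\neq f$ for $i=0,\dots,k-l-1$ are precisely the conditions needed so that each intermediate application of the base case is legal. One subtlety here is commuting covariant derivatives with the operator, producing curvature terms; near $\partial M$ the curvature of $g_0$ blows up like $r^{-2}$ (see \eqref{eqn: scalar curvature formula}), but these terms carry compensating factors of $r$ from the rescaling and are controlled by the same Hardy-type bound, or alternatively one works with the flat model connection and absorbs the difference using \eqref{eqn: metric asymptotic control}.

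For item (2), the embedding $W^{k,p}(M)\subset C^j(M)$ when $k-j>\tfrac np$: iterate \eqref{eqn: Sobolev inequality} until the order of Sobolev integrability exponent becomes infinite, i.e. until $\tfrac1p-\tfrac{k-l}{n}$ would go negative; at the last legitimate stage one has $W^{k,p}\subset W^{m,q}$ with $q>n$ and $m-j\ge1$, and then the local Morrey embedding $W^{1,q}\hookrightarrow C^0$ on the bounded-geometry rescaled annuli $A_m$, summed with the scaling weights, gives a uniform sup bound on $\nabla^j u$ (here one must check that the scaling factors $2^{-m\cdot(\text{something})}$ from the Morrey inequality on $A_m$ are summable, which holds precisely under $k-j>\tfrac np$). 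Item (3), the weighted decay $|\nabla^j u|\le C\|u\|_{W^{k,p}}r^l$ near $r=0$ when $k-j-l>\tfrac np$, is extracted from the same scaled Morrey estimate: on the annulus $A_m$ the sup of $|\nabla^j u|$ is bounded by $2^{-m(\cdots)}\|u\|_{W^{k,p}}$ and tracking the exponent shows it is $\lesssim (2^{-m})^l\sim r^l$ there.

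The main obstacle, and the place requiring genuine care rather than routine bookkeeping, is the passage from the model metric $g_0$ on each dyadic annulus $A_m$ to a fixed reference metric of bounded geometry with constants uniform in $m$: the fiber directions scale like $r$ while the base directions do not, so the rescaled annuli are \emph{not} isometric to a single fixed Riemannian manifold — they collapse along $F$. One must either (a) argue that the Euclidean/bounded-geometry Sobolev constant is nonetheless uniform along this collapsing family (true, because collapse with bounded curvature does not degrade the Sobolev constant in this direction — the volume-noncollapsing is not needed for Sobolev with the correct exponent), or (b) more robustly, prove the Sobolev inequality on the exact model product $\big((0,1)\times F\big)\times B$ with metric $(dr^2+r^2\hat g)\oplus\check g$ directly by combining the known Sobolev inequality on the cone $C(F)=(0,1)\times F$ (from \cite{DW20}, \cite{DY}) with the one on the closed manifold $B$ via a Minkowski-type/Fubini argument, and then transfer to $(M,g)$ using \eqref{eqn: metric asymptotic control} and a partition of unity. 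Approach (b) is cleaner and is the one I would pursue; the Hardy inequality on $C(F)$ enters exactly at the step of controlling zeroth-order boundary terms in the Fubini argument, and the hypothesis $\min_{\partial M} R_{\hat g}$ does not appear here — only the dimensional condition $\frac{np}{n-ip}\neq f$ does, since the Sobolev inequality itself is curvature-free.
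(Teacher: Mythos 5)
Your overall plan is in the right spirit --- you correctly identify that the naive dyadic-annulus rescaling used for isolated cones fails here (the rescaled annuli collapse along $F$), and you correctly single out the Hardy inequality on the cone as the tool that absorbs the $r^{-1}$-size terms. You also correctly observe that the assumption on $R_{\hat g}$ plays no role in the Sobolev inequality, only the dimensional restriction $\tfrac{np}{n-ip}\neq f$ does. Items (2) and (3), once (1) is in hand, follow by routine arguments close to what you describe.

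However, your preferred fix (b) has a genuine gap: you propose proving the base inequality on the \emph{product} $\bigl((0,1)\times F\bigr)\times B$ with metric $(dr^2+r^2\hat g)\oplus\check g$ via a Fubini/Minkowski argument combining Sobolev on $C(F)$ and on $B$. But the actual model in Definition~\ref{defn: ManifoldWithNonisolatedConicalSingularities} is $g_0 = dr^2 + r^2\hat g + \pi^*\check g$ on $(0,1)\times\partial M$, where $\partial M\to B$ is a Riemannian submersion with fiber $F$ that is \emph{not} assumed to be a trivial bundle, and the induced fiber metric $\hat g$ may vary from fiber to fiber. So $g_0$ is not a Riemannian product of a cone with $B$, and the Fubini step cannot be run directly. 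To repair this you would have to trivialize the submersion over a finite cover $\{U_i\}$ of $B$ and use that the fiber metrics are uniformly quasi-isometric to a fixed $\hat g_0$ on $F$ --- but once you do that, it is cleaner and shorter to do what the paper does in Lemma~\ref{lem: cone-Sobolev-inequality}: additionally cover $F$ by pieces $\{V_j\}$ quasi-isometric to domains in $\mathbb{S}^f$, so that every piece $(0,1)\times\varphi_i^{-1}(U_i\times V_j)$ of $U$ with metric $g_0$ is quasi-isometric to a bounded domain in $\mathbb{R}^n$ (cone over a spherical cap times a Euclidean domain). Then apply the \emph{Euclidean} Sobolev and Morrey inequalities on each piece, with the Hardy inequality \eqref{eqn: cone-Hardy-inequality} absorbing the $|d\rho_{ij}|\lesssim r^{-1}$ term from the partition of unity. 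This bypasses both the non-product issue and the (nontrivial) Sobolev-on-products lemma your approach (b) would additionally require. Also note that the paper's proof of item (3) proceeds not through the scaled Morrey bookkeeping you sketch but directly: apply item (2) to $r^{-l}|\nabla^j(\varphi u)|$ and then use Hardy to dominate $\|r^{-l}|\nabla^j(\varphi u)|\|_{W^{k-l-j,p}(U)}$ by $\|\varphi u\|_{W^{k,p}(U)}$; this is cleaner than tracking powers of $2^{-m}$ annulus by annulus.
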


Clearly, the key in the derivation of the above Sobolev inequalities is to establish them on the collar neighborhood $U$ of $\partial M$ in the item (3) in Definition \ref{defn: ManifoldWithNonisolatedConicalSingularities}. As in \cite{DY} and \cite{DW20}, a Hardy inequality (see, e.g. {\bf 330} on p. 245 in \cite{HLP34}) will play a crucial rule. So we recall it: for any $p>1$ and $a \neq 1$, one has
\begin{equation}\label{eqn: Hardy inequality}
\int^{\infty}_{0} |u|^{p} x^{-a} dx \leq \left( \frac{p}{|a-1|} \right)^{p} \int^{\infty}_{0} |u^{\prime}(x)|^{p} x^{p-a} dx,
\end{equation}
for any $u \in C^{\infty}_{0}((0, \infty))$. This then implies a Hardy inequality (c.f. (3.8) in \cite{DW20}) on $U :=(0, 1)\times \partial M$ with the metric $g_{0}= dr^{2} + r^{2} \hat{g} + \pi^{*}\check{g}$: for $p>1$ and $k \in \mathbb{N}$ with $pk \neq f$, and any $u \in C^{\infty}_{0}(U)$, we have
\begin{eqnarray}\label{eqn: cone-Hardy-inequality}
\int_{U} \frac{|u|^{p}}{r^{pk}} d\vol_{g_{0}}
& \leq & \left( \frac{p}{|f-pk|} \right)^{p} \int_{U} \frac{|\nabla u|^{p}_{g_{0}}}{r^{p(k-1)}} d\vol_{g_{0}}.
\end{eqnarray}

\begin{lem}\label{lem: cone-Sobolev-inequality}
For $1 < p < n$ with $p \neq f$, there is a constant $C$ depending only on $g_{0}$ and $p$ such that
\begin{equation}\label{eqn: Sobolev inequality on collar}
\|u\|_{L^{q}(U, g_{0})} \leq C \| \nabla u \|_{L^{p}(U, g_{0})},
\end{equation}
for any $u \in C^{\infty}_{0} (U)$, where $q = \frac{np}{n-p}$.

For $p>n$, there is a constant $C$ depending only on $g_{0}$ and $p$ such that
\begin{equation}\label{eqn: Morrey inquality on collar}
\|u\|_{C^{0}(U)} \leq C \|u\|_{W^{1, p}(U)}
\end{equation}
for any $u\in C^{\infty}_{0}(U)$.
\end{lem}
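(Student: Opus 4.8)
\textbf{Proof plan for Lemma \ref{lem: cone-Sobolev-inequality}.}

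The plan is to reduce the Sobolev and Morrey inequalities on the collar $U = (0,1) \times \partial M$ with the model metric $g_0 = dr^2 + r^2 \hat g + \pi^* \check g$ to the standard inequalities on Euclidean space, by means of a carefully chosen partition of the radial interval, combined with the Hardy inequality \eqref{eqn: cone-Hardy-inequality} to absorb the error terms coming from the partition. First I would set up the dyadic decomposition: choose a partition of unity $\{\varphi_j\}_{j \geq 0}$ subordinate to the annuli $A_j := (2^{-j-1}, 2^{-j+1}) \times \partial M$, with $|\nabla \varphi_j|_{g_0} \leq C 2^{j}$ (this is the non-isolated analogue of the partition in \cite{DY} and \cite{DW20}, and works because the $\pi^*\check g$-directions carry no $r$-weight so the only scaling is in $r$ and the fibre). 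On each annulus $A_j$, the metric $g_0$ is, after the rescaling $r \mapsto 2^j r$, uniformly comparable to a fixed metric on a fixed compact piece, so the classical $L^p$-Sobolev inequality (resp. Morrey inequality for $p > n$) holds on each $A_j$ with a constant independent of $j$, after tracking how the volume element $d\vol_{g_0} = r^f \, dr \, d\vol_{\hat g} \, d\vol_{\check g}$ and the gradient rescale. Summing these local inequalities over $j$ and using the finite overlap of the $A_j$, the left-hand side $\|u\|_{L^q}^q$ is controlled by $\sum_j \|\nabla(\varphi_j u)\|_{L^p}^p$ (for the $\ell^p$ version one uses $\|u\|_q^q \le (\sum_j \|\varphi_j u\|_q^p)^{q/p}$ after an interpolation/embedding $\ell^p \hookrightarrow \ell^q$ since $q \ge p$), and then $\|\nabla(\varphi_j u)\|_{L^p} \le \|\varphi_j \nabla u\|_{L^p} + \|u \nabla \varphi_j\|_{L^p}$.

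The second term is the crux: $\sum_j \|u \nabla\varphi_j\|_{L^p(A_j, g_0)}^p \leq C \sum_j 2^{jp} \int_{A_j} |u|^p \, d\vol_{g_0} \leq C \int_U \frac{|u|^p}{r^p} \, d\vol_{g_0}$. This is exactly the left-hand side of the Hardy inequality \eqref{eqn: cone-Hardy-inequality} with $k = 1$, which requires $p \neq f$ — precisely the hypothesis of the lemma — and bounds it by $C \int_U |\nabla u|_{g_0}^p \, d\vol_{g_0} = C\|\nabla u\|_{L^p(U, g_0)}^p$. So the error term is absorbed into the right-hand side, yielding \eqref{eqn: Sobolev inequality on collar}. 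For the Morrey case $p > n$ in \eqref{eqn: Morrey inquality on collar}, since $p > n > f$ the Hardy inequality applies as well; one argues pointwise: for $x$ in a given annulus $A_j$, apply the local Morrey estimate on $A_j$ to get $|u(x)| \leq C(\|\nabla(\varphi_j u)\|_{L^p(A_j)} + \ldots)$, and again the $|u \nabla\varphi_j|$ contribution is controlled by the Hardy inequality by $\|u\|_{W^{1,p}(U)}$; here one also uses that $u$ has compact support in $U$ so it vanishes near $r=1$, and boundedness near $r = 0$ follows since the bound is uniform in $j$.

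The main obstacle I anticipate is the bookkeeping in the local-to-global passage: making sure the constants in the rescaled Sobolev inequality on each annulus are genuinely uniform in $j$ (this uses that after rescaling, $(A_j, 2^{2j} g_0)$ is isometric to the fixed piece $((1/2, 2) \times \partial M, \, ds^2 + s^2 \hat g + (2^{-2j})\pi^*\check g)$, whose metric degenerates in the base direction as $j \to \infty$ — one must check the Sobolev constant does not blow up, which it does not because shrinking the base direction only decreases volume and the relevant Poincaré/Sobolev constant behaves monotonically, or alternatively one covers the base by coordinate balls and works with the product structure directly), and correctly handling the summation exponents when $q \neq p$. A secondary point is the transition region near $r = 1$ where the collar is glued to the compact part; but since $u \in C^\infty_0(U)$ has compact support away from $r=1$, this is harmless and only finitely many $\varphi_j$ with small $j$ are involved there, each contributing a standard bounded term.
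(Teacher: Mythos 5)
Your proposal takes a dyadic radial decomposition combined with a rescaling argument, whereas the paper uses a finite angular/fibrewise partition that covers the entire radial interval at once and involves no scaling at all. The two approaches are genuinely different, and unfortunately the scaling step in yours does not go through. The crux of your argument is that after rescaling $r \mapsto 2^j r$, each annulus $A_j = (2^{-j-1}, 2^{-j+1}) \times \partial M$ becomes a ``fixed compact piece'' with uniform Sobolev constant; but the model metric $g_0 = dr^2 + r^2\hat{g} + \pi^*\check{g}$ is precisely \emph{not} homogeneous under this scaling, because of the $\pi^*\check{g}$ term. You noticed this, but got the sign wrong: with $r = 2^{-j}s$, $s \in (\frac{1}{2}, 2)$, one has $2^{2j}g_0 = ds^2 + s^2\hat{g} + 2^{2j}\pi^*\check{g}$, not $2^{-2j}\pi^*\check{g}$. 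So the base direction of the rescaled piece blows up rather than shrinks as $j \to \infty$; the volume grows, and the ``monotonicity of the Sobolev constant'' remark has no footing. This is exactly the obstruction the paper flags in the introduction and again in \S\ref{section: Sobolev-weighted-Sobolev-spaces}: the scaling technique from the isolated case (\cite{Bar86}, \cite{DW18}) fails for non-isolated conical singularities because $g_0$ has no radial homogeneity.

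The paper sidesteps this by choosing the partition differently. Rather than cutting the cone into dyadic annuli, it takes a finite partition of unity $\{\rho_{ij}\}$ on $U$ built from a product cover of $\partial M$: finitely many coordinate patches $U_i$ on the base $B$ and finitely many patches $V_j$ on the fibre $F$, with each $V_j$ embedded quasi-isometrically into $\mathbb{S}^f$. Each resulting piece $(0,1) \times \varphi_i^{-1}(U_i \times V_j)$ covers the whole radial interval and is quasi-isometric to a single fixed domain in $\mathbb{R}^n$ (the wedge $(0,1) \times V_j$ with $dr^2 + r^2\hat{g}$ sits inside $\mathbb{R}^{f+1}$ via the sphere embedding, times a domain in $\mathbb{R}^b$). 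Since the cover is finite and $r$-independent, there are no $j$-dependent constants to track. The partition functions then satisfy $|d\rho_{ij}|_{g_0} \leq C r^{-1}$ --- the $r^{-1}$ arising because $\hat{\rho}_j$ is differentiated in the $F$-direction, whose $g_0$-metric is $r^2\hat{g}$ --- and this error is absorbed via the Hardy inequality $(\ref{eqn: cone-Hardy-inequality})$ with $p \neq f$, which is the one part of your proposal that is on the right track. To salvage your approach you would essentially need to establish the uniform Sobolev constant on each $A_j$ by a direct argument using this same product quasi-isometry, at which point you have reproduced the paper's construction and the radial decomposition is superfluous.
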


\begin{proof}
First of all, we note that because $B$ is compact, the metrics restricted to fibers $\hat{g}|_{\pi^{-1}(y)}$ are uniformly quasi-isometric, i.e. there exists a constant $C$ and a Riemannian metric $\hat{g}_{0}$ on $F^{f}$ such that $\frac{1}{C} \hat{g}_{0} \leq \hat{g}|_{\pi^{-1}(y)} \leq C \hat{g}_{0}$ for all $y \in B$.

Now we choose a finite open cover of the collar neighborhood $U$ as follows, so that on each piece the metric $g_{0}$ is quasi-isometric to the standard Euclidean metric on $\mathbb{R}^{n}$.

First, choose a finite open cover $\{ U_{i} \}$ of B, such that each $\{ U_{i}, \check{g}|_{U_{i}} \}$ is quasi-isometric to an open domain in the standard Euclidean space $\mathbb{R}^{b}$, and the submersion $\pi: \partial M \rightarrow B$ can be locally trivialized over this open cover, i.e. there exists diffeomorphisms $\varphi_{i}: \pi^{-1}(U_{i}) \rightarrow U_{i} \times F$. Let $\check{\rho}_{i}$ be a partition of unity subordinate to this open cover of $B$.

Next, choose a finite open cover $\{ V_{j} \}$ of $F$, such that each $V_{j}$ can be embedded into Euclidean unite sphere $\mathbb{S}^{f}$ and the metric $\hat{g}_{0}$ restricted to each $V_{j}$ is quasi-isometric to the the round sphere metric. Let $\hat{\rho}_{j}$ be a partition of unity subordinate to this open cover of $F$.

So $\{ \varphi^{-1}_{i}(U_{i} \times V_{j}) \}$ form a finite open cover of $\partial M$, and further we obtain a finite open cover $\{ (0, 1) \times  \varphi^{-1}_{i}(U_{i} \times V_{j})\}$ of $U = (0, 1) \times \partial M$, with a subordinated partition of unity $\rho_{ij} := \tilde{\pi}^{*} \big( [\pi^{*}(\check{\rho_{i}})] \cdot [(({\rm Pr}_{2}) \circ \varphi_{i})^{*}(\hat{\rho}_{j})] \big)$, where $\tilde{\pi}: (0, 1) \times \partial M \rightarrow \partial M$ and ${\rm Pr}_{2}: U_{i} \times F \rightarrow F$ are natural projections. Then by the construction, one easily sees that $g_{0}$ restricted to $(0, 1) \times \varphi^{-1}_{i}(U_{i} \times V_{j})$ is quasi-isometric to the standard Euclidean metric on $\mathbb{R}^{n}$, and more importantly
\begin{equation}\label{eqn: cone-partition-of-unity-derivative}
|d \rho_{ij}|_{g_0}(r, \theta) \leq C_{ij} r^{-1},
\end{equation}
where $C_{ij}$ are some constants. 

The usual Sobolev inequality on standard Euclidean spaces together with the estimate (\ref{eqn: cone-partition-of-unity-derivative}) and the Hardy inequality (\ref{eqn: cone-Hardy-inequality}) imply the Sobolev inequality (\ref{eqn: Sobolev inequality on collar}) on the collar neighborhood $U$ with metric $g_{0}= dr^{2} + r^{2} \hat{g} + \pi^{*}\check{g}$.

For Morrey's inequality (\ref{eqn: Morrey inquality on collar}), let $u\in C^{\infty}_{0}(U)$, because $u=\sum\limits_{i,j}\rho_{ij}u$, one has
\begin{eqnarray*}
    \|u\|_{C^{0}(U)}& \leq & \sum_{i, j} \| \rho_{ij}u \|_{C^{0}(U)} \cr
    & \leq & \sum_{i, j} C \|\rho_{ij}u\|_{W^{1, p}(U)}  \qquad \qquad (\text{Morrey's inequality on} \ \ \mathbb{R}^{n})\cr
    & \leq & \sum_{i, j} C \left(\int_{U}\left(|d (\rho_{ij}u)|^{p}_{g_0} + |\rho_{ij}u|^{p}\right)d\vol_{g_{0}}\right)^{\frac{1}{p}} \cr
    & \leq & \sum_{i, j} C \left(\int_{U}\left(|d \rho_{ij}|^{p}_{g_0} |u|^{p} + |\rho_{ij}|^{p} |d u|^{p}_{g_0} +|\rho_{ij}u|^{p}\right)d\vol_{g_{0}}\right)^{\frac{1}{p}} \cr
    & \leq & C  \left(\int_{U}\left(r^{-p} |u|^{p} + |d u|^{p}_{g_0} +|u|^{p}\right)d\vol_{g_{0}}\right)^{\frac{1}{p}}  \quad (\text{Estimate} (\ref{eqn: cone-partition-of-unity-derivative})) \cr
    & \leq & C \left(\int_{U}\left(|d u|^{p}_{g_0} +|u|^{p}\right)d\vol_{g_{0}}\right)^{\frac{1}{p}} \qquad (\text{Hardy inequality } (\ref{eqn: cone-Hardy-inequality}))\cr
    & = & C \|u\|_{W^{1, p}(U)}.
\end{eqnarray*}
In the above derivation, the constant $C$ may vary along steps.
\end{proof}
\noindent{\em Proof of Proposition \ref{prop: SobolevInequality}}.
(1) By using the usual Sobolev inequalities on compact manifolds and applying the Kato's inequality: $|\nabla |\nabla^{k}u || \leq |\nabla^{k+1} u|$, Lemma \ref{lem: cone-Sobolev-inequality} implies the Sobolev inequality in (\ref{eqn: Sobolev inequality}).

(2) By combining the inequality (\ref{eqn: Morrey inquality on collar}) and Morrey's inequality on compact smooth manifolds, one has that for any $q>n$ there is a constant $C$  such that
    \begin{equation}\label{eqn: Morrey inequality}
    \|u\|_{C^{0}(M)} \leq C \|u\|_{W^{1, q}(M)}
    \end{equation}
    for any $u\in W^{1, q}(M)$. 
    As in the smooth case, the general inequality in $(\ref{eqn: Sobolev Cr inequality})$ then follows from the Sobolev inequality $(\ref{eqn: Sobolev inequality})$ and the Morrey's inequality $(\ref{eqn: Morrey inequality})$: see e.g. \S5.6.3 in \cite{Eva} for more details.

(3) Let $\varphi$ be a cutoff function with support in $U=(0, 1)\times \partial M$, satisfying $\varphi\equiv1$ on $U_{\frac{1}{2}}:=(0, \frac{1}{2})\times \partial M \subset U$. For $u\in C^{\infty}_{0}(M)$, one has $\varphi u\in C^{\infty}_{0}(U)$. Then Hardy inequality (\ref{eqn: cone-Hardy-inequality}) and Sobolev inequalities (\ref{eqn: Sobolev Cr inequality}) imply
\begin{equation*}
\|r^{-l}|\nabla^{j}(\varphi u)|\|_{C^{0}(U_{\frac{1}{2}})} \leq \|r^{-l}|\nabla^{j}(\varphi u)|\|_{C^{0}(U)}
\leq C \|r^{-l}|\nabla^{j}(\varphi u)|\|_{W^{k-l-j, p}(U)} \leq C \|\varphi u\|_{W^{k, p}(U)}.
\end{equation*}
Thus
\begin{equation*}
\|r^{-l}|\nabla^{j} u|\|_{C^{0}(U_{\frac{1}{2}})} \leq C \| u\|_{W^{k, p}(M)},
\end{equation*}
since $\varphi \equiv 1$ on $U_{\frac{1}{2}}$. This completes the proof of (3) in Proposition \ref{prop: SobolevInequality}. 
\qed


\subsection{Weighted Sobolev spaces and embedding}\label{subsect: weighted Sobolev spaces}
In this subsection, we review some weighted Sobolev spaces and derive a weighted Sobolev inequality from Sobolev inequality in Proposition $\ref{prop: SobolevInequality}$.

Let $(M^{n}, g)$ be a compact Riemannian manifold with non-isolated conical singularities as defined in Definition \ref{defn: ManifoldWithNonisolatedConicalSingularities}.  For each $p\geq1$, $k\in\mathbb{N}$ and $\delta\in\mathbb{R}$, the {\em weighted Sobolev space} $W^{k, p}_{\delta}(M)$ is the completion of $C^{\infty}_{0}(\mathring{M})$ with respect to the {\em weighted Sobolev norm}
\begin{equation}\label{eqn: WSN}
\|u\|_{W^{k, p}_{\delta}(M)} = \left( \int_{M}\left(\sum^{k}_{i=0}\chi^{p(\delta-i)+n}|\nabla^{i}u|^{p}_{g}\right)d\vol_{g} \right)^{\frac{1}{p}},
\end{equation}
where $\nabla^{i}u$ denotes the $i$-times covariant derivative of the function $u$ with respect to the metric $g$, and $\chi\in C^{\infty}(\mathring{M})$ is a positive weight function satisfying
\begin{equation}\label{eqn: WeightFunction-cone}
\chi(x)=
\begin{cases} 1, & \text{if} \ \ x\in M\setminus U, \\ \frac{1}{r}, & \text{if} \ \  r={\rm dist}(x,\partial M)<\frac{1}{10},
\end{cases}
\end{equation}
and $0<(\chi(x))^{-1}\leq 1$ for all $x \in \mathring{M}$. Recall that $U=(0, 1)\times \partial M\subset M$ is a collar neighborhood of the boundary $\partial M$.

By the definition of the weighted Sobolev norm in $(\ref{eqn: WSN})$, we clearly have 
\begin{equation}\label{eqn: weighted Sobolev space inclusion}
\delta^\prime > \delta \ \ \Rightarrow \ \ 
W^{k, p}_{\delta^\prime}(M) \subset W^{k, p}_{\delta}(M).
\end{equation}

Note also that the integral $\int^1_0 r^{2(\mu -\delta) -1 }\left(\ln r \right)^p dr$ is finite if and only if $\mu > \delta$. Therefore, for a smooth function $u$ on $\mathring{M}$ satisfying \begin{equation}
u = O\left(r^\mu \left( \ln r \right)^p\right), \ \ \text{as} \ \ r \to 0,
\end{equation}
we have
\begin{equation}\label{eqn: polynomial weighted Sobolev condition}
u \in W^{k, 2}_{\delta}(M) \Longleftrightarrow \mu > \delta.
\end{equation}

The weighted Sobolev norm $\| \cdot \|_{W^{k, p}_{\delta}}$ defined in $(\ref{eqn: WSN})$ is essentially the same as the weighted Sobolev norm $\| \cdot \|_{\mathcal{H}^{k, p}_{\gamma}}$ with $\delta=\gamma-\frac{n}{p}$ in \cite{CLW12}.

Besides the Sobolev inequality in Proposition \ref{prop: SobolevInequality}, as in Lemma 3.1 in \cite{DW20}, another interesting consequence of the Hardy inequality  (\ref{eqn: cone-Hardy-inequality}) is the following equivalence between some of the weighted Sobolev norms in $(\ref{eqn: WSN})$ with certain weight indices and the usual Sobolev norms in $(\ref{eqn: Sobolev norm})$.
\begin{lem}\label{lem: Sobolev-weighted-Sobolev-equivalence}
Let $(M^{n}, g)$ be a compact Riemannian manifold with non-isolated conical singularities. For each $p>1$ and $k\ \in \mathbb{N}$ satisfying $pi \neq n$ for $i = 1, 2, \cdots, k$, there exists a constant $C = C(g, n, p, k)$ such that
\begin{equation*}
\|u\|_{W^{k, p}(M)} \leq \|u\|_{W^{k, p}_{k-\frac{n}{p}}(M)} \leq C \|u\|_{W^{k, p}(M)}.
\end{equation*}

Consequently, $W^{k, p}_{k-\frac{n}{p}}(M) = W^{k, p}(M)$ for any $p>1$ and $k \in \mathbb{N}$ satisfying $pi \neq n$ for all $i=1, 2, \cdots, k$.
\end{lem}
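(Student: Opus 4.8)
The plan is to compare the two norms directly on a collar neighborhood $U = (0,1) \times \partial M$, where the weight function satisfies $\chi(x) = 1/r$, while away from $U$ the weight $\chi$ is bounded above and below by positive constants, so the two norms are trivially equivalent there. Hence everything reduces to establishing, for each $i = 0, 1, \dots, k$, two-sided bounds between $\int_U r^{p(i-k)}|\nabla^i u|^p \, d\vol_{g_0}$ and $\sum_{j=0}^{k} \int_U |\nabla^j u|^p \, d\vol_{g_0}$, for $u \in C^\infty_0(U)$, after which one takes completions. One inequality is immediate: since $0 < \chi^{-1} \leq 1$ everywhere, we have $\chi^{p(\delta - i) + n} \geq 1$ precisely when $p(\delta - i) + n \leq 0$; with $\delta = k - \frac{n}{p}$ this reads $p(k - i) \geq 0$, which holds for all $i \leq k$. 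Thus $\|u\|_{W^{k,p}(M)} \leq \|u\|_{W^{k,p}_{k - n/p}(M)}$ follows term by term without any hypothesis.

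For the reverse inequality, the point is to bound each weighted term $\int_U r^{-p(k-i)}|\nabla^i u|^p \, d\vol_{g_0}$ (a negative power of $r$ since $i < k$, and for $i = k$ there is nothing to do) by the unweighted Sobolev norm. First I would apply the cone Hardy inequality $(\ref{eqn: cone-Hardy-inequality})$ with exponent $p$ and integer shift $k - i$: this gives
\begin{equation*}
\int_U \frac{|\nabla^i u|^p}{r^{p(k-i)}} \, d\vol_{g_0} \leq \left( \frac{p}{|f - p(k-i)|} \right)^p \int_U \frac{|\nabla(|\nabla^i u|)|^p}{r^{p(k-i-1)}} \, d\vol_{g_0} \leq \left( \frac{p}{|f - p(k-i)|} \right)^p \int_U \frac{|\nabla^{i+1} u|^p}{r^{p(k-i-1)}} \, d\vol_{g_0},
\end{equation*}
where the second step uses Kato's inequality $|\nabla |\nabla^i u|| \leq |\nabla^{i+1} u|$. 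Iterating this $k - i$ times lowers the power of $r$ in the denominator to zero, producing the bound $\int_U r^{-p(k-i)}|\nabla^i u|^p \, d\vol_{g_0} \leq C \int_U |\nabla^k u|^p \, d\vol_{g_0}$, with $C$ depending only on $p$, $k$, $i$ and the fiber dimension $f$. Summing over $i = 0, \dots, k$ and passing from the model metric $g_0$ to $g$ (the two are uniformly quasi-isometric on $U$, and the asymptotic control $(\ref{eqn: metric asymptotic control})$ on $h$ keeps the volume forms and covariant-derivative norms comparable up to bounded factors — this is the one place a little care with the connection terms is needed) yields $\|u\|_{W^{k,p}_{k - n/p}(M)} \leq C\|u\|_{W^{k,p}(M)}$. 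Finally, since both norms are thus equivalent on $C^\infty_0(\mathring M)$ and both spaces are defined as the completion of $C^\infty_0(\mathring M)$ in their respective norms, the spaces coincide: $W^{k,p}_{k - n/p}(M) = W^{k,p}(M)$.

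The main obstacle — really the only subtle point — is the non-degeneracy condition needed to run the Hardy inequality at every stage of the iteration: each application requires $f \neq p \cdot m$ for $m = 1, 2, \dots, k - i$, so collectively one needs $pm \neq f$ for all $m = 1, \dots, k$. The statement as given imposes the condition $pi \neq n$ for $i = 1, \dots, k$ rather than $pi \neq f$; since $f = n - 1 - b \leq n - 1 < n$, the hypothesis $pi \neq n$ does not by itself exclude $pi = f$. I would therefore either note that the correct hypothesis for the argument is $pi \neq f$ (and that $pi \neq n$ guarantees the final weight index $\delta = k - n/p$ is itself admissible in the sense that the weight exponents $p(\delta - i) + n$ never vanish, which is what makes the weighted norm well-behaved), or observe that in the intended application $f$ and $n$ play interchangeable roles up to reindexing; in any case the role of the exceptional set is exactly to keep the constant $p/|f - pm|$ finite at each step. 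The rest is routine bookkeeping with quasi-isometries and the partition of unity from Lemma \ref{lem: cone-Sobolev-inequality}.
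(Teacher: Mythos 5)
Your outline --- the trivial direction from $\chi\geq 1$, the nontrivial direction by iterating the cone Hardy inequality $(\ref{eqn: cone-Hardy-inequality})$ together with Kato's inequality, and the passage between $g_0$ and $g$ via the quasi-isometry and the decay control $(\ref{eqn: metric asymptotic control})$ --- is exactly the route the paper intends; it cites Lemma~3.1 of \cite{DW20} in lieu of an argument, so there is no written proof here to compare against, but your method is the one meant. One small slip in the easy direction: $\chi^{p(\delta-i)+n}\geq 1$ because $\chi\geq 1$ and $p(\delta-i)+n = p(k-i)\geq 0$, not $\leq 0$ as you wrote.

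You have also put your finger on a real issue with the hypothesis, but neither of your attempted reconciliations holds up. The weight exponent $p(\delta-i)+n$ with $\delta = k-\frac{n}{p}$ equals $p(k-i)$, which contains no $n$ and vanishes for $i=k$ regardless, so saying that $pi\neq n$ is what keeps it from vanishing is not a coherent reading; nor do $f$ and $n$ play interchangeable roles, since $f = n-1-b < n-1$ in the non-isolated case. The actual resolution is a shift by one already hiding in $(\ref{eqn: cone-Hardy-inequality})$: re-deriving it from the one-dimensional Hardy inequality $(\ref{eqn: Hardy inequality})$ with $d\vol_{g_0} = r^f\,dr\,d\vol_{g_{\partial M}}$ forces $a = pk - f$, hence the denominator $|a-1| = |pk-(f+1)|$ and the non-degeneracy condition $pk \neq f+1$, not $pk\neq f$ as printed. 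Your iteration therefore needs $pm\neq f+1$ for $m=1,\dots,k$. For isolated singularities $B$ is a point and $f+1 = n$, so the lemma's hypothesis $pi\neq n$ is precisely this condition; for genuinely non-isolated ones the hypothesis should read $pi\neq f+1$, which $pi\neq n$ does not guarantee. You are right that something needs fixing, but the exceptional set is $\{(f+1)/m : 1\leq m\leq k\}$, not $\{f/m\}$, and not $\{n/m\}$ except when $B$ is a point.
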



By Lemma \ref{lem: Sobolev-weighted-Sobolev-equivalence} and the Sobolev inequalities in Proposition \ref{prop: SobolevInequality}, one immediately obtains some weighted Sobolev inequalities with special weight indices. However, we also need weighed Sobolev inequalities with general indices, which can be established by using the Sobolev inequalities in Proposition \ref{prop: SobolevInequality} as follows.

\begin{prop}\label{prop: weighted-Sobolev-inequality}
Let $(M^{n}, g)$ be a compact Riemannian manifolds with non-isolated conical singularities. For any $\delta \in \mathbb{R}$, $1 \leq p < n$, $0 \leq l \leq k$, and $q$ with $\frac{1}{q} = \frac{1}{p} - \frac{k-l}{n} > 0$ and $\frac{np}{n-ip}\neq f, i=0, \cdots, k-l-1$, there exists a constant $C$, such that for any $u \in C^{\infty}_{0}(\mathring{M})$, 
\begin{equation*}
\|u\|_{W^{l, q}_{\delta }} \leq C \|u\|_{W^{k, p}_{\delta}}.
\end{equation*}
Consequently, there is a continuous embedding
\begin{equation*}
W^{k, p}_{\delta}(M) \subset W^{l, q}_{\delta}(M).
\end{equation*}
\end{prop}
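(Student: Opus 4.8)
The plan is to reduce the weighted Sobolev inequality to the unweighted one in Proposition~\ref{prop: SobolevInequality} by a substitution that absorbs the weight factor into the function. Recall that $\chi$ behaves like $r^{-1}$ near the singular set and is bounded with bounded inverse away from it, so multiplying by a power of $\chi$ is a bijection of $C^\infty_0(\mathring M)$ to itself and the claimed inequality is equivalent to a comparison of norms. First I would reduce to the case $\delta=0$: given $u\in C^\infty_0(\mathring M)$, set $v=\chi^{\delta}u$. Then $v\in C^\infty_0(\mathring M)$ as well, and since $|\nabla\chi|_g\le C\chi$ (which follows from $\chi=1/r$ on the collar and $|\nabla r|_{g_0}=1$, together with the asymptotic control \eqref{eqn: metric asymptotic control} on $h$), the Leibniz rule gives the two-sided pointwise bounds
\begin{equation*}
\chi^{\delta-i}|\nabla^i u|_g \le C\sum_{m=0}^{i}\chi^{-m}|\nabla^m v|_g,\qquad
\chi^{-i}|\nabla^i v|_g \le C\sum_{m=0}^{i}\chi^{\delta-m}|\nabla^m u|_g,
\end{equation*}
for $0\le i\le k$, with $C=C(g,k,\delta)$. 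Integrating against $d\vol_g$ and comparing with \eqref{eqn: WSN}, one sees that $\|u\|_{W^{k,p}_\delta}$ is comparable to $\|v\|_{W^{k,p}_0}$ and likewise for the $(l,q)$ norms, so it suffices to treat $\delta=0$.

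Next, for $\delta=0$, I would unwind the definition \eqref{eqn: WSN}: the norm $\|v\|_{W^{k,p}_0}$ is $\big(\int_M\sum_{i=0}^k\chi^{n-pi}|\nabla^i v|_g^p\,d\vol_g\big)^{1/p}$. The idea is to introduce the ``$b$-type'' substitution at the level of the iterated covariant derivatives: because $\chi\sim 1/r$, the factor $\chi^{n-pi}=\chi^n\cdot\chi^{-pi}$ matches exactly the weight that, under the unweighted Sobolev inequality applied to the rescaled setting, converts one power of $\chi^{-1}\sim r$ per derivative. Concretely, one can exploit that $\chi^{n/p}$ is, up to the asymptotic control of $h$, a conformal-type factor relating $d\vol_g$ to a measure for which the weights $\chi^{-pi}$ become the standard Sobolev weights; equivalently, one can run the partition argument of Lemma~\ref{lem: cone-Sobolev-inequality} directly but now keeping track of the $r$-powers. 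The cleanest route, however, is to bootstrap from the three items of Proposition~\ref{prop: SobolevInequality} applied to the function $w=\chi^{-k}v$ (or more precisely to $\chi^{-j}$ times appropriate derivatives), using the Hardy inequality \eqref{eqn: cone-Hardy-inequality} to trade negative powers of $r$ against derivatives on the collar, exactly as in the proof of item~(3) above.

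Then I would assemble the pieces. On the collar $U$ I write $v=\varphi v$ near $\partial M$ with $\varphi$ a cutoff, apply the change of variable $t=-\ln r$ (or equivalently the scaling-type comparison implicit in the partition of Lemma~\ref{lem: cone-Sobolev-inequality}) so that the weighted norm on $(0,1)\times\partial M$ with metric $g_0$ becomes an unweighted norm on a half-cylinder $(0,\infty)\times\partial M$; on the cylinder the ordinary Sobolev inequality $W^{k,p}\subset W^{l,q}$ holds with $\frac1q=\frac1p-\frac{k-l}{n}$, and transporting back reproduces the weighted inequality with exactly the stated indices, the excluded values $\frac{np}{n-ip}=f$ arising precisely from the Hardy inequality's exclusion $pk\ne f$ in \eqref{eqn: cone-Hardy-inequality}. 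Away from the collar $\chi$ is bounded above and below, so there the weighted and unweighted norms are comparable and the ordinary Sobolev inequality on the compact smooth piece applies; patching the two regions with a partition of unity and reabsorbing the cutoff error terms via Hardy completes the argument. Finally, the continuous embedding $W^{k,p}_\delta(M)\subset W^{l,q}_\delta(M)$ follows by density of $C^\infty_0(\mathring M)$ in $W^{k,p}_\delta(M)$ (its defining property) together with the inequality just established.

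\emph{Main obstacle.} The delicate point is the bookkeeping of weights under differentiation near the non-isolated singular stratum: unlike the isolated case, the model metric $g_0=dr^2+r^2\hat g+\pi^*\check g$ is not scale-homogeneous because of the $\pi^*\check g$ term, so one cannot simply invoke the dilation $r\mapsto\lambda r$ to convert the weighted inequality to the unweighted one on a single model cone. One must instead verify that the bound $|\nabla\chi|_g\le C\chi$ and, more importantly, the commutator estimates controlling $[\nabla^i,\chi^{s}]$ survive the presence of the base directions and the error term $h$ with only the asymptotic control \eqref{eqn: metric asymptotic control}; equivalently, one must check that the partition-of-unity reduction of Lemma~\ref{lem: cone-Sobolev-inequality} is compatible with carrying along arbitrary powers of $r$, which is where the dimensional constraint $\frac{np}{n-ip}\ne f$ (rather than $\ne n$) enters. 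Once this weight-tracking is done carefully, the rest is a routine interpolation/iteration over the order of derivatives together with Hardy.
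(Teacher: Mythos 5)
Your proposal correctly identifies the right general strategy — reduce the weighted inequality to the unweighted one from Proposition \ref{prop: SobolevInequality} — but it never lands on a definitive argument. You float a conformal-factor heuristic, then a change of variables $t=-\ln r$ to a half-cylinder, then fall back to the partition/Hardy route "exactly as in the proof of item~(3)", without carrying any of these to completion. In particular, as you yourself observe, the $\pi^*\check g$ term destroys the scale-homogeneity that would make the cylinder substitution clean, and your reduction to $\delta=0$ gains nothing: the $W^{k,p}_0$ norm still carries the weights $\chi^{n-pi}$, so it is not the unweighted norm and the problem is no easier afterward. There is also a slip: you assert $|\nabla\chi|_g\le C\chi$, but since $\chi=1/r$ near $\partial M$ and $|\nabla r|\simeq 1$, the correct estimate is $|\nabla\chi|_g\le C\chi^2$ (your displayed pointwise bounds happen to be what the correct estimate produces, but the stated bound is false).

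The paper's proof is shorter and bypasses all of these detours. With $q_1=\frac{np}{n-p}$ (so $\frac{n}{q_1}=\frac{n}{p}-1$), one simply applies the unweighted embedding $W^{1,p}(M)\hookrightarrow L^{q_1}(M)$ from Proposition \ref{prop: SobolevInequality} to the compactly supported functions $f_i:=\chi^{\delta-i+\frac{n}{q_1}}|\nabla^i u|$, $i=0,\dots,k-1$. The Leibniz rule together with $|\nabla\chi|\le C\chi^2$, Kato's inequality $\bigl|\nabla|\nabla^i u|\bigr|\le|\nabla^{i+1}u|$, and $0<\chi^{-1}\le 1$ then give $\|f_i\|_{W^{1,p}}\le C\sum_{j=0}^{k}\|\chi^{\delta-j+\frac{n}{p}}|\nabla^j u|\|_{L^p}$, because the shift $\frac{n}{q_1}+1=\frac{n}{p}$ makes the weight indices match up exactly. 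Summing in $i$ gives $\|u\|_{W^{k-1,q_1}_\delta}\le C\|u\|_{W^{k,p}_\delta}$, and iterating over the order of derivatives produces the general $(l,q)$ statement. The only genuine input from the conical geometry is the pointwise bound $|\nabla\chi|\le C\chi^2$; the Hardy inequality and the excluded values $\frac{np}{n-ip}\ne f$ enter only through the hypotheses of Proposition \ref{prop: SobolevInequality}, which you are free to quote as a black box.
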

\begin{proof}
For any $u\in C^{\infty}_{0}(\mathring{M})$, $\chi^{\delta-i+\frac{n}{q}}|\nabla^{i}u| \in C^{\infty}_{0}(\mathring{M})$ for any $i\in\mathbb{N}$ and $q>1$. Recall that $\chi$ is a weight function in $(\ref{eqn: WeightFunction-cone})$. Then for $q_{1}=\frac{np}{n-p}$, i.e. $\frac{1}{q_{1}}=\frac{1}{p}-\frac{1}{n}$, the Sobolev inequality in Proposition \ref{prop: SobolevInequality} implies
\begin{eqnarray*}
\|u\|_{W^{k-1, q_{1}}_{\delta}(M)}
& = & \left(\int_{M}\left(\sum^{k-1}_{i=0}\chi^{q_{1}(\delta-i)+n}|\nabla^{i}u|^{q_{1}}_{g}\right)d\vol_{g}\right)^{\frac{1}{q_{1}}} \\
& \leq & C \sum^{k-1}_{i=0} \|\chi^{\delta-i+\frac{n}{q_{1}}} |\nabla^{i}u|_{g}\|_{L^{q_{1}}} \\
& \leq & C \sum^{k-1}_{i=0} \|\chi^{\delta-i+\frac{n}{q_{1}}} |\nabla^{i}u|_{g}\|_{W^{1,p}(M)} \\
& \leq & C \sum^{k-1}_{i=0} \left( \|\nabla\left(\chi^{\delta-i+\frac{n}{q_{1}}} |\nabla^{i}u|_{g}\right)\|_{L^{p}(M)} + \|\chi^{\delta-i+\frac{n}{q_{1}}} |\nabla^{i}u|_{g}\|_{L^{p}(M)} \right) \\
& \leq & C \sum^{k-1}_{i=0} \big( \|\chi^{\delta-i+\frac{n}{q_{1}}+1}|\nabla^{i}u|_{g}\|_{L^{p}(M)} + \|\chi^{\delta-i+\frac{n}{q_{1}}}|\nabla^{i+1}u|_{g}\|_{L^{p}(M)} \\
&   & + \|\chi^{\delta-i+\frac{n}{q_{1}}}|\nabla^{i}u|_{g}\|_{L^{p}(M)}\big) \\
& \leq & C \sum^{k}_{i=0} \|\chi^{\delta-i+\frac{n}{p}}|\nabla^{i}u|_{g}\|_{L^{p}(M)} 
\leq C \|u\|_{W^{k, p}_{\delta}(M)}.
\end{eqnarray*}
In the third last inequality, we used $|\nabla \chi|_{g}\leq C \chi^{2}$. In the second last inequality, we used $0< \chi^{-1} \leq 1$, and $\frac{n}{q_{1}}=\frac{n}{p}-1$.

Then for any $0\leq l\leq k$, one can iterate the above inequality to complete the proof.
\end{proof}


\begin{remark}
{\rm
The local version of the weighted Sobolev inequalities in Proposition \ref{prop: weighted-Sobolev-inequality} has been estiblished in Proposition 3.2 and 3.3 in \cite{CLW12} in a different manner.
}
\end{remark}

\begin{prop}\label{prop: weighted Sobolev space regularity}
Let $(M^{n}, g)$ be a compact Riemannian manifold with non-isolated conical singularities. For any $\delta\in \mathbb{R}$ and $k, l \in \mathbb{N}$ with $k-l>\frac{n}{p}$, any $u\in W^{k, p}_{\delta}(M)$ satisfies
\begin{equation*}
|\nabla^{l} u| = o(r^{\delta-k+\frac{n}{p}}),
\end{equation*}
as $r\rightarrow 0$.
\end{prop}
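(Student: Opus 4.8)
The plan is to reduce the weighted statement to the unweighted pointwise decay estimate in item (3) of Proposition \ref{prop: SobolevInequality}, via the standard device of multiplying $u$ by a power of the weight function $\chi$. First I would fix $\delta$, $k$, $l$ with $k-l>\frac n p$, and pick an auxiliary integer $m\in\mathbb N$ large enough that $\delta - k + \tfrac n p + m \ge 0$ (so that positive powers $r^{m}$, which are the "price" of differentiating $\chi$, do not destroy the claimed exponent); one can in fact take $m$ as small as $\lceil k - \tfrac n p - \delta\rceil_+$. Since $C^\infty_0(\mathring M)$ is dense in $W^{k,p}_\delta(M)$, it suffices to prove the estimate with explicit constant for $u\in C^\infty_0(\mathring M)$ and then pass to the limit, observing that the $o(r^{\delta-k+\frac n p})$ statement follows once one has the bound $|\nabla^l u|\le C\,r^{\delta-k+\frac n p}\,\|u\|_{W^{k,p}_\delta}$ together with the fact that functions in $W^{k,p}_{\delta'}$ for any $\delta'>\delta$ are dense and satisfy the strictly better bound — more precisely, $W^{k,p}_\delta = \overline{\bigcup_{\delta'>\delta}W^{k,p}_{\delta'}}$ in the appropriate sense is not quite what is needed; instead I would argue directly, as in item (3) of Proposition \ref{prop: SobolevInequality}: apply the (weighted) Morrey/Sobolev embedding to get $\|\chi^{\delta-l+\frac n p}|\nabla^l u|\|_{C^0}\le C\|u\|_{W^{k,p}_\delta}$, which gives the "big-$O$" bound, and upgrade $O$ to $o$ by a cutoff-and-tail argument, splitting $u = \varphi_\epsilon u + (1-\varphi_\epsilon)u$ where $\varphi_\epsilon$ is supported in $\{r<\epsilon\}$, using that $\|\varphi_\epsilon u\|_{W^{k,p}_\delta}\to 0$ as $\epsilon\to 0$ for fixed $u\in W^{k,p}_\delta$.

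Concretely, the key computation is: for $u\in C^\infty_0(\mathring M)$ the function $w:=\chi^{\delta-l+\frac n p}\,|\nabla^l u|$ lies in $C^0(M)$ after the embedding, and near $r=0$ we have $\chi = r^{-1}$, so $w \sim r^{\,l-\delta-\frac n p}|\nabla^l u|$; bounding $w$ in $C^0$ by the weighted Sobolev embedding of Proposition \ref{prop: weighted-Sobolev-inequality} combined with item (2)–(3) of Proposition \ref{prop: SobolevInequality} applied to $\chi^{\delta-l+\frac n p}|\nabla^l u|$ yields $|\nabla^l u|\le C\,r^{\delta-l+\frac n p}\,\|u\|_{W^{k,p}_\delta}$ near $r=0$. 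But note the claimed exponent is $\delta - k + \tfrac n p$, which is \emph{smaller} (since $k>l$), so this direct bound is actually too weak by the factor $r^{k-l}$ — to recover the sharp exponent one must instead apply item (3) of Proposition \ref{prop: SobolevInequality}, which already contains the extra decay $r^{l'}$ coming from the gap $k-l-j>\frac n p$. Thus the right move is: write the weighted condition $u\in W^{k,p}_\delta$ out, use the equivalence of weighted norms and ordinary Sobolev norms after conjugating by $\chi$ (in the spirit of Lemma \ref{lem: Sobolev-weighted-Sobolev-equivalence}, adjusting the weight index), so that $v:=\chi^{\delta-(k-\frac n p)}\cdot(\text{stuff})$ — more cleanly, observe $\chi^{\,a}u \in W^{k,p}_{\delta - a}$ and choose $a$ so that $\delta - a = k - \frac n p$, i.e. $a = \delta - k + \frac n p$; then $\chi^{a}u \in W^{k,p}_{k-\frac n p}(M) = W^{k,p}(M)$ by Lemma \ref{lem: Sobolev-weighted-Sobolev-equivalence}, and item (3) of Proposition \ref{prop: SobolevInequality} applied to $\chi^{a}u$ gives $|\nabla^l(\chi^a u)| = o(r^{\,m})$ for any $m$ with $k-l-m>\frac n p$; unwinding the derivatives of $\chi^a$ via the product rule and the estimate $|\nabla^j \chi^a|\le C\chi^{a+j}= C r^{-a-j}$ then gives the desired $|\nabla^l u| = o(r^{\,m-a-l+(\text{correction})})$, and optimizing over admissible $m$ together with the freedom in $l$ produces the exponent $\delta - k + \frac n p$ in the limiting case.

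I expect the \textbf{main obstacle} to be precisely the bookkeeping of the product rule $\nabla^l(\chi^a u)=\sum_{j=0}^l \binom l j \nabla^{l-j}(\chi^a)\otimes\nabla^j u$ together with the weight estimates $|\nabla^{j}\chi|\le C\chi^{j+1}$ (equivalently $|\nabla^j\chi^a|\le C\chi^{a+j}$): one must check that every term of the form $\chi^{a+l-j}|\nabla^j u|$ is controlled with the \emph{same} power of $r$ as the top term, which forces the choice $a=\delta-k+\frac n p$ and uses crucially that $u\in W^{k,p}_\delta$ controls all derivatives up to order $k$ with the correctly shifted weights — this is exactly the content of the definition $(\ref{eqn: WSN})$. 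A secondary technical point is the $O\Rightarrow o$ upgrade, which requires knowing that $C^\infty_0(\mathring M)$ is dense in $W^{k,p}_\delta(M)$ (true by definition of the space as a completion) and that the tail $\|(1-\varphi_\epsilon)u - u\|_{W^{k,p}_\delta}\to 0$; this is routine dominated convergence once the bound with explicit constant is in hand. Finally I would remark that the statement and proof are entirely parallel to, and indeed a corollary of, the unweighted item (3) of Proposition \ref{prop: SobolevInequality}, just as in the isolated conical case treated in \cite{DW18}.
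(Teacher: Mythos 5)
Your first route doesn't work, and your diagnosis of why it fails is also off. You claim that bounding $\chi^{\delta-l+\frac{n}{p}}|\nabla^l u|$ in $C^0$ would give $|\nabla^l u|\le C\,r^{\delta-l+\frac{n}{p}}$ and that this is ``too weak.'' In fact it would be \emph{too strong}: since $k>l$ and $r<1$, $r^{\delta-l+\frac{n}{p}}=o(r^{\delta-k+\frac{n}{p}})$, so you would be asserting more decay than the proposition claims. The reason it is not provable is that $u\in W^{k,p}_\delta$ does \emph{not} control $\chi^{\delta-l+\frac{n}{p}}|\nabla^l u|$ in any positive-order (hence Morrey-admissible) Sobolev norm: when you differentiate this product, the lower-order terms carry weight $\chi^{p(\delta-l+j-i)+n}$ against $|\nabla^{l+i}u|^p$, and for $j\ge 1$ these weights exceed the ones the $W^{k,p}_\delta$-norm actually bounds. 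There is no ``weighted Morrey inequality'' in the paper to invoke here, and Proposition \ref{prop: weighted-Sobolev-inequality} only covers the subcritical range $\frac1q=\frac1p-\frac{k-l}{n}>0$.

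Your second route (conjugate $u$ to $\chi^a u\in W^{k,p}(M)$ with $a=\delta-k+\frac np$, then undo the product rule) is much closer in spirit, but still has two genuine gaps. First, you assert that item (3) of Proposition \ref{prop: SobolevInequality} gives a little-$o$ bound; it only gives $|\nabla^j u|\le C\|u\|_{W^{k,p}}r^{l}$, a big-$O$ bound. The upgrade to little-$o$ is precisely the content of the proposition and requires a separate argument. Second, the product-rule unwinding is not routine bookkeeping: after applying the $C^0$ bound to $\nabla^l(\chi^a u)$, the lower-order terms $\chi^{a+l-j}|\nabla^j u|$ ($j<l$), estimated via $|\nabla^j u|=O(r^{a+l-j})$, are only $O(1)$ in the normalized quantity $\chi^a|\nabla^l u|$, not $o(1)$, so the little-$o$ conclusion does not drop out. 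You would need little-$o$ at each lower order too, which essentially forces you to redo the dyadic argument at every step.

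The paper sidesteps all of this by applying the weight directly to $|\nabla^l u|$ rather than to $u$: it observes that $v:=\chi^{\delta-k+\frac np}|\nabla^l u|\in W^{k-l,p}(M)$ (this is the correct weight shift so that all lower-order terms produced by differentiating are dominated by the $W^{k,p}_\delta$-norm of $u$, as one checks by comparing exponents as above), then covers a neighborhood of $\partial M$ by dyadic shells $U_j=(2^{-j},2^{-(j-1)})\times\partial M$, applies Morrey's inequality ($k-l>\frac np$) on each shell with a $j$-independent constant, and notes that $\sum_j\|u\|^p_{W^{k,p}_\delta(U_j)}\le\|u\|^p_{W^{k,p}_\delta(M)}<\infty$ forces $\|v\|_{C^0(U_j)}\to0$. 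No product rule, no induction, and the little-$o$ is built in from the start. Your write-up does gesture toward this (the ``cutoff-and-tail'' remark, and the density observation), but the decisive step of putting the weight on the derivative rather than on $u$ is missing, and that is exactly what makes the argument close.
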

\begin{proof}
First note that if $u\in W^{k, p}_{\delta}(M)$ then $\chi^{\delta-k+\frac{n}{p}}|\nabla^l u| \in W^{k-l, p}(M)$, where $\chi$ is the weight function in (\ref{eqn: WeightFunction-cone}) and it is equal to $\frac{1}{r}$ near the singular set.
Let $U_{j}=\left( \left(\frac{1}{2}\right)^{j}, \left(\frac{1}{2}\right)^{j-1}\right) \times \partial M$ for each $j\in\mathbb{N}$. The Sobolev inequality in Proposition \ref{prop: SobolevInequality} gives
\begin{equation*}
\|r^{-\delta+k-\frac{n}{p}}|\nabla^{l}u|\|_{C^{0}(U_{j})} \leq C \|r^{-\delta+k-\frac{n}{p}}|\nabla^{l}u|\|_{W^{k-l, p}(U_{j})} \leq C \|u\|_{W^{k, p}_{\delta}(U_{j})}.
\end{equation*}
Then because $\sum\limits^{\infty}_{j=1}\|u\|_{W^{k, p}_{\delta}(U_{j})} \leq \|u\|_{W^{k, p}_{\delta}(M)}\leq\infty$, $\|u\|_{W^{k, p}_{\delta}(U_{j})} \rightarrow 0$, as $j\rightarrow\infty$. Thus $|\nabla^{l}u|=o(r^{\delta-k+\frac{n}{p}})$ as $r\rightarrow0$.
\end{proof}

\subsection{Compact weighted Sobolev embeddings}\label{subsect: compact weighted Sobolev embeddings}
In this subsection, we prove that weighted Sobolev spaces $W^{k, 2}_{k-\frac{n}{2}}(M)$ can be compactly embedded into $L^{2}(M)$ space. This compact embedding plays a crucial rule in the study of the spectrum of the operator $-4\Delta_{g}+R_{g}$.



Firstly, we derive a compact embedding on the collar neighborhood: $U=(0, 1) \times \partial M$ of the singular set $\partial M$ with the model metric $g_{0}$ given in $(\ref{eqn: model conical metric})$. 

\begin{lem}\label{lem: compactness on cones}
The continuous embedding
\begin{equation*}
i:W^{k, 2}_{k-\frac{n}{2}}(U, g_{0})\hookrightarrow L^{2}(U, g_{0})
\end{equation*}
is compact for each $k\in\mathbb{N}$.
\end{lem}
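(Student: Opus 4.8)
The plan is to reduce the statement to the $k=1$ case and then exploit the product structure $U=(0,1)\times\partial M$ together with the weight $\chi\sim r^{-1}$ to gain an extra factor that vanishes near the singular set. First I would observe that it suffices to treat $k=1$: indeed $W^{k,2}_{k-\frac{n}{2}}(U,g_0)\subset W^{1,2}_{1-\frac{n}{2}}(U,g_0)$ continuously (this is the iterated version of Proposition \ref{prop: weighted-Sobolev-inequality} combined with the inclusion \eqref{eqn: weighted Sobolev space inclusion}), so compactness of $W^{1,2}_{1-\frac{n}{2}}(U,g_0)\hookrightarrow L^2(U,g_0)$ implies the general statement. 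So fix a bounded sequence $\{u_j\}\subset W^{1,2}_{1-\frac{n}{2}}(U,g_0)$, i.e. with $\int_U (\chi^{-n+2}|\nabla u_j|^2+\chi^{-n}|u_j|^2)\,d\vol_{g_0}$ uniformly bounded; near $r=0$ this reads $\int (r^{n-2}|\nabla u_j|^2+r^{n-4}|u_j|^2)\,d\vol_{g_0}$ bounded. The goal is to extract an $L^2(U,g_0)$-convergent subsequence.

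The main step is a tail estimate showing the mass of $u_j$ concentrated near $r=0$ is uniformly small. On $U_\epsilon:=(0,\epsilon)\times\partial M$ we have $\|u_j\|_{L^2(U_\epsilon,g_0)}^2=\int_{U_\epsilon}|u_j|^2\,d\vol_{g_0}\le \epsilon^2\int_{U_\epsilon}\chi^2|u_j|^2\,d\vol_{g_0}$ since $\chi^{-1}=r<\epsilon$ there. Now $\int_{U_\epsilon}\chi^2|u_j|^2\,d\vol_{g_0}\le \int_U \chi^2|u_j|^2\,d\vol_{g_0}$, and this last quantity is controlled by the Hardy inequality \eqref{eqn: cone-Hardy-inequality} with $k=1$ (valid since $2\neq f$ is implied by $n\ge 4$... — more carefully, one uses the Hardy-type bound $\int_U\frac{|u|^2}{r^2}\le C\int_U|\nabla u|^2$ in the range where it holds, or iterates so that the relevant exponent avoids $f$), giving $\int_U\chi^2|u_j|^2\le C\int_U|\nabla u_j|_{g_0}^2\,d\vol_{g_0}$. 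But $\int_U|\nabla u_j|_{g_0}^2\,d\vol_{g_0}\le \int_U\chi^{-n+2}|\nabla u_j|^2\,d\vol_{g_0}\le \|u_j\|_{W^{1,2}_{1-n/2}}^2\le C$ (using $0<\chi^{-1}\le 1$ and $n\ge 2$, so $\chi^{-n+2}\le 1$ when $n\ge 2$; for $n\ge 4$ one has $-n+2\le -2<0$). Hence $\|u_j\|_{L^2(U_\epsilon,g_0)}\le C\epsilon$ uniformly in $j$.

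Having controlled the tail, the complement $U\setminus U_\epsilon=[\epsilon,1)\times\partial M$ is a relatively compact region on which $g_0$ is uniformly equivalent to a smooth metric and $\chi$ is bounded above and below, so $W^{1,2}_{1-\frac{n}{2}}(U\setminus\overline{U_{\epsilon/2}},g_0)$ has norm equivalent to the ordinary $W^{1,2}$ norm there, and the Rellich--Kondrachov theorem gives a subsequence of $\{u_j\}$ converging in $L^2(U\setminus\overline{U_{\epsilon/2}})$. Combining this with the tail bound via a diagonal argument over $\epsilon=1/m$, $m\to\infty$: for each $m$ extract a further subsequence converging in $L^2$ away from $U_{1/m}$; the diagonal subsequence is Cauchy in $L^2(U,g_0)$ because its $L^2(U_{1/m})$-tails are $\le C/m$ uniformly and it converges on $U\setminus U_{1/m}$. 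This produces the desired $L^2(U,g_0)$-limit, establishing compactness. The main obstacle is ensuring the Hardy inequality is applicable with the right exponents — i.e. handling the excluded value $pk=f$ — which one circumvents either by invoking \eqref{eqn: cone-Hardy-inequality} only in admissible ranges or by first passing through a higher-order weighted space via Proposition \ref{prop: weighted-Sobolev-inequality} where the relevant exponents can be arranged to miss $f$; once the uniform tail estimate $\|u_j\|_{L^2(U_\epsilon)}\le C\epsilon$ is secured, the rest is a standard Rellich plus diagonalization argument.
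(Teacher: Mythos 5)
Your strategy — a tail estimate $\|u_j\|_{L^2(U_\epsilon)}\le C\epsilon$ uniformly in $j$, Rellich on the smooth interior $U\setminus \overline{U_{\epsilon/2}}$, and a diagonalization over $\epsilon=1/m$ — is a genuine and valid alternative to the paper's argument, which instead applies the unitary change of variable $\tilde u = r^{f/2}u$ mapping $L^2(U,g_0)$ to $L^2$ of the finite cylinder $((0,1)\times\partial M, dr^2+g_{\partial M})$, proves the inequality $\|u\|_{W^{1,2}_{1-n/2}(U,g_0)}\ge\frac34\|\tilde u\|_{W^{1,2}(\text{cyl})}$, and concludes by the classical Rellich lemma on the cylinder. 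Your route is arguably more elementary (no change of variables, no integration by parts on the cone), while the paper's transformation pushes all the work into a single inequality and needs no exhaustion by compacta; both give the same conclusion.

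However, the middle of your argument contains weight-exponent errors that produce false intermediate claims. From the definition \eqref{eqn: WSN} with $p=2$, $k=1$, $\delta=1-\tfrac n2$, the weight on $|u|^2$ is $\chi^{2(\delta-0)+n}=\chi^2$ and the weight on $|\nabla u|^2$ is $\chi^{2(\delta-1)+n}=\chi^0=1$, so
\[
\|u\|^2_{W^{1,2}_{1-n/2}(U,g_0)}=\int_U\left(\chi^2 u^2 + |\nabla u|_{g_0}^2\right)d\vol_{g_0},
\]
exactly as in the paper's proof. You instead wrote the norm as $\int_U(\chi^{-n+2}|\nabla u|^2+\chi^{-n}|u|^2)$, which is not the definition, and the line ``$\int_U|\nabla u_j|_{g_0}^2\le\int_U\chi^{-n+2}|\nabla u_j|^2$'' is false (for $n\ge 4$ one has $\chi^{-n+2}\le 1$, so the inequality goes the other way). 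The good news is that with the correct weights the argument is actually simpler than you make it: $\int_U\chi^2|u_j|^2$ appears verbatim in the $W^{1,2}_{1-n/2}$ norm and is therefore uniformly bounded with no need for the Hardy inequality \eqref{eqn: cone-Hardy-inequality} at all (which also removes your worry about the excluded case $pk=f$). The tail estimate then reads directly
\[
\int_{U_\epsilon}|u_j|^2 \,d\vol_{g_0}\;\le\;\epsilon^2\int_{U_\epsilon}\chi^2|u_j|^2\,d\vol_{g_0}\;\le\;\epsilon^2\|u_j\|^2_{W^{1,2}_{1-n/2}(U,g_0)}\le C\epsilon^2,
\]
and the remainder of your diagonalization argument goes through. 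So the approach is correct once the norm computation is repaired; in its present form the chain of inequalities is broken.
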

The proof of Lemma \ref{lem: compactness on cones} is similar in spirit as the proof of Lemma 3.2 in \cite{DW18} for isolated conical singularities. Roughly speaking, one relates the problem to that on a cylinder. In \cite{DW18}, we did a spectral decomposition for functions on a cone with respect to eigenfunctions of Laplacian on the cross section, when deriving the inequality $(\ref{eqn: cone cylinder inequality})$ below. In the non-isolated conical singularities case, where we have a family of cones, we cannot do the spectral decomposition as in \cite{DW18}, and we derive $(\ref{eqn: cone cylinder inequality})$ without using the spectral decomposition. So we provide details here.
\begin{proof}
Clearly, $\|u\|_{W^{k, 2}_{k-\frac{n}{2}}(U, g_{0})}\geq\|u\|_{W^{l, 2}_{l-\frac{n}{2}}(U, g_{0})}$, for $k\geq l\in\mathbb{N}$. Therefore, it suffices to show that the embedding:
$i:W^{1, 2}_{1-\frac{n}{2}}(C_{\epsilon}(N))\hookrightarrow L^{2}(C_{\epsilon}(N))$, is compact.

Let $(C(\partial M), g_{Cyl})=((0, 1)\times\partial M, dr^{2}+g_{\partial M})$ be a finite cylinder. The usual Sobolev norm on the cylinder is given by
\begin{equation*}
\|u\|_{W^{1, 2}(C(\partial M), g_{Cyl})}=\int_{C(\partial M)}(|\nabla u|^{2}_{g_{Cyl}}+u^{2})dvol_{g_{Cyl}}.
\end{equation*}
Note that the mapping
\begin{equation*}
\begin{aligned}
L^{2}(U, g_{0}) &\longrightarrow L^{2}(C(\partial M), g_{Cyl})\\
u &\longmapsto \tilde{u}=r^{\frac{f}{2}}u,
\end{aligned}
\end{equation*}
is unitary. We will show that
\begin{equation}\label{eqn: cone cylinder inequality}
\|u\|_{W^{1, 2}_{1-\frac{n}{2}}(U, g_{0})}\geq\frac{3}{4}\|\tilde{u}\|_{W^{1,2}(C(\partial M), g_{Cyl})},
\end{equation}
for all $u\in C^{\infty}_{0}(U)$. This then completes the proof, since the embedding
\begin{equation*}
\mathring{W}^{1,2}(C(\partial M), g_{Cyl})\hookrightarrow L^{2}(C(\partial M), g_{Cyl})
\end{equation*}
is compact by the classical Rellich Lemma.

Now we prove the inequality $(\ref{eqn: cone cylinder inequality})$. For any $u\in C^{\infty}_{0}(U)$,
\begin{eqnarray*}
\|u\|^{2}_{W^{1, 2}_{1-\frac{n}{2}}(U, g_{0})}
& = &\int_{U}\left(\frac{1}{r^{2}}u^{2}+|\nabla u|^{2}_{g_{0}}\right)d\vol_{g_{0}}\\
& \geq & 
\int^{1}_{0}\int_{\partial M}\left(\frac{1}{r^{2}}u^{2}+\left|\frac{\partial}{\partial r}u\right|^{2}+|\nabla u|^{2}_{g_{\partial M}}\right)r^{f}d\vol_{g_{\partial M}}dr\\
& = & \int^{1}_{0}\int_{\partial M}\left(\frac{1}{r^{2}}\tilde{u}^{2}+\left|\frac{\partial}{\partial r}\tilde{u}\right|^{2}-\frac{f^{2}}{4}\frac{1}{r^{2}}\tilde{u}^{2}+|\nabla \tilde{u}|^{2}_{g_{\partial M}}\right)d\vol_{g_{\partial M}}dr\\
&   & -\, f\int_{\partial M}\int^{1}_{0}r^{f-1}u\left(\frac{\partial}{\partial r}u\right)drd\vol_{g_{\partial M}}\\
& = & \int^{1}_{0}\int_{\partial M}\left(\frac{1}{r^{2}}\tilde{u}^{2}+\left|\frac{\partial}{\partial r}\tilde{u}\right|^{2}-\frac{f^{2}}{4}\frac{1}{r^{2}}\tilde{u}^{2}+|\nabla \tilde{u}|^{2}_{g_{\partial M}}\right)d\vol_{g_{\partial M}}dr\\
&   & +\, \frac{f(f-1)}{2}\int^{1}_{1}\int_{\partial M}\frac{1}{r^{2}}\tilde{u}^{2}d\vol_{g_{\partial M}}dr\\
& = & \int_{C(\partial M)}\left(\frac{1}{r^{2}}\left(1+\frac{f(f-2)}{4}\right)\tilde{u}^{2}+|\nabla \tilde{u}|^{2}_{g_{Cyl}}\right)d\vol_{g_{Cyl}}\\
& \geq & \int_{C(\partial M)}\left(\frac{1}{r^{2}}\frac{3}{4}\tilde{u}^{2}+|\nabla \tilde{u}|^{2}_{g_{Cyl}}\right)d\vol_{g_{Cyl}}.\\
& \geq & \frac{3}{4}\|\tilde{u}\|^{2}_{W^{1,2}(C(\partial M), g_{Cyl})}.
\end{eqnarray*}
\end{proof}

Then by the asymptotic control for the metric $g$ on the collar neighborhood $U$ in condition $(3)$ of Definition $\ref{defn: ManifoldWithNonisolatedConicalSingularities}$, on a sufficiently small collar neighborhood $U_{\epsilon} = (0, \epsilon) \times \partial M$, metrics $g$ and $g_0$ are quasi-isometric to each other, and so the corresponding weighted Sobolev norms are equivalent. Thus, Lemma $\ref{lem: compactness on cones}$ implies the following compact embedding property.

\begin{prop}\label{prop: compactness on manifolds}
Let $(M^n, g)$ be a compact manifold with non-isolated conical singularities as in Definition $\ref{defn: ManifoldWithNonisolatedConicalSingularities}$.
The continuous embedding
\begin{equation*}
i:W^{k, 2}_{k-\frac{n}{2}}(M)\hookrightarrow L^{2}(M)
\end{equation*}
is compact for each $k\in\mathbb{N}$.
\end{prop}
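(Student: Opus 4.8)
The plan is to deduce Proposition~\ref{prop: compactness on manifolds} from Lemma~\ref{lem: compactness on cones} by a standard localization argument, splitting $M$ into the singular collar piece and the smooth compact-with-boundary interior piece, and then patching the two compactness statements together. First I would fix a cutoff function $\varphi \in C^{\infty}(M)$ with $\varphi \equiv 1$ on a small collar $U_{\epsilon} = (0,\epsilon)\times\partial M$ and $\varphi \equiv 0$ outside $U_{2\epsilon}$, chosen so that $\epsilon$ is small enough that the asymptotic control \eqref{eqn: metric asymptotic control} in Definition~\ref{defn: ManifoldWithNonisolatedConicalSingularities} forces $g$ and $g_{0}$ to be uniformly quasi-isometric on $U_{2\epsilon}$; this uniform quasi-isometry (together with control on the first covariant derivatives of the difference tensor, again from \eqref{eqn: metric asymptotic control}) ensures that the weighted Sobolev norms $\|\cdot\|_{W^{k,2}_{k-\frac{n}{2}}(U_{2\epsilon}, g)}$ and $\|\cdot\|_{W^{k,2}_{k-\frac{n}{2}}(U_{2\epsilon}, g_{0})}$ are equivalent, and likewise the $L^{2}(U_{2\epsilon},g)$ and $L^{2}(U_{2\epsilon},g_{0})$ norms are equivalent, by comparing volume forms and inner products pointwise.

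The core of the argument is then the following. Let $\{u_{m}\}$ be a bounded sequence in $W^{k,2}_{k-\frac{n}{2}}(M)$; I want a subsequence converging in $L^{2}(M)$. Write $u_{m} = \varphi u_{m} + (1-\varphi)u_{m}$. For the first piece, $\varphi u_{m}$ is supported in $U_{2\epsilon}$ and, using the product rule, $|\nabla \varphi|_{g} \le C$ on the compact region $\operatorname{supp}(d\varphi)$ (which stays away from the singular set), so $\{\varphi u_{m}\}$ is bounded in $W^{k,2}_{k-\frac{n}{2}}(U_{2\epsilon}, g)$, hence in $W^{k,2}_{k-\frac{n}{2}}(U_{2\epsilon}, g_{0})$ by the norm equivalence above; Lemma~\ref{lem: compactness on cones} yields a subsequence converging in $L^{2}(U_{2\epsilon}, g_{0})$, hence in $L^{2}(M, g)$. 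For the second piece, $(1-\varphi)u_{m}$ is supported in $M\setminus U_{\epsilon}$, which is a smooth compact manifold with boundary on which the weight $\chi$ is bounded above and below, so the weighted norm is equivalent to the ordinary $W^{k,2}$ norm there; $\{(1-\varphi)u_{m}\}$ is therefore bounded in $W^{k,2}(M\setminus U_{\epsilon})$ and the classical Rellich--Kondrachov theorem gives a further subsequence converging in $L^{2}(M\setminus U_{\epsilon})$. A diagonal argument produces a single subsequence along which both pieces converge in $L^{2}$, hence $u_{m}$ converges in $L^{2}(M)$. Continuity of the embedding is immediate from the definition of the weighted norm \eqref{eqn: WSN} together with $0 < \chi^{-1} \le 1$, which gives $\|u\|_{L^{2}(M)} = \|\chi^{0}|\nabla^{0}u|\|_{L^{2}} \le \|u\|_{W^{k,2}_{k-\frac{n}{2}}(M)}$ after checking the exponent $p(\delta - 0) + n = 2(k-\tfrac{n}{2}) + n = 2k \ge 0$, so the weight $\chi^{2k} \ge$ (a positive constant) is harmless away from the singularity and $\le 1$ is not what we need here — rather one simply notes the $i=0$ term in the sum is $\chi^{2k}u^{2}$ which dominates $u^{2}$ only where $\chi \ge 1$; a cleaner route is to observe $W^{k,2}_{k-\frac{n}{2}}(M) \subset W^{0,2}_{-\frac{n}{2}}(M) = L^{2}(M)$ with continuous inclusion via the iterated weighted Sobolev embedding of Proposition~\ref{prop: weighted-Sobolev-inequality} (or directly from Lemma~\ref{lem: Sobolev-weighted-Sobolev-equivalence} in the unweighted-equivalent regime), which I would cite rather than re-derive.

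The main obstacle is genuinely the comparison step near the singular set: one must be careful that the asymptotic control \eqref{eqn: metric asymptotic control}, which is phrased in terms of $g_{0}$-norms of covariant derivatives of $h$, really does translate into equivalence of the \emph{$k$-th order} weighted Sobolev norms and not merely the $L^{2}$ or $W^{1,2}$ norms — this requires commuting $\nabla_{g}$ and $\nabla_{g_{0}}$ and absorbing the difference of Christoffel symbols, whose $g_{0}$-size is $O(r^{\alpha-1})\cdot r^{-1}$-type and is therefore controlled by the weight $\chi$ for $\epsilon$ small. Since the proposition is stated for all $k$ but only $k=1$ is needed downstream (for Theorem~\ref{thm: spectrum property}, which uses $W^{1,2}_{1-\frac{n}{2}}$), I would either restrict attention to the quantitative details for $k=1$ and remark that the higher-$k$ case is identical in spirit, or handle general $k$ by the monotonicity observation $\|u\|_{W^{k,2}_{k-n/2}} \ge \|u\|_{W^{1,2}_{1-n/2}}$ (already noted at the start of the proof of Lemma~\ref{lem: compactness on cones}) so that boundedness in the higher space implies boundedness in $W^{1,2}_{1-n/2}$, reducing everything to the $k=1$ compactness.
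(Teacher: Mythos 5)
Your proposal is correct and follows essentially the same route as the paper: use the quasi-isometry of $g$ and $g_0$ on a small collar $U_{\epsilon}$ (guaranteed by the asymptotic control \eqref{eqn: metric asymptotic control}) to transfer the compactness of Lemma~\ref{lem: compactness on cones}, and patch with classical Rellich--Kondrachov on the smooth interior. The paper's proof is terser and leaves the cutoff/patching step and the continuity of the embedding implicit, but what you spell out is exactly what is meant there.
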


We end this subsection by stating another compact weighted Sobolev embedding that will be used in proving the existence of a minimizer of $W$-functional.

\begin{prop}\label{prop: compact-weighted-embedding2}
Let $(M^{n}, g)$ be a compact Riemannian manifold with non-isolated conical singularities. The embedding $W^{1, 1}_{1-n}(M) \subset L^{q}(M)$ is compact for any $1 \leq q \leq \frac{n}{n-1}$.
\end{prop}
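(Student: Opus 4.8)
The plan is to establish this compact embedding by relating it to the unweighted setting via the weight function $\chi$ and then invoking a standard Rellich-type argument on the collar together with the ordinary Rellich theorem on the compact part $M \setminus U$. First I would note that for $u \in C^\infty_0(\mathring M)$ the quantity $\|u\|_{W^{1,1}_{1-n}(M)}$ is, by $(\ref{eqn: WSN})$ with $p=1$, $k=1$, $\delta = 1-n$, comparable to $\int_M \left( \chi^{-n}|\nabla u|_g + \chi^{1-n}|u| \right) d\vol_g$; near the singular set $\chi = 1/r$, so this is $\int_U \left( r^n |\nabla u|_{g_0} + r^{n-1}|u|\right) d\vol_{g_0}$ up to quasi-isometry constants (using condition (3) of Definition \ref{defn: ManifoldWithNonisolatedConicalSingularities} on a small collar, as in the passage preceding Proposition \ref{prop: compactness on manifolds}), while away from the singular set it is an ordinary $W^{1,1}$ norm. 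Since $d\vol_{g_0} \approx r^f\, dr\, d\vol_{g_{\partial M}}$ on the collar and $r^n \leq r^{f}$ there (as $n \geq f$), the weighted norm controls the ordinary $W^{1,1}$ norm on any region $\{r > \epsilon_0\}$.

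The key step is a change of variables on the collar $U_\epsilon = (0,\epsilon)\times \partial M$ turning the $g_0$-geometry into cylindrical geometry, exactly parallel to the substitution $\tilde u = r^{f/2} u$ used in the proof of Lemma \ref{lem: compactness on cones}, but now adapted to $L^1$. Here I would set $s = \ln r$ (or a suitable power substitution) so that the degenerate measure and weights become the standard cylinder measure; under such a change the $W^{1,1}_{1-n}$ norm on $U_\epsilon$ dominates a genuine $W^{1,1}$ norm on a half-cylinder $(-\infty, \ln\epsilon)\times \partial M$, on which one has both the $W^{1,1} \hookrightarrow L^{n/(n-1)}$ Sobolev embedding (via Proposition \ref{prop: weighted-Sobolev-inequality}, or its unweighted precursor Proposition \ref{prop: SobolevInequality}(1) with $p=1$, $l=0$, $k=1$) and — after combining with an $L^1$ bound — control of the mass escaping to $s \to -\infty$. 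Concretely, given a bounded sequence $(u_j)$ in $W^{1,1}_{1-n}(M)$, I would first extract (by Sobolev embedding, Proposition \ref{prop: weighted-Sobolev-inequality}) a subsequence bounded in $L^q(M)$, $q = n/(n-1)$, hence weakly convergent there; to upgrade to strong $L^q$ convergence for $1 \le q \le n/(n-1)$ I would use a cutoff $\varphi_\epsilon$ supported in $U$ with $\varphi_\epsilon \equiv 1$ on $U_{\epsilon}$, split $u_j = \varphi_\epsilon u_j + (1-\varphi_\epsilon)u_j$, apply the ordinary Rellich theorem on the smooth compact manifold-with-boundary $\mathrm{supp}(1-\varphi_\epsilon)$ to the second piece, and bound the $L^q$ norm of the first piece uniformly by $C\epsilon^\beta \|u_j\|_{W^{1,1}_{1-n}}$ for some $\beta > 0$ (a tail estimate coming from the weight gap, since $\frac{1}{q} = 1 - \frac1n > \frac1p - \frac1n$ strictly leaves room, exactly as interpolation between $L^1$ and $L^{n/(n-1)}$ against a vanishing-measure tail).

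The main obstacle I anticipate is the tail estimate on the collar in the borderline case $q = n/(n-1)$: unlike the $L^2$ situation of Lemma \ref{lem: compactness on cones}, where one had a spectral-gap-type improvement $(1 + f(f-2)/4 \geq 3/4)$ giving an honest cylinder $H^1$ bound, for $p=1$ there is no Hilbert-space structure and the Hardy inequality $(\ref{eqn: cone-Hardy-inequality})$ degenerates when the exponent equals $f$. I expect one must instead exploit that $\frac{1}{q} = \frac1p - \frac1n$ is the \emph{endpoint} exponent and that on $U_\epsilon$ the weighted measure of the region contributes a factor that is small with $\epsilon$; more precisely, using the scale-broken interpolation $\|\varphi_\epsilon u_j\|_{L^q(U_\epsilon)} \le \|\varphi_\epsilon u_j\|_{L^{n/(n-1)}(U_\epsilon)}^{1-\theta}\|\varphi_\epsilon u_j\|_{L^1(U_\epsilon)}^{\theta}$ and noting $\|\varphi_\epsilon u_j\|_{L^1(U_\epsilon)} \le \epsilon \|u_j\|_{W^{1,1}_{1-n}}$ (since $\chi^{1-n} \le r^{n-1} \le r$ forces an extra power of $r \le \epsilon$ relative to the unweighted $L^1$ norm), one gets the required $\epsilon^\theta$ decay for $1 \le q < n/(n-1)$, and the endpoint $q = n/(n-1)$ then follows by a diagonal argument combined with the weak $L^{n/(n-1)}$ compactness already secured from Proposition \ref{prop: weighted-Sobolev-inequality}. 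Carefully tracking which power of $r$ is gained in each term of the weighted norm, and verifying the change-of-variables constants near $r=0$, is where the real work lies; everything else is the familiar Rellich-via-cutoff template.
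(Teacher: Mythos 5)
Two things need fixing. First, a computational error: plugging $p=1$, $k=1$, $\delta = 1-n$ into $(\ref{eqn: WSN})$ gives weight exponents $p(\delta-i)+n = (1-n-i)+n = 1-i$, so the norm is $\int_M\bigl(\chi|u| + |\nabla u|\bigr)\,d\vol_g$, not $\int_M\bigl(\chi^{1-n}|u| + \chi^{-n}|\nabla u|\bigr)\,d\vol_g$ as you wrote (you dropped the ``$+n$''). This matters because your stated reason for the tail estimate --- that ``$\chi^{1-n}\le r^{n-1}\le r$ forces an extra power of $r$ relative to the unweighted $L^1$ norm'' --- is backwards: the weight $r^{n-1}$ makes the integral \emph{smaller} than $\int|u|$ on the collar, so boundedness of $\int r^{n-1}|u|$ would not control $\int|u|$. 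With the correct exponent the bound you want is immediate: $\chi = 1/r$ and $r<\epsilon$ on $U_\epsilon$ give $\int_{U_\epsilon}|u| \le \epsilon\int_{U_\epsilon}\chi|u| \le \epsilon\,\|u\|_{W^{1,1}_{1-n}(M)}$. Once this is in hand, your interpolation gives $\|\varphi_\epsilon u_j\|_{L^q(U_\epsilon)} \le C\epsilon^{\theta}\|u_j\|_{W^{1,1}_{1-n}}$ with $\theta>0$ for every $q<\frac{n}{n-1}$, and the cutoff/Rellich scheme does close the argument in that range; this is essentially the structure the paper points to (Proposition 3.5 of \cite{DW20} patched along the finite open cover of Lemma \ref{lem: cone-Sobolev-inequality}). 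A side remark: for $L^1$ the natural unitary to the cylinder is $\tilde u = r^f u$, not $r^{f/2}u$; but as written you never actually use the cylinder reduction, so you can drop it.

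Second, the endpoint $q=\frac{n}{n-1}$ does not come for free and your ``diagonal argument combined with weak $L^{n/(n-1)}$ compactness'' is a genuine gap: at $q=\frac{n}{n-1}$ the interpolation exponent is $\theta=0$, so the tail term is not improved, and the ordinary Rellich step you invoke on the compact part is also not compact at the critical exponent; weak compactness of a bounded set in reflexive $L^q$ is automatic and never upgrades to strong convergence by itself. In fact the embedding $W^{1,1}_{1-n}(M)\hookrightarrow L^{n/(n-1)}(M)$ cannot be compact: with $\phi\in C^\infty_c(B_1)$, $\phi\not\equiv0$, set $\phi_j(y)=j^{\,n-1}\phi(jy)$ in a coordinate ball inside the smooth interior, where $\chi$ is bounded. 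Then $\|\phi_j\|_{W^{1,1}_{1-n}(M)}$ is bounded, $\phi_j\to0$ in $L^1$, yet $\|\phi_j\|_{L^{n/(n-1)}}$ stays bounded away from $0$, so no subsequence converges strongly in $L^{n/(n-1)}$. Your argument --- and the proposition's use in \S\ref{section: W-functional} --- should therefore be restricted to $1\le q<\frac{n}{n-1}$, and you should not attempt to reach the endpoint.
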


This proposition can be derived similarly as in the proof of Proposition 3.5 in \cite{DW20}, except here we need to use the finite open cover as in the proof of Lemma $\ref{lem: cone-Sobolev-inequality}$ for a collar neighborhood of $\partial M$.
Thus we omit the proof.


\subsection{Weighted elliptic estimate}\label{subsect: weighted elliptic estimate}
In this subsection, we establish a weighted elliptic estimate on compact manifolds with non-isolated conical singularities in Proposition $\ref{prop: weighted elliptic estimate}$. This will be used in the derivation of asymptotic behavior of eigenfunctions of Schr\"odinger operator $L:=-4\Delta+R$ and the minimizer of W-functional. 

\begin{prop}\label{prop: weighted elliptic estimate}
Let $(M, g)$ be a compact manifold with non-isolated conical singularities as in Definition $\ref{defn: ManifoldWithNonisolatedConicalSingularities}$. 
If $u\in W^{k+1, 2}_{\delta}(M) \cap C^{k+2}(\mathring{M})$ and $Lu\in W^{k, 2}_{\delta-2}(M)$, then
\begin{equation}\label{eqn: weighted elliptic estimate}
\|u\|_{W^{k+2, 2}_{\delta}(M)} \leq C \left( \|Lu\|_{W^{k, 2}_{\delta-2}(M)} + \|u\|_{W^{k+1, 2}_{\delta}(M)} \right)
\end{equation}
holds for some constant $C=C(M, k, \delta)$ independent of function $u$.
\end{prop}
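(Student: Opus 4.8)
The plan is to prove the weighted elliptic estimate \eqref{eqn: weighted elliptic estimate} by reducing to the standard interior elliptic estimate on dyadic annular pieces and then summing with the correct weights, combined with an integration-by-parts argument to handle the lowest-order terms where the scalar curvature $R_g$ appears with a singular $r^{-2}$ factor. First I would localize: cover the collar neighborhood $U$ by the dyadic shells $U_j := (2^{-j}, 2^{-j+1}) \times \partial M$ (slightly fattened to overlapping shells $\widetilde U_j$), and cover $M \setminus U$ by finitely many smooth coordinate charts. On $M \setminus U$ the operator $L = -4\Delta_g + R_g$ is a genuine smooth elliptic operator and \eqref{eqn: weighted elliptic estimate} is just the classical $L^2$ elliptic estimate (with $\chi$ bounded above and below there). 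On each shell $\widetilde U_j$, rescale by $r \mapsto 2^{-j} r$ so that the shell becomes a fixed-size region; under this rescaling the metric $g_0 = dr^2 + r^2\hat g + \pi^*\check g$ does \emph{not} become uniformly comparable across $j$ because of the $\pi^*\check g$ term — this is exactly the obstruction noted in the introduction that kills the naive scaling technique. So rather than rescaling, I would instead directly write out $L$ in the $(r,\text{fiber},\text{base})$ coordinates and use the explicit structure: $\Delta_{g_0} = \partial_r^2 + \frac{f}{r}\partial_r + \frac{1}{r^2}\Delta_{\hat g} + \Delta_{\check g} + (\text{lower order})$, observe that $L$ has the form of a "$b$-type" (totally characteristic up to the conformal factor $r^{-2}$) operator, and apply the standard elliptic estimate of Proposition \ref{prop: SobolevInequality}-type reasoning on each fixed shell with constants uniform in $j$ because the transition functions and the metric coefficients (in the natural trivialization of the submersion used in Lemma \ref{lem: cone-Sobolev-inequality}) are uniformly controlled.

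Concretely, the key steps in order: (1) Reduce to the collar $U$ via a partition of unity, the non-collar part being classical. (2) On the collar, insert the weight: set $v := \chi^{\delta - k + \frac{n}{2}} \,\nabla^{(\cdot)} u$ or, more cleanly, work shell-by-shell and note that on $\widetilde U_j$ the weight $\chi \asymp 2^j$ is essentially constant, so $\|u\|_{W^{k+2,2}_\delta(\widetilde U_j)} \asymp (2^j)^{\delta + \frac{n}{2}} \sum_{i\le k+2} (2^j)^{-i}\|\nabla^i u\|_{L^2(\widetilde U_j)}$. (3) On each $\widetilde U_j$ apply the interior elliptic estimate for $L$ — here I use that, after the coordinate description above, $L$ differs from $-4r^{-2}(\text{uniformly elliptic operator in rescaled variables})$ by terms whose coefficients, together with all derivatives up to order $k$, are bounded uniformly in $j$ (this uses the curvature asymptotics \eqref{eqn: metric asymptotic control} and the scalar curvature formula \eqref{eqn: scalar curvature formula}, which gives $R_{g_0} = O(r^{-2})$ precisely matching the operator's natural weight). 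This yields $\|u\|_{W^{k+2,2}_\delta(\widetilde U_j)} \le C\big( \|Lu\|_{W^{k,2}_{\delta-2}(\widetilde U_j)} + \|u\|_{W^{k+1,2}_\delta(\widetilde U_j)}\big)$ with $C$ independent of $j$. (4) Square and sum over $j$; since each point lies in a bounded number of the fattened shells $\widetilde U_j$, the sum of the right-hand sides is $\le C\big(\|Lu\|_{W^{k,2}_{\delta-2}(M)}^2 + \|u\|_{W^{k+1,2}_\delta(M)}^2\big)$, giving \eqref{eqn: weighted elliptic estimate}. (5) Justify all manipulations first for $u \in C_0^\infty(\mathring M)$, then extend to $u \in W^{k+1,2}_\delta(M) \cap C^{k+2}(\mathring M)$ with $Lu \in W^{k,2}_{\delta-2}(M)$ by an approximation/cutoff argument: cut off near $\partial M$, apply the estimate, and let the cutoff radius tend to $0$, using the membership hypotheses and dominated convergence to pass to the limit.

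The main obstacle I anticipate is step (3): making precise the claim that the constant in the per-shell interior elliptic estimate is uniform in $j$. The difficulty is genuinely the inhomogeneity of $g_0$ — one cannot simply rescale, so one must either (a) argue via the "b-calculus"/edge-calculus picture that $r^2 L$ is, up to uniformly bounded perturbations, an elliptic edge operator with coefficients and all derivatives uniformly bounded on the shells, and invoke a uniform elliptic estimate for such families; or (b) do the integration by parts by hand on the model operator $-4(\partial_r^2 + \frac{f}{r}\partial_r + \frac{1}{r^2}\Delta_{\hat g} + \Delta_{\check g}) + R_{g_0}$, controlling the cross terms between the $r^{-2}\Delta_{\hat g}$ block and the $\Delta_{\check g}$ block using Cauchy–Schwarz with weights, and then treat the error $g - g_0 = h$ with $|r^k \nabla^k_{g_0} h|_{g_0} = O(r^\alpha)$ as a genuinely small perturbation absorbed into the $\|u\|_{W^{k+1,2}_\delta}$ term on the right. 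Approach (b) is more self-contained and matches the paper's stated strategy ("doing integration by parts, using the asymptotic control of Riemannian curvature tensor near the conical singularities, and an approximation argument"), so that is the route I would pursue, with the bookkeeping of the weighted integration-by-parts identities being the most technical part.
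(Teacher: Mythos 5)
Your plan correctly identifies the paper's actual strategy (which you even quote: weighted integration by parts, using the asymptotic control of the curvature tensor, plus a cutoff/approximation argument), and your overall structure is sound. The packaging is different: the paper does a single global weighted integration by parts on the collar, inserting the weight $(1/r)^{2(\delta-k-2)+n}$ directly into the $L^2$ pairing, commuting covariant derivatives, and controlling the resulting weight-derivative and curvature terms via $|\nabla^i \mathrm{Rm}| \le C_i r^{-2-i}$; whereas you first reduce to a per-shell estimate on dyadic annuli with $\chi \asymp 2^j$ essentially constant, establish that per-shell estimate by integration by parts, and then sum. These are equivalent bookkeeping schemes (the cutoff-derivative terms on your shells, $|\nabla\eta_j| \sim 2^j$, play exactly the role of the weight-derivative terms in the global computation), and your version is a bit more cumbersome because of the overlap/summation step, but it does work and you correctly pinpoint the uniformity-in-$j$ question as the place where the naive scaling argument dies. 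One slip worth flagging: when you propose doing the computation for the model operator and treating $g - g_0 = h$ as a perturbation, you say the error can be ``absorbed into the $\|u\|_{W^{k+1,2}_\delta}$ term on the right.'' That is not quite right. The difference $\Delta_g - \Delta_{g_0}$ is a second-order operator with $O(r^\alpha)$-small coefficients, so $r^2(\Delta_g - \Delta_{g_0})u$ contributes a \emph{top-order} term $\lesssim \epsilon^\alpha \|u\|_{W^{k+2,2}_\delta}$ on a collar $(0,\epsilon)\times\partial M$; it must be absorbed into the left-hand side after choosing $\epsilon$ small, not into the lower-order right-hand term. The paper's published proof avoids this perturbation step entirely by integrating by parts with respect to $g$ itself and invoking the $r^{-2-i}$ curvature bound, which follows from the decay assumption \eqref{eqn: metric asymptotic control} on $h$; your perturbative variant also works, but the absorption has to go to the left and requires the small-collar restriction (the extension to all of $M$ is then a separate, easy patching with the interior estimate). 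Your Step (5) approximation argument matches the paper's Step 2.
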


\begin{remark} 
{\rm 
Note that this is slightly weaker than the usual elliptic estimate but it suffices for our purpose.
}
\end{remark}




\begin{proof}
Clearly, it suffices to prove that there exists a constant $C= C(M, k, \delta)$ such that
\begin{equation}\label{eqn: laplace elliptic estimate}
\|u\|_{W^{k+2, 2}_{\delta}(M)} \leq C_{k} \left( \|\Delta u\|_{W^{k, 2}_{\delta-2}(M)}+ \|u\|_{W^{k+1, 2}_{\delta}(M)} \right)
\end{equation}
holds for any $u \in W^{k+1, 2}_\delta (M) \cap C^{k+2}(\mathring{M})$ with $\Delta u \in W^{k, 2}_{\delta -2}(M)$, as
\begin{equation}\label{eqn: laplace L estimate}
\begin{aligned}
\|\Delta u\|_{W^{k, 2}_{\delta-2}(M, g)}
& \leq  \frac{1}{4}\|Lu\|_{W^{k, 2}_{\delta-2}(M)}+\frac{1}{4}\|Ru\|_{W^{k, 2}_{\delta-2}(M, g)} \\
& \leq   C \left( \|Lu\|_{W^{k, 2}_{\delta-2}(M, g)} + \|u\|_{W^{k, 2}_{\delta}(M, g)} \right).
\end{aligned}
\end{equation}
To prove the inequality $(\ref{eqn: laplace elliptic estimate})$, we only need to show it for the collar neighborhood $U=(0, 1) \times \partial M$, 
that is
\begin{equation}\label{eqn: cone laplace elliptic estimate}
\|u\|_{W^{k+2, 2}_{\delta}(U, g)} \leq C_{k} \left( \|\Delta u\|_{W^{k, 2}_{\delta-2}(U, g)}+ \|u\|_{W^{k+1, 2}_{\delta}(U, g)} \right),
\end{equation}
since the estimate on the smooth interior part $M \setminus U$ is classical.

In the rest of the proof, we prove the inequality $(\ref{eqn: cone laplace elliptic estimate})$ by two steps. In the first step, we prove the inequality $(\ref{eqn: cone laplace elliptic estimate})$ for $u\in C^{\infty}_{0}(U)$. This is done by direct computation, using integration by parts as well as commuting the derivatives. In the second step, we prove the inequality $(\ref{eqn: cone laplace elliptic estimate})$ for functions $u\in  C^{k+2}(U)\cap W^{k+2,2}_{\delta}(U, g)$ with $\Delta u\in W^{k,2}_{\delta-2}(U, g)$, by doing a cut-off and an approximation argument.

{\bf Step 1}. Let $u\in C^{\infty}_{0}(U)$. We have:
\begin{eqnarray*}
     \|u\|^{2}_{W^{k+2, 2}_{\delta}(U, g)} 
    & = & \int_{U}\left(\frac{1}{r}\right)^{2(\delta-k-2)+n}\sum^{n}_{i_{1}, \cdots, i_{k+2}=1}(\nabla_{i_{1}}\cdots\nabla_{i_{k+2}}u)(\nabla_{i_{1}}\cdots\nabla_{i_{k+2}}u) d\vol_g \\
   & &   + \|u\|^{2}_{W^{k+1, 2}_{\delta}(U, g)}.
\end{eqnarray*}
Here $\nabla_{i_{j}}$ denotes the covariant derivative $\nabla_{e_{i_{j}}}$ with respect to the metric $g$, where $\left\{e_{1}, e_{2}, \cdots, e_{n}\right\}$ is a local orthornormal frame on $(U, g)$. By doing the integration by parts for the integral in the first term on the right hand side, one obtains
\begin{eqnarray*}
   & & \|u\|^{2}_{W^{k+2, 2}_{\delta}(U, g)} \\
   & = &  -\int_{U} \sum^{n}_{i_{1}, \cdots, i_{k+2}=1} \nabla_{i_{1}}\left(\frac{1}{r}\right)^{2(\delta-k-2)+n}(\nabla_{i_{1}}\cdots\nabla_{i_{k+2}}u)(\nabla_{i_{2}}\cdots\nabla_{i_{k+2}}u) d\vol_{g} \\
   & &  -\int_{U}\left(\frac{1}{r}\right)^{2(\delta-k-2)+n} \sum^{n}_{i_{1}, \cdots, i_{k+2}=1} (\nabla_{i_{1}}\nabla_{i_{1}}\nabla_{i_{2}}\cdots\nabla_{i_{k+2}}u)(\nabla_{i_{2}}\cdots\nabla_{i_{k+2}}u) d\vol_{g}\\
   & &  + \int_U \left(\frac{1}{r}\right)^{2(\delta-k-2)+n} \sum^{n}_{i_{1}, \cdots, i_{k+2}=1} (\nabla_{\nabla_{e_{i_{1}}}e_{i_{1}}}\nabla_{i_{2}}\cdots\nabla_{i_{k+2}}u)(\nabla_{i_{2}}\cdots\nabla_{i_{k+2}}u) d\vol_{g} \\
  &  & + \|u\|^{2}_{W^{k+1, 2}_{\delta}(U, g)}.
\end{eqnarray*}
Then one can switch $\nabla_{i_{1}}$ with $\nabla_{i_{2}}, \cdots, \nabla_{i_{k+2}}$ in the second term on the right hand side to obtain $\nabla_{i_{2}}\cdots\nabla_{i_{k+2}}(\Delta u)$, with some additional terms involving the Riemann curvature tensor and its derivatives. Note that the $i^{th}$ derivatives of curvature can be controlled by $C_{i}\left(\frac{1}{r}\right)^{2+i}$ for some constant $C_{i}$. So by Cauchy-Schwarz inequality, one has
\begin{equation}
\begin{aligned}
   & \|u\|^{2}_{W^{k+2, 2}_{\delta}(U, g)} 
   \leq  \frac{1}{4}\|u\|^{2}_{W^{k+2, 2}_{\delta}(U, g)} + C \|u\|^{2}_{W^{k+1, 2}_{\delta}(U, g)} \\
   & \qquad  -\int_{U}\left(\frac{1}{r}\right)^{2(\delta-k-2)+n} \sum^{n}_{i_{2}, \cdots, i_{k+2}=1} (\nabla_{i_{2}}\cdots\nabla_{i_{k+2}}(\Delta u))(\nabla_{i_{2}}\cdots\nabla_{i_{k+2}}u) d\vol_{g}.
\end{aligned}
\end{equation}
Again, integrating by parts for the third term on the right hand side and switching the order of covariant derivatives, 
one obtains similarly
\begin{equation}
\begin{aligned}
  & \qquad  \|u\|^{2}_{W^{k+2, 2}_{\delta}(U, g)} \\
  & \leq  \frac{1}{2}\|u\|^{2}_{W^{k+2, 2}_{\delta}(U, g)} + C \|u\|^{2}_{W^{k+1, 2}_{\delta}(U, g)} \\
  &  \qquad  +\int_{U}\left(\frac{1}{r}\right)^{2(\delta-k-2)+n} \sum^{n}_{i_{3}, \cdots, i_{k+2}=1} (\nabla_{i_{3}}\cdots\nabla_{i_{k+2}}(\Delta u))(\nabla_{i_{3}}\cdots\nabla_{i_{k+2}}(\Delta u)) d\vol_{g}\\
  & \leq  \frac{1}{2}\|u\|^{2}_{W^{k+2, 2}_{\delta}(U, g)} + C \|u\|^{2}_{W^{k+1, 2}_{\delta}(U, g)} + \|\Delta u\|_{W^{k, 2}_{\delta-2}(U, g)}.
\end{aligned}
\end{equation}
Finally, by rearranging the inequality, one obtains
\begin{equation}
\|u\|_{W^{k+2, 2}_{\delta}(U, g)} \leq C \left( \|\Delta u\|_{W^{k, 2}_{\delta-2}(U, g)} + \|u\|_{W^{k+1, 2}_{\delta}(U, g)} \right),
\end{equation}
for some constant $C$ and any $u\in C^{\infty}_{0}(U)$.

{\bf Step 2}. 
Take a cut-off function $\varphi(r): (0, +\infty) \rightarrow [0, 1]$ such that
\begin{equation}
\varphi(r)=\begin{cases} 0, & r\leq 1, \\ 1, & r\geq  2, \end{cases}
\end{equation}
$0\leq \varphi \leq 1$ on $(1, 2)$, and $|\varphi^{(k)}|\leq C_{k}$ for each $k\in\mathbb{N}$ and some constant $C_{k}$.

For each $i\in \mathbb{N}\cup\{0\}$, let $\varphi_{i}(r):=\varphi(2^{i+1}r)$. Then
\begin{equation}
\varphi_{i}(r)=\begin{cases} 0, & r\leq\left(\frac{1}{2}\right)^{i+1}, \\ 1, & r \geq \left(\frac{1}{2}\right)^{i}, \end{cases}
\end{equation}
$0\leq \varphi_{i}\leq 1$, on $(2^{-(i+1)}, 2^{-i})$, and 
\begin{equation*}
|\varphi^{(k)}_{i}(r)|\leq C_{k}\left(\frac{1}{r}\right)^{k},
\end{equation*}
for any $i, k\in \mathbb{N} \cup \{0\}$ and a constant $C_{k}$.

For any $u\in C^{k+2}(U)\cap W^{k+1,p}_{\delta}(U, g)$ with $\Delta u\in W^{k,2}_{\delta-2}(U, g)$, let $u_{i}=\varphi_{i}(r)u\in C^{k+2}_{0}(U)$. Then one can check that
\begin{equation*}
u_{i} \rightarrow u \ \ \text{in} \ \ W^{k+1, 2}_{\delta}(U, g), \quad \text{as} \ \ i\rightarrow \infty.
\end{equation*}
\begin{equation*}
\Delta u_{i}\rightarrow \Delta u \ \ \text{in} \ \ W^{k, 2}_{\delta-2}(U, g), \quad \text{as} \ \ i\rightarrow \infty.
\end{equation*}

Consequently, we obtain that the inequality (\ref{eqn: cone laplace elliptic estimate}) holds for any $u \in C^{k+2}(U) \cap W^{k+1, 2}_\delta(U, g)$ with $\Delta u \in W^{k, 2}_{\delta -2}(U, g)$, since in step 1 we have shown that it holds for any $u \in C^\infty_0(U, g)$. This completes the proof of the proposition.
\end{proof}


\section{Spectrum of $-4\Delta +R$ on compact manifolds with non-isolated conical singularities}\label{section: spectrum}
\noindent In this section, we study the spectrum of the Schr\"odinger operator $-4\Delta+R$ on compact Riemannian manifolds with non-isolated conical singularities. First we obtain a semi-boundedness estimate for the operator $-4\Delta_{g_0}+R_{g_0}$ on a collar neighborhood of $\partial M$ in Lemma $ \ref{lem: cone semiboundedness estimate}$. This then implies a similar estimate for the operator $-4\Delta_g+R_g$ in Proposition \ref{prop: semiboundedness estimate}. Finally, we obtain the spectral properties of the operator $-4\Delta+R$ on compact manifolds with non-isolated conical singularities in Theorem $\ref{thm: spectrum property}$.

In the following, we still set $L=-4\Delta_{g_0}+R_{g_0}$. 

\begin{lem}\label{lem: cone semiboundedness estimate}
Let $(M^n, g)$ be a compact manifold with non-isolated conical singularities defined as in Definition $\ref{defn: ManifoldWithNonisolatedConicalSingularities}$ satisfying $\min\limits_{\partial M}\{R_{\hat{g}}\}>(f-1)$. Then for a sufficiently small $\epsilon>0$, on the collar neighborhood $U_{\epsilon}=(0, \epsilon)\times \partial M$ of the singular set $\partial M$ with the model metric $g_{0}$, we have
\begin{equation*}
(Lu, u)_{L^{2}(U_{\epsilon}, g_{0})}\geq \delta_{0}\|u\|^{2}_{W^{1, 2}_{1-\frac{n}{2}}(U_{\epsilon}, g_{0})},
\end{equation*}
for all $u\in C^{\infty}_{0}(U_{\epsilon})$, and some $\delta_{0}>0$ depending only on $\min\limits_{\partial M}\{R_{\hat{g}}\}$ and $f$.
\end{lem}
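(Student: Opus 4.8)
The strategy is to reduce the estimate to the cylinder via the unitary substitution $\tilde u = r^{f/2} u$, exactly as in the proof of Lemma \ref{lem: compactness on cones}, and then use the hypothesis $\min_{\partial M}\{R_{\hat g}\} > f-1$ together with a Hardy-type inequality on the cylinder to absorb all the bad terms. First I would write out, for $u \in C^\infty_0(U_\epsilon)$,
\[
(Lu,u)_{L^2(U_\epsilon,g_0)} = \int_{U_\epsilon}\bigl(4|\nabla u|_{g_0}^2 + R_{g_0}u^2\bigr)\,d\vol_{g_0},
\]
and substitute the scalar curvature formula \eqref{eqn: scalar curvature formula}: $R_{g_0} = r^{-2}(R_{\hat g} - f(f-1)) + R_{\check g}\circ\pi - |A|^2 - |T|^2 - |N|^2 - \check\delta N$. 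The key observation is that the $r^{-2}$-singular part of $R_{g_0}$ carries the coefficient $R_{\hat g} - f(f-1)$, while the remaining terms are bounded on the compact $\partial M$ (here one must note that $\check\delta N$, though first-order, is a fixed smooth function on $\partial M$ pulled back, hence bounded); so for $r$ small the dominant behavior is governed by $r^{-2}$.

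Next, applying the gradient decomposition $|\nabla u|_{g_0}^2 = |\partial_r u|^2 + r^{-2}|\nabla_{\hat g} u|^2 + |\nabla_{\check g} u|^2$ and the substitution $\tilde u = r^{f/2}u$ as in Lemma \ref{lem: compactness on cones}, the quadratic form becomes, after integration by parts in $r$ (which converts the cross term $f\int r^{f-1}u\,\partial_r u$ into a multiple of $\int r^{-2}\tilde u^2$),
\[
(Lu,u) \;=\; \int_{C(\partial M)}\Bigl(4|\partial_r\tilde u|^2 + 4|\nabla_{g_\partial}\tilde u|^2 + \tfrac{1}{r^2}\bigl(R_{\hat g} - (f-1)\bigr)\tilde u^2\Bigr)\,d\vol_{g_{Cyl}} + (\text{bounded terms}),
\]
where the coefficient $R_{\hat g} - (f-1)$ results from combining $R_{\hat g} - f(f-1)$ with the $f(f-2)/4$ shift from the change of variable multiplied by $4$ (i.e. $-f(f-1) + f^2 - 2f\cdot(\text{from }4\cdot\frac{f(f-2)}{4})$ — the bookkeeping giving precisely $R_{\hat g}-(f-1)$). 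Using $\min_{\partial M}\{R_{\hat g}\} - (f-1) =: 2\kappa > 0$, this term is bounded below by $2\kappa \int r^{-2}\tilde u^2$, and after converting back ($\int r^{-2}\tilde u^2 = \int r^{-2}u^2 r^f = \int_{U_\epsilon} r^{-2}u^2\,d\vol_{g_0}$, up to the cylinder/cone volume identification) and combining with a fraction of the $4|\nabla u|^2$ term, one recovers a lower bound of the form $\delta_0\bigl(\int r^{-2}u^2 + \int|\nabla u|^2\bigr)$, which is exactly $\delta_0\|u\|^2_{W^{1,2}_{1-\frac n2}(U_\epsilon,g_0)}$ by \eqref{eqn: WSN}.

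The main obstacle is handling the bounded-but-not-necessarily-positive lower-order terms ($R_{\check g}\circ\pi - |A|^2 - |T|^2 - |N|^2 - \check\delta N$, plus the error from integrating by parts) uniformly on $U_\epsilon$. These are $O(1)$ as $r\to 0$, so on a sufficiently small collar $(0,\epsilon)\times\partial M$ they are dominated by a small multiple of the $r^{-2}$-term, which is where the freedom to shrink $\epsilon$ enters: one splits off, say, half of the positive $2\kappa\, r^{-2}\tilde u^2$ term and uses $\kappa\, r^{-2} \geq \kappa\,\epsilon^{-2} \geq \|\text{lower-order}\|_{C^0(\partial M)}$ once $\epsilon$ is small enough. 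A secondary technical point is that on the cylinder one does not have a Poincaré inequality controlling $\int \tilde u^2$ by $\int|\nabla\tilde u|^2$ directly (the cross-sections are fixed), but this is not needed here since the $W^{1,2}_{1-n/2}$-norm only involves $\int r^{-2}u^2$ and $\int|\nabla u|^2$, both of which appear explicitly on the right-hand side of the computation — the $r^{-2}$-Hardy weight does all the work. I would conclude by noting that $\delta_0$ depends only on $\kappa = \tfrac12(\min_{\partial M}\{R_{\hat g}\}-(f-1))$ and on $f$, as claimed.
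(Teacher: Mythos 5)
Your overall strategy --- substitute $\tilde u = r^{f/2}u$, integrate by parts in $r$, invoke the scalar-curvature formula \eqref{eqn: scalar curvature formula}, use the one-dimensional Hardy inequality on the radial derivative, and absorb the bounded lower-order terms by shrinking $\epsilon$ --- is exactly the paper's route, and you correctly identify $R_{\hat g}>(f-1)$ as the operative threshold. However, the displayed intermediate formula contains an inconsistency at the crux of the argument. After the substitution and integration by parts, the $r^{-2}\tilde u^2$-coefficient contributed by $4|\nabla u|^2$ and $R_{g_0}u^2$ is $4\cdot\frac{f(f-2)}{4}+\bigl(R_{\hat g}-f(f-1)\bigr)=R_{\hat g}-f$, \emph{not} $R_{\hat g}-(f-1)$; the extra $+1$ that upgrades this to $R_{\hat g}-(f-1)$ only appears when you apply the sharp one-dimensional Hardy inequality $\int|\partial_r\tilde u|^2\,dr\ge\frac14\int r^{-2}\tilde u^2\,dr$ to the radial term, which \emph{consumes} the $4|\partial_r\tilde u|^2$ term. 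Your display keeps $4|\partial_r\tilde u|^2$ \emph{and} shows the coefficient $R_{\hat g}-(f-1)$; these cannot both be true. And $R_{\hat g}-f$ may well be negative under the hypothesis (e.g.\ $R_{\hat g}=f-\tfrac12$), so the display as written does not prove positivity.

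What reconciles the $R_{\hat g}>(f-1)$ threshold with the need to retain a gradient term in the weighted Sobolev norm is the paper's $\delta_0$-splitting device: write $L=L_{\delta_0}+\delta_0(-\Delta_{g_0}+r^{-2})$, where $L_{\delta_0}=-(4-\delta_0)\Delta_{g_0}+R_{g_0}-\delta_0r^{-2}$. The second summand gives exactly $\delta_0\|u\|^2_{W^{1,2}_{1-n/2}}$ by \eqref{eqn: WSN}, and one proves $(L_{\delta_0}u,u)\ge 0$ by applying Hardy to the $(4-\delta_0)|\partial_r\tilde u|^2$ piece; the resulting $r^{-2}$-coefficient is $(4-\delta_0)\frac{(f-1)^2}{4}+R_{\hat g}-2\delta_0-f(f-1)$, which tends to $R_{\hat g}-(f-1)>0$ as $\delta_0\to0$ and hence is positive for $\delta_0$ small enough --- while the smallness of $\epsilon$ is what makes $\frac{\delta_0}{r^2}$ dominate the bounded O'Neill terms and $\check\delta N$. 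You do gesture at a split ("splits off, say, half of the positive term"), but only in the context of absorbing the bounded lower-order terms; the same splitting is what produces the gradient part of $\|u\|^2_{W^{1,2}_{1-n/2}}$. Once you make this bookkeeping explicit your sketch becomes a correct proof, matching the paper's.
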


\begin{proof}
Because $\min\limits_{\partial M}\{R_{\hat{g}}\}>(f-1)$, and
\begin{equation}
f(f-1)+2\delta-\frac{4-\delta}{4}(f-1)^{2} \rightarrow f-1, \ \ \text{as} \ \ \delta \to 0,
\end{equation}
there exists a sufficiently small $\delta_{0}>0$ such that
\begin{equation}
\min\limits_{\partial M}\{R_{\hat{g}}\} > f(f-1)+2\delta_{0}-\frac{4-\delta_{0}}{4}(f-1)^{2}.
\end{equation}
Then choose a sufficiently small $\epsilon$ such that on $U_{\epsilon}=(0, \epsilon)\times\partial M$,
\begin{equation} \label{control of R}
\frac{\delta_{0}}{r^{2}}+R_{\check{g}}\circ\pi-|A|^{2}-|T|^{2}-|N|^{2}-2\check{\delta}N>0.
\end{equation}

Set
\begin{equation}
L_{\delta_{0}}:=-(4-\delta_{0})\Delta_{g_{0}}+R_{g_{0}}-\frac{1}{r^{2}}\delta_{0}.
\end{equation}
Then we have
\begin{equation}
L=L_{\delta_{0}}-\delta_{0}\Delta_{g_{0}}+\frac{1}{r^{2}}\delta_{0},
\end{equation}
and for any $u\in C^{\infty}_{0}(U_{\epsilon})$,
\begin{eqnarray}
(Lu, u)_{L^{2}(U_{\epsilon}, g_{0})}
& = & \int_{U_{\epsilon}}(L_{\delta_{0}}u)ud\vol_{g_{0}}+\int_{U_{\epsilon}}\left((-\delta_{0}\Delta_{g_{0}}u)u
+\frac{1}{r^{2}}\delta_{0}u^{2}\right)d\vol_{g_{0}}  \nonumber \\
& = & \int_{U_{\epsilon}}(L_{\delta_{0}}u)ud\vol_{g_{0}}+\delta_{0}\int_{U_{\epsilon}}\left(|\nabla u|^{2}_{g_{0}}+\frac{1}{r^{2}}u^{2}\right)d\vol_{g_{0}} \nonumber \\
& = & (L_{\delta_{0}}u, u)_{L^{2}(U_{\epsilon}, g_{0})}+\delta_{0}\|u\|^{2}_{W^{1, 2}_{1-\frac{n}{2}}(U_{\epsilon}, g_{0})}.
\end{eqnarray}

Thus, it suffices to show that $(L_{\delta_{0}}u, u)_{L^{2}(U_{\epsilon}, g_{0})}\geq0$ to complete the proof. Indeed, we claim that:
\begin{equation}\label{eqn: L estimate claim}
(L_{\delta_{0}}u, u)_{L^{2}(U_{\epsilon}, g_{0})}\geq C\|u\|^{2}_{L^{2}(U_{\epsilon}, g_{0})},
\end{equation}
holds for any $u\in C^{\infty}_{0}(U_{\epsilon})$, where
\begin{equation}\label{eqn: L estimate C}
C=\min\left\{(4-\delta_{0})\frac{(f-1)^{2}}{4}+\min\limits_{\partial M}\{R_{\hat{g}}\}-2\delta_{0}-f(f-1), 1\right\}>0.
\end{equation}

In the rest of the proof, we prove the claim in $(\ref{eqn: L estimate claim})$.
We start with:
\begin{eqnarray}
&   & (L_{\delta_{0}}u, u)_{L^{2}(U_{\epsilon}, g_{0})} \nonumber \\
& = & \int_{U_{\epsilon}}\left((4-\delta_{0})|\nabla u|^{2}_{g_{0}}+\left(R_{g_{0}}-\frac{\delta_{0}}{r^{2}}\right)u^{2}\right)d\vol_{g_{0}} \nonumber \\
& \geq & \int_{U_{\epsilon}}\left((4-\delta_{0})\left|\frac{\partial}{\partial r}u\right|^{2}+\left(R_{g_{0}}-\frac{\delta_{0}}{r^{2}}\right)u^{2}\right)r^{f}d\vol_{g_{\partial M}}dr \label{eqn: L_delta estimate}  
\end{eqnarray}
Setting $\tilde{u}:=r^{\frac{f}{2}}u$, then the right hand side of $(\ref{eqn: L_delta estimate})$ can be rewritten as
\begin{eqnarray}
&  & \int^{\epsilon}_{0} \int_{\partial M}\left((4-\delta_{0})\left|\frac{\partial}{\partial r}\tilde{u}\right|^{2}-\frac{(4-\delta_{0})f^{2}}{4r^{2}}\tilde{u}^{2}+\left(R_{g_{0}}-\frac{\delta_{0}}{r^{2}}\right)\tilde{u}^{2}\right)drd\vol_{g_{\partial M}} \nonumber \\
&   &  -\, (4-\delta_{0})f\int_{\partial M}\int^{\epsilon}_{0}r^{f-1}\tilde{u}\left(\frac{\partial}{\partial r}\tilde{u}\right)drd\vol_{g_{\partial M}} \nonumber 
\end{eqnarray}
It simplifies via integration by parts to
\begin{eqnarray}
 &   & \int^{\epsilon}_{0}\int_{\partial M}\Bigg[(4-\delta_{0})\left|\frac{\partial}{\partial r}\tilde{u}\right|^{2}+(4-\delta_{0})\frac{f(f-2)}{4}\frac{1}{r^{2}}\tilde{u}^{2} \nonumber \\
&   & +\left(R_{g_{0}}-\frac{\delta_{0}}{r^{2}}\right)\tilde{u}^{2}\Bigg]d\vol_{g_{\partial M}}dr. \label{simplified}
\end{eqnarray}
Now apply the one dimensional Hardy inequality $(\ref{eqn: Hardy inequality})$ to the first term and use the formula (\ref{eqn: scalar curvature formula}) for the scalar curvature $R_{g_{0}}$ combined with \eqref{control of R}. Then the quantity in \eqref{simplified} is
\begin{eqnarray}
& \geq & \int^{\epsilon}_{0}\int_{\partial M}\Big[\frac{1}{4}(4-\delta_{0})\frac{1}{r^{2}}\tilde{u}^{2}+(4-\delta_{0})\frac{f(f-2)}{4}\frac{1}{r^{2}}\tilde{u}^{2} \nonumber \\
&     & +\, \frac{1}{r^{2}}(R_{\hat{g}}-2\delta_{0}-f(f-1))\tilde{u}^{2}\Big]d\vol_{g_{\partial M}}dr \nonumber \\
&     & +\, \int^{\epsilon}_{0}\int_{\partial M}\left(\frac{\delta_{0}}{r^{2}}+R_{\check{g}}\circ\pi-|A|^{2}-|T|^{2}-|N|^{2}-\check{\delta}N\right)d\vol_{g_{\partial M}}dr \nonumber \\
& \geq & \int^{\epsilon}_{0}\int_{\partial M}\left((4-\delta_{0})\frac{(f-1)^{2}}{4}+\min\limits_{\partial M}\{R_{\hat{g}}\}-2\delta_{0}-f(f-1)\right)\frac{1}{r^{2}}\tilde{u}^{2}d\vol_{g_{\partial M}}dr \nonumber\\
& \geq & C\int^{\epsilon}_{0}\int_{\partial M}\tilde{u}^{2}d\vol_{g_{\partial M}}dr= C\|u\|^{2}_{L^{2}(U_{\epsilon}, g_{0})}. \label{fn}
\end{eqnarray}
Here the constant $C$ is given by $(\ref{eqn: L estimate C})$. Combining \eqref{eqn: L_delta estimate},  \eqref{simplified}, \eqref{fn}, this proves the claim in $(\ref{eqn: L estimate claim})$, and completes the proof of the lemma.
\end{proof}

Then because on a sufficiently small collar neighborhood of the boundary the metric $g$ is uniformly equivalent to $g_{0}$ and the manifold is compact, Lemma \ref{lem: cone semiboundedness estimate} implies the following semi-boundedness estimate on the manifold with non-isolated conical singularities.

\begin{prop}\label{prop: semiboundedness estimate}
Let $(M^n, g)$ be a compact Riemannian manifold with non-isolated conical singularities with $\inf\limits_{\partial M}\{R_{\hat{g}}\}>(f-1)$. Then there exists a large enough constant $A$ such that the operator $L_{A}:=-4\Delta_{g}+R_{g}+A$ satisfies
\begin{equation}
(L_{A}u, u)_{L^{2}(M, g)}\geq C\|u\|^{2}_{W^{1, 2}_{1-\frac{n}{2}}(M, g)},
\end{equation}
for any $u\in C^{\infty}_{0}(\mathring{M})$ and some constant $C>0$. In particular, the operator $L_{A}$ with the domain ${\rm Dom}(L_{A})=C^{\infty}_{0}(\mathring{M})$ is strictly positive.
\end{prop}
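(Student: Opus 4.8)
The plan is to split a test function $u \in C^\infty_0(\mathring M)$ into a piece supported near the singular set $\partial M$, where the cone estimate of Lemma \ref{lem: cone semiboundedness estimate} applies, and a piece supported in a region at positive distance from $\partial M$, where the metric $g$ is uniformly comparable to a fixed smooth metric and all of $R_g$, the volume form, and the weight function $\chi$ are bounded above and below. Concretely, fix the $\epsilon > 0$ produced by Lemma \ref{lem: cone semiboundedness estimate} and choose a smooth cutoff $\varphi(r)$ on $M$ with $\varphi \equiv 1$ on $U_{\epsilon/2} = (0, \epsilon/2) \times \partial M$ and $\varphi \equiv 0$ outside $U_\epsilon$; write $u = \varphi u + (1-\varphi)u$. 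On the support of $\varphi u$ the metric $g$ is quasi-isometric to the model metric $g_0$ (by condition (3) of Definition \ref{defn: ManifoldWithNonisolatedConicalSingularities}, after shrinking $\epsilon$), so the weighted Sobolev norms $W^{1,2}_{1-n/2}(\cdot, g)$ and $W^{1,2}_{1-n/2}(\cdot, g_0)$ are equivalent there, and likewise $R_g$ differs from $R_{g_0}$ by a term that is $O(r^{\alpha-2})$, which for $\epsilon$ small is dominated by the gain $\delta_0/r^2$ in Lemma \ref{lem: cone semiboundedness estimate}. Thus there is $\delta_1 > 0$ with $(L u, \varphi^2 u)$-type terms controlled below by $\delta_1 \|\varphi u\|^2_{W^{1,2}_{1-n/2}(M,g)}$.

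Next I would handle the interior piece. On $\mathrm{supp}(1-\varphi)$, which is a compact subset of $\mathring M$, the weight $\chi$ is bounded between two positive constants, so $\|(1-\varphi)u\|^2_{W^{1,2}_{1-n/2}(M,g)}$ is comparable to the ordinary $W^{1,2}$ norm $\int (|\nabla((1-\varphi)u)|^2 + ((1-\varphi)u)^2)\, d\vol_g$; and since $R_g$ is bounded below there, $\int (4|\nabla v|^2 + R_g v^2)\, d\vol_g + A_0 \int v^2 \geq c \|v\|^2_{W^{1,2}}$ for $v = (1-\varphi)u$ once $A_0$ is large enough. The remaining work is to reassemble: expanding $\int (4|\nabla u|^2 + R_g u^2)$ in terms of $\varphi u$ and $(1-\varphi) u$ produces cross terms of the form $\int |\nabla \varphi|^2 u^2$ and $\int \langle \nabla \varphi, \cdots \rangle$, all supported in the fixed annular region $\{\epsilon/2 \le r \le \epsilon\}$ where everything is smooth and $|\nabla \varphi|$ is bounded; these are absorbed into a term $C \int_{\{\epsilon/2 \le r \le \epsilon\}} u^2 \le C \|u\|^2_{L^2(M,g)}$. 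Adding $A \int u^2$ with $A$ large enough (larger than this $C$, the interior constant $A_0$, and enough to beat any sign from $R_g$) then yields $(L_A u, u) \geq C' \big( \|\varphi u\|^2_{W^{1,2}_{1-n/2}} + \|(1-\varphi)u\|^2_{W^{1,2}_{1-n/2}} \big) \geq C'' \|u\|^2_{W^{1,2}_{1-n/2}(M,g)}$, using that the two weighted Sobolev norms on the two pieces dominate the weighted Sobolev norm of the sum up to a constant (another cutoff-commutator estimate, again localized to the smooth annulus).

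The main obstacle I expect is bookkeeping rather than conceptual: one must check that the passage from $g_0$ to $g$ near $\partial M$ genuinely preserves the strict positivity, i.e. that the error in $R_g - R_{g_0}$ and in the volume form (both $O(r^{\alpha})$ relative corrections by \eqref{eqn: metric asymptotic control}) can be swallowed by the quantitative gap $\delta_0/r^2$ coming from Lemma \ref{lem: cone semiboundedness estimate}, which forces one to shrink $\epsilon$ in a way that depends only on $\min_{\partial M}\{R_{\hat g}\}$, $f$, and the constants in \eqref{eqn: metric asymptotic control}. Once that is in place, the final statement that $L_A$ with domain $C^\infty_0(\mathring M)$ is strictly positive is immediate, since $\|u\|^2_{W^{1,2}_{1-n/2}(M,g)} \geq \|u\|^2_{L^2(M,g)}$ (as $\chi^{-1} \le 1$), so $(L_A u, u) \geq C \|u\|^2_{L^2} > 0$ for $u \not\equiv 0$.
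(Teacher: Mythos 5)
Your argument is correct and is essentially the same localization-plus-compactness argument the paper uses (the paper states Proposition \ref{prop: semiboundedness estimate} in one sentence as a consequence of Lemma \ref{lem: cone semiboundedness estimate}, quasi-isometry of $g$ and $g_0$ on a small collar, and compactness; you merely spell out the cutoff decomposition and the absorption of cross terms and of the $O(r^{\alpha-2})$ error in $R_g - R_{g_0}$ into the gain $\delta_0\|u\|^2_{W^{1,2}_{1-n/2}}$ from the lemma). One small technical remark: with your decomposition $u=\varphi u+(1-\varphi)u$ the cross term in $\int|\nabla u|^2$ contains a piece $2\int\varphi(1-\varphi)|\nabla u|^2$ that is not of lower order, so it is cleaner to either use an IMS-type partition with $\varphi_1^2+\varphi_2^2=1$, or simply note that this piece is nonnegative and drop it from the lower bound; after this adjustment the rest of the bookkeeping goes through exactly as you describe.
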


\begin{remark}
{\rm
The constant $A$ in Proposition \ref{prop: semiboundedness estimate} is need to control the scalar curvature of the metric $g$ outside of a collar neighborhood of the boundary, since we do not impose any restrictions for it.
}
\end{remark}

\begin{thm}\label{thm: spectrum property}
Let $(M, g)$ be a compact Riemannian manifold with non-isolated conical singularities with $\inf\limits_{\partial M}\{R_{\hat{g}}\}>(f-1)$. Then the spectrum of the Friedrichs extension of the operator $-4\Delta_{g}+R_{g}$ with domain $C^{\infty}_{0}(\mathring{M})$ consists of discrete eigenvalues with finite multiplicities
\begin{equation}
\lambda_{1}\leq \lambda_{2}\leq \lambda_{2}\leq \cdots,
\end{equation}
and $\lambda_{k}\rightarrow\infty$. The corresponding eigenfunctions $\{u_{i}\}^{\infty}_{i=1}$ form a complete basis of $L^{2}(M)$. Moreover, eigenfunctions $u_{i}$ are smooth on $\mathring{M}$ and satisfy the usual eigenfunction equation on $\mathring{M}$.
\end{thm}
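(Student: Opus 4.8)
The plan is to derive Theorem \ref{thm: spectrum property} as a standard consequence of the semi-boundedness estimate in Proposition \ref{prop: semiboundedness estimate} together with the compact embedding in Proposition \ref{prop: compactness on manifolds}. First I would invoke Proposition \ref{prop: semiboundedness estimate} to fix a constant $A$ so that $L_A = -4\Delta_g + R_g + A$ is symmetric and strictly positive on $C^\infty_0(\mathring M)$, hence by the Friedrichs construction extends to a self-adjoint operator, still denoted $L_A$, whose form domain is the completion of $C^\infty_0(\mathring M)$ under the quadratic form $Q(u) = (L_A u, u)_{L^2}$; the estimate $(L_A u, u) \geq C\|u\|^2_{W^{1,2}_{1-n/2}}$ shows this form domain is continuously included in $W^{1,2}_{1-\frac{n}{2}}(M)$, and conversely $Q(u) \leq C'\|u\|^2_{W^{1,2}_{1-n/2}}$ since all coefficients are bounded relative to the weight, so the form domain equals $W^{1,2}_{1-\frac{n}{2}}(M)$ with equivalent norms.

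Next I would observe that the inverse $L_A^{-1} : L^2(M) \to L^2(M)$ factors through the form domain: for $f \in L^2(M)$, $u = L_A^{-1} f$ lies in $W^{1,2}_{1-\frac{n}{2}}(M)$ with $\|u\|_{W^{1,2}_{1-n/2}} \leq C\|f\|_{L^2}$. Since Proposition \ref{prop: compactness on manifolds} (with $k=1$) gives that $W^{1,2}_{1-\frac{n}{2}}(M) \hookrightarrow L^2(M)$ is compact, $L_A^{-1}$ is a compact, self-adjoint, positive operator on $L^2(M)$. By the spectral theorem for compact self-adjoint operators, $L_A^{-1}$ has a discrete spectrum of positive eigenvalues $\mu_1 \geq \mu_2 \geq \cdots \to 0$ with finite multiplicities, and the eigenfunctions form a complete orthonormal basis of $L^2(M)$. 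Translating back, the Friedrichs extension of $-4\Delta_g + R_g$ has eigenvalues $\lambda_k = \mu_k^{-1} - A \to +\infty$ with finite multiplicities, and the same eigenfunctions form a complete basis; this yields all the claims except interior regularity.

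Finally, for the interior smoothness and the eigenfunction equation, I would note that on $\mathring M$ the operator $-4\Delta_g + R_g$ has smooth coefficients (the metric $g$ is smooth away from $\partial M$), so any $L^2$ eigenfunction $u_i$ is a weak solution of the elliptic equation $-4\Delta_g u_i + R_g u_i = \lambda_i u_i$ on $\mathring M$; interior elliptic regularity (bootstrapping via $W^{k,2}_{loc}$ estimates and Sobolev embedding) then upgrades $u_i$ to $C^\infty(\mathring M)$ and shows it satisfies the equation classically on $\mathring M$. The main point requiring care — but already handled by the results we may cite — is the identification of the Friedrichs form domain with $W^{1,2}_{1-\frac{n}{2}}(M)$, since it is exactly this identification that lets the compact \emph{weighted} Sobolev embedding of Proposition \ref{prop: compactness on manifolds} do the work; once that is in place, the rest is the textbook compact-resolvent argument, so I expect no serious obstacle beyond bookkeeping.
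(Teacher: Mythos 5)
Your proposal matches the paper's proof essentially step for step: both invoke Proposition \ref{prop: semiboundedness estimate} to obtain a strictly positive Friedrichs extension whose form domain lies in $W^{1,2}_{1-\frac{n}{2}}(M)$, then factor the inverse through this weighted Sobolev space, apply the compact embedding of Proposition \ref{prop: compactness on manifolds} to conclude the resolvent is compact and self-adjoint, and finish with the spectral theorem and local interior elliptic regularity. The only (harmless) extra you add is the reverse inclusion showing the form domain actually \emph{equals} $W^{1,2}_{1-\frac{n}{2}}(M)$; the paper only needs the forward inclusion, which is what Proposition \ref{prop: semiboundedness estimate} gives directly.
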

\begin{proof}
The existence of the self-adjoint, strictly positive and surjective Friedrichs extension $\tilde{L}_{A}$ with domain ${\rm Dom}(\tilde{L}_{A})$ of the operator $L_{A}$ in Proposition \ref{prop: semiboundedness estimate} follows from the Neumann Theorem in \cite{EK96} and the estimate in Proposition \ref{prop: semiboundedness estimate}. Furthermore, Proposition \ref{prop: semiboundedness estimate} also implies that the completion of $C^{\infty}_{0}(\mathring{M})$ with respect to the norm $\|u\|_{L_{A}}:=(L_{A}u, u)_{L^{2}(M, g)}$ is contained in the weighted Sobolev space $W^{1, 2}_{1-\frac{n}{2}}(M)$. Thus from the construction of the Friedrichs extension in the proof of the Neumann theorem in \cite{EK96}, one can see that ${\rm Dom}(\tilde{L}_{A})\subset H^{1}(M)$.

Because $\tilde{L}_{A}: {\rm Dom}(\tilde{L}_{A})\rightarrow L^{2}(M)$ is one-to-one and onto, the inverse
\begin{equation}
\tilde{L}_{A}^{-1}: L^{2}(M)\rightarrow {\rm Dom}(\tilde{L}_{A}) \hookrightarrow W^{1, 2}_{1-\frac{n}{2}}(M) \hookrightarrow L^{2}(M)
\end{equation}
exists. This inverse map is self-adjoint and compact, since the embedding $W^{1, 2}_{1-\frac{n}{2}}(M)\hookrightarrow L^{2}(M)$ is compact by Proposition \ref{prop: compactness on manifolds}. Then the spectrum theorem of self-adjoint compact operators implies the desired spectral properties of the Friedrichs extension of $L_{A}$, and hence those for the Friedrichs extension of $L=-4\Delta_{g}+R_{g}$ stated in the theorem.

Finally, the regularity of the eigenfunctions follows from the standard elliptic regularity theory for weak solutions of eigenvalue equations, since this is a local property.
\end{proof}



\section{Asymptotic behavior of eigenfunctions}\label{section: asymptotic behavior of eigenfuctions}

In this section, we study the asymptotic behavior of eigenfunctions of $-4\Delta + R$ on manifolds with non-isolated conical singularities and prove Theorem \ref{thm: asymptotic of eigenfunctions} by a Nash-Moser iteration argument. 


\begin{prop}\label{prop: Nash-Moser iteration}
Let $(M^n, g)$ be a $n$-dimensional compact manifold with non-isolated conical singularities as in Definition $\ref{defn: ManifoldWithNonisolatedConicalSingularities}$, and $u$ an eigenfunction of $-4\Delta_g + R_g$ with eigenvalue $\lambda$. Then $u$ satisfies 
\begin{equation}\label{eqn: eigenfunction estimate}
\| u \|_{L^{\infty}\left( \left( \frac{1}{4} \epsilon, \frac{1}{2} \epsilon \right) \times \partial M, g \right)}
\leq
C \cdot \epsilon^{-\frac{n}{2}+1} \cdot \| u \|_{W^{2, 2}_{1-\frac{n}{2}}\left( \left (\frac{1}{8}\epsilon, \epsilon \right) \times \partial M, g \right)},
\end{equation}
and
\begin{equation}\label{eqn: eigenfunction gradient estimate}
\| \nabla u \|_{L^{\infty}\left( \left( \frac{1}{4} \epsilon, \frac{1}{2} \epsilon \right) \times \partial M, g \right)} 
\leq 
C \cdot \epsilon^{-\frac{n}{2}} \cdot \| u \|_{W^{2, 2}_{1-\frac{n}{2}}\left( \left (\frac{1}{8}\epsilon, \epsilon \right) \times \partial M, g \right)},
\end{equation}
for some constant $C$ and any $\epsilon<1$.
\end{prop}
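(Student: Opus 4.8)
The plan is to run a standard De Giorgi--Nash--Moser iteration for the eigenfunction equation $-4\Delta_g u + R_g u = \lambda u$ on the dyadic annular pieces near the singular set, but to track carefully how all constants depend on the scale parameter $\epsilon$. The key point is that, after rescaling, the geometry of the annular region $(\tfrac{1}{8}\epsilon,\epsilon)\times\partial M$ with the metric $g$ is uniformly comparable to that of a fixed region, so the Moser iteration constants are scale-independent once the $\epsilon$-powers are accounted for correctly.

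First I would introduce the rescaled radial variable $s = r/\epsilon$, so that the region $(\tfrac{1}{8}\epsilon,\epsilon)\times\partial M$ becomes $(\tfrac18,1)\times\partial M$. Under the model metric $g_0 = dr^2 + r^2\hat g + \pi^*\check g$ the rescaling $r\mapsto \epsilon r$ does \emph{not} act as a pure homothety because of the $\pi^*\check g$ term (this is exactly the obstruction flagged repeatedly in the earlier sections). However, on the annulus $r\in(\tfrac18\epsilon,\epsilon)$ we have $r^2\hat g$ comparable to $\epsilon^2\hat g$, and after rescaling the metric $\epsilon^{-2}g_0$ is uniformly quasi-isometric — with constants independent of $\epsilon$ — to the fixed metric $ds^2 + s^2\hat g + \epsilon^{-2}\pi^*\check g$, which on $s\in(\tfrac18,1)$ is in turn quasi-isometric, with $\epsilon$-independent constants, to a fixed reference metric (since $\epsilon^{-2}\pi^*\check g \geq \pi^*\check g$ and the relevant Sobolev/Poincaré constants only improve as the base metric is enlarged). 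The asymptotic control \eqref{eqn: metric asymptotic control} lets us replace $g$ by $g_0$ up to a factor $1+O(\epsilon^\alpha)$, so for $\epsilon$ small the same comparison holds for $g$. One then checks that the coefficients of the rescaled operator — namely $\epsilon^2 R_g$ and $\epsilon^2\lambda$ — are uniformly bounded on this annulus, using $R_{g}=O(r^{-2})=O(\epsilon^{-2})$ from \eqref{eqn: scalar curvature formula}.

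Next, on the rescaled fixed annulus I would run the classical Moser iteration for the uniformly elliptic equation with bounded coefficients: combining the Caccioppoli inequality on nested annuli $(\tfrac18,1)\supset\cdots\supset(\tfrac14,\tfrac12)$ with the (fixed, $\epsilon$-independent) Sobolev inequality on this smooth compact-with-boundary region, one obtains
\[
\|\tilde u\|_{L^\infty((\tfrac14,\tfrac12)\times\partial M,\,\epsilon^{-2}g)} \leq C\,\|\tilde u\|_{L^2((\tfrac18,1)\times\partial M,\,\epsilon^{-2}g)},
\]
with $C$ independent of $\epsilon$, where $\tilde u$ denotes $u$ viewed on the rescaled domain. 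The gradient estimate follows by a further step: differentiating the equation (or applying interior $W^{2,2}$ and then Sobolev/Moser estimates to $\nabla u$, controlling the curvature terms that arise from commuting derivatives, which are again $O(\epsilon^{-2})$ after rescaling), one gets $\|\nabla_{\epsilon^{-2}g}\tilde u\|_{L^\infty}\leq C\|\tilde u\|_{W^{1,2}}$, hence a bound by the full $W^{2,2}$ norm. Finally I would unwind the rescaling: $\|\cdot\|_{L^\infty}$ is scale-invariant, $d\vol_{\epsilon^{-2}g}=\epsilon^{-n}d\vol_g$, $|\nabla_{\epsilon^{-2}g}\tilde u| = \epsilon|\nabla_g u|$, and the weight $\chi\sim r^{-1}$ rescales so that $\|\cdot\|_{W^{2,2}_{1-n/2}}$ picks up exactly the factor $\epsilon^{-n/2+1}$; collecting the powers produces the stated $\epsilon^{-n/2+1}$ in \eqref{eqn: eigenfunction estimate} and $\epsilon^{-n/2}$ in \eqref{eqn: eigenfunction gradient estimate}.

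The main obstacle is the bookkeeping of the $\epsilon$-dependence together with the verification that the Moser iteration constants really are $\epsilon$-independent despite the non-homogeneous model metric: one must argue that the relevant Sobolev and Caccioppoli constants on the rescaled annuli are bounded above uniformly in $\epsilon$, which is where the uniform quasi-isometry with constants depending only on $g_0$ (already used in Lemma \ref{lem: cone-Sobolev-inequality}) and the monotonicity of the constants in the base metric enter. Once that uniformity is in place, everything else is the routine Moser machinery applied on a fixed domain.
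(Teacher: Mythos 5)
Your proposal takes a genuinely different route from the paper's, and there are two substantive gaps in it.

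The paper does not rescale at all. It introduces the auxiliary quantity $w=|\nabla u|^{2}+r^{-2}u^{2}$, derives the differential inequality $\Delta w\geq -|R|w-|\lambda|w-Cr^{-2}w$ via the Bochner formula (the cross terms $r^{-3}|\nabla u||u|$ being absorbed into $w$ by Cauchy--Schwarz), and then runs Moser iteration directly on the original annuli $U_k\subset(\tfrac18\epsilon,\epsilon)\times\partial M$ with cutoffs whose derivatives are $\lesssim 2^{k}\epsilon^{-1}$, feeding the global Sobolev inequality of Proposition \ref{prop: SobolevInequality} (itself proved via the partition/Hardy argument) into each step. The factor $\epsilon^{-(n-2)}$ emerges from the accumulated cutoff derivatives $\prod_k \epsilon^{-2/q_k}$, and both the $L^\infty$ bound for $u$ and for $\nabla u$ come out of the single iteration on $w$. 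Your plan instead rescales to a fixed annulus and tries to import $\epsilon$-independent constants.

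The first gap is your justification of $\epsilon$-uniformity of the rescaled constants. The metric $\epsilon^{-2}g_{0}=ds^{2}+s^{2}\hat{g}+\epsilon^{-2}\pi^{*}\check{g}$ on $s\in(\tfrac18,1)$ is \emph{not} quasi-isometric with $\epsilon$-independent constants to any fixed metric: the base factor is stretched by $\epsilon^{-1}$, so the quasi-isometry constant blows up as $\epsilon\to0$. The parenthetical reason you give --- that ``Sobolev/Poincar\'e constants only improve as the base metric is enlarged'' --- is also false; the Poincar\'e constant of a domain \emph{worsens} when one direction is stretched (the lowest nonzero eigenvalue drops), and for Sobolev the volume form $J\,d\vol_{g_1}$ with $J\to\infty$ appears on both sides with incompatible powers, so no monotonicity comparison holds. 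One can indeed show the Sobolev constant of the stretched annulus is uniform in $\epsilon$, but the valid argument is a bounded-geometry / covering argument (tile the long base direction by unit-scale pieces, apply the local Sobolev inequality, sum using a partition of unity); morally this reproduces exactly the content of the paper's Lemma \ref{lem: cone-Sobolev-inequality} and Proposition \ref{prop: SobolevInequality}, so the rescaling buys nothing over working directly at scale $\epsilon$.

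The second gap is the gradient estimate. The inequality $\|\nabla_{\epsilon^{-2}g}\tilde u\|_{L^\infty}\leq C\|\tilde u\|_{W^{1,2}}$ that you invoke is not a Sobolev embedding fact (for $n>4$ even $W^{2,2}\not\hookrightarrow C^1$), nor does it follow immediately from elliptic regularity; passing from an $L^\infty$ bound on $u$ to an $L^\infty$ bound on $\nabla u$ requires its own Moser iteration on a quantity controlling $\nabla u$, with the curvature commutator terms ($\sim r^{-2}$) estimated carefully. This is precisely what the choice $w=|\nabla u|^{2}+r^{-2}u^{2}$ and the Bochner computation in the paper accomplish in one stroke, and your outline does not contain a substitute for it.
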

\begin{proof}

Let $w=|\nabla u|^{2} + \frac{1}{r^{2}}u^{2}$. We apply Nash-Moser iteration to $w$, to obtain the estimates for $u$ in $(\ref{eqn: eigenfunction estimate})$ and $(\ref{eqn: eigenfunction gradient estimate})$.   

\noindent {\bf Step 1.} We derive the following differential inequality for $w$:
\begin{equation}\label{eqn: Laplace estimate}
\Delta w \geq -|R|w - |\lambda|w - C r^{-2}w.
\end{equation}

The Bochner formula implies
\begin{equation}
\begin{split}
 \frac{1}{2} \Delta |\nabla u|^{2}
& = |{\rm Hess}\, u|^{2} + \langle \nabla u, \nabla \Delta u \rangle + \Ric(\nabla u, \nabla u) \\
& \geq \frac{1}{4}\langle \nabla u, \nabla ( Ru- \lambda u )\rangle + \Ric(\nabla u, \nabla u) \\
& = \frac{1}{4} R |\nabla u|^{2} + \frac{1}{4}\langle\nabla u, \nabla R \rangle u -\frac{\lambda}{4}|\nabla u|^{2} + \Ric(\nabla u, \nabla u) \\
& \geq -\frac{1}{4} |R| w -\frac{|\lambda|}{4}w - C_0r^{-2}w - \frac{1}{4}|\nabla u|\cdot |\nabla R|\cdot |u|,
\end{split}
\end{equation}
since Ricci curvature is bounded by $C_0 r^{-2}$.
Note that $|\nabla R|\leq C_1r^{-3}$ near the conically singular points for some positive constant $C_1$. Thus,
\begin{equation}\label{eqn: Laplace estimate 1}
\begin{split}
 \frac{1}{2} \Delta |\nabla u|^{2}
& \geq -\frac{1}{4} |R| w -\frac{|\lambda|}{4}w - C_0r^{-2}w - C_1 \frac{1}{4}r^{-3}|\nabla u|\cdot |u| \\
& \geq -\frac{1}{4} |R| w -\frac{|\lambda|}{4}w - C_0r^{-2}w - C_1 \frac{1}{8} r^{-2}(|\nabla u|^{2} + r^{-2}u^{2}) \\
& = -\frac{1}{4} |R| w -\frac{|\lambda|}{4}w - C_0r^{-2}w - C_1 \frac{1}{8} r^{-2} w
\end{split}
\end{equation}
Moreover, 
\begin{equation}\label{eqn: Laplace estimate 2}
\begin{split}
\Delta (r^{-2}u^{2})
& = \Delta (r^{-2}) u^{2} + 2 \langle \nabla r^{-2}, \nabla u^{2} \rangle + r^{-2} \Delta u^{2} \\
& \geq 4 r^{-4}u^{2} - 4r^{-3}|\nabla u| \cdot |u| - \frac{1}{2}|R|w - \frac{1}{2} |\lambda|w \\
& \geq 2 r^{-2}w - \frac{1}{2} |R|w - \frac{1}{2} |\lambda|w 
\end{split}
\end{equation}
By combining inequalities in $(\ref{eqn: Laplace estimate 1})$ and $(\ref{eqn: Laplace estimate 2})$, we obtain the differential inequality in $(\ref{eqn: Laplace estimate})$

\noindent{\bf Step 2. Iteration process.} 

For any $q>1$ and $\eta \in C^{\infty}_{0}(\mathring{M})$, straightforward calculations give
\begin{eqnarray}
 \int | \nabla(\eta w^{\frac{q}{2}}) |^{2}
& = & \int w^{q} |\nabla \eta|^{2} - \frac{q}{2} \int \eta^{2} w^{q-1} \Delta w \nonumber\\
&   & -  \left( 1 - \frac{2}{q} \right) \int | \nabla(\eta w^{\frac{q}{2}}) |^{2} - \left( 1 - \frac{2}{q} \right) \int \eta w^{q} \Delta \eta.
\end{eqnarray}
By rearranging the equation and then using the inequality (\ref{eqn: Laplace estimate}), we obtain
\begin{eqnarray}\label{eqn: Nash-Moser}
\int | \nabla(\eta w^{\frac{q}{2}}) |^{2}
& = & \frac{q}{2(q-1)} \int w^{q} |\nabla \eta|^{2} - \frac{q^{2}}{4(q-1)} \int \eta^{2} w^{q-1} \Delta w - \frac{q-2}{2(q-1)} \int \eta w^{q} \Delta \eta  \nonumber \\
& \leq & \frac{q}{2(q-1)} \int w^{q} |\nabla \eta|^{2} + \frac{q^{2} |\lambda|}{8 (q-1)} \int \eta^{2} w^{q} + \frac{q^{2}}{8 (q-1)} \int \eta^{2} w^{q} |R|\\
&   & + \frac{q^{2}C}{2(q-1)} \int \eta^{2} r^{-2} w^{q} - \frac{q-2}{2(q-1)} \int \eta w^{q} \Delta \eta \nonumber
\end{eqnarray}

Define $q_{k} := \mu^{k}$, for $k \geq 0$, where $\mu = \frac{n}{n-2}$, and
\begin{eqnarray}
a_{k} & := & \left( \frac{1}{16} + \frac{1}{16} \sum^{k}_{i=0} \left( \frac{1}{2} \right)^{i} \right)\epsilon, \\
b_{k} & := & \left( \frac{5}{4} - \frac{1}{4} \sum^{k}_{i=0} \left( \frac{1}{2} \right)^{i} \right)\epsilon.
\end{eqnarray}
Then let $U_{k} := (a_{k}, b_{k}) \times \partial M \subset U$ with the restricted metric $g|_{U_{k}}$. Choose cut-off functions $\eta_{k} := \varphi_{k}(r) \in C^{\infty}_{0} (U_{k})$ such that
\begin{equation}
\eta_{k} \equiv 1, \ \ \text{on} \ \ U_{k+1}; \quad |\varphi^{\prime}_{k}| \leq 2^{k+5} \epsilon^{-1}, \quad |\varphi^{\prime \prime}_{k}| \leq 2^{2k+10} \epsilon^{-2}.
\end{equation}
Now for any $k \geq 1$, applying the Sobolev inequality (with $l=0, k=1$, $p=2$ and $ q=\frac{2n}{n-2}$) in Proposition \ref{prop: SobolevInequality} and substituting $\eta_{k}$ into the inequality in (\ref{eqn: Nash-Moser}) give
\begin{eqnarray}
\left( \int_{U_{k+1}} w^{q_{k+1}} \right)^{\frac{1}{\mu}}
& \leq & \left( \int_{U_{k}} \left( \eta^{2}_{k} w^{q_{k}} \right)^{\mu} \right)^{\frac{1}{\mu}}\nonumber  \\
& \leq & C \left( \int_{U_{k}} \left| \nabla \left( \eta_{k} w^{\frac{q_{k}}{2}} \right) \right|^{2} + \int_{U_{k}}  \eta^{2}_{k} w^{q_{k}} \right) \nonumber \\
&  \leq   & C \Bigg( \frac{q_{k}}{2(q_{k}-1)} \int_{U_{k}} w^{q_{k}} |\nabla \eta_{k}|^{2} + \frac{q_{k}^{2} |\lambda|}{8 (q_{k}-1)} \int_{U_{k}} \eta^{2}_{k} w^{q_{k}} \nonumber \\
&   & +\frac{q_{k}^{2}}{8 (q_{k}-1)} \int_{U_{k}} \eta^{2}_{k} w^{q_{k}} |R| + \frac{q_{k}^{2}C}{2(q_{k}-1)} \int_{U_{k}} \eta^{2}_{k} r^{-2} w^{q_{k}} \nonumber \\
&   &  - \frac{q_{k}-2}{2(q_{k}-1)} \int_{U_{k}} \eta_{k} w^{q_{k}} \Delta \eta_{k}   + \int_{U_{k}} r^{-2} \eta^{2}_{k} w^{q_{k}} \Bigg) \nonumber \\
& \leq & C \cdot (4 + |\lambda|) \cdot q^{2}_{k} \cdot 2^{2k+10} \cdot \epsilon^{-2} \int_{U_{k}} w^{q_{k}}.
\end{eqnarray}

Then by iterating the above process, one has
\begin{equation}\label{eqn: iteration inequality}
\begin{split}
\|w\|_{L^{q_{k+1}}(U_{k+1})}
& \leq  ( C \cdot (4 + |\lambda|))^{\frac{1}{q_{k}}} \cdot q^{\frac{2}{q_{k}}}_{k} \cdot 2^{\frac{2k+10}{q_{k}}} \cdot \epsilon^{-2\frac{1}{q_{k}}} \cdot \|w\|_{L^{q_{k}}(U_{k})} \\
& \leq  \cdots\\
& \leq  ( C \cdot (4 + |\lambda|))^{\sum\limits^{k}_{i=1}\frac{1}{q_{i}}} \cdot \left( \prod^{k}_{i=1} q^{\frac{2}{q_{i}}}_{i} \right) \cdot 2^{\sum\limits^{k}_{i=1} \frac{2i + 10}{q_{i}}} \cdot \epsilon^{-2\sum\limits^{k}_{i=1}\frac{1}{q_{i}}} \cdot \|w\|_{L^{q_{1}}(U_{1})}.
\end{split}
\end{equation}
By letting $k \rightarrow \infty$ in the inequality in $(\ref{eqn: iteration inequality})$, we obtain
\begin{equation}\label{eqn: iteration limit inequality}
\|w\|_{L^{\infty}(U_{\infty})} \leq C \cdot \epsilon^{-(n-2)} \cdot \|w\|_{L^{q_{1}}(U_{1})}.
\end{equation}

\noindent {\bf Step 3.} We estimate $\|w\|_{L^{q_{1}}(U_{1})}$ and complete the proof. By using Sobolev inequality, we have
\begin{eqnarray}
\||\nabla u|^{2}\|_{L^{q_{1}}(U_{1})} 
& = & \|\nabla u\|^{2}_{L^{\frac{2n}{n-2}}(U_{1})}\nonumber \\
& \leq &  \|\eta_{0}|\nabla u|\|^{2}_{L^{\frac{2n}{n-2}}(U_{0})}\nonumber \\
& \leq & C \left( \|\nabla (\eta_{0}|\nabla u|)\|^{2}_{L^{2}(U_{0})} + \|\eta_{0}|\nabla u|\|^{2}_{L^{2}(U_{0})} \right)\nonumber \\
& \leq & C \left( \|\nabla^{2} u \|^{2}_{L^{2}(U_{0})} + \epsilon^{-2}\|\nabla u\|^{2}_{L^{2}(U_{0})}\right).\label{eqn: u gradient integral estimate}
\end{eqnarray}
Because $u\in W^{1,2}_{1-\frac{n}{2}}(M) \cap C^{\infty}(\mathring{M}) \subset W^{0, 2}_{1-\frac{n}{2}-2}$ and satisfies the eigenvalue equation $Lu=\lambda u$, Proposition \ref{prop: weighted elliptic estimate} implies that $u\in W^{2,2}_{1-\frac{n}{2}}(M)$. So
\begin{eqnarray}
+\infty & > & \|u\|_{W^{2,2}_{1-\frac{n}{2}}(M)} \nonumber\\
        & \geq &  \|u\|_{W^{2,2}_{1-\frac{n}{2}}(U_{0})}\nonumber\\
        & \geq & \left( \int_{U_{0}}\left(\frac{1}{r}\right)^{2(1-\frac{n}{2}-2)+n}|\nabla^{2}u|^{2} \right)^{\frac{1}{2}}\nonumber \\
        & \geq & C \epsilon \|\nabla^{2}u\|_{L^{2}(U_{0})}.
\end{eqnarray}
This gives $\|\nabla^{2}u\|^{2}_{L^{2}(U_{0})} \leq C \epsilon^{-2} \|u\|^{2}_{W^{2,2}_{1-\frac{n}{2}}(U_{0})}$. Then by substitute this inequality into $(\ref{eqn: u gradient integral estimate})$, we obtain
\begin{equation}\label{eqn: w estimate 1}
\||\nabla u|^{2}\|^{2}_{L^{q_{1}}(U_{1})} \leq C \epsilon^{-2} \|u\|^{2}_{W^{2,2}_{1-\frac{n}{2}}(U_{0})}.
\end{equation}

Moreover, using Sobolev inequality again gives
\begin{equation}\label{eqn: w estimate 2}
\begin{split}
\|r^{-2}u^{2}\|_{L^{q_{1}}(U_{1})} 
& =   \|r^{-1}u\|^{2}_{L^{\frac{2n}{n-2}}(U_{1})} \\
& \leq  C \epsilon^{-2} \|\eta_{0}u\|^{2}_{L^{\frac{2n}{n-2}}(U_{0})} \\
& \leq  C \epsilon^{-2} \|u\|^{2}_{W^{1, 2}_{1-\frac{n}{2}}(U_{0})}.
\end{split}
\end{equation}

By combining the inequalities in $(\ref{eqn: w estimate 1})$ and $(\ref{eqn: w estimate 2})$, we obtain
\begin{equation}\label{eqn: w integral estimate}
\|w\|_{L^{q_{1}}(U_{1})} \leq C \epsilon^{-2} \|u\|^{2}_{W^{2, 2}_{1-\frac{n}{2}}(U_{0})}.
\end{equation}
Finally, by substituting the estimate in $(\ref{eqn: w integral estimate})$ into the inequality in $(\ref{eqn: iteration limit inequality})$, we finish the proof.
\end{proof}

Now by setting $\epsilon = \left( \frac{1}{2} \right)^{j}$ for $j \geq 0$ in Proposition \ref{prop: Nash-Moser iteration}, we have
\begin{equation}
\left( \frac{1}{2} \right)^{\left(\frac{n}{2}-1\right) j} \|u\|_{L^{\infty}((2^{-(j+2)}, \, 2^{-(j+1)}) \times \partial M)} 
\leq
 C \cdot \|u\|_{W^{2, 2}_{1-\frac{n}{2}}( (2^{-(j+3)}, \, 2^{-j}) \times \partial M)} \rightarrow 0,
\end{equation}
as $j \rightarrow \infty$, since $\sum\limits^{\infty}_{j=1} \|u\|_{W^{2, 2}_{1-\frac{n}{2}} ((2^{-(j+3)}, \, 2^{-j}) \times \partial M)} \leq 3 \|u\|_{W^{2, 2}_{1-\frac{n}{2}}(M)} <\infty$.

The same arguement gives an estimate for $|\nabla u|$.  We obtain 

\begin{cor}\label{cor: asymptotic of eigenfunctions}
Let $(M^n, g)$ be a $n$-dimensional compact manifold with non-isolated conical singularities as in Definition $\ref{defn: ManifoldWithNonisolatedConicalSingularities}$, and $u$ an eigenfunction of $-4\Delta_g + R_g$. Then $u$ satisfies 
\begin{equation}
|\nabla^{i}u|  = o(r^{-\frac{n}{2}+1-i})
\end{equation}
as $r\rightarrow 0$, 
for $i=0$ and $1$.
\end{cor}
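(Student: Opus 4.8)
\emph{Proof strategy.} The plan is to read the corollary off Proposition \ref{prop: Nash-Moser iteration} by specializing the scale $\epsilon$ to a dyadic sequence and using that the full weighted Sobolev norm of $u$ is finite, so its tails near $\partial M$ decay. First I would record that any eigenfunction $u$ of $L=-4\Delta_g+R_g$ in fact lies in $W^{2,2}_{1-\frac n2}(M)$: by Theorem \ref{thm: spectrum property} (via Proposition \ref{prop: semiboundedness estimate}) we have $u\in W^{1,2}_{1-\frac n2}(M)\cap C^{\infty}(\mathring M)$, and since $Lu=\lambda u\in W^{0,2}_{1-\frac n2}(M)\subset W^{0,2}_{-1-\frac n2}(M)$ by $(\ref{eqn: weighted Sobolev space inclusion})$, the weighted elliptic estimate of Proposition \ref{prop: weighted elliptic estimate} (with $k=0$, $\delta=1-\frac n2$) upgrades this to $u\in W^{2,2}_{1-\frac n2}(M)$. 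This is the same step already carried out inside the proof of Proposition \ref{prop: Nash-Moser iteration}, and its only role here is to guarantee the finiteness used below.

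Next I would apply Proposition \ref{prop: Nash-Moser iteration} with $\epsilon=2^{-j}$, $j\in\mathbb N$. Writing $A_j:=(2^{-(j+2)},2^{-(j+1)})\times\partial M$ and $\widetilde A_j:=(2^{-(j+3)},2^{-j})\times\partial M$, inequalities $(\ref{eqn: eigenfunction estimate})$ and $(\ref{eqn: eigenfunction gradient estimate})$ become
\begin{equation*}
\|u\|_{L^{\infty}(A_j)}\le C\,2^{(\frac n2-1)j}\,\|u\|_{W^{2,2}_{1-\frac n2}(\widetilde A_j)},\qquad
\|\nabla u\|_{L^{\infty}(A_j)}\le C\,2^{\frac n2 j}\,\|u\|_{W^{2,2}_{1-\frac n2}(\widetilde A_j)}.
\end{equation*}
Each point of the collar with $r<\tfrac18$ lies in at most three of the enlarged annuli $\widetilde A_j$, so $\sum_{j\ge 1}\|u\|_{W^{2,2}_{1-\frac n2}(\widetilde A_j)}^{2}\le 3\,\|u\|_{W^{2,2}_{1-\frac n2}(M)}^{2}<\infty$ by the previous paragraph; in particular $\|u\|_{W^{2,2}_{1-\frac n2}(\widetilde A_j)}\to 0$, and hence
\begin{equation*}
2^{-(\frac n2-1)j}\,\|u\|_{L^{\infty}(A_j)}\longrightarrow 0,\qquad
2^{-\frac n2 j}\,\|\nabla u\|_{L^{\infty}(A_j)}\longrightarrow 0\qquad(j\to\infty).
\end{equation*}

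Finally I would convert these annular bounds into the claimed pointwise decay. For $x\in A_j$ one has $r(x)<2^{-(j+1)}$, so for $i\in\{0,1\}$ (the exponent $\tfrac n2-1+i$ being nonnegative since $n\ge 2$) $r^{\frac n2-1+i}\le 2^{-(\frac n2-1+i)}\,2^{-(\frac n2-1+i)j}$, whence on $A_j$
\begin{equation*}
r^{\frac n2-1+i}\,|\nabla^{i} u|\le 2^{-(\frac n2-1+i)}\Big(2^{-(\frac n2-1+i)j}\,\|\nabla^{i} u\|_{L^{\infty}(A_j)}\Big)\longrightarrow 0
\end{equation*}
by the previous display. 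Since the annuli $A_j$ exhaust a punctured neighborhood of $\partial M$, this yields $|\nabla^{i} u|=o(r^{-\frac n2+1-i})$ as $r\to 0$ for $i=0,1$, which is the assertion.

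The argument is essentially assembly; there is no real obstacle beyond the bookkeeping in the last step, i.e.\ checking that $r^{-\frac n2+1-i}\asymp 2^{(\frac n2-1+i)j}$ on $A_j$ with constants independent of $j$ and that the overlap multiplicity of the enlarged annuli $\widetilde A_j$ is bounded (here by $3$) so that the weighted Sobolev tails are summable. All the analytic work is already contained in Propositions \ref{prop: Nash-Moser iteration} and \ref{prop: weighted elliptic estimate}.
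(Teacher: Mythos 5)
Your proposal is correct and follows essentially the same route as the paper: specialize Proposition \ref{prop: Nash-Moser iteration} to dyadic scales $\epsilon=2^{-j}$, observe that the right-hand side norms over the overlapping annuli tend to zero because the full weighted Sobolev norm $\|u\|_{W^{2,2}_{1-\frac n2}(M)}$ is finite (via Proposition \ref{prop: weighted elliptic estimate}), and read off the pointwise decay; the same argument applied to $|\nabla u|$ gives the $i=1$ case. Your bookkeeping is slightly more careful than the paper's in that you use the sum of \emph{squared} tail norms, which is the version that the bounded-overlap argument actually controls, but the conclusion is identical.
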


\section{Lower boundedness of the infimum of $W$-functional and asymptotic behavior of the minimizer}\label{section: W-functional}

By using the estimate in Proposition \ref{prop: semiboundedness estimate}, Sobolev inequality in Proposition $\ref{prop: SobolevInequality}$, and compactness of weighted Sobolev embeddings in Proposition \ref{prop: compactness on manifolds} and Proposition \ref{prop: compact-weighted-embedding2}, one can obtain that the infimum of $W$-functional is finite and a smooth positive function realizes the infimum, provided $R_{h_{0}}>(f-1)$, via direct methods in the calculus of variations. The proof is verbatim as in the proof of Proposition 4.2 in \cite{DW18}. So we omit it here.

Moreover, the asymptotic estimate for the minimizer of the $W$-functional can be obtained by combining weighted elliptic bootstrapping and Nash-Moser iteration as we do for eigenfunctions of $-4\Delta+R$ in \S$\ref{section: asymptotic behavior of eigenfuctions}$. However, unlike the eigenfunction equation, the Euler-Lagrange equation of minimizer of $W$-functional in (\ref{eqn: Euler-Lagrange equation of W-functional}) below is nonlinear. In the rest of the section, we explain how to deal with new difficulties caused by the nonlinear term $u\ln u$ in the derivation of asymptotic estimate for the minimizer $u$ in Theorem $\ref{thm: W-functional introduction}$.

Recall that the Euler-Lagrange equation  of the minimizer $u$ of $W$-functional is given by, see e.g. the equation (4.17) in \cite{DW20},
\begin{equation}\label{eqn: Euler-Lagrange equation of W-functional}
-4\Delta_g u + R_{g}u - \frac{2}{\tau}u\ln u - \frac{n + m}{\tau}u = 0,
\end{equation}
with the constraint $\|u\|_{L^{2}(M)} = (4\pi\tau)^{\frac{n}{4}}$, where $m$ is the infimum of the $W$-functional.

Let $v=|\nabla u|^{2} + r^{-2}u^{2}$. As in step 1 in the proof of Proposition \ref{prop: Nash-Moser iteration}, one easily sees that
\begin{equation}
\Delta (r^{-2}u^{2}) \geq 2r^{-2}v - \frac{1}{2}|R|v - \frac{|n+m|}{8\tau}v - \frac{1}{\tau}r^{-2}u^{2}\ln u,
\end{equation}
and
\begin{equation}
\Delta |\nabla u|^{2} \geq -\frac{1}{2}|R|v - \frac{|n+m|}{\tau}v - Cv - C r^{-2}v - \frac{1}{\tau}|\nabla u|^{2}\ln u.
\end{equation}
So
\begin{equation}
\Delta v \geq -|R|v - Cv - Cr^{-2}v - \frac{1}{\tau}v\ln u.
\end{equation}

Then substituting $v$ into the inequality in (\ref{eqn: Nash-Moser}) gives
\begin{eqnarray}
\int | \nabla(\eta v^{\frac{q}{2}}) |^{2}
& = & \frac{q}{2(q-1)} \int v^{q} |\nabla \eta|^{2} - \frac{q^{2}}{4(q-1)} \int \eta^{2} v^{q-1} \Delta v - \frac{q-2}{2(q-1)} \int \eta v^{q} \Delta \eta \nonumber \\
& \leq & \frac{q}{2(q-1)} \int v^{q} |\nabla \eta|^{2} - \frac{q^{2}}{4(q-1)}\int \eta^{2}v^{q}|R| \\
&   & + C\frac{q^{2}}{4(q-1)} \int \eta^{2} v^{q} + C\frac{q^{2}}{4(q-1)} \int \eta^{2} r^{-2} v^{q} \nonumber \\ 
&   & + \frac{q^{2}}{8(q-1)\tau} \int \eta^{2}v^{2}\ln u  - \frac{q-2}{2(q-1)}\int \eta^{2}v^{q}\Delta\eta, \nonumber
\end{eqnarray}
for any nonnegative $\eta \in C^{\infty}_{0}(\mathring{M})$ and $q>1$. Then the only difference from the eigenfunction estimate in Proposition \ref{prop: Nash-Moser iteration} is the term involving the integral of $v^{q}\ln u$. So we briefly describe how to deal with this term.

For a sufficiently small $\gamma < 1$ (with $\frac{2}{\gamma}>\frac{n}{2}$), there exists a constant $a$ such that
\begin{eqnarray}
\int \eta^{2}v^{q}\ln u
&   =  & \int_{\{0< u \leq e\}} \eta^{2}v^{q}\ln u + \int_{\{u>e\}} \eta^{2}v^{q}\ln u \\
& \leq & \int_{\{0< u \leq e\}} \eta^{2}v^{q} + a\int_{\{u>e\}} \eta^{2}v^{q} u^{\gamma} \\
& \leq & \int \eta^{2}v^{q} + a\int \eta^{2}v^{q} u^{\gamma}.
\end{eqnarray}

Then by H\"older inequality and Young's inequality (e.g. as on p. 20 in \cite{DWZ18}),
\begin{eqnarray}
& & a \frac{q^{2}}{8(q-1)\tau} \int \eta^{2}v^{q} u^{\gamma} \nonumber \\
& \leq & a \frac{q^{2}}{8(q-1)\tau} \left( \int u^{2} \right)^{\frac{\gamma}{2}} \cdot \left( \int \left( \eta^{2}v^{q} \right)^{\frac{2}{2-\gamma}} \right)^{\frac{2-\gamma}{2}} \\
& \leq & a (4\pi\tau)^{\frac{n\gamma}{4}} \left[ \delta \left( \int (\eta^{2}v^{q})^{\mu} \right)^{\frac{1}{\mu}} + \delta^{-\frac{n\gamma}{4-n\gamma}} \cdot \left( \frac{q^{2}}{8(q-1)\tau} \right)^{\frac{4-2n\gamma}{4-n\gamma}} \left( \int \eta^{2} v^{q} \right) \right],
\end{eqnarray}
for any small $\delta>0$. We may take $\delta$ so that $C\cdot a \cdot (4\pi\tau)^{\frac{n\gamma}{4}} \cdot \delta = \frac{1}{2}$, where $C$ is the $L^{2}$-Sobolev constant in Proposition \ref{prop: SobolevInequality}. 

Another remark is about the derivation of $u\in W^{0, 2}_{1-\frac{n}{2}}(M)$ by using Proposition \ref{prop: weighted elliptic estimate}. For this one needs to show that $u\ln u\in W^{0, 2}_{1-\frac{n}{2}-2}$, since $Lu=\frac{n+m}{\tau}u + \frac{2}{\tau}u\ln u$. This follows from $u\in W^{1, 2}_{1-\frac{n}{2}}(M) \subset W^{0, \frac{2n}{n-2}}_{1-\frac{n}{2}}(M) = L^{\frac{2n}{n-2}}(M)$ and the fact that there exits a constant $a(n)$ such that $|u\ln u|\leq a(n) + |u|^{\frac{n}{n-2}}$. Indeed, $\int_{M} \chi^{2\left(-\frac{n}{2}\right)+n}|u\ln u|^{2} \leq a(n)^{2} {\rm Vol}(M) + \int_{M} |u|^{\frac{2n}{n-2}} < \infty$. So $u\ln u \in W^{0, 2}_{-\frac{n}{2}} \subset W^{0, 2}_{1-\frac{n}{2}-2}$.

Then the Nash-Moser iteration as in the proof of Proposition \ref{prop: Nash-Moser iteration} gives the asymptotic behavior in Theorem \ref{thm: W-functional introduction}.


\section{Horn singularity}\label{section: horn}
In this section, we study the spectrum and eigenfunctions of the Schr\"odinger operators $-\Delta_{g} + c R_{g} (c\geq 0)$ on compact manifolds with a kind of more general singularities, whose model is $((0, 1)\times F^{f}, g_{\gamma}=dr^{2} + r^{2\gamma}\hat{g})$. Here $\gamma>0$, and $\hat{g}$ is a Riemannian metric on $F^{f}$. This is a {\em $r^{\gamma}$-horn} in \cite{Che80}. If in particular $\gamma=1$, then this is a model cone singularity that we studied in preceding sections. So now we assume $\gamma>1$, and this kind of singularities appear in the study of singular projective varieties.

The model singular metric $g_{\gamma}=dr^{2} + r^{\gamma}\hat{g}$ has Laplacian $\Delta_{g_{\alpha}}$ and scalar curvature $R_{g}$ as
\begin{eqnarray*}
\Delta_{g_{\gamma}} & = & \frac{\partial^{2}}{\partial r^{2}} + \frac{\gamma(n-1)}{r}\frac{\partial}{\partial r} + \frac{1}{r^{2\gamma}}\Delta_{\hat{g}}, \\
R_{g_{\gamma}} & = & \frac{R_{\hat{g}}}{r^{2\alpha}} - \frac{\gamma(n-1)(n\gamma-2)}{r^{2}},
\end{eqnarray*}
where $n=f+1$.

\subsection{manifolds with $r^{\gamma}$-horn singularities}

Similar as Definition \ref{defn: ManifoldWithNonisolatedConicalSingularities}, we define
\begin{defn}\label{defn: ManifoldWith-alpha-ConicalSingularities}
We say that $(M^{n},g)$ is a compact Riemannian manifold with $r^{\gamma}$-horn singularities ($\gamma>1$), if
\begin{enumerate}
\item $M^{n}$ is a compact $n$-dimensional manifold with boundary $\partial M$.
\item The boundary $\partial M$ endowed with a Riemannian metric $g_{\partial M}$ is the total space of a Riemannian submersion
      \begin{equation}\label{eqn: Riemannian-submersion}
      \pi: (\partial M, g_{\partial M}) \longrightarrow (B^{b}, \check{g})
      \end{equation}
      with a $b$-dimensional closed Riemannian manifold $(B^{b}, \check{g})$ as base and a $f$-dimensional closed manifold $F^{f}$ as a typical fiber, in particular, $b+f=n-1$, and
      \begin{equation}\label{eqn: metric-on-total-space}
      g_{\partial M}=\hat{g}+\pi^{*}\check{g},
      \end{equation}
      where $\hat{g}$ denotes metrics on fibers induced by $g_{\partial M}$;
\item Moreover, there exists a collar neighborhood $U=(0, 1)\times \partial M$ of the boundary $\partial M$ in $M$ with the radial coordinate $r$ $(r=0$ corresponds to the boundary $\partial M)$, and the metric $g$ restricted on $U$ satisfies $g=g_{\gamma}+h$ as $r\rightarrow0$, where the Riemannian metric $g_{\gamma}$ on $U$ given by
    \begin{equation*}
    g_{\gamma}=dr^{2}+r^{2\gamma}\hat{g}+\pi^{*}\check{g},
    \end{equation*}
    and $h$ satisfies
    \begin{equation*}
    |r^{\gamma k} \nabla^{k}_{g_{\gamma}} h|_{g_{\gamma}} = O(r^{\alpha}), \quad \text{as} \ \ r\rightarrow 0,
    \end{equation*}
    for some $\alpha>0$ and all $k \in \mathbb{N}$.
\end{enumerate}
\end{defn}

\subsection{$\alpha$-weighted Sobolev spaces}

Let $(M^{n}, g)$ be a compact Riemannian manifold with $\alpha$-horn singularities as defined in Definition \ref{defn: ManifoldWith-alpha-ConicalSingularities}.  For each $p\geq1$, $k\in\mathbb{N}$ and $\delta\in\mathbb{R}$, the {\em $\alpha$-weighted Sobolev space} $W^{k, p}_{\delta, \alpha}(M)$ is the completion of $C^{\infty}_{0}(\mathring{M})$ with respect to the {\em $\gamma$-weighted Sobolev norm}
\begin{equation}\label{eqn: WSN-horn}
\|u\|_{W^{k, p}_{\delta, \gamma}(M)} = \left( \int_{M}\left(\sum^{k}_{i=0}\chi^{p(\delta-\gamma i)+n\gamma}|\nabla^{i}u|^{p}_{g}\right)d\vol_{g} \right)^{\frac{1}{p}},
\end{equation}
where $\nabla^{i}u$ denotes the $i$-times covariant derivative of the function $u$, and $\chi\in C^{\infty}(\mathring{M})$ is a positive weight function satisfying
\begin{equation}\label{eqn: WeightFunction}
\chi(x)=
\begin{cases} 1, & \text{if} \ \ x\in M\setminus U, \\ \frac{1}{r}, & \text{if} \ \  r=dist(x,\partial M)<\frac{1}{10},
\end{cases}
\end{equation}
and $0<(\chi(x))^{-1}\leq 1$ for all $x \in \mathring{M}$. Recall that $U=(0, 1)\times \partial M\subset M$ is the collar neighborhood of the boundary $\partial M$.

On a manifold $(M^{n}, g)$ with isolated $r^{\gamma}$-horn singularities, i.e. the base manifold $B$ in Definition \ref{defn: ManifoldWith-alpha-ConicalSingularities} is a point, one can derive the following weighted Sobolev inequalities.
\begin{prop}\label{prop: isolated alpha weighted Sobolev inequality}
Let $(M^{n}, g)$ be a manifold with an isolated $r^{\gamma}$-horn singularity. For any $\delta \in \mathbb{R}$, $1 \leq p < n$, $0 \leq l \leq k$, and $q$ with $\frac{1}{q} = \frac{1}{p} - \frac{k-l}{n} > 0$, there exists a constant $C$, such that 
\begin{equation*}
\|u\|_{W^{l, q}_{\delta, \gamma}(M, g)} \leq C \|u\|_{W^{k, p}_{\delta, \gamma}(M, g)}
\end{equation*}
holds for all $u \in C^{\infty}_{0}(\mathring{M})$.

Consequently, there is a continuous embedding
\begin{equation*}
W^{k, p}_{\delta, \gamma}(M, g) \subset W^{l, q}_{\delta, \gamma}(M, g).
\end{equation*}
\end{prop}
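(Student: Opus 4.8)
The plan is to transplant the $\gamma$-weighted problem on the model horn onto a half-infinite product cylinder by a conformal change of variable, where the classical Sobolev inequality holds with a uniform constant, and then to patch with the classical inequality on the smooth compact part of $M$.

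I would first reduce to the model horn $(U,g_\gamma)$, $U=(0,1)\times F$ with $g_\gamma=dr^{2}+r^{2\gamma}\hat g$ (the base $B$ being a point in the isolated case). Writing $u=\varphi u+(1-\varphi)u$ for a cutoff $\varphi$ that equals $1$ near $\partial M$ and is supported in $U$: on $\mathrm{supp}(1-\varphi)$ the weight $\chi$ is bounded above and below, so $W^{k,p}_{\delta,\gamma}$ coincides there with $W^{k,p}$ and the classical Sobolev inequality on a smooth compact manifold with boundary applies to $(1-\varphi)u$; and the asymptotic control $|r^{\gamma k}\nabla^{k}_{g_\gamma}h|_{g_\gamma}=O(r^{\alpha})$, $\alpha>0$, from Definition~\ref{defn: ManifoldWith-alpha-ConicalSingularities} (together with compactness away from $\partial M$) makes $g$ quasi-isometric to $g_\gamma$ with comparable covariant derivatives on $\mathrm{supp}\,\varphi$, so the $\gamma$-weighted norms of $\varphi u$ with respect to $g$ and to $g_\gamma$ are equivalent. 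Hence it suffices to prove $\|v\|_{W^{l,q}_{\delta,\gamma}(U,g_\gamma)}\le C\|v\|_{W^{k,p}_{\delta,\gamma}(U,g_\gamma)}$ for all $v\in C^{\infty}_{0}(U)$, with $C$ independent of $v$ (in particular of $\mathrm{supp}\,v$).

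Next comes the change of variables. Set $s=\frac{1}{\gamma-1}r^{1-\gamma}$, so $ds=-r^{-\gamma}dr$ and $g_\gamma=r^{2\gamma}(ds^{2}+\hat g)=r^{2\gamma}g_{\mathrm{cyl}}$; this identifies $(U,g_\gamma)$ conformally with the half-cylinder $C=(s_{0},\infty)\times F$, $s_{0}=\frac{1}{\gamma-1}$, carrying the product metric $g_{\mathrm{cyl}}$, of dimension $n=f+1$. Put $\tilde v:=r^{-\delta}v$, which is compactly supported in $C$. Since $\partial_{s}\log r=-r^{\gamma-1}$ and $\gamma>1$, the function $\log r$ and all its $g_{\mathrm{cyl}}$-covariant derivatives are bounded on $C$; hence so are the Christoffel symbols of $g_\gamma$ relative to $g_{\mathrm{cyl}}$ and their $g_{\mathrm{cyl}}$-derivatives, and also $r^{\pm\delta}$ divided into its own $g_{\mathrm{cyl}}$-derivatives. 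Using $d\vol_{g_\gamma}=r^{n\gamma}d\vol_{g_{\mathrm{cyl}}}$ and $|\nabla^{i}_{g_\gamma}v|_{g_\gamma}\le Cr^{-i\gamma}\sum_{1\le j\le i}|\nabla^{j}_{g_{\mathrm{cyl}}}v|_{g_{\mathrm{cyl}}}$ (and the reverse inequality), a bookkeeping of powers of $r$ shows that the weight $\chi^{p(\delta-\gamma i)+n\gamma}$ is exactly what converts each $i$-th term of $\|v\|^{p}_{W^{k,p}_{\delta,\gamma}(U,g_\gamma)}$ into $r^{-p\delta}$ times a $g_{\mathrm{cyl}}$-term, so that
\[
C^{-1}\|v\|_{W^{m,p}_{\delta,\gamma}(U,g_\gamma)}\le\|\tilde v\|_{W^{m,p}(C,g_{\mathrm{cyl}})}\le C\|v\|_{W^{m,p}_{\delta,\gamma}(U,g_\gamma)}
\]
for every $m\in\mathbb{N}$ and $p\ge1$, and likewise for $(l,q)$.

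It then remains to establish the ordinary Sobolev inequality $\|\tilde v\|_{W^{l,q}(C,g_{\mathrm{cyl}})}\le C\|\tilde v\|_{W^{k,p}(C,g_{\mathrm{cyl}})}$ for $\tilde v$ compactly supported in $C$, with $C$ independent of $\tilde v$. Here $C\subset\mathbb{R}\times F$ is a piece of a complete product cylinder of bounded geometry; concretely, one covers $F$ by finitely many charts quasi-isometric to Euclidean balls with a partition of unity $\{\rho_{j}\}$ pulled back from $F$, so that the $\rho_{j}$ and --- in contrast to the conical case, where the Hardy inequality \eqref{eqn: cone-Hardy-inequality} was needed --- their $g_{\mathrm{cyl}}$-gradients are bounded, and then applies the classical Euclidean $W^{k,p}\hookrightarrow W^{l,q}$ inequality to each $\rho_{j}\tilde v$. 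Combining this with the reduction step and the conformal equivalence above gives the proposition. I expect the main technical point to be the displayed two-sided equivalence: carefully tracking how the higher covariant derivatives $\nabla^{i}_{g_\gamma}$ transform under the rescaling $g_\gamma=r^{2\gamma}g_{\mathrm{cyl}}$ together with the substitution $v=r^{\delta}\tilde v$, and checking that every resulting error coefficient stays bounded on the half-cylinder --- which is precisely where the hypothesis $\gamma>1$ enters, guaranteeing that $r^{\gamma-1}$ and its iterated $g_{\mathrm{cyl}}$-derivatives remain bounded as $r\to0$.
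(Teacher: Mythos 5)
Your proposal is correct and shares the paper's central idea: pass from the model horn $(U, g_\gamma)$ to a half-cylinder $((s_0,\infty)\times F, g_{\mathrm{cyl}})$ by the substitution $s \sim r^{1-\gamma}$, exploit the fact that $r^{\gamma-1}$ and its $s$-derivatives stay bounded precisely because $\gamma>1$, and invoke the classical Sobolev inequality on the cylinder. The paper uses the same substitution $r=s^{1/(1-\gamma)}$; it differs in organization. The paper derives only the one-sided inequality $\|u\|_{W^{0,q_1}_{\delta,\gamma}} \leq C\|u\|_{W^{1,p}_{\delta,\gamma}}$ on the model by a direct change-of-variables computation, then promotes it to general $(k,l)$ by applying this scalar estimate to $|\nabla u|$ via Kato's inequality, exploiting the shift $\delta\mapsto\delta+\gamma$ in the weight index and the arbitrariness of $\delta$, and iterating. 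You instead propose to prove a clean two-sided equivalence $\|v\|_{W^{m,p}_{\delta,\gamma}(U,g_\gamma)} \simeq \|r^{-\delta}v\|_{W^{m,p}(C,g_{\mathrm{cyl}})}$ for every order $m$, after which the full family of embeddings follows in one stroke from the cylinder Sobolev embedding with no iteration. Your route is more structural and exposes the conformal relation $g_\gamma = r^{2\gamma}g_{\mathrm{cyl}}$ more explicitly, at the cost of having to track how iterated covariant derivatives transform under the rescaling and the substitution $v=r^\delta\tilde v$ at arbitrary order — which is exactly the bookkeeping the paper's Kato iteration is designed to sidestep, proceeding one derivative at a time with a shift in the weight. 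Both approaches are sound; you should just be aware that the boundedness you need is of the $g_{\mathrm{cyl}}$-derivatives of $\log r$ (namely $\partial_s\log r=-r^{\gamma-1}$ and its further $s$-derivatives), not of $\log r$ itself, which is unbounded on the half-cylinder.
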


\begin{proof}
The key of establishment of the $\gamma$-weighted Sobolev embedding is to derive it on a collar neighborhood $(0, 1)\times \partial M$ of $\partial M$ with the model metric $g_{\gamma}=dr^{2}+r^{2\gamma}\hat{g}$.

For any $u\in C^{\infty}_{0}((0,1)\times \partial M)$,
\begin{eqnarray*}
   \|u\|_{W^{0, \frac{np}{n-p}}_{\delta, \gamma}((0,1)\times\partial M, g_{\gamma})}
   & = & \left(\int_{(0,1)\times \partial M}(u(r,x))^{\frac{pn}{n-p}}\left(\frac{1}{r}\right)^{\frac{pn}{n-p}\delta+n\gamma} d\vol_{g_{\gamma}}\right)^{\frac{n-p}{np}} \\
   & = & \left(\int_{\partial M}\int^{1}_{0}\left(u(r,x)\right)^{\frac{pn}{n-p}}\left(\frac{1}{r}\right)^{\frac{pn}{n-p}\delta+n\gamma}r^{(n-1)\gamma}dr d\vol_{\hat{g}}\right)^{\frac{n-p}{np}} \\
   & = & \left(\int_{\partial M}\int^{1}_{0}\left(u(r,x)r^{-\delta}\right)^{\frac{pn}{n-p}}r^{-\gamma}dr d\vol_{\hat{g}}\right)^{\frac{n-p}{np}}
\end{eqnarray*}
In the last integral, we make a change of variable $r=s^{\frac{1}{1-\gamma}}$, and set $\tilde{u}(s, x)=u(s^{\frac{1}{1-\gamma}}, x)$. Then $\tilde{u}\in C^{\infty}_{0}((1, \infty)\times\partial M)$, and one has
\begin{eqnarray*}
\|u\|_{W^{0, \frac{np}{n-p}}_{\delta, \gamma}((0,1)\times\partial M, g_{\gamma})}
& = & \left(\int_{\partial M}\int^{\infty}_{1}\left(\tilde{u}(s, x)s^{-\frac{\delta}{1-\gamma}}\right)^{\frac{np}{n-p}}\frac{1}{\gamma-1}ds d\vol_{\hat{g}}\right)^{\frac{n-p}{np}} \\
& = & \left(\frac{1}{\gamma-1}\right)^{\frac{n-p}{np}} \|\tilde{u}(s,x)s^{-\frac{\delta}{1-\gamma}}\|_{L^{\frac{np}{n-p}}((1, \infty)\times\partial M, g_{0})}.
\end{eqnarray*}
Here $g_{0}=ds^{2}+\hat{g}$ is the product metric on $(1, \infty)\times \partial M$. Then the Sobolev inequality on the cylinder $((1, \infty)\times \partial M, g_{0})$ produces
\begin{eqnarray*}
&   &  \|u\|_{W^{0, \frac{np}{n-p}}_{\delta, \gamma}((0,1)\times\partial M, g_{\gamma})}  \\
& \leq & \left(\frac{1}{\gamma-1}\right)^{\frac{n-p}{np}} C \|\tilde{u}(s,x)s^{-\frac{\delta}{1-\gamma}}\|_{W^{1, p}((1, \infty)\times\partial M, g_{0})} \\
& = & \left(\frac{1}{\gamma-1}\right)^{\frac{n-p}{np}} C \left(\int_{\partial M}\int^{\infty}_{1}\left(|\nabla(\tilde{u}(s,x)s^{-\frac{\delta}{1-\gamma}})|^{p}_{g_{0}} + |\tilde{u}(s, x)|^{p}s^{-\frac{\delta p}{1-\gamma}}\right)ds d\vol_{\hat{g}}\right)^{\frac{1}{p}} \\
& \leq & \left(\frac{1}{\gamma-1}\right)^{\frac{n-p}{np}} C \Bigg( \int_{\partial M}\int^{\infty}_{1} \Bigg( \left(\frac{\delta}{\gamma-1}\right)^{p}s^{\frac{p\delta}{\gamma-1}-p}|\tilde{u}(s, x)|^{p} + s^{-\frac{\delta p}{1-\gamma}}\left|\frac{\partial}{\partial s}\tilde{u}(s, x)\right|^{p} \\
&   & \qquad + s^{-\frac{p\delta}{1-\gamma}}|\nabla^{\hat{g}}\tilde{u}(s,x)|^{p}_{\hat{g}} + s^{-\frac{p\delta}{1-\gamma}}|\tilde{u}(s, x)|^{p} \Bigg) ds d\vol_{\hat{g}} \Bigg)^{\frac{1}{p}}
\end{eqnarray*}
Now change the variable back, i.e. let $s=r^{1-\gamma}$, and note $\frac{\partial \tilde{u}}{\partial s}(s, x)=\frac{\partial u}{\partial r}(r,x)\left(\frac{1}{1-\gamma}\right)r^{\gamma}$.One has
\begin{eqnarray*}
&    &  \|u\|_{W^{0, \frac{np}{n-p}}_{\delta, \gamma}((0,1)\times\partial M, g_{\gamma})} \\
& \leq & \left(\frac{1}{\gamma-1}\right)^{\frac{n-p}{np}} C \Bigg( \int_{\partial M}\int^{1}_{0} \Bigg( \left(\frac{\delta}{\gamma-1}\right)^{p}r^{-p\delta+p(\gamma-1)}|u(r, x)|^{p} \\
&   & \qquad  + r^{-p\delta+p\gamma}\left|\frac{1}{\gamma-1}\right|^{p}\left|\frac{\partial u}{\partial r}(r,x)\right|^{p}
   + r^{-p\delta}|\nabla^{\hat{g}}u(r, x)|^{p}_{\hat{g}} \\
&    & \qquad  + r^{-p\delta}|u(r, x)|^{p} \Bigg) (\gamma-1) r^{-\alpha}dr d\vol_{\hat{g}} \Bigg)^{\frac{1}{p}} \\
& \leq & C \Bigg( \int_{\partial M}\int^{1}_{0} \Bigg[ \left(\frac{1}{r}\right)^{p(\delta-\gamma)+n\gamma}\left(\left|\frac{\partial u}{\partial r}(r, x)\right|^{p} + \left(\frac{1}{r}\right)^{p\gamma}|\nabla^{\hat{g}}u(r, x)|^{p}_{\hat{g}}\right) \\
&    & \qquad  + \left(\frac{1}{r}\right)^{p\delta + n\gamma}|u(r, x)|^{p} \Bigg]  r^{(n-1)\gamma}dr d\vol_{\hat{g}} \Bigg)^{\frac{1}{p}} \\
& \leq & C \Bigg( \int_{(0, 1)\times \partial M} \Bigg[ \left(\frac{1}{r}\right)^{p(\delta-\gamma)+n\gamma}|\nabla^{g_{\gamma}} u|^{p}_{g_{\gamma}}
   + \left(\frac{1}{r}\right)^{p\delta+n\gamma}|u(r, x)|^{p} \Bigg]  d\vol_{g_{\gamma}} \Bigg)^{\frac{1}{p}} \\
& = & C \|u\|_{W^{1, p}_{\delta, \gamma}((0,1)\times\partial M, g_{\gamma})}.
\end{eqnarray*}

Now combining this with the usual Sobolev inequality in the interior part of the manifold $M$, one can obtain that for any given $\delta\in \mathbb{R}$ and $p\geq1$, there exists a constant $C$ such that
\begin{equation}\label{eqn: basic alpha weighted Sobolev inequality}
\|u\|_{W^{0, \frac{np}{n-p}}_{\delta, \gamma}(M, g)} \leq C \|u\|_{W^{1, p}_{\delta, \gamma}(M, g)}
\end{equation}
for all $u\in C^{\infty}_{0}(\mathring{M})$.

Then by the inequality (\ref{eqn: basic alpha weighted Sobolev inequality}), Kato's inequality and the definition of the $\gamma$-weighted Sobolev norm, one has
\begin{equation*}
\|\nabla u\|_{W^{0, \frac{np}{n-p}}_{\delta, \gamma}(M, g)} \leq C \|\nabla u\|_{W^{1, p}_{\delta, \gamma}(M, g)} \leq C \|u\|_{W^{2, p}_{\delta+\gamma, \gamma}(M, g)}.
\end{equation*}
Moreover,
\begin{equation*}
\|u\|_{W^{0, \frac{np}{n-p}}_{\delta+\gamma, \gamma}(M, g)} \leq C \|u\|_{W^{1, p}_{\delta+\gamma, \gamma}(M, g)} \leq C \|u\|_{W^{2, p}_{\delta+\gamma, \gamma}(M, g)}.
\end{equation*}
Thus
\begin{equation*}
\|u\|_{W^{1, \frac{np}{n-p}}_{\delta+\gamma, \gamma}(M, g)}\leq C \|u\|_{W^{2, p}_{\delta+\gamma, \gamma}(M, g)}.
\end{equation*}
Because in this inequality $\delta$ is arbitrary, one obtains that for any given $\delta\in\mathbb{R}$ and $p\geq1$ there exits a constant $C$ such that
\begin{equation*}
\|u\|_{W^{1, \frac{np}{n-p}}_{\delta, \gamma}(M, g)}\leq C \|u\|_{W^{2, p}_{\delta, \gamma}(M, g)}
\end{equation*}
for all $u\in C^{\infty}_{0}(\mathring{M})$.

Then one can inductively prove the $\gamma$-weighted Sobolev inequality and complete the proof of the proposition.
\end{proof}

In Proposition \ref{prop: weighted-Sobolev-inequality}, we established the weighted Sobolev inequality on manifolds with conical singularities by using Sobolev inequality on these manifolds. A Hardy's inequality is the key ingredient in the derivation of the Sobolev inequality on manifolds with conical singularities. This does not work for manifolds with $r^{\gamma}$-horn singularities any more. 
On the other hand, it is interesting to note that, for projective varieties, 
the work of P. Li and G. Tian \cite{LT95} implies the Sobolev inequality via their upper bound for the heat kernel.

In the isolated $r^{\gamma}$-horn singularities case, we have obtained the $\gamma$-weighted Sobolev inequalities. In the non-isolated singularities case, if we assume that the Sobolev inequalities holds, e.g. on projective varieties with $r^{\gamma}$-horn singularities, then similarly as in the proof of Proposition \ref{prop: weighted-Sobolev-inequality}, we can derive $\gamma$-weighted Sobolev inequalities.

\begin{prop}\label{prop: alpha-weighted-Sobolev-inequality}
Let $(M^{n}, g)$ be a compact Riemannian manifold with non-isolated $r^{\gamma}$-horn singularities, on which the Sobolev inequalities hold. For any $\delta \in \mathbb{R}$, $1 \leq p < n$, $0 \leq l \leq k$, and $q$ with $\frac{1}{q} = \frac{1}{p} - \frac{k-l}{n} > 0$, there exists a constant $C$, such that 
\begin{equation*}
\|u\|_{W^{l, q}_{\delta, \gamma}} \leq C \|u\|_{W^{k, p}_{\delta, \gamma}}.
\end{equation*}
holds for all $u \in C^{\infty}_{0}(\mathring{M})$.

Consequently, there is a continuous embedding
\begin{equation*}
W^{k, p}_{\delta, \gamma}(M) \subset W^{l, q}_{\delta, \gamma}(M).
\end{equation*}
\end{prop}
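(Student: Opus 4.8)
The plan is to reduce the statement to a single Sobolev step and then iterate, exactly as in the proof of Proposition~\ref{prop: weighted-Sobolev-inequality}. Iterating the chain of inclusions, it suffices to prove, for $q_{1}$ with $\frac{1}{q_{1}}=\frac{1}{p}-\frac{1}{n}$ (so $q_{1}=\frac{np}{n-p}$ and $\frac{n}{q_{1}}=\frac{n}{p}-1$), the single-step inequality
\[
\|u\|_{W^{k-1,q_{1}}_{\delta,\gamma}(M)}\le C\,\|u\|_{W^{k,p}_{\delta,\gamma}(M)},\qquad u\in C^{\infty}_{0}(\mathring{M}),
\]
and then to iterate it $k-l$ times. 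As in the conical case, the content is entirely local near $\partial M$; away from $\partial M$ everything reduces to the usual Sobolev inequality on a smooth compact domain.

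For the single step, set $a_{i}:=\delta-\gamma i+\frac{n\gamma}{q_{1}}$ for $0\le i\le k-1$; using $\frac{n\gamma}{q_{1}}=\frac{n\gamma}{p}-\gamma$ this is $a_{i}=\delta-\gamma(i+1)+\frac{n\gamma}{p}$, and by definition \eqref{eqn: WSN-horn} one has $\|u\|_{W^{k-1,q_{1}}_{\delta,\gamma}(M)}\le\sum_{i=0}^{k-1}\big\|\chi^{a_{i}}|\nabla^{i}u|\big\|_{L^{q_{1}}(M)}$. Since $u\in C^{\infty}_{0}(\mathring{M})$ and $\chi$ is smooth and positive on $\mathring{M}$, each $\chi^{a_{i}}|\nabla^{i}u|\in W^{1,p}(M)$, with $|\nabla^{i}u|$ handled via Kato's inequality $|\nabla|\nabla^{i}u||\le|\nabla^{i+1}u|$ as in Proposition~\ref{prop: weighted-Sobolev-inequality}. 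Applying the assumed (unweighted) Sobolev inequality $\|v\|_{L^{q_{1}}(M)}\le C\|v\|_{W^{1,p}(M)}$ and the Leibniz rule,
\begin{align*}
\|u\|_{W^{k-1,q_{1}}_{\delta,\gamma}(M)}
&\le C\sum_{i=0}^{k-1}\big\|\chi^{a_{i}}|\nabla^{i}u|\big\|_{W^{1,p}(M)}\\
&\le C\sum_{i=0}^{k-1}\Big(\big\|\chi^{a_{i}}|\nabla^{i+1}u|\big\|_{L^{p}(M)}+\big\|\chi^{a_{i}+1}|\nabla^{i}u|\big\|_{L^{p}(M)}+\big\|\chi^{a_{i}}|\nabla^{i}u|\big\|_{L^{p}(M)}\Big),
\end{align*}
where we used $|\nabla\chi|_{g}\le C\chi^{2}$, valid because $\chi=1/r$ near $\partial M$, $|dr|_{g_{\gamma}}=1$, and $g$ is uniformly equivalent to $g_{\gamma}$ on a small collar. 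Now the weight exponents: $p\,a_{i}=p(\delta-\gamma(i+1))+n\gamma$ and $p(a_{i}+1)=p(\delta-\gamma i)+n\gamma-p(\gamma-1)$. Since $\gamma>1$ and $0<\chi^{-1}\le1$ (so $\chi\ge1$), the first term equals the $(i+1)$-st derivative term in $\|u\|_{W^{k,p}_{\delta,\gamma}(M)}^{p}$, while the second and third are dominated by the $i$-th derivative term; here the condition $\gamma>1$ only provides extra room (for the cone the corresponding identities hold with equality). Summing over $i$ gives the single-step inequality.

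Iterating the single-step inequality $k-l$ times produces $\|u\|_{W^{l,q}_{\delta,\gamma}(M)}\le C\|u\|_{W^{k,p}_{\delta,\gamma}(M)}$ with $\frac{1}{q}=\frac{1}{p}-\frac{k-l}{n}$; at the $j$-th step the intermediate integrability exponent is $\frac{np}{n-jp}$, and the hypothesis $\frac{1}{q}=\frac{1}{p}-\frac{k-l}{n}>0$ guarantees $jp<n$ for every $j\le k-l$, so each application of the Sobolev inequality is legitimate. The continuous embedding $W^{k,p}_{\delta,\gamma}(M)\subset W^{l,q}_{\delta,\gamma}(M)$ then follows from the density of $C^{\infty}_{0}(\mathring{M})$. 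The only genuinely non-routine point — and the essential difference from the isolated case treated in Proposition~\ref{prop: isolated alpha weighted Sobolev inequality} — is that the unweighted Sobolev inequality cannot be produced here, since for non-isolated $r^{\gamma}$-horns with $\gamma>1$ there is neither a Hardy inequality nor a working partition argument; this is precisely why it is taken as a hypothesis, and granting it, the weight bookkeeping above goes through.
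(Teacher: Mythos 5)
Your proof is correct and executes exactly the strategy the paper itself sketches: the paper gives no separate proof of this proposition, instead saying that it follows ``similarly as in the proof of Proposition~\ref{prop: weighted-Sobolev-inequality},'' and your single-step Sobolev argument with the $\gamma$-weighted bookkeeping, the use of $|\nabla\chi|_g\le C\chi^2$ and $\chi\ge 1$, Kato's inequality, and iteration is precisely that adaptation. The computation of the exponents $p\,a_i$ and $p(a_i+1)$ and the observation that $\gamma>1$ only gives extra slack (equality when $\gamma=1$) correctly reproduces the weight matching used in the conical case.
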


Moreover, we have the following compact $\alpha$-weighted Sobolev embedding, which will play an important role in the study of the spectrum of the Sch\"odinger operator $\Delta + c R$.

\begin{prop}\label{prop: compact-alpha-weighted embedding}
Let $(M, g)$ be a compact Riemannian manifold with $r^{\gamma}$-horn singularities. The continuous embedding $W^{1, 2}_{\gamma-\frac{n\gamma}{2}, \gamma}(M, g) \hookrightarrow L^{2}(M, g)$ is compact.
\end{prop}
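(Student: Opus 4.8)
The plan is to reduce the statement to the compactness of a Rellich-type embedding on a cylinder, exactly as in the proof of Lemma~\ref{lem: compactness on cones}, via the change of variable $r = s^{1/(1-\gamma)}$ that already appeared in the proof of Proposition~\ref{prop: isolated alpha weighted Sobolev inequality}. First I would note that it suffices to establish the compact embedding on a sufficiently small collar neighborhood $U = (0,1)\times\partial M$ equipped with the model metric $g_{\gamma} = dr^2 + r^{2\gamma}\hat g + \pi^*\check g$, since on the interior $M\setminus U$ the metric is smooth and the embedding $W^{1,2}(M\setminus U)\hookrightarrow L^2(M\setminus U)$ is compact by the classical Rellich lemma, and on the collar the asymptotic control $|r^{\gamma k}\nabla^k_{g_\gamma}h|_{g_\gamma} = O(r^\alpha)$ makes $g$ and $g_\gamma$ uniformly quasi-isometric near $\partial M$ (together with the corresponding equivalence of the $\gamma$-weighted Sobolev norms). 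It is also enough to treat $k=1$, since $\|u\|_{W^{k,2}_{\gamma - n\gamma/2,\gamma}} \geq \|u\|_{W^{1,2}_{\gamma-n\gamma/2,\gamma}}$.

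Next I would set up the unitary change of variables. As in the proof of Lemma~\ref{lem: cone-Sobolev-inequality}, using compactness of $B$ I may assume the fiber metrics $\hat g|_{\pi^{-1}(y)}$ are uniformly quasi-isometric to a fixed metric $\hat g_0$ on $F$, so that $g_\partial M$ is quasi-isometric to a fixed metric on $\partial M$ and the cylinder metric $g_{\mathrm{Cyl}} = ds^2 + g_{\partial M}$ on $(1,\infty)\times\partial M$ is well-defined. For $u\in C^\infty_0(U)$, I would compute the $L^2(U,g_\gamma)$ norm: since $d\vol_{g_\gamma} = r^{(n-1)\gamma}\,dr\,d\vol_{g_{\partial M}}$, after the substitution $r = s^{1/(1-\gamma)}$ and setting $\tilde u(s,x) = s^{\beta}u(s^{1/(1-\gamma)},x)$ for a suitable exponent $\beta$ (chosen, as the $\delta = \gamma - n\gamma/2$ computation in Proposition~\ref{prop: isolated alpha weighted Sobolev inequality} dictates, to absorb the Jacobian $r^{-\gamma}$ and the weight $r^{-2\delta}$), the map $u\mapsto\tilde u$ is an isometry from $L^2(U,g_\gamma)$ onto $L^2((1,\infty)\times\partial M, g_{\mathrm{Cyl}})$. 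The key analytic step is then to show the weighted Sobolev norm dominates the cylinder Sobolev norm:
\begin{equation*}
\|u\|_{W^{1,2}_{\gamma - \frac{n\gamma}{2},\gamma}(U,g_\gamma)} \;\geq\; c\,\|\tilde u\|_{W^{1,2}((1,\infty)\times\partial M, g_{\mathrm{Cyl}})}
\end{equation*}
for some $c>0$. This follows by expanding $|\nabla_{g_\gamma}u|^2_{g_\gamma} = |\partial_r u|^2 + r^{-2\gamma}|\nabla^{\hat g}u|^2_{\hat g} + |\nabla^{\check g}u|^2$, substituting, integrating by parts in $s$ to produce the cross term, and checking that the resulting coefficient of $s^{-2}\tilde u^2$ is a strictly positive constant — this is where the special weight index $\delta = \gamma - n\gamma/2$ enters, mirroring the identity $1 + \tfrac{f(f-2)}{4} = \tfrac{3}{4} + \tfrac{(f-1)^2}{4} \geq \tfrac34$ in Lemma~\ref{lem: compactness on cones}. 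Once this inequality is in hand, compactness follows: a bounded sequence in $W^{1,2}_{\gamma-n\gamma/2,\gamma}(U,g_\gamma)$ maps to a bounded sequence in $\mathring W^{1,2}((1,\infty)\times\partial M, g_{\mathrm{Cyl}})$, which by the classical Rellich lemma (and the fact that the images have support in a fixed set after cutting off, so no escape to $s=\infty$) has an $L^2$-convergent subsequence; pulling back via the isometry gives an $L^2(U,g_\gamma)$-convergent subsequence.

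The main obstacle I anticipate is bookkeeping the exponents so that the change of variable is genuinely isometric on $L^2$ \emph{and} produces a uniformly positive constant in front of the $s^{-2}\tilde u^2$ term after the integration by parts — the horn exponent $\gamma>1$ changes the sign conventions relative to the conical case ($s = r^{1-\gamma}$ sends $r\to 0$ to $s\to+\infty$, so the collar becomes a half-infinite cylinder rather than a finite one), and one must confirm that no boundary terms at $s=\infty$ appear (they do not, since $\tilde u$ has compact support) and that the positivity of the coefficient does not require any curvature hypothesis — indeed, unlike the conical case, the $\min_{\partial M}\{R_{\hat g}\}>0$ hypothesis of Theorem~\ref{thm: horn} is not needed here because the $r^{-2\gamma}$ decay of the fiber-direction term and the $r^{-2}$ Hardy-type weight interact differently. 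A secondary, purely routine point is handling the transition region between the collar and the interior with a partition of unity, which is standard. I would then remark that the argument is close to verbatim that of Lemma~\ref{lem: compactness on cones}, and for brevity present only the modified computation.
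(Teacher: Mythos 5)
Your route differs from the paper's in a way that opens a genuine gap. The paper does \emph{not} change the radial variable at all: it uses the multiplicative unitary $u\mapsto\tilde u := r^{\gamma f/2}u$, which maps $L^2(U, g_\gamma)$ isometrically onto $L^2\bigl((0,1)\times\partial M,\, dr^2 + g_{\partial M}\bigr)$, a \emph{finite} cylinder, and then shows $\|u\|_{W^{1,2}_{\gamma-\frac{n\gamma}{2},\gamma}(U,g_\gamma)} \geq \|\tilde u\|_{W^{1,2}((0,1)\times\partial M, g_C)}$ by the same integration-by-parts computation as in Lemma~\ref{lem: compactness on cones} (exploiting $r^{-2\gamma}\geq r^{-2}$ on $(0,1)$, so that $1+\frac{\gamma f(\gamma f -2)}{4}=\frac{(\gamma f-1)^2+3}{4}\geq\frac34$). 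Since the finite cylinder closes up to a compact manifold with boundary, Rellich applies directly and compactness follows.

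You instead reuse the substitution $r = s^{1/(1-\gamma)}$ from Proposition~\ref{prop: isolated alpha weighted Sobolev inequality}, which sends the collar to the \emph{half-infinite} cylinder $(1,\infty)\times\partial M$, and then appeal to "the classical Rellich lemma." This is where the argument breaks: the embedding $\mathring W^{1,2}((1,\infty)\times\partial M)\hookrightarrow L^2$ is \emph{not} compact --- translating a fixed bump toward $s=\infty$ produces a bounded sequence with no $L^2$-convergent subsequence. Your parenthetical fix, "the images have support in a fixed set after cutting off, so no escape to $s=\infty$," is false: functions $u\in C^\infty_0(U)$ may have support arbitrarily close to $r=0$, so the supports of $\tilde u$ are not uniformly bounded in $s$. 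You also conflate this with the (unrelated, and indeed unproblematic) vanishing of boundary terms at $s=\infty$ in the integration by parts. The gap is in fact repairable: under your substitution, the $r^{-2\gamma}$ weight multiplying $u^2$ in the $\gamma$-weighted norm becomes $s^{2\gamma/(\gamma-1)}$, which grows as $s\to\infty$ because $\gamma>1$, and this yields a uniform tail estimate $\int_{\{s>S\}}\tilde u^2 \leq S^{-2\gamma/(\gamma-1)}\,\|u\|^2_{W^{1,2}_{\gamma-\frac{n\gamma}{2},\gamma}}$; combining Rellich on the truncated cylinder $(1,S)\times\partial M$ with this tail estimate restores compactness. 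But your stated domination inequality to the plain $W^{1,2}$ cylinder norm discards precisely this extra growth, so as written the conclusion does not follow. The paper's choice to stay in the $r$-variable, so that the target cylinder is finite, sidesteps the issue entirely and is the cleaner route.
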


\begin{remark}
{\rm
Note that the embedding $W^{1, 2}_{\gamma-\frac{n\gamma}{2}, \gamma}(M, g) \hookrightarrow L^{2}(M, g)$ does not follow from Proposition \ref{prop: alpha-weighted-Sobolev-inequality}, since in Proposition \ref{prop: compact-alpha-weighted embedding} we do not assume that the Sobolev inequalities hold on the manifold with $r^{\gamma}$-horn singularities. In the following proof, we will obatin both the embedding $W^{1, 2}_{\gamma-\frac{n\gamma}{2}, \gamma}(M, g) \hookrightarrow L^{2}(M, g)$ and its compactness simultaneously.
}
\end{remark}

\begin{proof}
Clearly, it suffices to obtain the compact embedding on a collar neighborhood $U=(0, 1)\times \partial M$ of $\partial M$ with the model metric $g_{\alpha}=dr^{2}+r^{2\gamma}\hat{g}+\pi^{*}\check{g}$. Similarly as in the proof of Lemma \ref{lem: compactness on cones}, we relate the function spaces on the model collar neighborhood $(U, g_{\gamma})$ to that on a finite cylinder $(C(\partial M) = (0, 1) \times \partial M, g_{C}=dr^{2}+g_{\partial M})$.

The mapping
\begin{eqnarray*}
L^{2}(U, g_{\gamma}) & \rightarrow & L^{2}(C(\partial M), g_{C}) \\
   u & \mapsto & \tilde{u}=r^{\frac{\gamma f}{2}}u
\end{eqnarray*}
is unitary. Similarly as the proof of the inequality in $(\ref{eqn: cone cylinder inequality})$ in Lemma $\ref{lem: compactness on cones}$, we can show that
\begin{equation}\label{eqn: horn cylinder inequality}
\|u\|_{W^{1,2}_{\gamma-\frac{n\gamma}{2}, \gamma}(U, g_{\gamma})} \geq \|u\|_{W^{1, 2}(C(\partial M), g_{C})}.
\end{equation}
Then by combining the inequality in $(\ref{eqn: horn cylinder inequality})$ with the classic Rellich lemma on a finite cylinder, we complete the proof.
\end{proof}

Finally, we derive the following compact $\alpha$-weighted Sobolev embedding that will be used in the study of $W$-functional on manifolds with $r^{\gamma}$-horn singularities. 
\begin{prop}
Let $(M^{n}, g)$ be a compact Riemannian manifold with $r^{\gamma}$-horn singularities. There is a compact embedding $W^{1,1}_{\gamma -n\gamma, \gamma}(M, g)\subset L^{1}(M, g)$.
\end{prop}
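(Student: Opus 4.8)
The plan is to follow the scheme of Proposition~\ref{prop: compact-alpha-weighted embedding}, reducing the statement to the classical Rellich--Kondrachov compactness theorem on a finite cylinder. As in the other proofs of this section, the only region requiring work is a small collar neighborhood $U_\epsilon=(0,\epsilon)\times\partial M$ of the singular set: on the complement $M\setminus U_\epsilon$ the metric $g$ is smooth on a compact set and compactness of $W^{1,1}\hookrightarrow L^1$ is classical, while by the asymptotic control in condition~(3) of Definition~\ref{defn: ManifoldWith-alpha-ConicalSingularities} the metrics $g$ and $g_\gamma$ are uniformly quasi-isometric on $U_\epsilon$ once $\epsilon$ is small, so the weighted $W^{1,1}$-norm and the $L^1$-norm taken with respect to $g$ are each equivalent to the corresponding norms taken with respect to $g_\gamma$. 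Hence it suffices to prove that the embedding $W^{1,1}_{\gamma-n\gamma,\gamma}(U_\epsilon,g_\gamma)\hookrightarrow L^1(U_\epsilon,g_\gamma)$ is compact.

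First I would record the identities on the model collar. Since $d\vol_{g_\gamma}=r^{\gamma f}\,dr\,d\vol_{g_{\partial M}}$, the substitution $u\mapsto\tilde u:=r^{\gamma f}u$ is an isometry of $L^1(U_\epsilon,g_\gamma)$ onto $L^1(C,g_C)$, where $C=(0,\epsilon)\times\partial M$ carries the product metric $g_C=dr^2+g_{\partial M}$. With the weight index $\delta=\gamma-n\gamma$, the two terms in $(\ref{eqn: WSN-horn})$ carry the exponents $\chi^{\gamma}$ on $|u|$ and $\chi^{0}=1$ on $|\nabla u|$, so on $U_\epsilon$ (where $\chi=1/r$)
\[
\|u\|_{W^{1,1}_{\gamma-n\gamma,\gamma}(U_\epsilon,g_\gamma)}
=\int_0^\epsilon\int_{\partial M}\Big(r^{\gamma(f-1)}|u|+r^{\gamma f}|\nabla u|_{g_\gamma}\Big)\,dr\,d\vol_{g_{\partial M}}.
\]

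Next I would establish the key inequality
\[
\|\tilde u\|_{W^{1,1}(C,g_C)}\le C\,\|u\|_{W^{1,1}_{\gamma-n\gamma,\gamma}(U_\epsilon,g_\gamma)}\qquad\text{for all }u\in C^\infty_0(U_\epsilon).
\]
Using the orthogonal splitting $|\nabla u|_{g_\gamma}^2=|\partial_r u|^2+|\nabla^{\partial M}_{g_\gamma}u|^2$ and the comparison $|\nabla^{\partial M}_{g_{\partial M}}u|^2\le(1+r^{2\gamma})|\nabla^{\partial M}_{g_\gamma}u|^2\le 2|\nabla^{\partial M}_{g_\gamma}u|^2$ (a vertical vector of $g_\gamma$ has length $r^{\gamma}$ times its $g_{\partial M}$-length), one obtains the pointwise bounds $|\nabla^{\partial M}\tilde u|_{g_{\partial M}}\le\sqrt2\,r^{\gamma f}|\nabla u|_{g_\gamma}$ and $|\partial_r\tilde u|\le\gamma f\,r^{\gamma f-1}|u|+r^{\gamma f}|\partial_r u|$. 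Each term on the right is then dominated by a term appearing in the displayed weighted norm, using only $0<r<\epsilon<1$ and $\gamma>1$: indeed $r^{\gamma f}\le r^{\gamma(f-1)}$ and $r^{\gamma f-1}=r^{\gamma-1}r^{\gamma(f-1)}\le r^{\gamma(f-1)}$, and this last step is precisely where the horn exponent $\gamma>1$ enters. Integrating and summing the contributions of $|\tilde u|$, $|\partial_r\tilde u|$, and $|\nabla^{\partial M}\tilde u|$ yields the inequality. Since $\tilde u$ again has compact support in $C$, the embedding $W^{1,1}(C,g_C)\hookrightarrow L^1(C,g_C)$ is compact by Rellich--Kondrachov on the bounded cylinder; composing the resulting compact map with the bounded map $u\mapsto\tilde u$ on the left and with the $L^1$-isometry $\tilde u\mapsto r^{-\gamma f}\tilde u=u$ on the right recovers the inclusion $W^{1,1}_{\gamma-n\gamma,\gamma}(U_\epsilon,g_\gamma)\hookrightarrow L^1(U_\epsilon,g_\gamma)$ and shows it is compact. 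A standard finite partition of $M$ together with a diagonal subsequence argument then combines this with the interior estimate to finish the proof.

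The main obstacle, which is bookkeeping rather than conceptual, is the norm inequality relating $\|\tilde u\|_{W^{1,1}(C,g_C)}$ to $\|u\|_{W^{1,1}_{\gamma-n\gamma,\gamma}(U_\epsilon,g_\gamma)}$: one must carry the fibre rescaling $r^{2\gamma}\hat g$ through the gradient decomposition of the Riemannian submersion and verify that every weight produced by the substitution $\tilde u=r^{\gamma f}u$ is controlled by the single weight $r^{\gamma(f-1)}$ attached to $|u|$ in the $W^{1,1}_{\gamma-n\gamma,\gamma}$-norm. Unlike the $L^2$ case of Proposition~\ref{prop: compact-alpha-weighted embedding}, no delicate integration by parts is needed here---the triangle inequality and the monotonicity of $r\mapsto r^a$ suffice---but the argument genuinely uses $\gamma>1$ at the step $r^{\gamma-1}\le1$.
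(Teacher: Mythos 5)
Your argument is correct and follows the same route as the paper's: reduce to the model collar, transfer $u\mapsto \tilde u = r^{\gamma f}u$ to a finite cylinder, bound the cylinder $W^{1,1}$-norm by the weighted norm using $\gamma>1$ (you via $r^{\gamma-1}\le1$, the paper via choosing $\epsilon$ so $r^{-\gamma}-\gamma f r^{-1}>1$), and then invoke Rellich on the cylinder. The only slips are cosmetic: the reasoning you give for the $\sqrt2$ factor is a detour — since on $U_\epsilon$ one has $r^{-2\gamma}\ge1$, the direct bound $|\nabla^{\partial M}_{g_{\partial M}}u|\le|\nabla^{\partial M}_{g_\gamma}u|$ already holds, no constant needed.
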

\begin{proof}
Again, it suffices to derive the compact embedding on a small collar neighborhood of the singular set $\partial M$ with the model metric $g_{\gamma}$. Because $\gamma > 1$, there exists $\epsilon>0$ such that for all $r<\epsilon$
\begin{equation*}
r^{-\gamma} - \gamma f r^{-1} >1.
\end{equation*}

The map
\begin{eqnarray*}
L^{1}((0, \epsilon)\times\partial M, g_{\gamma}) & \rightarrow & L^{1}((0, \epsilon)\times\partial M, g_{Cyl}) \\
  u & \mapsto & \tilde{u}=r^{\gamma f}u
\end{eqnarray*}
is unitary. Here $g_{Cyl}$ is the product metric $dr^{2}+g_{\partial M}$.

On the other hand,
\begin{eqnarray*}
\|u\|_{W^{1, 1}_{\gamma - n\gamma, \gamma}((0, \epsilon)\times\partial M, g_{\gamma})}
& = & \int_{\partial M}\int^{\epsilon}_{0} \left(|\nabla u|_{g_{\gamma}} + \frac{1}{r^{\gamma}}|u|\right)r^{\gamma f}dr d\vol_{g_{\partial M}} \\
& \geq &  \int_{\partial M}\int^{\epsilon}_{0} \left( |\partial_{r}u| + |\nabla u|_{g_{\partial M}} + \frac{1}{r^{\gamma}}|u| \right)r^{\gamma f}dr d\vol_{g_{\partial M}} \\
& \geq & \int_{\partial M}\int^{\epsilon}_{0} \left( |\partial_{r}\tilde{u}| + |\nabla \tilde{u}|_{g_{\partial M}} + (r^{-\gamma}-\gamma f r^{-1})|\tilde{u}| \right)dr d\vol_{g_{\partial M}} \\
& \geq & \|\tilde{u}\|_{W^{1,1}((0, \epsilon)\times\partial M, g_{Cyl})}.
\end{eqnarray*}
Then Rellich lemma on the cylinder $((0, \epsilon)\times\partial M, g_{Cyl})$ completes the proof.
\end{proof}

\subsection{Spectrum of $-\Delta_{g} + c R_{g}$ on manifolds with $r^{\gamma}$-horn singularities}

\begin{thm}\label{thm: spectrum property horn singularity}
Let $(M, g)$ be a compact Riemannian manifold with $r^{\gamma}$-horn singularities as in Definition $\ref{defn: ManifoldWith-alpha-ConicalSingularities}$ with $\gamma > 1$. If $\min\limits_{\partial M}\{R_{\hat{g}}\}>0$, then the Friedrichs extension of the Schr\"odinger operator $-\Delta_{g} + c R_{g} (c>0)$ with the domain $C^{\infty}_{0}(\mathring{M})$ has the spectrum consisting of discrete eigenvalues with finite multiplicity $-\infty < \lambda_{1}<\lambda_{2}\leq\lambda_{3}\leq \cdots$, and the corresponding eigenfunctions are smooth and form a basis of $L^{2}(M, g)$.
\end{thm}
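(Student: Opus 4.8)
The plan is to mimic the argument of Section~\ref{section: spectrum} almost line for line, the key simplification being that when $\gamma>1$ the singular part of the potential $cR_{g_\gamma}$ is already positive and dominant near the singular set, so one does not need a Hardy inequality at all. The first step is a model semi-boundedness estimate on a collar. On $U_\epsilon=(0,\epsilon)\times\partial M$ with the model metric $g_\gamma=dr^2+r^{2\gamma}\hat g+\pi^{*}\check g$, the scalar curvature has the form $R_{g_\gamma}=\frac{R_{\hat g}}{r^{2\gamma}}-\frac{\gamma(n-1)(n\gamma-2)}{r^{2}}+(\text{terms of order }r^{-2}\text{ and lower})$, where the extra terms come from $R_{\check g}\circ\pi$ and the O'Neill tensors exactly as in \eqref{eqn: scalar curvature formula}. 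Since $\gamma>1$ the term $r^{-2\gamma}$ dominates $r^{-2}$ as $r\to0$, so using $\min_{\partial M}\{R_{\hat g}\}>0$ I would fix $\epsilon$ small enough that $cR_{g_\gamma}\ge \tfrac{c}{2}\min_{\partial M}\{R_{\hat g}\}\,r^{-2\gamma}$ on $U_\epsilon$. Then for $u\in C^\infty_0(U_\epsilon)$,
\[
\big((-\Delta_{g_\gamma}+cR_{g_\gamma})u,\,u\big)_{L^2(U_\epsilon,g_\gamma)}
=\int_{U_\epsilon}\!\big(|\nabla u|^2_{g_\gamma}+cR_{g_\gamma}u^2\big)\,d\vol_{g_\gamma}
\ \ge\ \delta_0\int_{U_\epsilon}\!\Big(|\nabla u|^2_{g_\gamma}+\frac{u^2}{r^{2\gamma}}\Big)\,d\vol_{g_\gamma},
\]
with $\delta_0=\min\{1,\tfrac{c}{2}\min_{\partial M}\{R_{\hat g}\}\}>0$. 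By the choice of weights in \eqref{eqn: WSN-horn} the right-hand side equals $\delta_0\|u\|^2_{W^{1,2}_{\gamma-\frac{n\gamma}{2},\gamma}(U_\epsilon,g_\gamma)}$, which is the horn analogue of Lemma~\ref{lem: cone semiboundedness estimate}.

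The second step is to pass to the true metric $g$ and then to all of $M$. By condition (3) of Definition~\ref{defn: ManifoldWith-alpha-ConicalSingularities}, on a sufficiently small collar $g$ and $g_\gamma$ are uniformly quasi-isometric and $R_g$ differs from $R_{g_\gamma}$ only by terms that are lower order relative to $r^{-2\gamma}$, so the $\gamma$-weighted norms are equivalent there and the model estimate survives with a possibly smaller constant. On the complement $M\setminus U_\epsilon$, which is a smooth compact manifold with boundary, $cR_g$ is bounded, so for a large enough constant $A>0$ the operator $L_A:=-\Delta_g+cR_g+A$ satisfies, by the same partition-of-unity patching as in Proposition~\ref{prop: semiboundedness estimate},
\[
(L_A u,\,u)_{L^2(M,g)}\ \ge\ C\,\|u\|^2_{W^{1,2}_{\gamma-\frac{n\gamma}{2},\gamma}(M,g)}
\]
for all $u\in C^\infty_0(\mathring M)$ and some $C>0$; in particular $L_A$ with domain $C^\infty_0(\mathring M)$ is symmetric and strictly positive.

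The third step is the spectral theory, carried out exactly as in the proof of Theorem~\ref{thm: spectrum property}. From the previous estimate the Friedrichs (Neumann) extension $\tilde L_A$ exists, is self-adjoint and boundedly invertible, and its form domain — hence $\mathrm{Dom}(\tilde L_A)$ — embeds continuously into $W^{1,2}_{\gamma-\frac{n\gamma}{2},\gamma}(M)$. Consequently $\tilde L_A^{-1}\colon L^2(M)\to \mathrm{Dom}(\tilde L_A)\hookrightarrow W^{1,2}_{\gamma-\frac{n\gamma}{2},\gamma}(M)\hookrightarrow L^2(M)$ is a bounded self-adjoint operator that is compact, because the last embedding is compact by Proposition~\ref{prop: compact-alpha-weighted embedding}. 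The spectral theorem for compact self-adjoint operators then yields a discrete set of eigenvalues of $\tilde L_A$ with finite multiplicities accumulating only at $0$, with an orthonormal basis of $L^2(M)$ consisting of eigenfunctions; subtracting $A$ gives the asserted spectrum $-\infty<\lambda_1<\lambda_2\le\lambda_3\le\cdots$ of $-\Delta_g+cR_g$, with $\lambda_1$ simple by the standard Perron--Frobenius argument (the ground state may be taken nonnegative, hence positive by the interior strong maximum principle, so its eigenspace is one-dimensional). Smoothness of the eigenfunctions on $\mathring M$ is immediate from interior elliptic regularity, since the eigenvalue equation is elliptic with smooth coefficients away from $\partial M$.

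I expect no serious obstacle here, precisely because Proposition~\ref{prop: compact-alpha-weighted embedding} already does the analytic heavy lifting. The only points demanding care are bookkeeping consequences of Definition~\ref{defn: ManifoldWith-alpha-ConicalSingularities}: verifying that the O'Neill correction terms in $R_{g_\gamma}$ together with the perturbation $h$ cannot destroy the dominance of the positive $\tfrac{R_{\hat g}}{r^{2\gamma}}$ term near $\partial M$, and that the $\gamma$-weighted norms for $g$ and $g_\gamma$ are uniformly equivalent on a small collar; both follow from the asymptotic control $|r^{\gamma k}\nabla^k_{g_\gamma}h|_{g_\gamma}=O(r^\alpha)$.
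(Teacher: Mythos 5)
Your proposal is correct and follows essentially the same route as the paper: obtain a semi-boundedness estimate on a collar from the pointwise bound $R_g\gtrsim r^{-2\gamma}$ (no Hardy inequality needed, precisely because $\gamma>1$), add a constant $A$ to handle the interior, form the Friedrichs extension with form domain in $W^{1,2}_{\gamma-\frac{n\gamma}{2},\gamma}(M)$, and invoke the compact embedding of Proposition~\ref{prop: compact-alpha-weighted embedding}. The only superficial difference is that you first estimate with the model metric $g_\gamma$ and then transfer to $g$ by quasi-isometry, whereas the paper works directly with $R_g$ via the formula \eqref{eqn: alpha conical scalar curvature formula}; you also spell out the simplicity of $\lambda_1$, which the paper's proof leaves to the reader.
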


\begin{proof}
The scalar curvature of the metric $g$ restricted on a small collar neighborhood of $\partial M$ is
\begin{equation}\label{eqn: alpha conical scalar curvature formula}
R_{g}= \frac{R_{\hat{g}}}{r^{2\gamma}}  - \frac{f \gamma (n\gamma+2)}{r^{2}} + R_{\check{g}}\circ\pi-|A|^{2}-|T|^{2}-|N|^{2}-\check{\delta}N + o(r^{-2\gamma + \alpha}).
\end{equation}
Thus for sufficiently small $\epsilon>0$, on $(0, \epsilon)\times \partial M$, one has
\begin{equation*}
R_{g}\geq \frac{\delta}{r^{2\gamma}},
\end{equation*}
for some $\delta>0$.
Consequently, for any $u\in C^{\infty}_{0}((0, \epsilon)\times \partial M)$,
\begin{equation*}
  \begin{split}
     \langle (-\Delta_{g}+cR_{g})u, u \rangle_{L^{2}(M, g)}
     & = \int_{M}(-\Delta_{g} u + cR_{g}u)u d\vol_{g} \\
     & = \int_{M} |\nabla u|^{2}_{g} + cR_{g}u^{2} d\vol_{g} \\
     & \geq \int_{M} |\nabla u|^{2}_{g} + \frac{c\delta}{r^{2\gamma}} u^{2} d\vol_{g} \\
     & \geq \min\{1, c\delta \} \|u\|_{W^{1, 2}_{\gamma-\frac{n\gamma}{2}, \gamma}(M, g)}.
  \end{split}
\end{equation*}
Then by choosing a sufficiently large constant $A$ to control the scalar curvature $R_{g}$ on the interior part $M\setminus (0, \epsilon/2)\times \partial M$, one can obtain that there exists a constant $C$ such that
\begin{equation*}
\langle (-\Delta_{g}+cR_{g}+A)u, u \rangle_{L^{2}(M, g)} \geq C \|u\|_{W^{1, 2}_{\gamma - \frac{n\gamma}{2}, \gamma}(M, g)},
\end{equation*}
for any $u\in C^{\infty}_{0}(\mathring{M})$. So the operator $-\Delta_{g}+cR_{g}+A$ with domain $C^{\infty}_{0}(\mathring{M})$ has a Friedrichs self-adjoint extension with the domain contained in $W^{1, 2}_{\gamma -\frac{n\gamma}{2}, \gamma}(M ,g)$ and the image equal to $L^{2}(M, g)$. Now the compactness of the embedding in Proposition \ref{prop: compact-alpha-weighted embedding} implies that the inverse of $-\Delta_{g}+cR_{g}+A$ is a compact operator from $L^{2}(M, g)$ to itself. Then the spectrum theory of self-adjoint compact operators completes the proof.
\end{proof}

\subsection{Perelman's $\lambda$-functional and $W$-functional on manifolds with $r^{\gamma}$-horn singularities}

By Theorem \ref{thm: spectrum property horn singularity}, one can define the $\lambda$-functional on a manifold $(M^{n}, g)$ with $r^{\gamma}$-horn singularities as the first eigenvalue of the Friedrichs extension of the operator $-4\Delta_{g}+R_{g}$, or equivalently
\begin{equation*}
\lambda(g):=\inf\left\{\int_{M}\left(4|\nabla u|^{2}_{g} + R_{g}u^{2}\right)d\vol_{g} \mid u\in W^{1,2}_{\gamma-\frac{n\gamma}{2}}, \quad \|u\|_{L^{2}(M, g)}=1\right\} > -\infty,
\end{equation*}
provided $\min\limits_{\partial M}\{R_{\hat{g}}\}>0$.

For the $W$-functional, we can use $\gamma$-weighted Sobolev inequality in Proposition \ref{prop: alpha-weighted-Sobolev-inequality} to control $\int_{M}u^{2}\ln u d\vol_{g}$ term as follows. For a fixed $\tau>0$ and any $u\in W^{1, 2}_{\gamma - \frac{n\gamma}{2}}(M)$ with $\|u\|_{L^{2}}=(4\pi\tau)^{\frac{n}{2}}$, one has 
\begin{eqnarray*}
\int_{M} u^{2}\ln u d\vol_{g} 
& \leq & c(n) \int_{M} u^{2+\frac{2}{n}} d\vol_{g} \\
& \leq & c(n) \epsilon \int_{M}u^{2+\frac{4}{n}} + c(n)\frac{1}{\epsilon}\int_{M}u^{2}d\vol_{g} \\
& \leq & c(n) \epsilon \left(\int_{M}u^{\frac{2n}{n-2}} d\vol_{g}\right)^{\frac{n-2}{n}}\cdot \left(\int_{M}u^{2}d\vol_{g}\right)^{\frac{2}{n}} + c(n)\frac{1}{\epsilon}(4\pi\tau)^{n} \\
& = & c(n)\epsilon(4\pi\tau)^{2}\left(\int_{M}u^{\frac{2n}{n-2}} d\vol_{g}\right)^{\frac{n-2}{n}} + c(n)\frac{1}{\epsilon}(4\pi\tau)^{n} \\
& \leq & c(n)\epsilon (4\pi\tau)^{2}C\int_{M}\left(|\nabla u|^{2}_{g}+\chi^{2\gamma}u^{2} d\vol_{g}\right)d\vol_{g} + c(n)\frac{1}{\epsilon}(4\pi\tau)^{n}.
\end{eqnarray*}
Here $c(n)$ is a constant depending only on $n$. Now we can choose $\epsilon$ sufficiently small so that 
\begin{equation*}
2c(n)\epsilon (4\pi\tau)^{2}C < \tau\min\limits_{\partial M}\{R_{\hat{g}}\}.
\end{equation*}
Thus one has
\begin{equation*}
\inf\left\{\int_{M}[\tau(4|\nabla u|_{g}+R_{g}u^{2})]-2u^{2}\ln u d\vol_{g} \mid u\in W^{1, 2}_{\gamma-\frac{n\gamma}{2}, \gamma}(M), \ \ \|u\|_{L^{2}(M)}=(4\pi\tau)^{\frac{n}{2}}\right\} > -\infty,
\end{equation*}
provided $\min\limits_{\partial M}R_{\hat{g}}>0$. 

Hence on a compact manifold with $r^{\gamma}$-horn singularities with $\min\limits_{\partial M} R_{\hat{g}}>0$, one has
\begin{equation*}
\inf\left\{W(g, \tau, u) \mid u\in W^{1, 2}_{\gamma-\frac{n\gamma}{2}, \gamma}(M), \ \ \|u\|_{L^{2}(M)}=(4\pi\tau)^{\frac{n}{2}}\right\} > -\infty.
\end{equation*}

\subsection{Asymptotic of eigenfunctions of $-\Delta_{g} + c R_{g}$ and the minizer of the $W$-functional on manifolds with $r^{\gamma}$-horn singularities}
Similar as the derivation of weighted elliptic estimate inequality in the conical singularities case in Proposition \ref{prop: weighted elliptic estimate}, one can obtain the $\gamma$-weighted elliptic estimate inequality. Then combining this with $\gamma$-weighted Sobolev inequality in Proposition \ref{prop: alpha-weighted-Sobolev-inequality}, Nash-Moser iteration give the following asymptotic estimate for eigenfunctions of of $-\Delta_{g} + c R_{g}$ and the minizer of the $W$-functional on manifolds with $r^{\gamma}$-horn singularities.   

\begin{prop}
Let $u$ be an eigenfunction of $-\Delta_{g} + c R_{g}$ or the minimizer of the $W$-functional on manifolds with $r^{\gamma}$-horn singularities. Then one has
\begin{equation*}
    |\nabla^{i}u| = o(r^{-\frac{n-2}{2}\gamma - i\gamma})
\end{equation*}
as $r\rightarrow 0$, i.e. approaching to the singular set, for $i=0, 1$.
\end{prop}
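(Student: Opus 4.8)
The plan is to combine the $\gamma$-weighted elliptic estimate (analogous to Proposition \ref{prop: weighted elliptic estimate} but for the model horn metric $g_\gamma$) with the $\gamma$-weighted Sobolev inequalities of Proposition \ref{prop: alpha-weighted-Sobolev-inequality}, and then run a Nash--Moser iteration on the quantity $w = |\nabla u|^2 + r^{-2\gamma} u^2$ exactly as in the proof of Proposition \ref{prop: Nash-Moser iteration}. The reason $r^{-2\gamma}u^2$ (rather than $r^{-2}u^2$) is the right companion term is that, in the horn geometry, $|\nabla r^{-\gamma}|_{g_\gamma} \sim r^{-\gamma-1}$ and the weight appearing in the $\gamma$-weighted Sobolev space $W^{1,2}_{\gamma-\frac{n\gamma}{2},\gamma}$ is exactly $\chi^{2\gamma} \sim r^{-2\gamma}$ near the singular set, so this is the scale at which the coercivity estimate in the proof of Theorem \ref{thm: spectrum property horn singularity} operates.

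First I would record the relevant differential inequality for $w$. Using the Bochner formula together with the equation $-\Delta_g u + cR_g u = \lambda u$ (or the Euler--Lagrange equation \eqref{eqn: Euler-Lagrange equation of W-functional} in the minimizer case), and the curvature bounds $|\Ric|_g \le C r^{-2}$, $|R_g| \le C r^{-2\gamma}$, $|\nabla R_g| \le C r^{-2\gamma - 1}$ that follow from \eqref{eqn: alpha conical scalar curvature formula} and the asymptotic control of $h$ in Definition \ref{defn: ManifoldWith-alpha-ConicalSingularities}, one obtains
\begin{equation*}
\Delta w \geq -|R_g| w - C w - C r^{-2\gamma} w - (\text{lower order or }u\ln u\text{ term}).
\end{equation*}
In the eigenfunction case the last term is absent; in the $W$-functional case the $v\ln u$ term is handled exactly as in \S\ref{section: W-functional}, by splitting $\{0<u\le e\}$ and $\{u>e\}$, bounding $\ln u \le a u^\gamma$ on the latter, and absorbing via H\"older and Young's inequalities against the $L^2$-constraint $\|u\|_{L^2} = (4\pi\tau)^{n/4}$ together with the $\gamma$-weighted Sobolev inequality.

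Next I would set up the Moser iteration on the dyadic annuli $U_k = (a_k, b_k)\times \partial M$ with $a_k \uparrow \tfrac{1}{8}\epsilon$, $b_k \downarrow \tfrac12\epsilon$ as in Proposition \ref{prop: Nash-Moser iteration}, using cutoffs with $|\varphi_k'| \lesssim 2^k \epsilon^{-1}$, $|\varphi_k''| \lesssim 2^{2k}\epsilon^{-2}$, and applying the $\gamma$-weighted Sobolev inequality of Proposition \ref{prop: alpha-weighted-Sobolev-inequality} with $p=2$, $q = \tfrac{2n}{n-2}$ to pass from $L^{q_k}$ to $L^{q_{k+1}}$ with $\mu = \tfrac{n}{n-2}$. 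The term $\int \eta_k^2 r^{-2\gamma} w^{q_k}$ is controlled by the weighted Sobolev norm rather than simply by $\epsilon^{-2}\int w^{q_k}$; but on the fixed annulus $U_k$ one has $r \sim \epsilon$, so $r^{-2\gamma} \sim \epsilon^{-2\gamma}$ and all the constants track correctly with powers of $\epsilon$. Iterating and letting $k\to\infty$ yields $\|w\|_{L^\infty}$ on $(\tfrac14\epsilon,\tfrac12\epsilon)\times\partial M$ bounded by $C\epsilon^{-(n-2)\gamma}\|w\|_{L^{q_1}}$; bounding $\|w\|_{L^{q_1}}$ in turn by $C\epsilon^{-2\gamma}\|u\|^2_{W^{2,2}_{\gamma-\frac{n\gamma}{2},\gamma}((\frac18\epsilon,\epsilon)\times\partial M)}$ via the $\gamma$-weighted Sobolev and $\gamma$-weighted elliptic estimates (the latter giving $u \in W^{2,2}_{\gamma-\frac{n\gamma}{2},\gamma}$ from the equation, where one checks $u\ln u \in W^{0,2}_{-\frac{n\gamma}{2},\gamma} \subset W^{0,2}_{\gamma - \frac{n\gamma}{2} - 2\gamma,\gamma}$ exactly as in \S\ref{section: W-functional}). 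Finally, setting $\epsilon = 2^{-j}$ and summing $\sum_j \|u\|_{W^{2,2}_{\gamma-\frac{n\gamma}{2},\gamma}((2^{-(j+3)},2^{-j})\times\partial M)} \le 3\|u\|_{W^{2,2}_{\gamma-\frac{n\gamma}{2},\gamma}(M)} < \infty$ forces $\epsilon^{(\frac{n}{2}-1)\gamma}\|u\|_{L^\infty((2^{-(j+2)},2^{-(j+1)})\times\partial M)} \to 0$, and similarly for $|\nabla u|$, which is precisely the claimed $|\nabla^i u| = o(r^{-\frac{n-2}{2}\gamma - i\gamma})$, $i=0,1$.

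The main obstacle I anticipate is bookkeeping the exact powers of $r$ and $\epsilon$ in the Bochner inequality and throughout the iteration, since the horn scaling $r \mapsto r^\gamma$ in the fibre directions means the "radial" and "cross-sectional" derivatives scale differently, and the curvature terms $|R_g| \sim r^{-2\gamma}$ and $|\Ric| \sim r^{-2}$ enter at different orders; one must check that $r^{-2\gamma}$ (the worst term) is the one that is absorbed by the $\gamma$-weighted Sobolev space's weight $\chi^{2\gamma}$ and that no uncontrolled $r^{-2\gamma-1}$ term survives after applying Cauchy--Schwarz to the $|\nabla u|\,|\nabla R_g|\,|u|$ term (it splits as $\lesssim r^{-2\gamma}(|\nabla u|^2 + r^{-2\gamma}u^2) = r^{-2\gamma}w$, which is fine). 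Once that is verified, the rest of the argument is a routine adaptation of \S\ref{section: asymptotic behavior of eigenfuctions}, \S\ref{section: W-functional} with $r^{-2}$ replaced by $r^{-2\gamma}$ and the unweighted/conically-weighted Sobolev inequalities replaced by their $\gamma$-weighted counterparts, so I would state it as such and omit the repeated computations.
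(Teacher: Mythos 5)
Your proposal is correct and follows essentially the same route the paper intends: the paper's proof is a one-sentence pointer (``combine the $\gamma$-weighted elliptic estimate and $\gamma$-weighted Sobolev inequality, then Nash--Moser iteration as in \S5--\S6''), and your write-up is a faithful expansion of that sketch, including the right choice of companion term $r^{-2\gamma}u^2$ (matching the $i=0$ weight $\chi^{2\gamma}$ in $W^{1,2}_{\gamma-\frac{n\gamma}{2},\gamma}$), the correct per-step factor $\epsilon^{-2\gamma}$ dominating the cutoff contribution $\epsilon^{-2}$, and the correct summing $\sum_k 1/q_k = \frac{n-2}{2}$ yielding $\epsilon^{-(n-2)\gamma}$, which cashes out to the claimed $o(r^{-\frac{n-2}{2}\gamma-i\gamma})$.

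One inaccuracy worth fixing: you assert $|\Ric|_g \leq C r^{-2}$, but for a horn metric $dr^2 + r^{2\gamma}\hat g$ with $\gamma>1$, the tangential-tangential sectional curvatures carry a term $K_{\hat g}/r^{2\gamma}$, so $|\Ric|_g = O(r^{-2\gamma})$ in general (consistent with $R_g \sim R_{\hat g}\, r^{-2\gamma}$ being its trace; only $\Ric(\partial_r,\partial_r)$ is $O(r^{-2})$). This does not affect the argument, since the Bochner term $\Ric(\nabla u,\nabla u) \geq -Cr^{-2\gamma}|\nabla u|^2 \geq -Cr^{-2\gamma}w$ still lands in the differential inequality you write down, and $r^{-2\gamma}$ is precisely the worst order your $w$-companion term and the weighted Sobolev weight are built to absorb. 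But the stated bound should read $|\Ric|_g \leq Cr^{-2\gamma}$.
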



\section{Partial asymptotic expansion of eigenfunctions on manifolds with isolated conical singularities}\label{section: partial asymptotic expansion isolated singularities}

In  this section, we derive a partial asymptotic expansion for the eigenfunctions of the Schr\"odinger operator $L:=-4\Delta+R$ on manifolds with isolated conical singularities in Theorem $\ref{prop: partial asymptotic expansion}$. The b-calculus theory of Melrose is the main tool in this derivation. So in \S$\ref{subsect: b-calculus}$ we recall some basics of the b-calculus theory.


\subsection{Basics of b-calculus of Melrose}\label{subsect: b-calculus}
In this subsection, we briefly recall some basic notions and results in b-calculus theory of Melrose that we need in this paper, and we refer to Melrose's book, \cite{Mel93}, for details. The b-calculus theory is very useful in studies of elliptic operators on various non-compact manifolds, for example, complete manifolds with certain ends (e.g. Example $\ref{ex: cylinder}$), and manifolds with conical singularities (Example $\ref{ex: cone}$). 

Let $M^n$ be  a $n$-dimensional manifold with boundary $\partial M$. Let $\left\{ y_1, \cdots, y_{n-1} \right\}$ be a local coordinate system on $\partial M$. Combining this with a defining function $x$ of the boundary, that is $x: \overline{M} \rightarrow [0, +\infty)$ smooth with $x^{-1}(0) = \partial M$ and $\nabla x\not=0$ on $\partial M$, we obtain a local coordinate system on $M$. 

The fundamental object in the b-calculus theory is the space of vector fields tangent to  the boundary at each point on the boundary. That is denoted by $\mathcal{V}_b$ and can be expressed as
\begin{equation}
\mathcal{V}_b = \left\{ X \in C^{\infty}(M, TM) \ \  \mid  \ \ X_{\upharpoonright{\partial M}} \in C^{\infty}(\partial M, T\partial M)  \right\},
\end{equation}
where $TM$ and $T\partial M$ are tangent bundles of $M$ and $\partial M$, respectively, and $C^{\infty}(M, TM)$ and $C^{\infty}(\partial M, T\partial M)$ are their smooth sections.  Then a vector field $X \in \mathcal{V}_b$ can be expressed locally as:
\begin{equation}
X = X_0 x \partial_x + \sum^{n-1}_{i=1} X_i \partial_{y_i}, \quad \text{where} \ \ X_0, X_i \in C^{\infty}(\overline{M}).
\end{equation}
An important point here is that $\mathcal{V}_b$ can be realized as the space of all smooth sections of a vector bundle, which is denoted by $^bTM$ and called {\em $b$-tangent bundle}, i.e.
\begin{equation}
\mathcal{V}_b = C^\infty(M, {^bT}M).
\end{equation}
Then the dual bundle of $^bTM$ is denoted by $^bT^*M$, whose smooth sections $\omega \in C^\infty(M, {^bT}^*M)$ can be expressed locally as:
\begin{equation}
\omega = \omega_0 \frac{dx}{x} + \sum^{n-1}_{i=1} \omega_i dy_i, \quad \text{where} \ \ \omega_0, \omega_i \in C^{\infty}(\overline{M}).
\end{equation}

A  {\em $b$-differential operator} of order $k$ is defined to be a linear map $P: C^{\infty}(M) \to C^\infty(M)$ given by a finite sum of up to $k$-fold products of elements of $\mathcal{V}_b$ and $C^\infty(M)$, i.e.
\begin{equation}
P = \sum_{j\in J, \ \  l(j) \leq k} a_j X_{1, j} \cdots X_{l(j), j}, \quad \text{where} \ \ a_j \in C^\infty(M), \ \ X_i, j \in \mathcal{V}_b
\end{equation}
The set of all $b$-differential operators of order $k$ on $M$ is denoted by ${\rm Diff}^k_b(M)$. Then $P \in {\rm Diff}^k_b(M)$ if and if only $P$ is a differential operator of order $k$ (in the usual sense), and near $\partial M$ it can be expressed as
\begin{equation}\label{eqn: b-operator local expression}
P = \sum_{|\alpha| \leq k} a_{\alpha}(x, y_1, \cdots, y_{n-1}) (x\partial x)^{\alpha_0} (\partial_{y_1})^{\alpha_1} \cdots (\partial_{y_{n-1}})^{\alpha_{n-1}},
\end{equation}
where $\alpha = (\alpha_0, \alpha_1, \cdots, \alpha_{n-1}) \in \mathbb{N}_0^n$ are multi-indices, and the coefficients $a_\alpha$ are smooth up to $\partial M$.

For $P \in {\rm Diff}^k_b(M)$ , the {\em principal symbol} of $P$, denoted by ${^b\sigma}_k(P)$,  is a smooth section of the bundle of $k$-symmetric powers $\bigodot^k \left({^bTM}\right)$. It can be viewed as a homogenous polynomial of degree $k$ on $^bT^*M$, and if $P$ is locally expressed as $(\ref{eqn: b-operator local expression})$ near $\partial M$, then ${^b\sigma}_k(P)$ is locally given by
\begin{equation}
{^b\sigma}_k(P)(x, y, \omega) = \sum_{|\alpha| = k} a_{\alpha}(x, y_1, \cdots, y_{n-1})(\omega_0)^{\alpha_0} (\omega_1)^{\alpha_1} \cdots (\omega_{n-1})_{\alpha_{n-1}},
\end{equation}
for $ \omega = \omega_0(x, y) x \partial_x + \sum\limits^{n-1}_{i=1}\omega_i(x, y) \partial_{y_i} \in {^bT}^*M$. Then $P \in {\rm Diff}^k_b(M)$ is said to be {\em $b$-elliptic} if ${^b\sigma}_k(P)\neq 0$ on ${^bT}^*M \setminus 0$. 

In order to extend the parametrix construction of elliptic operators on compact smooth manifolds without boundary to b-elliptic operators $P$, it is natural to consider weighted Sobolev spaces $W^{m, 2}_{\delta}(M)$ to control asymptotic behavior of functions near boundary. But the map:
\begin{equation}\label{eqn: b-map between weighted spaces}
P: W^{m, 2}_{\delta}(M) \rightarrow W^{m-k, 2}_{\delta}(M)
\end{equation}
may not have a good pararmetrix for some $\delta$ to get Fredholm property. 
To find all indices $\delta$ such that the b-elliptic operator $P$ in $(\ref{eqn: b-map between weighted spaces})$ has a nice parametrix, one needs to investigate the so called indicial operator of $P$. In the b-calculus theory, the weighted Sobolev spaces $W^{k, 2}_{\delta}(M)$ are usually defined to be
\begin{equation}
W^{k, 2}_{\delta}(M) := x^\delta W^{k, 2}_b(M) = \{ u = x^\delta v \ \ \mid \ \ v \in W^{k, 2}_{b}(M)\},
\end{equation}
where $W^{k, 2}_{b}(M)$ is the usual Sobolev space on $M$ with respect to the {\em b-metric}, $g$, which 
near $\partial M$ is
\begin{equation}\label{eqn: b-metric}
g = \frac{dx^2}{x^2} + g^{\partial M}
\end{equation}
with $g^{\partial M}$ a smooth metric on $\partial M$. One can check that these weighted Sobolev spaces are the same as that defined  on manifolds with a single conical singularity in \cite{DW18} (also see \S$\ref{subsect: weighted Sobolev spaces})$, once we view a cone metric as a conformal change of a b-metric as discussed in Example $\ref{ex: cone}$

For an operator $P \in {\rm Diff}^k_b(M)$ locally given by (\ref{eqn: b-operator local expression}), its {\em indicial operator}, denoted by $I(P)$, is an (${\mathbb R}^+$-invariant) operator on $\partial M \times \mathbb{R}^{+}$, 
\begin{equation}
I(P) = \sum_{|\alpha| \leq k} a_{\alpha}(0, y_1, \cdots, y_{n-1}) (t\partial_t)^{\alpha_0} (\partial_{y_{1}})^{\alpha_1} \cdots (\partial_{y_{n-1}})^{\alpha_{n-1}}.
\end{equation}
It can be identified, via the Mellin transform on $\mathbb{R}^+$, with a family of differential operator on $\partial M$ parametrized by $z \in \mathbb{C}$, called the indicial family, given by
\begin{equation}
I(P)_{M}(z) := \sum_{|\alpha| \leq m} a_{\alpha}(0, y_1, \cdots, y_{n-1}) z^{\alpha_0}  (\partial_{y_{1}})^{\alpha_1} \cdots (\partial_{y_{n-1}})^{\alpha_{n-1}}.
\end{equation}
The set  
\begin{equation}
spec_b(P) := \left\{ z \in \mathbb{C}  \ \ \mid \ \ I(P)_{M}(z) \text{ is not invertible on} \ \ C^\infty(\partial M) \right\}
\end{equation}
is a discrete subset of $\mathbb{C}$. Then a real number $\delta \in \mathbb{R}$ is said to be a {\em indicial root} of the b-differential operator $P$, if $\delta = {\rm Re}(z)$ for some $z \in spec_b(P)$.

For any $\delta \in \mathbb{R}$ that is not a indicial root of b-elliptic operator $P$, roughly speaking, with the help of inverting the indicial operator $I(P)$, a parametrix of the operator
\begin{equation}\label{eqn: b-elliptic map}
P: W^{m, 2}_{\delta}(M) \rightarrow W^{m-k, 2}_{\delta}(M)
\end{equation}
can be constructed, denoted by 
\begin{equation}
Q: W^{m-k, 2}_{\delta}(M) \rightarrow W^{m, 2}_{\delta}(M)
\end{equation}
such that
\begin{equation}
    \begin{cases}
    Q \circ P  = id - R_{1} , \cr
    P \circ Q  = id - R_{2},
    \end{cases}
\end{equation}
where
\begin{equation}
\begin{cases}
    R_{1}: &  W^{k, 2}_{\delta}(M) \rightarrow W^{\infty, 2}_{\delta}(M) \cap \mathcal{A}_{phg}(M), \cr
    R_{2}: &  W^{m-k, 2}_{\delta}(M) \rightarrow W^{\infty, 2}_{\delta}(M) \cap \mathcal{A}_{phg}(M)
\end{cases}
\end{equation}
are compact operators. Here $W^{\infty, 2}_{\delta}=\cap^{\infty}_{k=1}W^{k, 2}_{\delta}$, and $\mathcal{A}_{phg}(M)$ denotes the space of smooth functions on $M$ that admit a full asymptotic expansion, as $x \rightarrow0$, in terms of power of $x$ and $\ln x$ with smooth coeffiecients. In particular, the map in $(\ref{eqn: b-elliptic map})$ is Fredholm, if $P$ is b-elliptic and $\delta$ is not a indicial root of $P$.

\begin{ex}[Manifolds with a cylindrical end]\label{ex: cylinder}
{\rm
Let $(M^n, g)$ be a complete manifold with a cylindrical end, that is, there exists a relatively compact open submanifold $M_0 \subset M$ such that  its complement, $M \setminus M_0$, with metric $g$ is isometric to a cylinder. In other words, $M \setminus M_0 $ is diffeomorphic to $[0, +\infty) \times N$ with metric
\begin{equation}
g = ds^2 + g^N,
\end{equation}
where $g^N$ is a smooth metric on compact manifold $N$.  

By doing a change of variable, $x = e^{-s}$, the metric $g$ becomes
$
g = \frac{dx^2}{x^2} + g^N,
$
which is a b-metric in $(\ref{eqn: b-metric})$, and $M$ can be viewed as the interior of the compact manifold with boundary with a b-metric and $x$ as a defining function of the boundary. In this change of variable, Laplace operator on $(M, g)$, $\Delta = \frac{\partial^2}{\partial s^2} + \Delta_{g^N}$ becomes {$(x\partial_x)^2 + \Delta_{g^N}$}. Clearly, this is a b-elliptic differential operator, with $I(\Delta)$ identified with $z^2 + \Delta_{g^N}$. Thus, the indicial roots are the square roots of the eigenvalues of $\Delta_{g^N}$. 
\qed
}
\end{ex}

\begin{ex}[Manifolds with a conical singularity]\label{ex: cone}
{\rm
Let $(M^n, g, o)$ be a compact manifold with a single conical singularity, that is a manifold as defined in Definition $\ref{defn: ManifoldWithNonisolatedConicalSingularities}$ such that base $B = o$ is a point and near the singular point $o$ the manifold is isometric to $(0, 1) \times F$ with metric
\begin{equation}
g_0 = dr^2 + r^2 \hat{g}
\end{equation}
where $\hat{g}$ is a smooth metric on $F$ (To emphasize this is also called an {\em exact} conical singularity). Clearly, this metric is conformal to the b-metric $\frac{dr^2}{r^2} + \hat{g}$ with conform factor $r^2$.

The regular part of the manifold can be viewed as a compact manifold with boundary. The elliptic operator, $L := -4\Delta_g + R_g$, which we study in this paper,  can be expressed near the singular point $o$ as
\begin{equation}
L_0 := -4\Delta_{g_0} + R_{g_0} = -4\left( \partial_r^2 + \frac{n-1}{r}\partial_r + \frac{1}{r^2} \Delta_{\hat{g}}\right) + \frac{R_{\hat{g}} - (n-1)(n-2)}{r^2}.
\end{equation}
Then 
\begin{equation}
r^2 L_0 = -4(r \partial_r)^2 - 4(n-2)r\partial_r - 4 \Delta_{\hat{g}} + R_{\hat{g}} - (n-1)(n-2).
\end{equation}
Thus, $r^2 L_0$ is a b-elliptic operator, 
with its indicial family given by 
\begin{equation}
I(r^2 L_0)_M (z) = -4z^2 - 4(n-2) z + \left( -4\Delta_{\hat{g}} + R_{\hat{g}} \right) - (n-1)(n-2).
\end{equation}
This is not invertible if and only if $z = \frac{-(n-2) \pm \sqrt{\nu - (n-2)}}{2}$, where $\nu$ is an eigenvalue of the operator $-4\Delta_{\hat{g}} + R_{\hat{g}}$. Note that these $z$ are all real numbers, if we assume $R_{\hat{g}} > n-2$. Then for any 
\begin{equation}\label{eqn: indicial root}
\delta \not\in \left\{ \frac{-(n-2) \pm \sqrt{\nu - (n-2)}}{2} \ \ \mid \ \  \nu \ \  \text{is an eigenvalue of} \ \ -4\Delta_{\hat{g}} + R_{\hat{g}} \right\}
\end{equation}
and $k \in \mathbb{N}_0$, the operator
\begin{equation}
r^2 L_0: W^{k+2, 2}_\delta(M) \rightarrow W^{k, 2}_{\delta}(M)
\end{equation}
has a parametrix $Q$ such that
\begin{equation}\label{eqn: parametrix equation}
    \begin{cases}
    Q \circ r^{2}L_{0}  = id - R_{1} , \cr
    r^{2}L_{0} \circ Q  = id - R_{2},
    \end{cases}
\end{equation}
where
\begin{equation}\label{eqn: parametrix map}
    \begin{cases}
    Q: & W^{k, 2}_{\delta}(M, g) \rightarrow W^{k+2, 2}_{\delta}(M, g), \cr
    R_1: & W^{k+2, 2}_{\delta}(M, g) \rightarrow W^{\infty, 2}_{\delta}(M, g) \cap \mathcal{A}_{phg}(M), \cr
    R_2: & W^{k, 2}_{\delta}(M, g) \rightarrow W^{\infty, 2}_{\delta}(M, g) \cap \mathcal{A}_{phg}(M)
    \end{cases}
\end{equation}
are bounded operators, where $\mathcal{A}_{phg}(M)$ denotes the space of smooth functions on $M$ that admit a full asymptotic expansion, as $r\rightarrow0$, in terms of power of $r$ and $\ln r$ with smooth coeffiecients.  
\qed
}
\end{ex}


\subsection{A partial asymptotic expansion of eigenfunctions}\label{subsect: eigenfunction partial asymptotic expansion}

Let $(M^n, g, o)$ be a compact manifold with a single conical singularity at $o$, as in Definition $\ref{defn: ManifoldWithNonisolatedConicalSingularities}$ but with the base manifold $B$ being a point $o$. The singular point $o$ has a neighborhood that is isometric to $C_1(F) := (0, 1) \times F$ with metric $g = dr^2 + r^2 \hat{g} + h = g_0 +h $, where $h$ has asymptotic control as in $(\ref{eqn: metric asymptotic control})$, i.e.
\begin{equation}\label{eqn: asymptotic control of metric}
|r^{k}\nabla^{k}_{g_{0}}h|_{g_{0}}=o(r^{\alpha}),
\end{equation}
as $r\rightarrow 0$, for some $\alpha>0$ and all $k\in\mathbb{N}$. Set $ L := -4\Delta_g + R_g$.

Now we take a cut-off function $\psi$ on $(0, 1) \times F $ such that 
\begin{equation}
\psi = \begin{cases}
       0, & \text{on} \left(0, \frac{1}{4}\right)\times F, \cr
       1, & \text{on}  \left(\frac{3}{4}, 1\right) \times F,
       \end{cases}
\end{equation}
and use it to modify the metric $g$ near the singular point by letting $g_0 + \psi h$ be the new metric on $(0, 1) \times F$ and keeping the metric $g$ on $M \setminus \left( (0, 1) \times F \right)$ unchanged. This new metric is exactly conical 
near the singular point $o$. Let $L_0$ denote the Schr\"odinger operator $-4\Delta + R$ with respect to the new metric on $M$. Then as seen in Example $\ref{ex: cone}$, $r^2 L_0$ is a b-elliptic differential operator. Here we can make a smooth extension of the radial variable $r$ on $(0, 1) \times F$ to the whole manifold $M$.

By the asymptotic control (\ref{eqn: asymptotic control of metric}) of the perturbation term $h$ of the metric $g$, 
the difference 
\begin{equation}\label{eqn: difference map}
    L - L_{0}: W^{k+2, 2}_{\delta} \rightarrow W^{k, 2}_{\delta-2+\alpha}
\end{equation}
is a bounded second order differential operator for any fixed $k\in\mathbb{N}$ and $\delta\in\mathbb{R}$. Here and in the rest of the paper, we will use $W^{k, p}_\delta$ to denote $W^{k, p}_\delta(M, g)$ for simplicity of notations.

\begin{thm}\label{prop: partial asymptotic expansion}
Let $u$ be an eigenfunciton of $-4\Delta_{g}+R_{g}$ on a $n$-dimensional manifold $(M^n, g)$ with a conical singularity, satisfying that the scalar curvature of the cross section $(F, \hat{g})$ of the model cone: $R_{\hat{g}} > (n-2)$. Then 
\begin{equation}
    u=a(y)r^{\mu} + o(r^{\mu^{\prime}}),
\end{equation}
as $r\rightarrow0$ (approaching the singular point), where $\mu=-\frac{n-2}{2}+\frac{\sqrt{\nu-(n-2)}}{2}$ and $a(y), y\in F,$ is an eigenfunction of $-4\Delta_{\hat{g}}+R_{\hat{g}}$ with eigenvalue $\nu$ on $(F, \hat{g})$, and $\mu^{\prime}>\mu$.

Moreover,
\begin{equation}
|\nabla^{i} (u-a(y)r^{\mu})| = o(r^{\mu^{\prime}-i})
\end{equation}
as $r\rightarrow 0$, for all $i\in \mathbb{N}$.
\end{thm}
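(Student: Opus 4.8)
The plan is to reduce the eigenfunction equation to a b-elliptic problem and then run a weighted elliptic bootstrap, extracting the leading term from the indicial operator. First I would observe that $u$ is an eigenfunction, so $Lu = \lambda u$, hence $L_0 u = (L_0 - L)u + \lambda u$. By Corollary~\ref{cor: asymptotic of eigenfunctions} (specialized to the isolated case, where $f = n-1$) together with Proposition~\ref{prop: weighted elliptic estimate}, we know $u \in W^{2,2}_{1-\frac{n}{2}+\epsilon_0}(M)$ for some small $\epsilon_0 > 0$; in fact using $|\nabla^i u| = o(r^{-\frac{n-2}{2}-i})$ and \eqref{eqn: polynomial weighted Sobolev condition}, $u \in W^{k,2}_\delta$ for every $\delta < -\frac{n-2}{2}$ and every $k$ (after bootstrapping with the equation). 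Now multiply by $r^2$: $r^2 L_0 u = r^2(L_0 - L)u + \lambda r^2 u$. The right-hand side: the term $\lambda r^2 u$ gains two powers of $r$, i.e. lands in $W^{k,2}_{\delta + 2}$; by \eqref{eqn: difference map} the term $r^2(L_0-L)u$ lands in $W^{k,2}_{\delta + \alpha}$. So $r^2 L_0 u \in W^{k,2}_{\delta'}$ with $\delta' = \delta + \min(\alpha, 2) > \delta$.

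Next I would apply the b-calculus parametrix from Example~\ref{ex: cone}: for any $\delta''$ that is not an indicial root of $r^2 L_0$ with $\delta \leq \delta'' \leq \delta'$, the parametrix $Q$ gives $u = Q(r^2 L_0 u) + R_1 u$, where $R_1 u \in \mathcal{A}_{phg}(M)$ is polyhomogeneous (hence negligible, being $O(r^N)$ for all $N$ up to log factors, or rather: it contributes only terms that are already smooth/expandable) and $Q(r^2 L_0 u) \in W^{k+2,2}_{\delta''}$. The subtlety is that we can only push $\delta''$ up to (but not across) the first indicial root lying above the starting weight. The indicial roots are $z_\pm(\nu) = \frac{-(n-2)\pm\sqrt{\nu-(n-2)}}{2}$ for eigenvalues $\nu$ of $-4\Delta_{\hat g}+R_{\hat g}$; our starting weight is just below $-\frac{n-2}{2}$, and the first root encountered as we increase the weight is $\mu = z_+(\nu_1) = -\frac{n-2}{2} + \frac{\sqrt{\nu_1 - (n-2)}}{2}$ where $\nu_1$ is the smallest eigenvalue of $-4\Delta_{\hat g}+R_{\hat g}$ (note $\nu_1 \geq R_{\hat g}^{\min} > n-2$ by assumption, so $\mu$ is real and $> -\frac{n-2}{2}$). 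So bootstrapping lets us conclude $u \in W^{k+2,2}_{\mu - \epsilon}$ for every $\epsilon>0$ and every $k$, but to cross $\mu$ we must subtract off the contribution of the indicial root, which is precisely the term $a(y)r^\mu$ with $a$ in the kernel of $I(r^2L_0)_M(z)$ at $z=\mu$ — i.e., an eigenfunction of $-4\Delta_{\hat g}+R_{\hat g}$ with eigenvalue $\nu_1$. After removing $a(y)r^\mu$ (extended by the cutoff $\psi$), the function $u - a(y)r^\mu$ satisfies a similar equation with right-hand side in a better weighted space, and its weight can be pushed past $\mu$ to some $\mu' > \mu$ (the next indicial root, or $\mu + \min(\alpha,2)$, whichever is smaller), giving $u - a(y)r^\mu \in W^{k,2}_{\mu'}$. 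Then Proposition~\ref{prop: weighted Sobolev space regularity} converts the weighted Sobolev membership into the pointwise decay $|\nabla^i(u - a(y)r^\mu)| = o(r^{\mu' - i})$.

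I expect the main obstacle to be the careful bookkeeping at the indicial root: one must verify that the only obstruction to improving the weight across $\mu$ is a single term $a(y)r^\mu$ (no $\log$ terms, because $\mu$ is a simple root of the indicial family in the relevant range — this uses that $\nu_1$ has no "resonance" with another eigenvalue producing the same $z$, or if it does, that one still gets a clean $r^\mu$ term possibly times a finite-dimensional space of fiber eigenfunctions), that $a(y)$ indeed solves $(-4\Delta_{\hat g} + R_{\hat g})a = \nu_1 a$ (this is exactly the statement $I(r^2L_0)_M(\mu)a = 0$), and that the cutoff extension $\psi(r)a(y)r^\mu$ does not reintroduce bad terms (it changes the equation only on a compact region away from $r=0$, contributing smooth error). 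A second point requiring care: establishing at the outset that $u$ lies in $W^{k,2}_\delta$ for all $k$ and all $\delta < -\frac{n-2}{2}$ — this is an elliptic bootstrap combining Proposition~\ref{prop: weighted elliptic estimate} with the fact that $Lu = \lambda u$ and the already-known $C^0$ bound; one feeds $u \in W^{0,2}_{\delta-2}$ back through the weighted estimate repeatedly to raise the regularity index $k$ while keeping $\delta$ fixed, and Corollary~\ref{cor: asymptotic of eigenfunctions} with \eqref{eqn: polynomial weighted Sobolev condition} gives the starting weight. Everything else is the standard b-calculus iteration as in Example~\ref{ex: cone}.
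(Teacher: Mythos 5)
Your proposal follows essentially the same route as the paper's proof: apply the b-calculus parametrix from Example~\ref{ex: cone} to $r^2 L_0 u = \lambda r^2 u + r^2(L_0 - L)u$, run a weighted elliptic bootstrap (the paper anchors at the single weight $\delta = 1-\tfrac{n}{2}$ and reads off the polyhomogeneous expansion of $R_1 u$, while you iterate the weight up to the first indicial root --- equivalent bookkeepings), and identify the leading term with a solution of the indicial equation on the cross section. The one step you leave as a sketch --- ruling out $\log$ factors in the leading coefficient --- is closed in the paper by an explicit computation: comparing the coefficient of $r^\mu(\ln r)^{p-1}$ in the expansion of $r^2L_0(R_1 u)$ with the decay $r^2 L_0(R_1 u) \in W^{\infty,2}_{1-\frac{n}{2}+\alpha}$ forces $\sigma = -4p\sqrt{\nu-(n-2)} = 0$, and since $\nu > n-2$ this gives $p=0$.
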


\begin{proof}
The eigenfunction $u$ satisfies the equation: $Lu=\lambda u$, and $u \in W^{1, 2}_{1-\frac{n}{2}}$ by the estimate in Proposition $\ref{prop: semiboundedness estimate}$. Then by using weighted elliptic estimate in Proposition $\ref{prop: weighted elliptic estimate}$, a weighted elliptic bootstrapping argument gives:
\begin{equation}\label{eqn: eigenfunction regularity}
    u\in W^{k, 2}_{1-\frac{n}{2}}, \quad \forall k \in \mathbb{N}.
\end{equation}

The eigenfunction equation $Lu = \lambda u$ can be rewritten as
\begin{equation}\label{eqn: rearranged eigen equation}
    r^{2}L_{0}u = \lambda r^{2}u + r^{2}(L_{0} - L)u.
\end{equation}
Clearly, the eigenvalues of $-4\Delta_{\hat{g}} + R_{\hat{g}}$: $\nu > (n-2)$, since $R_{\hat{g}} > (n-2)$. In particular, this implies that $\delta = 1 - \frac{n}{2}$ is not a indicial root of the operator $r^2 L_0$ by $(\ref{eqn: indicial root})$. 
Applying the parametrix $Q$ in Example $\ref{ex: cone}$ for the operator: $r^2 L_0: W^{k, 2}_{1-\frac{n}{2}} \rightarrow W^{k-2, 2}_{1-\frac{n}{2}}$,  to the equation $(\ref{eqn: rearranged eigen equation})$ and using (\ref{eqn: parametrix equation}), one obtains a decomposition of $u$ as
\begin{equation}\label{eqn: eigenfunction decomposition}
    u=R_{1}u + Q(\lambda r^{2}u) + Q(r^{2}(L_{0}-L)u).
\end{equation}

By the regularity of $u$ in $(\ref{eqn: eigenfunction regularity})$, we have $r^{2}u\in W^{k, 2}_{3-\frac{n}{2}}$
and so 
\begin{equation}\label{eqn: Q first term}
Q(r^{2}u)\in W^{k, 2}_{3-\frac{n}{2}}, \quad \forall k \in \mathbb{N}.
\end{equation}
Moreover, by (\ref{eqn: difference map}), 
we have
$
r^{2}(L_{0} - L)u \in W^{k, 2}_{1-\frac{n}{2}+\alpha},
$
and so 
\begin{equation}\label{eqn: Q second term}
Q(r^{2}(L_{0} - L)u) \in W^{k, 2}_{1-\frac{n}{2}+\alpha}, \quad \forall k \in \mathbb{N}.
\end{equation}

Furthermore, $R_{1}u\in W^{\infty, 2}_{1-\frac{n}{2}} \cap \mathcal{A}_{phg}$; in particular, by looking at the first two leading order terms in the full asymptotic expansion of $R_1 u$, we have
\begin{equation}\label{eqn: asymptotic R1u}
    R_{1}u \sim a_p(y)r^{\mu}(\ln r)^{p} + a_{p-1}(y) r^{\mu}(\ln r)^{p-1},
\end{equation}
as $r\rightarrow0$ for some $\mu\in\mathbb{R}$, $p\in \mathbb{N} \cup \{0\}$, {\em nonzero} smooth function $a_p(y)$ on $F$, and a smooth function $a_{p-1}(y)$ on $F$ ($a_{p-1}(y)$ may be zero).  Hence (up to higher order terms), 
\begin{eqnarray}
    & & r^{2}L_{0} \circ R_{1}u 
      \sim 
    \{ -\nu a_p(y) + (-4\Delta_{\hat{g}}+R_{\hat{g}})a_p(y)\} r^{\mu}(\ln r)^{p} \nonumber \\
    &  & \quad + \left\{ \sigma a_p(y) - \nu a_{p-1}(y) + \left(-4\Delta_{\hat{g}} + R_{\hat{g}}\right) a_{p-1}(y)  \right\} r^\mu (\ln r)^{p-1},\label{eqn: asymptotic r2L0R1u}
\end{eqnarray}
as $r\rightarrow0$, where
\begin{equation}\label{eqn: nu}
\nu := - [ - 4\mu(n+\mu-2) - (n-1)(n-2)], \ \ \  \sigma := -4p (2\mu -1) - 4(n-1)p 
\end{equation}

On the other hand, by applying $r^{2}L_{0}$ to the equation (\ref{eqn: eigenfunction decomposition}), and using eigenfunction equation and (\ref{eqn: parametrix equation}), we obtain (assuming without loss of generality that $0<\alpha<2$)
\begin{equation}\label{eqn: r2L0 acts on remainder}
    r^{2}L_{0}\circ R_{1}u = \lambda R_{2}(r^{2}u) + R_{2}(r^{2}(L_{0}-L)u) \in W^{\infty, 2}_{1-\frac{n}{2}+\alpha}.
\end{equation}


{\bf Claim}:
\begin{equation}\label{eqn: claim}
-\nu a_p(y) + (-4\Delta_{\hat{g}}+R_{\hat{g}})a_p(y) \equiv 0,
\end{equation} 
and
\begin{equation}\label{eqn: claim2}
\sigma a_p(y) - \nu a_{p-1}(y) + \left(-4\Delta_{\hat{g}} + R_{\hat{g}}\right) a_{p-1}(y)  \equiv 0.
\end{equation}
Otherwise,
by the asymptotic behavior in $(\ref{eqn: asymptotic r2L0R1u})$, the inclusion in $(\ref{eqn: r2L0 acts on remainder})$, and the observation in $(\ref{eqn: polynomial weighted Sobolev condition})$, we obtain 
\begin{equation}\label{eqn: mu-inequality}
\mu > 1 - \frac{n}{2} + \alpha.
\end{equation}
Here note that $r^\mu (\ln r)^p, r^\mu (\ln r)^{p-1}, \cdots$ and $r^\mu$ are linearly independent functions. By  $(\ref{eqn: asymptotic R1u})$, and again $(\ref{eqn: polynomial weighted Sobolev condition})$, $(\ref{eqn: mu-inequality})$ then implies:
\begin{equation}\label{eqn: R1u estimate}
R_{1}u \in W^{\infty, 2}_{1-\frac{n}{2}+\alpha}.
\end{equation}
Hence by (\ref{eqn: eigenfunction decomposition}), (\ref{eqn: Q first term}), (\ref{eqn: Q second term}) and (\ref{eqn: R1u estimate}), we obtain
\begin{equation}
    u \in W^{\infty, 2}_{1-\frac{n}{2}+\alpha}.
\end{equation}
Iterating the above process, we obtain 
\begin{equation}
    u\in W^{\infty, 2}_{1-\frac{n}{2}+k\alpha}, \ \ \forall k\in \mathbb{N}.
\end{equation}
As a result, by Lemma 8.1 in \cite{DW18}, 
\begin{equation}
    u=o(r^{1-\frac{n}{2}+k\alpha}), \ \ \forall k \in \mathbb{N}.
\end{equation}
The argument also implies  
\begin{equation}
Q(\lambda r^{2}u) + Q(r^{2}(L_{0}-L)u) = o (r^{1-\frac{n}{2}+k\alpha}), \ \ \forall k \in \mathbb{N}.
\end{equation} 
Therefore, by (\ref{eqn: eigenfunction decomposition}), we have
\begin{equation}
R_{1}u = o(r^{1-\frac{n}{2}+k\alpha}), \ \ \forall \in \mathbb{N}.
\end{equation}
Then the asymptotic expansion (\ref{eqn: asymptotic r2L0R1u}) implies $a_p(y)\equiv 0$, and this contradicts with the assumption: $a_p(y) \not\equiv 0$. Thus Claims $(\ref{eqn: claim})$ and $(\ref{eqn: claim2})$ hold. 

Clearly, the equation $(\ref{eqn: claim})$ says that $a_p(y)$ is an eigenfunction of $-4\Delta_{\hat{g}}+R_{\hat{g}}$ with eigenvalue $\nu$. By solving (\ref{eqn: nu}), we obtain
\begin{equation}\label{eqn: mu-nu-relation}
    \mu=-\frac{n-2}{2}+\frac{\sqrt{\nu-(n-2)}}{2}.
\end{equation}
Note that we have used our previous result, Theorem 1.4 of \cite{DW18}, together with $\nu > n-2$ via our assumption $R_{\hat{g}}>n-2$, to rule out the negative sign in the quadratic formula here.

We can write the smooth function $a_{p-1}(y)$ on $F$ as a linear combination of eigenfunctions of $-4\Delta_{\hat{g}}+R_{\hat{g}}$. Note that eigenfunctions with eigenvalue $\nu$ will go away in the expression: $-\nu a_{p-1}(x)+(-4\Delta_{\hat{g}}+R_{\hat{g}})a_{p-1}(x)$, and recall that $a_{p}(x)$ is an eigenfunction with eigenvalue $\nu$. Thus $a_{p}(y)$ and $- \nu a_{p-1}(y) + (-4\Delta_{\hat{g}}+R_{\hat{g}})a_{p-1}(y)$ are linearly independent functions. Thus, $(\ref{eqn: claim2})$ implies:
\begin{equation}\label{eqn: p-mu-relation}
   \sigma= -4p(2\mu-1) -4(n-1)p=0.
\end{equation}
Then by plugging $(\ref{eqn: mu-nu-relation})$ into $(\ref{eqn: p-mu-relation})$, we obtain
\begin{equation*}
    -4p\sqrt{\nu-(n-2)}=0.
\end{equation*}
Hence $p=0$, since $\nu>(n-2)$ (by our assumption $R_{\hat{g}} > (n-2)$).

Now to complete the proof, by the decomposition in (\ref{eqn: eigenfunction decomposition}), it suffices to show that \begin{equation}\label{eqn: asymptotic of Qu}
    Q(\lambda r^{2}u)+Q(r^{2}(L_{0}-L)u) = o(r^{\mu^{\prime}}), \ \ \text{for some} \ \ \mu^\prime > \mu.
\end{equation} 
By $(\ref{eqn: Q first term})$ and $(\ref{eqn: Q second term})$, we have that 
\begin{equation}
    Q(\lambda r^{2}u)+Q(r^{2}(L_{0}-L)u) \in W^{\infty, 2}_{1-\frac{n}{2}+\alpha},
\end{equation}
and so
\begin{equation}
     Q(\lambda r^{2}u)+Q(r^{2}(L_{0}-L)u) = o(r^{1-\frac{n}{2}+\alpha}),
\end{equation}
by Lemma 8.1 in \cite{DW18}.
If $1-\frac{n}{2}+\alpha > \mu$, then set $\mu^{\prime}=1-\frac{n}{2} + \alpha$, and we are done.

Otherwise, $R_{1}u \sim a_0(y)r^{\mu} \in W^{\infty, 2}_{1-\frac{n}{2}+\beta}$ for any $\beta<\alpha$ by $(\ref{eqn: polynomial weighted Sobolev condition})$. Fix $0< \beta<\alpha$. Then (\ref{eqn: eigenfunction decomposition}) implies $u \in W^{\infty, 2}_{1-\frac{n}{2}+\beta}$. Starting with this inclusion, by repeating above derivation, one can obtain $Q(\lambda r^{2}u)+Q(r^{2}(L_{0}-L)u) = o(r^{1-\frac{n}{2}+\beta+\alpha}).$ Now if $1-\frac{n}{2}+\beta+\alpha > \mu$, then we are done. Otherwise, we can iterate above process and 
obtain (\ref{eqn: asymptotic of Qu}). Actually, we have
\begin{equation}
    |\nabla^{i}(Q(\lambda r^{2}u) + Q(r^{2}(L_{0}-L)u))|_{g}=o(r^{\mu^{\prime}-i})
\end{equation}
for all $i\in\mathbb{N}$.

Thus, we obtain
\begin{equation}\label{eqn: eigenfunction decomposition 2}
    u=a_0(y)r^{\mu} + u_{1},
\end{equation}
and $|\nabla^{i} u_{1}|=o(r^{\mu^{\prime}-i})$, as $r\rightarrow 0$, for some $\mu^{\prime}>\mu$ and all $i\in \mathbb{N}\cup \{ 0 \}$.
\end{proof}

Using a similar but more complicated argument, we now derive a partial asymptotic expansion for the minimizer $u$ of $W$-functional as in Theorem $\ref{thm: partial asymptotic expansion w-functional minimizer}$ below. Recall that the minimizer $u$ satisfies the equation (see (4.17) in \cite{DW20})
\begin{equation}\label{eqn: w-functional minimizer equation}
(-4\Delta + R)u = \frac{2}{\tau}u\ln u + \frac{n+m}{\tau} u,
\end{equation}
where $m$ is the infimum of $W$-functional with the scaling parameter $\tau>0$ as in $(\ref{eqn: mu-functional})$. The main difference between this equation and that of the eigenfunction as discussed in Theorem \ref{prop: partial asymptotic expansion} is the $u\ln u$ term on the right hand of the Euler-Lagrange equation (\ref{eqn: w-functional minimizer equation}).  So for the simplicity of presentation, instead of (\ref{eqn: w-functional minimizer equation}), we will consider the equation
\begin{equation}\label{eqn: u ln u}
(-4\Delta + R)u = u\ln u.
\end{equation}

The basic idea of the proof of Theorem \ref{thm: partial asymptotic expansion w-functional minimizer} is similar to that of Theorem \ref{prop: partial asymptotic expansion}. We will still use the parametrix of the Schr\"odinger operator $L_{0}$ in Melrose's b-calculus theory. In the derivation for the partial asymptotic expansion  of eigenfunction in Theorem \ref{prop: partial asymptotic expansion}, another key ingredient is that one can obtain $u\in W^{\infty, 2}_{1-\frac{n}{2}}$ by using a weighted elliptic bootstrapping with the help of Proposition \ref{prop: weighted elliptic estimate}. This helps us estimate the term $Q(\lambda r^{2}u) + Q(r^{2}(L_{0}- L)u)$ in (\ref{eqn: eigenfunction decomposition}). 

However, the $u\ln u$ term prevents us from getting the weighted $L^{2}$ estimate for high order derivatives of a solution to $(\ref{eqn: u ln u})$. Instead, we combine the b-calculus theory and weighted $L^{p}$ elliptic bootstrapping argument to obtain the following asymptotic estimate. This improves the asymptotic estimate obtained in \cite{DW20}.

\begin{thm}\label{thm: partial asymptotic expansion w-functional minimizer}
Let $u>0$ be a solution of 
\begin{equation}
(-4\Delta_{g}+R_{g})u = u\ln u.
\end{equation}
on a manifold $(M^n, g, o)$ with a conical singularity at $o$, satisfying $R_{\hat{g}} > (n-2)$.
 Then 
\begin{equation}
    |\nabla^{i}u|= o(r^{\mu-i})
\end{equation}
as $r\rightarrow0$ (approaching the singular point), for $i=0, 1$ and any $\mu < -\frac{n-2}{2}+\frac{\sqrt{\nu-(n-2)}}{2}$, where $\nu$ is the smallest eigenvalue of $-4\Delta_{\hat{g}}+R_{\hat{g}}$ on $(F, \hat{g})$.

\end{thm}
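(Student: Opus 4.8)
The plan is to adapt the b-calculus bootstrapping argument from the proof of Theorem \ref{prop: partial asymptotic expansion}, but replacing the weighted $L^2$ elliptic regularity (which fails because of the $u\ln u$ nonlinearity) by a weighted $L^p$ elliptic bootstrapping. First I would set up the equation in the b-form: rewrite $(-4\Delta_g + R_g)u = u\ln u$ as $r^2 L_0 u = r^2(L_0 - L)u + r^2 u\ln u$, where $L_0$ is the Schr\"odinger operator of the exactly-conical modified metric near $o$ and $L = -4\Delta_g + R_g$, and apply the b-parametrix $Q$ of Example \ref{ex: cone} to obtain a decomposition $u = R_1 u + Q(r^2 u\ln u) + Q(r^2(L_0 - L)u)$, with $R_1 u \in \mathcal{A}_{phg} \cap W^{\infty,2}_{1-\frac{n}{2}}$ admitting a leading term $a_p(y)r^{\widetilde{\mu}}(\ln r)^p$. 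As in Theorem \ref{prop: partial asymptotic expansion}, the semiboundedness estimate (Proposition \ref{prop: semiboundedness estimate}) gives $u \in W^{1,2}_{1-\frac{n}{2}}$, hence $u \in L^{\frac{2n}{n-2}}$, and using $|u\ln u| \le a(n) + |u|^{\frac{n}{n-2}}$ one sees $u\ln u \in L^{\frac{2(n-2)}{n}}$-type spaces; more precisely $r^2 u\ln u \in W^{0,2}_{3-\frac{n}{2}-?}$ with a small loss measured by a positive parameter, which is enough to run the first step.

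The key step is the iteration. Starting from $u \in W^{k,p}_{1-\frac{n}{2}-\varepsilon}$ for some small $\varepsilon$ and suitable $p$, I would use the weighted $L^p$ elliptic estimate (the analogue of Proposition \ref{prop: weighted elliptic estimate} in $L^p$, which the paper says can be proved the same way) together with the weighted Sobolev embeddings of Proposition \ref{prop: weighted-Sobolev-inequality} and Proposition \ref{prop: weighted Sobolev space regularity} to gain regularity/decay on $u$; feeding the improved $u$ back into $r^2 u\ln u$ (using that multiplication by $\ln u$ costs only an arbitrarily small exponent, since $|\ln u| \le C_\delta(1 + |u|^\delta)$ for any $\delta > 0$ near the singularity where $u$ is bounded) and into $r^2(L_0-L)u$ (which gains $\alpha$ by \eqref{eqn: difference map}), one improves the weight index of both $Q$-terms. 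As in Theorem \ref{prop: partial asymptotic expansion}, one argues by contradiction: if the leading exponent $\widetilde{\mu}$ of $R_1 u$ were not an indicial root giving an eigenfunction on $(F,\hat g)$, or if $p \ne 0$, the iteration would force $u = o(r^{1-\frac{n}{2}+k\alpha})$ for all $k$, contradicting $a_p \not\equiv 0$; hence $\widetilde{\mu} = -\frac{n-2}{2} + \frac{\sqrt{\nu-(n-2)}}{2}$ with $\nu$ an eigenvalue of $-4\Delta_{\hat g}+R_{\hat g}$, $\nu > n-2$ by the hypothesis $R_{\hat g} > n-2$, and $p = 0$. Since we only claim $|\nabla^i u| = o(r^{\mu - i})$ for every $\mu$ strictly below the value corresponding to the \emph{smallest} eigenvalue $\nu$, it suffices to observe that the leading exponent of $u$ is at least $\widetilde{\mu} \ge -\frac{n-2}{2} + \frac{\sqrt{\nu_{\min}-(n-2)}}{2}$, so for any strictly smaller $\mu$ we get $u = o(r^\mu)$, and the $Q$-terms are $o(r^{\mu'})$ with $\mu' > $ the bound; differentiating the parametrix output (which lands in $W^{\infty,p}$ of the appropriate weight) gives the $i=1$ statement via Proposition \ref{prop: weighted Sobolev space regularity}.

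The main obstacle I anticipate is controlling the nonlinear term $Q(r^2 u\ln u)$ in the weighted $L^p$ scale: unlike the linear eigenfunction case, one cannot simply assert $u\ln u \in W^{k,2}_\delta$ for all $k$, so the bootstrap must be carried out carefully in $W^{k,p}_\delta$ with $p$ chosen (and possibly varied) so that the product estimate $\|u\ln u\|_{W^{k,p}_{\delta'}} \le C\|u\|_{W^{k,p}_\delta}$ holds with only an $\varepsilon$-loss in the weight, and so that the weighted Sobolev embedding $W^{k,p}_\delta \subset C^0_{\delta'}$ is available at the end. Tracking that the accumulated $\varepsilon$-losses at each stage do not prevent the weight index from crossing the target value $\mu$ (for every $\mu < -\frac{n-2}{2}+\frac{\sqrt{\nu_{\min}-(n-2)}}{2}$) is the technical heart of the argument; this is exactly where the flexibility of choosing $\mu$ strictly below the optimal exponent, rather than equal to it, is used, and it is also why the statement here is slightly weaker than the genuine expansion in Theorem \ref{prop: partial asymptotic expansion}.
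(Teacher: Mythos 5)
Your proposal matches the paper's proof in all essentials: the same b-calculus decomposition $u = R_1 u + Q(r^2 u\ln u) + Q(r^2(L_0-L)u)$, the same remedy for the nonlinearity (replacing the $W^{k,2}$ bootstrap by a weighted $L^p$ bootstrap using $|u\ln u|\le C_\gamma + |u|^{1+\gamma}$ with $\gamma$ arbitrarily small), and the same iteration-plus-indicial-root analysis yielding decay below the exponent determined by the lowest eigenvalue. The only cosmetic difference is that the paper phrases the dichotomy on the leading term of $R_1u$ as two cases each directly giving the conclusion (with no need to pin down the $\log$-power $p$), rather than as a contradiction argument.
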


\begin{proof}
We prove the theorem in two steps. In the first step we show that, for some real number $\mu^\prime$ and any $\mu < \mu^\prime$, $|\nabla^i u| = o(r^{\mu - i})$ as $r \to 0$, $i=0, 1$. In the second step, we estimate $\mu^\prime$ to complete the proof.

\noindent{\bf Step 1}.
The solution $u>0$ satisfies the equation $Lu= u \ln u$. So
\begin{equation}\label{eqn: rearranged w-functional equation}
    r^{2}L_{0}u = r^{2}u\ln u + r^{2}(L_{0} - L)u.
\end{equation}
Applying the parametrix $Q$ of the operator: $r^2L_0: W^{2,2}_{1-\frac{n}{2}} \to W^{0, 2}_{1-\frac{n}{2}}$, to this equation and using (\ref{eqn: parametrix equation}), we write $u$ as
\begin{equation}\label{eqn: w-functional minimizer decomposition}
    u=R_{1}u + Q( r^{2}u\ln u) + Q(r^{2}(L_{0}-L)u).
\end{equation}

Recall that $u\in W^{1,2}_{1-\frac{n}{2}}$ and $u$ is smooth by the local elliptic regularity. By the weighted Sobolev embedding in Proposition 3.4 in \cite{DW20}, we have
\begin{equation}
u\in W^{0, p}_{1-\frac{n}{2}}, \quad \forall 1\leq p \leq \frac{2n}{n-2}.
\end{equation}
Now, for any $\gamma>0$, there is a constant $a(\gamma)$ such that
\begin{equation}
|u\ln u|  \leq a(\gamma) + |u|^{1+\gamma}.
\end{equation}
It follows that, for any small $\gamma>0$,
\begin{equation}
u\ln u \in W^{0, p}_{\left(1-\frac{n}{2}\right)(1+\gamma)}, \quad \forall 1\leq p \leq \frac{2n}{(n-2)(1+\gamma)}.
\end{equation}
In particular, for $p=2$, 
\begin{equation}
Q(r^{2}u\ln u) \in W^{2, 2}_{(1-\frac{n}{2})(1+\gamma)+2}.
\end{equation}
Thus, by taking $\gamma$ sufficiently small, we obtain
\begin{equation}\label{eqn: W Q first term}
Q(r^{2}u\ln u) \in W^{2, 2}_{1-\frac{n}{2}+\alpha}.
\end{equation}

In addition, as shown in the proof of Theorem 5.2 in \cite{DW20}, by Proposition 5.1 in \cite{DW20}, we can obtain 
\begin{equation}
u\in W^{2, p}_{(1-\frac{n}{2})(1+\gamma)}, \quad \forall 1\leq p \leq \frac{2n}{(n-2)(1+\gamma)}, \ \ \gamma>0.
\end{equation}
Then by the boundedness of the map in ($\ref{eqn: difference map}$), we have 
\begin{equation}
 r^{2}(L_{0} - L) u \in W^{0, 2}_{1-\frac{n}{2}+\alpha},
\end{equation}
and so
\begin{equation}\label{eqn: W Q second term}
Q(r^{2}(L_{0} - L)u) \in W^{2, 2}_{1-\frac{n}{2}+\alpha}.
\end{equation}

Thus, by $(\ref{eqn: W Q first term})$ and $(\ref{eqn: W Q second term})$, we obtain
\begin{equation}
Q(r^{2}u\ln u) + Q(r^{2}(L_{0} - L)u) \in W^{2, 2}_{1-\frac{n}{2}+\alpha}.
\end{equation}

On the other hand, $R_{1}u\in W^{\infty, 2}_{1-\frac{n}{2}} \cap \mathcal{A}_{phg}$; in particular, by looking at the leading order term,
\begin{equation}\label{eqn: asymptotic R1u w-functional}
    R_{1}u \sim a(y)r^{\mu^{\prime}}(\ln r)^{p},
\end{equation}
as $r\rightarrow0$ for some $\mu^{\prime}\in\mathbb{R}$. So $R_{1}u \in W^{2, 2}_{\mu}$ for any $\mu < \mu^{\prime}$. 

We first {\bf Claim}:
\begin{equation}\label{eqn: W-functional claim}
u \in W^{2, 2}_{\mu}, \quad \text{for any} \ \ \mu < \mu^{\prime}.
\end{equation}

If $\mu^{\prime} \leq 1-\frac{n}{2}+\alpha$, then 
\begin{equation}
Q(r^{2}u\ln u) + Q(r^{2}(L_{0} - L)u) \in W^{2, 2}_{1-\frac{n}{2}+\alpha} \subset W^{2, 2}_{\mu},
\end{equation}
for any $\mu < \mu^{\prime} \leq 1- \frac{n}{2} + \alpha$. Consequently,  (\ref{eqn: w-functional minimizer decomposition}) implies that 
$
u\in W^{2, 2}_{\mu},
$
for any $\mu < \mu^{\prime}$, and Claim $(\ref{eqn: W-functional claim})$ follows.

If $\mu^{\prime} > 1-\frac{n}{2} + \alpha$, then $u\in W^{2, 2}_{1-\frac{n}{2}+\alpha}$. Starting from this regularity estimate for $u$, and applying the above process, we obtain 
\begin{equation}
Q(r^{2}u\ln u) + Q(r^{2}(L_{0} - L)u) \in W^{2, 2}_{1-\frac{n}{2}+2\alpha}.
\end{equation}
Now if $\mu^{\prime} \leq 1-\frac{n}{2}+2\alpha$, then we obtain $u\in W^{2, 2}_{\mu}$, for any $\mu < \mu^{\prime}$. Otherwise, we repeat the above argument finitely many times to prove Claim $(\ref{eqn: W-functional claim})$.

Starting from the regularity estimate in Claim $(\ref{eqn: W-functional claim})$, a weighted $L^{p}$ elliptic bootstrapping argument as in the proof of Theorem 5.2 in \cite{DW20} gives 
\begin{equation}
|\nabla^{i}u| = o(r^{\mu-i})
\end{equation}
as $r\rightarrow 0$, for $i=0, 1$ and any $\mu<\mu^{\prime}$.

\noindent{\bf Step 2}.
Finally, we estimate the exponent $\mu^{\prime}$ in the leading order term of $R_{1}u$ to complete the proof. For that, we apply $r^2L_{0}$ to $R_{1}u$ to obtain, via $(\ref{eqn: asymptotic R1u w-functional})$, 
\begin{equation}\label{eqn: asymptotic r2L0R1u w-functional}
    r^{2}L_{0} \circ R_{1}u 
    \sim 
    \{ [-4\mu^\prime (\mu^\prime +n-2) - (n-1)(n-2)]a(y) + (-4\Delta_{\hat{g}}+R_{\hat{g}})a(y)\} r^{\mu^\prime}(\ln r)^{p},
\end{equation}
as $r\rightarrow0$.

On the other hand, applying $r^{2}L_{0}$ to the equation (\ref{eqn: w-functional minimizer decomposition}), using the equation $Lu = u\ln u$ and (\ref{eqn: parametrix equation}), one has
\begin{equation} \label{eq-r2L0R1u-Sobolev}
    r^{2}L_{0}\circ R_{1}u = R_{2}(r^{2} u\ln u) + R_{2}(r^{2}(L_{0}-L)u) \in W^{\infty, 2}_{1-\frac{n}{2}+\alpha}.
\end{equation}

Now we separate the discussion into two cases:

{\bf Case 1}: If 
\begin{equation}
[-4\mu^\prime (\mu^\prime+n-2) - (n-1)(n-2)]a(y) + (-4\Delta_{\hat{g}}+R_{\hat{g}})a(y) \equiv 0,
\end{equation}
then $a(y)$ is an eigenfunction of $-4\Delta_{\hat{g}}+R_{\hat{g}}$ and
\begin{equation}
    \mu^\prime =-\frac{n-2}{2}+\frac{\sqrt{\nu^\prime -(n-2)}}{2},
\end{equation}
where $\nu^\prime $ is an eigenvalue of $-4\Delta_{\hat{g}}+R_{\hat{g}}$ on $(F, \hat{g})$. And we are done. Here we have used  Theorem 5.2 in \cite{DW20}, together with $\nu'>n-2$ via our assumption on the scalar curvature, to rule out the other root.

{\bf Case 2}: If
\begin{equation}
[-4\mu^\prime (\mu^\prime +n-2) - (n-1)(n-2)]a(x) + (-4\Delta_{\hat{g}}+R_{\hat{g}})a(x) \not\equiv 0,
\end{equation}
then $\mu'> 1-\frac{n}{2}+\alpha$ by \eqref{eq-r2L0R1u-Sobolev}, (\ref{eqn: asymptotic r2L0R1u w-functional}), and (\ref{eqn: polynomial weighted Sobolev condition}).
Thus (\ref{eqn: asymptotic R1u w-functional}) and (\ref{eqn: polynomial weighted Sobolev condition}) 
imply
\begin{equation}
R_{1}u \in W^{\infty, 2}_{1-\frac{n}{2}+\alpha}.
\end{equation}
It follows from (\ref{eqn: w-functional minimizer decomposition}) that
\begin{equation}
    u \in W^{1, 2}_{1-\frac{n}{2}+\alpha}.
\end{equation}
Iterating the above process leads to 
\begin{equation}
    u\in W^{1, 2}_{1-\frac{n}{2}+k\alpha}, \ \ \forall k\in \mathbb{N}.
\end{equation} 
Then as in the proof of Theorem 5.2 in \cite{DW20}, we can obtain that 
\begin{equation}
    |\nabla^{i}u|=o(r^{1-\frac{n}{2}+k\alpha - i})
\end{equation}
for $i=0, 1$, and any $k\in\mathbb{N}$. In particular, $|\nabla^{i}u|=o(r^{\mu-i})$, for $i=0, 1$ and any $\mu < -\frac{n-2}{2} + \frac{\sqrt{\nu - (n-2)}}{2}$, where $\nu$ is the smallest eigenvalue of $-4\Delta_{\hat{g}}+R_{\hat{g}}$ on $(F, \hat{g})$. This completes the proof. 

\end{proof}


\section{Gradient Ricci solitons with isolated conical singularities}\label{section: gradient ricci solitions}

In this section, we apply the same idea as in the last section to study gradient Ricci solitons with isolated conical singularities. We first derive some asymptotic estimate for the potential functions of compact gradient Ricci solitons with isolated conical singularities, and use that to prove Theorem $\ref{thm: soliton}$. That is, there are no nontrivial compact gradient steady or expanding Ricci solitons with isolated conical singularities, when the model cone is scalar flat.

Let $(M^n, g, o)$ be a compact manifold with a single conical singularity at $o$ as described at the beginning of \S$\ref{subsect: eigenfunction partial asymptotic expansion}$. Then as we did there, we modify the metric $g$ near the singular point $o$ such that the metric is an exact cone metric: $dr^2 + r^2 \hat{g}$ in a sufficiently small neightborhood of the singular point: $(0, \epsilon) \times F$ (small $\epsilon$ is to be determined as we need in Proposition \ref{prop: Fredholm}), and the new metric is the same as the original metric $g$ on $M \setminus \left( (0, 4\epsilon) \times F \right)$. Let $\Delta_0$ denote the Laplacian with respect to the modified metric, and $\Delta_g$ be the Laplacian with respect to the original (asymptotically) conical metric $g$. Then the map 
\begin{equation}\label{eqn: laplacian difference operator}
\Delta_g - \Delta_0: W^{k+2, 2}_{\delta} \to W^{k, 2}_{\delta -2 + \alpha} \ \ \text{is bounded,}
\end{equation}\label{eqn: laplacian difference operator-1}
for any $\delta \in \mathbb{R}$ and $k \in \mathbb{N} \cup \{0\}$, and so the map
\begin{equation}
r^2\left( \Delta_g - \Delta_0 \right): W^{k+2, 2}_\delta \rightarrow W^{k, 2}_{\delta+\alpha} \ \ \text{is bounded}.
\end{equation} 
Recall that $\alpha$ is the decay order in $(\ref{eqn: asymptotic control of metric})$.

\begin{prop}\label{prop: Fredholm}
Let $(M^n, g, o)$ be a compact manifold with a conical singularity at $o$.
For each $k \in \mathbb{N} \cup \{0\}$ and 
\begin{equation}\label{eqn: indicial root of Laplacian}
\delta \not\in \left\{ \frac{2-n}{2} \pm \frac{\sqrt{(n-2)^2 + 4\nu}}{2} \ \ \mid \ \ \nu \ \ \text{is an eigenvalue of } -\Delta_{\hat{g}} \right\},
\end{equation}
the  map 
\begin{equation}
r^{2}\Delta_{g}: W^{k+2, 2}_{\delta} \rightarrow  W^{k, 2}_{\delta} 
\end{equation}
is Fredholm.
\end{prop}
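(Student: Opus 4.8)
The plan is to identify $r^{2}\Delta_{g}$ as a b-elliptic differential operator on the compact manifold with boundary $M$ and to quote the Fredholm theorem of the b-calculus recalled in \S\ref{subsect: b-calculus} (and used in Example \ref{ex: cone}). First I would work near the singular point, where the modified metric is the exact cone $dr^{2}+r^{2}\hat{g}$: there, after extending the radial variable $r$ smoothly and positively over all of $M$,
\begin{equation*}
r^{2}\Delta_{0}=(r\partial_{r})^{2}+(n-2)(r\partial_{r})+\Delta_{\hat{g}}\in{\rm Diff}^{2}_{b}(M),
\end{equation*}
exactly as in Example \ref{ex: cone}. Writing $r^{2}\Delta_{g}=r^{2}\Delta_{0}+r^{2}(\Delta_{g}-\Delta_{0})$, the boundedness statement $(\ref{eqn: laplacian difference operator})$ for $\Delta_{g}-\Delta_{0}$ — equivalently, that $r^{2}(\Delta_{g}-\Delta_{0})$ maps $W^{k+2,2}_{\delta}$ into $W^{k,2}_{\delta+\alpha}$ — is a reformulation of the asymptotic control $(\ref{eqn: asymptotic control of metric})$ on $h$ and shows that $r^{2}(\Delta_{g}-\Delta_{0})$ is a second order b-differential operator whose coefficients vanish to order $\alpha>0$ at $\partial M$; hence $r^{2}\Delta_{g}\in{\rm Diff}^{2}_{b}(M)$.

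Next I would check b-ellipticity and compute the indicial family. On the interior $\mathring{M}$ the operator $r^{2}\Delta_{g}$ is genuinely elliptic (since $\Delta_{g}$ is and $r>0$ there), while at $\partial M$ the coefficients of the perturbation $r^{2}(\Delta_{g}-\Delta_{0})$ vanish, so both the principal b-symbol and the indicial operator of $r^{2}\Delta_{g}$ coincide there with those of $r^{2}\Delta_{0}$. Thus $r^{2}\Delta_{g}$ is b-elliptic (the principal b-symbol of $r^{2}\Delta_{0}$ being non-vanishing off the zero section of ${}^{b}T^{*}M$, as in Example \ref{ex: cone}), and
\begin{equation*}
I(r^{2}\Delta_{g})_{M}(z)=I(r^{2}\Delta_{0})_{M}(z)=z^{2}+(n-2)z+\Delta_{\hat{g}},
\end{equation*}
which is not invertible on $C^{\infty}(F)$ exactly when $z^{2}+(n-2)z-\nu=0$ for some eigenvalue $\nu$ of $-\Delta_{\hat{g}}$, i.e.\ when $z=\tfrac{2-n}{2}\pm\tfrac{\sqrt{(n-2)^{2}+4\nu}}{2}$. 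Since $\nu\ge0$ these numbers are real, so the set of indicial roots of $r^{2}\Delta_{g}$ is exactly the set excluded in $(\ref{eqn: indicial root of Laplacian})$.

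With these two facts in hand, the proposition follows from the b-calculus Fredholm theorem recorded in \S\ref{subsect: b-calculus}: a b-elliptic operator in ${\rm Diff}^{2}_{b}(M)$ induces a Fredholm map $W^{k+2,2}_{\delta}(M)\to W^{k,2}_{\delta}(M)$ whenever $\delta$ is not an indicial root. Concretely, for such $\delta$ and each $k\in\mathbb{N}\cup\{0\}$ one obtains a parametrix $Q$ together with compact remainders $R_{1},R_{2}$ satisfying $Q\circ(r^{2}\Delta_{g})={\rm id}-R_{1}$ on $W^{k+2,2}_{\delta}$ and $(r^{2}\Delta_{g})\circ Q={\rm id}-R_{2}$ on $W^{k,2}_{\delta}$, as in the parametrix identities of Example \ref{ex: cone}, whence $r^{2}\Delta_{g}:W^{k+2,2}_{\delta}\to W^{k,2}_{\delta}$ is Fredholm.

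The only point that requires genuine care — and is, in the presence of the non-exact term $h$, the real content of the argument — is the verification that the perturbation $r^{2}(\Delta_{g}-\Delta_{0})$ alters neither the principal b-symbol at $\partial M$ nor the indicial family of $r^{2}\Delta_{0}$; this is immediate from $(\ref{eqn: laplacian difference operator})$ and $(\ref{eqn: asymptotic control of metric})$ once one notes that it amounts to $r^{2}(\Delta_{g}-\Delta_{0})\in r^{\alpha}\,{\rm Diff}^{2}_{b}(M)$. One should also keep in mind, exactly as in Example \ref{ex: cone}, that the cone Laplacian enters through the conformal identification of $dr^{2}+r^{2}\hat{g}$ with the b-metric $\tfrac{dr^{2}}{r^{2}}+\hat{g}$, so that the spaces $W^{k,2}_{\delta}=r^{\delta}W^{k,2}_{b}$ used here coincide with the weighted Sobolev spaces of \S\ref{subsect: weighted Sobolev spaces} and the quoted theorem applies verbatim.
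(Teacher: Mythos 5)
Your proposal takes a different and, unfortunately, incorrect route: it tries to apply the b-calculus Fredholm theorem directly to $r^2\Delta_g$ by asserting $r^2\Delta_g\in{\rm Diff}^2_b(M)$. That assertion does not hold under the hypotheses of the paper. As recalled in \S7.1, ${\rm Diff}^k_b(M)$ consists of operators whose coefficients $a_\alpha$ in the local expression $\sum a_\alpha (x\partial_x)^{\alpha_0}\partial_y^{\alpha'}$ are \emph{smooth up to} $\partial M$. By Definition 2.1 and Remark 2.2 the perturbation $h=g-g_0$ is only required to satisfy the conormal-type bound $|r^k\nabla_{g_0}^k h|_{g_0}=O(r^\alpha)$; it is emphatically \emph{not} assumed to be smooth (or even polyhomogeneous) up to $r=0$. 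Consequently the coefficients of $r^2\Delta_g$ near $\partial M$ are generally not smooth up to the boundary, and $r^2\Delta_g$ is not a b-differential operator in the sense to which the quoted Fredholm theorem and Example 7.2 apply. Your own remark that the perturbation lies in $r^\alpha{\rm Diff}^2_b(M)$ already betrays this: for non-integer $\alpha$ (and, more generally, without polyhomogeneity), $r^\alpha{\rm Diff}^2_b(M)$ is not contained in ${\rm Diff}^2_b(M)$, so $r^2\Delta_0 + r^2(\Delta_g-\Delta_0)$ leaves the class ${\rm Diff}^2_b(M)$. Thus the step "the b-calculus Fredholm theorem applies to $r^2\Delta_g$" has no justification, and the later identification of its indicial family rests on a classification that is not available for this operator.

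The paper's argument is structured precisely to sidestep this obstruction. It first modifies the metric so that it is \emph{exactly} conical on $(0,\epsilon)\times F$ and agrees with $g$ outside $(0,4\epsilon)\times F$. The resulting $r^2\Delta_0$ genuinely lies in ${\rm Diff}^2_b(M)$, so the b-calculus furnishes a parametrix $Q$ for it off its indicial roots. One then writes $Q\circ r^2\Delta_g=\mathrm{id}+Q\circ r^2(\Delta_g-\Delta_0)+R_L$ and exploits that $\Delta_g-\Delta_0$ is supported where $r<4\epsilon$: the boundedness $r^2(\Delta_g-\Delta_0):W^{k+2,2}_\delta\to W^{k,2}_{\delta+\alpha}$ combined with the localization yields an operator norm bound $\le C\epsilon^\alpha$ for $Q\circ r^2(\Delta_g-\Delta_0)$, so for $\epsilon$ small the identity plus this term is invertible and $Q\circ r^2\Delta_g$ is invertible modulo compacts. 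This absorption step (a Neumann-series perturbation argument) is the genuine content of the proof, and it does not require $r^2\Delta_g$ to be a b-operator — only the model is. If you want to salvage your approach, replace the claim "$r^2\Delta_g\in{\rm Diff}^2_b(M)$" by "the parametrix of the model $r^2\Delta_0$ is a parametrix for $r^2\Delta_g$ modulo compacts", and supply the smallness estimate (choosing $\epsilon$ after the fact) that makes this true; without it the perturbation $r^2(\Delta_g-\Delta_0)$ composed with $Q$ need not be compact nor of small norm, and the Fredholm conclusion does not follow.
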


\begin{proof}
From Example $\ref{ex: cone}$, $r^2 \Delta_0$ is a b-elliptic differential operator, and its indicial roots are 
given by: $\frac{2-n}{2} \pm \frac{\sqrt{(n-2)^2 + 4\nu}}{2}$, where $\nu$ is an eigenvalue of $-\Delta_{\hat{g}}$. Thus by b-calculus theory, for each $\delta$ satisfying $(\ref{eqn: indicial root of Laplacian})$,
\begin{equation}
r^{2}\Delta_0: W^{k+2, 2}_{\delta} \rightarrow  W^{k, 2}_{\delta}
\end{equation}
is Fredholm, and there is a parametrix $Q$, which is a bounded map
\begin{equation}\label{eqn: Q}
Q: W^{k, 2}_{\delta} \rightarrow W^{k+2, 2}_{\delta},
\end{equation}
such that
\begin{equation}
Q \circ r^{2}\Delta_0 = id + R_{L},
\end{equation}
and
\begin{equation}
R_{L} : W^{k+2, 2}_{\delta} \rightarrow W^{k+2, 2}_{\delta}
\end{equation}
is compact.

Then 
\begin{eqnarray*}
Q \circ r^{2}\Delta_{g} 
  & = & id + Q\circ r^{2}(\Delta_{g} - \Delta_0) + R_{L}.
\end{eqnarray*}
For $u\in W^{k+2, 2}_{\delta}(C_{\epsilon}(F))$, by using $(\ref{eqn: laplacian difference operator})$, we have
\begin{eqnarray}
\|Q \circ r^{2}(\Delta_{g} - \Delta_0)u\|_{W^{k+2, 2}_{\delta}}
& \leq & C \|r^{2} (\Delta_{g} - \Delta_0)u\|_{W^{k, 2}_{\delta}} \nonumber \\
& \leq & \epsilon^{\alpha} C \|r^{2} (\Delta_{g} - \Delta_0)u\|_{W^{k, 2}_{\delta+\alpha}} \nonumber \\
& \leq & \epsilon^{\alpha} C \|u\|_{W^{k+2, 2}_{\delta}} \label{eqn: Q r2 laplacian estimate}.
\end{eqnarray}
Here the first inequality is because of the boundedness of the map $Q$ in $(\ref{eqn: Q})$. For the second inequality, we note that $r^2 \left( \Delta_g - \Delta_0 \right) u$ is supported in $(0, 4\epsilon)\times F$ and we use the definition of weighted Sobolev norm $(\ref{eqn: WSN})$. The third inequality then follows from the fact in $(\ref{eqn: laplacian difference operator-1})$.
Then the estimate in $(\ref{eqn: Q r2 laplacian estimate})$ implies that for sufficiently small $\epsilon>0$ the operator
\begin{equation*}
Q \circ r^{2}(\Delta_{g} - \Delta_{g_{0}}): W^{k+2, 2}_{\delta} \rightarrow W^{k+2, 2}_{\delta} 
\end{equation*}
has norm $\leq \frac{1}{2}$. Consequently, $id + Q \circ r^{2}(\Delta_{g} - \Delta_{g_{0}})$ is invertible.
Thus, the map
\begin{equation}
r^{2}\Delta_{g}: W^{k+2, 2}_{\delta} \rightarrow  W^{k, 2}_{\delta}
\end{equation}
is also Fredholm. 
\end{proof}

\begin{prop}\label{prop: surjectivity}
Let $(M^n, g, o)$ be a compact manifold with a conical singularity at $o$. For $2-n \leq  \delta \leq 1 - \frac{n}{2}$ satisfying $(\ref{eqn: indicial root of Laplacian})$, the map
\begin{equation}\label{eqn: surjective map}
r^2 \Delta_g: W^{2, 2}_\delta \to W^{0, 2}_\delta
\end{equation}
is surjective.

In particular, the map $r^2 \Delta_g: W^{2, 2}_{1-\frac{n}{2}} \to W^{0, 2}_{1-\frac{n}{2}}$ is surjective.
\end{prop}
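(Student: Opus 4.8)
The plan is to deduce surjectivity from the Fredholm property in Proposition~\ref{prop: Fredholm} by identifying the cokernel and showing that it vanishes. As $\delta$ satisfies \eqref{eqn: indicial root of Laplacian}, Proposition~\ref{prop: Fredholm} gives that $r^{2}\Delta_{g}\colon W^{2,2}_{\delta}(M)\to W^{0,2}_{\delta}(M)$ has closed range of finite codimension; since multiplication by $r^{2}$ is an isomorphism $W^{0,2}_{\delta-2}(M)\to W^{0,2}_{\delta}(M)$, it is equivalent to prove surjectivity of $\Delta_{g}\colon W^{2,2}_{\delta}(M)\to W^{0,2}_{\delta-2}(M)$. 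Using the $L^{2}(M,g)$ pairing to identify $\big(W^{0,2}_{\delta-2}(M)\big)^{*}$ with $W^{0,2}_{2-n-\delta}(M)$, and the density of $C^{\infty}_{0}(\mathring M)$ in $W^{2,2}_{\delta}(M)$ (which is the very definition of the weighted Sobolev space), any functional annihilating the range is represented by some $v\in W^{0,2}_{2-n-\delta}(M)$ with $\Delta_{g}v=0$ distributionally on $\mathring M$. Since $\delta$, and hence $2-n-\delta$, avoids the indicial roots in \eqref{eqn: indicial root of Laplacian}, the weighted elliptic regularity furnished by the b-calculus parametrix of Example~\ref{ex: cone} (in the spirit of Proposition~\ref{prop: weighted elliptic estimate}) upgrades $v$ to $v\in W^{2,2}_{2-n-\delta}(M)$.

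Next I would use an energy identity to pin down $v$. The hypothesis $\delta\le 1-\tfrac n2$ gives $2-n-\delta\ge 1-\tfrac n2$, and comparing the weight exponents in \eqref{eqn: WSN} shows that the $W^{2,2}_{2-n-\delta}(M)$-norm dominates the $W^{1,2}_{1-\frac n2}(M)$-norm; thus $v\in W^{1,2}_{1-\frac n2}(M)$, which is the completion of $C^{\infty}_{0}(\mathring M)$ and whose norm controls $\int_{U}\big(r^{-2}v^{2}+|\nabla v|^{2}_{g}\big)\,d\vol_{g}$ on the collar. Picking $v_{j}\in C^{\infty}_{0}(\mathring M)$ with $v_{j}\to v$ in $W^{1,2}_{1-\frac n2}(M)$ and integrating by parts,
\begin{equation*}
\int_{M}|\nabla v|^{2}_{g}\,d\vol_{g}=\lim_{j\to\infty}\int_{M}\nabla v\cdot\nabla v_{j}\,d\vol_{g}=-\lim_{j\to\infty}\int_{M}v\,\Delta_{g}v_{j}\,d\vol_{g}=0,
\end{equation*}
since $\Delta_{g}v=0$ as a distribution and the $v_{j}$ are compactly supported. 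Hence $\nabla v\equiv 0$, so $v$ is constant on the connected manifold $M$. The Hardy inequality \eqref{eqn: cone-Hardy-inequality} is what makes the relevant weighted embeddings and the approximation near $r=0$ go through.

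It remains to exclude a nonzero constant. The key point is that $v$ must annihilate $\Delta_{g}u$ for \emph{every} $u\in W^{2,2}_{\delta}(M)$, not merely for $u\in C^{\infty}_{0}(\mathring M)$: applying Green's formula on $\{r>\epsilon\}$ and letting $\epsilon\to 0$, the constant mode of $v$ contributes a term governed by the boundary flux $\int_{\{r=\epsilon\}}\partial_{r}u\,dA_{\epsilon}$, and the condition $\delta\ge 2-n$ together with the weight defining $W^{0,2}_{2-n-\delta}(M)$ is precisely what is needed to force that contribution to vanish. Consequently $v\equiv 0$, the cokernel is trivial, and $r^{2}\Delta_{g}$ is surjective for all $\delta$ in the stated range; in particular $r^{2}\Delta_{g}\colon W^{2,2}_{1-\frac n2}(M)\to W^{0,2}_{1-\frac n2}(M)$ is surjective.

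I expect the genuine obstacle to be this last step, namely the quantitative analysis of the boundary flux at the conical singularity and its interplay with the exact location of the indicial roots, which is what removes the $r^{0}$-mode from the dual weighted space. By comparison, the Fredholm reduction of the first paragraph is immediate from Proposition~\ref{prop: Fredholm}, and the energy identity of the second paragraph is routine once the $C^{\infty}_{0}$-density and the Hardy inequality are in hand.
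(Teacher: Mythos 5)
Your overall strategy matches the paper's: use the Fredholm property of Proposition~\ref{prop: Fredholm} to reduce surjectivity to injectivity of the $L^2$-adjoint, identify the adjoint kernel with functions annihilated by $\Delta_g$ after factoring out $r^2$, and show such functions are constant by an energy identity. Your realization of the energy step is a mild variant of the paper's: the paper keeps $u\in W^{0,2}_{-\delta-n}$ and tests $\Delta_g(r^2u)=0$ directly against $\varphi_{r_0}^2(r^2u)$ for a scale-$r_0$ cutoff, obtaining $\int|\nabla(\varphi_{r_0}(r^2u))|^2=\int|d\varphi_{r_0}|^2(r^2u)^2$ and sending $r_0\to 0$; you instead bootstrap the cokernel element to $W^{2,2}_{2-n-\delta}$ via the parametrix, embed into $W^{1,2}_{1-\frac n2}$, and integrate by parts against an approximating sequence in $C^\infty_0(\mathring M)$. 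Both of these are fine.

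You correctly flag the last step as the genuine obstacle, but your execution of it runs backwards. You show that the boundary flux $\int_{\{r=\epsilon\}}\partial_r u\,dA_\epsilon$ goes to zero as $\epsilon\to 0$ for every test function $u\in W^{2,2}_\delta$, and conclude ``consequently $v\equiv 0$.'' This does not follow. If the flux vanishes for every such $u$, Green's formula gives $\int_M c\,\Delta_g u=0$ for every $u$, which says precisely that the constant functional \emph{does} annihilate the range of $\Delta_g$; it would therefore be an honest cokernel element and an obstruction to surjectivity, not evidence for it. What you need is either a test function whose flux does \emph{not} vanish in the limit, or an argument that a nonzero constant is not admissible as an element of the relevant dual weighted space. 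The paper takes the latter route and does not use a flux computation at all: once $r^2u$ is known to be a constant $c$ lying in $W^{0,2}_{-\delta-n+2}$, it appeals directly to the weight criterion \eqref{eqn: polynomial weighted Sobolev condition} together with the lower bound $\delta\ge 2-n$ to force $c=0$. Replacing your flux paragraph with that membership observation is what is needed to bring your proof in line with the paper's.
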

\begin{proof}
Proposition $\ref{prop: Fredholm}$ tells use that for any $\delta \in \mathbb{R}$ satisfying $(\ref{eqn: indicial root of Laplacian})$, the map
$(\ref{eqn: surjective map})$
is Fredholm. Thus, in order to prove the map $(\ref{eqn: surjective map})$ is surjective for $2 - n \leq \delta \leq 1 - \frac{n}{2}$ satisfying $(\ref{eqn: indicial root of Laplacian})$, it suffices to prove its usual $L^2$ adjoint map
\begin{equation}\label{eqn: adjoint map}
\left( r^2 \Delta_g \right)^*: \left( W^{0, 2}_\delta \right)^* \to \left( W^{2, 2}_\delta \right)^*
\end{equation}
is injective. Note that the usual $L^2$-pairing $(\cdot, \cdot)_{L^2(M)}$ identifies the toplogy dual space $\left( W^{0, 2}_\delta \right)^*$ with $W^{0, 2}_{-\delta -n}$. For each small $r_0 >0$, we take a cut-off function $\varphi_{r_0}$ such that 
\begin{equation}
\varphi_{r_0} = \begin{cases}
0, & \text{on} \ \ B_{r_0}(o), \cr
1, & \text{on} \ \ M \setminus B_{2r_0}(o),
\end{cases}
\end{equation}
and $|d\varphi_{r_0}| \leq \frac{10}{r_0}$ on $M$. Then for each $u \in {\rm Ker}\left( \left( r^2 \Delta_g \right)^* \right) \subset W^{0, 2}_{-\delta - n}$, we have $\Delta_g (r^2u) = 0$ and so
\begin{eqnarray}
\int_{M} |\nabla (\varphi_{r_0} (r^2 u))|^2 
& = & \int_{M} |d\varphi_{r_0}|^2 (r^2u)^2 - \int_{M} (\varphi_{r_0})^2 (r^2u)\Delta_g (r^2u) \nonumber \\
& = & \int_{M} |d\varphi_{r_0}|^2 (r^2u)^2 .\end{eqnarray}
Thus, 
\begin{equation}
\int_{M \setminus B_{2r_0}(o)} |\nabla (r^2 u)|^2 \leq 100 \int_{B_{2r_0}(o) \setminus B_{r_0}(o)} r^2 u^2 \to 0, \ \ \text{as} \ \ r_0 \to 0,
\end{equation}
since for $\delta \leq 1 - \frac{n}{2}$,
\begin{equation}
\int_{(0, 1) \times F} r^2 u^2 \leq \int_{(0, 1)\times F} r^{2\delta + n} u^2 \leq \|u\|^2_{W^{0, 2}_{-\delta -n}(M)} < +\infty.
\end{equation}
Therefore, $\nabla (r^2 u) =0$, and so $r^2 u$ is a constant function in $ W^{0, 2}_{-\delta -n +2}$. This implies $r^2 u$ must be zero, and so $u=0$, by the assumption $\delta \geq 2-n$ and (\ref{eqn: polynomial weighted Sobolev condition}). Thus, the map $(\ref{eqn: adjoint map})$ is injective and so the map $(\ref{eqn: surjective map})$ is surjective. 

Finally, we note that $\delta=1-\frac{n}{2}$ is not a indicial root of $r^2 \Delta_g$, via the expression of indicial roots of $r^2 \Delta_g$ in $(\ref{eqn: indicial root of Laplacian})$, and the fact that the eigenvalues of $-\Delta_{\hat{g}}$: $\nu \geq 0$.
\end{proof}

\begin{prop}\label{prop: asymptotic for potential function of steady soliton}
Let $(M^{n}, g, f)$ be a compact gradient Ricci soliton with a conical singularity, i.e. $(M^{n}, g)$ is a compact Riemannian manifolds with a conical singularity and the Ricci curvature $Ric_g$ on the regular part satisfies the equation
\begin{equation}\label{eqn: gradient soliton equation}
    Ric_g + Hess_g f = \Lambda g,
\end{equation}
for some constant $\Lambda$.
 If the cross section of the model cone at the singular point has scalar curvature  $R_{\hat{g}}=(n-1)(n-2)$, then a potential function $f$ satisfies
\begin{equation}
|\nabla^{i}f| = o(r^{\delta_0 -i}), 
\end{equation}
as $r\rightarrow 0$, for some $\delta_0 > 0$ and all $i \in \mathbb{N}_0$. 

In particular, 
$f \rightarrow 0$ at the conical singularity and hence the potential function $f$ is bounded on $M$.
\end{prop}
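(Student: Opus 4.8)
The plan is to combine a crude a priori estimate coming directly from the soliton structure with the b-calculus argument of \S\ref{subsect: eigenfunction partial asymptotic expansion}, run in two rounds, mirroring the proof of Theorem \ref{prop: partial asymptotic expansion}.

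First I would trace \eqref{eqn: gradient soliton equation} to obtain $\Delta_g f = n\Lambda - R_g =: \Phi$ on $\mathring M$. Conical metrics have curvature of size $O(r^{-2})$, and by the asymptotic control \eqref{eqn: asymptotic control of metric} on $h$ the same holds for $g$, so $|\nabla^2 f|_g = |\Lambda g - \mathrm{Ric}_g|_g = O(r^{-2})$; integrating this bound twice along the radial direction, starting from the compact hypersurface $\{r=\tfrac12\}$ on which $f$ is smooth, gives $f = O(|\ln r|)$ and $|\nabla f| = O(r^{-1})$ as $r\to 0$, while differentiating the soliton equation (using $|\nabla^i\mathrm{Ric}_g| = O(r^{-2-i})$) gives $|\nabla^i f| = O(r^{-i})$ for $i\ge 2$. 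The hypothesis $R_{\hat g} = (n-1)(n-2)$ makes the model cone scalar flat, $R_{g_0} = r^{-2}\big(R_{\hat g} - (n-1)(n-2)\big) \equiv 0$, hence $\Phi = O(r^{-2+\alpha})$ and $|\nabla^i\Phi| = O(r^{-2+\alpha-i})$; shrinking $\alpha$ I may assume $0<\alpha<\min\{2,\delta_1\}$, where $\delta_1 := \tfrac{2-n}{2} + \tfrac12\sqrt{(n-2)^2+4\nu_1}>0$ is the smallest positive indicial root of $r^2\Delta_0$ and $\nu_1>0$ is the first nonzero eigenvalue of $-\Delta_{\hat g}$. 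Then $f\in W^{1,2}_\delta(M)$ and $\Phi\in W^{k,2}_{\delta-2}(M)$ for all $k\in\mathbb N$, $\delta<0$, and iterating the weighted elliptic estimate of Proposition \ref{prop: weighted elliptic estimate} upgrades this to $f\in W^{k,2}_\delta(M)$ for all $k\in\mathbb N$, $\delta<0$.

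Now for the b-calculus. Modify $g$ near $o$ to an exact cone metric as in \S\ref{subsect: eigenfunction partial asymptotic expansion}, with $\Delta_0$ the resulting Laplacian; then $r^2\Delta_0$ is b-elliptic with $\mathrm{spec}_b(r^2\Delta_0) = \{\tfrac{2-n}{2}\pm\tfrac12\sqrt{(n-2)^2+4\nu} : \nu\in\mathrm{spec}(-\Delta_{\hat g})\}$, the only indicial root in $[2-n,\delta_1)$ besides $0$ is $2-n$, and $r^2(\Delta_0-\Delta_g)\colon W^{k+2,2}_\delta\to W^{k,2}_{\delta+\alpha}$ is bounded (cf.\ \eqref{eqn: laplacian difference operator}). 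In the first round I apply the parametrix $Q$ of $r^2\Delta_0\colon W^{k+2,2}_{\delta_-}\to W^{k,2}_{\delta_-}$, for some non-indicial $\delta_-\in(2-n,0)$, to the rewritten equation $r^2\Delta_0 f = r^2\Phi + r^2(\Delta_0-\Delta_g)f$, obtaining
\begin{equation*}
f = R_1 f + Q(r^2\Phi) + Q\big(r^2(\Delta_0-\Delta_g)f\big), \qquad R_1 f\in W^{\infty,2}_{\delta_-}(M)\cap\mathcal{A}_{phg}(M).
\end{equation*}
Since $f\in W^{k,2}_\delta$ for all $\delta<0$, the polyhomogeneous part $R_1 f$ has all exponents $\ge 0$ (weighted integrability excludes $2-n$); writing its exponent-$0$ term as $a_0(y)(\ln r)^p$, applying $r^2\Delta_0$ to the decomposition and using $r^2\Delta_0(R_1 f)=R_2\big(r^2\Phi+r^2(\Delta_0-\Delta_g)f\big)\in W^{\infty,2}_\delta$ for $\delta<\alpha$ (here the decay $\Phi=O(r^{-2+\alpha})$ is crucial), exactly as in the proof of Theorem \ref{prop: partial asymptotic expansion}, forces $\Delta_{\hat g}a_0\equiv0$, so $a_0$ is a constant, and $p=0$. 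Thus $f$ has a genuine limit $a_0$ at $o$ with no logarithm, and $\tilde f := f - a_0\in W^{k,2}_\delta(M)$ for all $k\in\mathbb N$, $\delta<\delta_1$. In the second round, since $\delta_1>\alpha$ and $(0,\alpha)$ contains no indicial root, I apply the parametrix at a weight in $(0,\alpha)$ to $r^2\Delta_0\tilde f = r^2\Phi + r^2(\Delta_0-\Delta_g)\tilde f$ (the constant $a_0$ dropping out of both Laplacians); all three resulting terms then lie in $W^{\infty,2}_\delta(M)$ for every $\delta<\alpha$, so $\tilde f\in W^{\infty,2}_\delta(M)$ for every $\delta<\alpha$, and Lemma 8.1 of \cite{DW18} yields $|\nabla^i\tilde f| = o(r^{\delta_0-i})$ as $r\to0$ for all $i\in\mathbb N_0$ and any $\delta_0<\alpha$. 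In particular $f$ is bounded, and tends to $0$ at $o$ after this normalization of the additive constant.

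The hard part will be the b-calculus bookkeeping across the indicial root $0$: controlling how the parametrices interact with the inhomogeneous terms $r^2\Phi$ and $r^2(\Delta_0-\Delta_g)f$, which live at the positive weight $\alpha$ while the first-round parametrix is built at a negative weight, and making rigorous that after removing the a priori unknown constant $a_0$ every remaining contribution begins at a strictly positive exponent. Scalar flatness of the cross section is used twice here: it makes $\Phi = O(r^{-2+\alpha})$ so that $r^2\Phi$ genuinely decays (otherwise one only gets $f=O(|\ln r|)$, and $f$ could in fact blow up logarithmically), and it is exactly what makes $0$ an indicial root with constant indicial eigenfunction, so that subtracting $a_0$ places $\tilde f$ in the indicial-root-free chamber $(0,\delta_1)$ where the second round can run.
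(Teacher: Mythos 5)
Your overall strategy matches the paper's: trace the soliton equation to get a Poisson equation for $f$, use scalar flatness of the cross section to make the source term decay, and then run the parametrix decomposition $f = R_1 f + Q(\cdots)$ for $r^2\Delta_0$ from Example \ref{ex: cone}, identify the exponent-$0$ term of the polyhomogeneous piece, subtract that constant (invoking Remark \ref{remark: non-uniqueness of potential function}), and bootstrap past the indicial root $0$. Where you genuinely diverge is the first step. The paper puts $f\in W^{2,2}_{1-\frac{n}{2}}$ via the surjectivity of $r^2\Delta_g: W^{2,2}_{1-\frac{n}{2}}\to W^{0,2}_{1-\frac{n}{2}}$ (Proposition \ref{prop: surjectivity}); you instead integrate $|\nabla^2 f| = |\Lambda g - \mathrm{Ric}_g| = O(r^{-2})$ twice along the radial direction to get $|f| = O(|\ln r|)$, $|\nabla f| = O(r^{-1})$, and $|\nabla^i f| = O(r^{-i})$ for $i\geq 2$ directly from the soliton structure, hence $f\in W^{k,2}_\delta$ for all $\delta<0$. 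Your route is more elementary and, importantly, it is self-contained: surjectivity of the Laplacian alone does not by itself control the \emph{given} solution $f$ without some a priori growth information (a harmonic correction $\sim r^{2-n}$ must be excluded), and your direct integration supplies exactly that. On the other hand your two-round b-calculus bookkeeping is noticeably more compressed than the paper's; the paper iterates in explicit $\alpha$-increments, whereas you jump directly to the chamber $(\max\{2-n,-\alpha\},0)$ and then to $(0,\alpha)$, which is the right idea but leaves the passage across the indicial root $0$ (how $\tilde f = f - a_0$ acquires weight $>0$ so that the second-round parametrix applies) underjustified — you flag this yourself. One small inaccuracy in your closing remarks: scalar flatness of the cross section is not what makes $0$ an indicial root of $r^2\Delta_0$; $\nu=0\in\mathrm{spec}(-\Delta_{\hat g})$ always gives indicial roots $0$ and $2-n$. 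Its only role here is the first one you identify, namely making $r^2 R_g$ decay so that the source term $r^2\Phi$ lies in a positive-weight space.
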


\begin{remark}\label{remark: non-uniqueness of potential function}
{\rm
Note that potential function $f$ the gradient Ricci soliton equation $(\ref{eqn: gradient soliton equation})$ is not unique, and one can add a constant to a potential function to get a new potential function. 
}
\end{remark}

\begin{proof}
By taking trace for the gradient Ricci soliton equation (\ref{eqn: gradient soliton equation}),
one has 
\begin{equation}\label{eqn: potential function equation 1}
\Delta_{g}f = -R_{g} + n\Lambda.
\end{equation}
Note that the scalar curvature of the asymptotic conical metric $g$ is given by
\begin{equation}
R_{g} = \frac{1}{r^{2}}(R_{\hat{g}}-(n-1)(n-2) + o(r^{\alpha})), \ \ \text{as} \ \ r \rightarrow 0.
\end{equation}
If $R_{\hat{g}}=(n-1)(n-2)$, then $r^2 R_g = o(r^\alpha)$ as $r \to 0$. Thus, by $(\ref{eqn: potential function equation 1})$, $f$ satisfies the equation 
\begin{equation}\label{eqn: potential function equation 2}
r^2 \Delta_g f = - r^2 R_g + r^2 n\Lambda \in W^{0, 2}_{1-\frac{n}{2}},
\end{equation}
where the inclusion follows from $(\ref{eqn: polynomial weighted Sobolev condition})$. Then by the surjectivity obtained in Proposition $\ref{prop: surjectivity}$, we have: $f \in W^{2, 2}_{1-\frac{n}{2}}$. 
 
Now we rewrite the equation $(\ref{eqn: potential function equation 2})$ as
\begin{equation}\label{eqn: potential function equation 3}
r^2 \Delta_0 f = r^2(\Delta_0 - \Delta_g) f - r^2R_g + r^2n \Lambda,
\end{equation}
and use the parametrix $Q$ of $r^2 \Delta_0$ to improve the estimate of asymptotic order of $f$ to complete the proof, starting with the regularity: $f \in W^{2 ,2}_{1-\frac{n}{2}}$. 

Recall that for each non-indicial root $\delta \in \mathbb{R}$, the parametrix $Q: W^{0, 2}_{\delta} \rightarrow W^{2, 2}_{\delta}$ satisfies
\begin{equation}\label{eqn: parametrix equation laplace}
    \begin{cases}
    Q \circ r^{2}\Delta_{0}  = id - R_{1} , \cr
    r^{2}\Delta_{0} \circ Q  = id - R_{2},
    \end{cases}
\end{equation}
where
\begin{equation}\label{eqn: remainder map laplace}
    \begin{cases}
    R_1: & W^{2, 2}_{\delta}(M, g) \rightarrow W^{\infty, 2}_{\delta} \cap \mathcal{A}_{phg}(M), \cr
    R_2: & W^{0, 2}_{\delta}(M, g) \rightarrow W^{\infty, 2}_{\delta} \cap \mathcal{A}_{phg}(M)
    \end{cases}
\end{equation}
are bounded operators, and $\mathcal{A}_{phg}(M)$ denotes the space of smooth functions on $M$ that admit a full asymptotic expansion, as $r\rightarrow0$, in terms of power of $r$ and $\ln r$ with smooth coefficients. 

By applying $Q$ to the equation $(\ref{eqn: potential function equation 3})$ and using $(\ref{eqn: parametrix equation laplace})$, we obtain a decomposition of $f$ as
\begin{equation}\label{eqn: decomposition of potential function}
f = R_1 f + Q\left( r^2 (\Delta_0 - \Delta_g)f \right) - Q\left(r^2R_g - r^2 n \Lambda\right).
\end{equation}
Then $f \in W^{2, 2}_{1-\frac{n}{2}}$ implies $Q\left( r^2 (\Delta_0 - \Delta_g) f \right) \in W^{2, 2}_{1-\frac{n}{2}+\alpha}$ by $(\ref{eqn: laplacian difference operator})$. Moreover, $Q\left(r^2R_g - r^2 n \Lambda\right) \in W^{2, 2}_\delta$ for any $\left( 1-\frac{n}{2} + \alpha < \right) \delta < \alpha$ (assuming without loss of generality that $0 < \alpha <2$). Thus,
\begin{equation}\label{eqn: regularity of second and third terms}
Q\left( r^2 (\Delta_0 - \Delta_g)f \right) - Q\left(r^2R_g - r^2 n \Lambda\right) \in W^{2, 2}_{1-\frac{n}{2} + \alpha},
\end{equation}
by the inclusion in $(\ref{eqn: weighted Sobolev space inclusion})$.
The first term, $R_1 f$, in (\ref{eqn: decomposition of potential function}) has a full asymptotic expansion, and by looking at the leading order term, we have
\begin{equation}\label{eqn: leading order term of remainder}
R_1 f \sim  a(y) r^\mu (\ln r)^p, \quad \text{as} \ \ r \rightarrow 0.
\end{equation}

Then by applying $r^2 \Delta_0$ to the equation $(\ref{eqn: decomposition of potential function})$, and using $(\ref{eqn: potential function equation 2})$ and $(\ref{eqn: parametrix equation laplace})$, we obtain
\begin{equation}\label{eqn: laplace acts on remainder}
r^2 \Delta_0 \circ R_1 f = R_2\left( r^2 (\Delta_0 - \Delta_g) f \right) + R_2\left( r^2 R_g + r^2 n\Lambda \right) \in W^{\infty, 2}_{1-\frac{n}{2} + \alpha}.
\end{equation}
Here the inclusion in the weighted Sobolev space can be obtain similar to the way for $(\ref{eqn: regularity of second and third terms})$. On the other hand, by applying $r^2 \Delta_0$ to $(\ref{eqn: leading order term of remainder})$, we obtain
\begin{equation}\label{eqn: leading order term of remainder 2}
r^2\Delta_0 \circ R_1 f \sim \left[ (\mu^2 + (n-2)\mu) a(y) + \Delta_{\hat{g}} a(y) \right] r^\mu (\ln r)^p.
\end{equation}
Now we have two possible cases:

{\bf Case 1}:
$
(\mu^2 + (n-2)\mu) a(y) + \Delta_{\hat{g}} a(y) = 0.
$
In this case, $\mu = \frac{2-n}{2} + \frac{\sqrt{(n-2)^2 + 4\nu}}{2}$ where $\nu$ is an eigenvalue of $-\Delta_{\hat{g}}$ and $a(y)$ is a corresponding eigenfunction. Here we rule out $\mu = \frac{2 - n}{2} - \frac{\sqrt{(n-2)^2 + 4\nu}}{2}$ by the facts: $R_1 f \in W^{\infty, 2}_{1-\frac{n}{2}}$ and $\nu \geq 0$ and (\ref{eqn: polynomial weighted Sobolev condition}). Note that $\mu \geq 0$, as $\nu \geq 0$.
In addition, similarly as in the proof of Theorem \ref{thm: asymptotic of eigenfunctions}, we can show that $p =0$ in (\ref{eqn: leading order term of remainder}). Therefore, we have
\begin{equation}
R_1 f = a(y)r^\mu + O(r^{\mu^\prime}), \quad \text{as} \ \ r \to 0,
\end{equation}
where $\mu^\prime > \mu \geq 0$. Furthermore, if $\mu = 0$, the associated eigenvalue $\nu=0$, and so the corresponding eigenfunction $a(y)$ is constant function on the cross section. As a result, the leading order term $a(y) r^\mu$ is a constant. By Remark \ref{remark: non-uniqueness of potential function}, we can subtract the constant leading order term from the potential function $f$, and then obtain
\begin{equation}\label{eqn: asymptotic of R1f}
R_1 f = O(r^{\mu^\prime}), \quad \text{as} \ \ r \to 0, \ \ \text{for some} \ \ \mu^{\prime} >0.
\end{equation}
If $\mu > 0$, the we already have $(\ref{eqn: asymptotic of R1f})$. Then by (\ref{eqn: decomposition of potential function}), (\ref{eqn: regularity of second and third terms}) and (\ref{eqn: asymptotic of R1f}), we obtain $ f \in W^{2, 2}_{1-\frac{n}{2}+\alpha}$.  Then we can repeat the above argument by starting with $f \in W^{2, 2}_{1-\frac{n}{2}+\alpha}$ to further increase the index in the weighted Sobolev space. By iterating the argument finitely many times, we can obtain $f \in W^{2, 2}_{\delta}$ for some $\delta >0$.


{\bf Case 2}: 
$
(\mu^2 + (n-2)\mu) a(y) + \Delta_{\hat{g}} a(y) \neq 0.
$
In this case, by comparing leading orders in $(\ref{eqn: leading order term of remainder})$ and $(\ref{eqn: leading order term of remainder 2})$, and using $(\ref{eqn: polynomial weighted Sobolev condition})$, the inclusion in (\ref{eqn: laplace acts on remainder}) implies: $R_1 f \in W^{\infty, 2}_{1-\frac{n}{2} + \alpha}$. By combining with (\ref{eqn: decomposition of potential function}) and (\ref{eqn: regularity of second and third terms}), this then implies $f \in W^{2, 2}_{1-\frac{n}{2}+\alpha}$. Again, by iterating the argument finitely many times, we can obtain $f \in W^{2, 2}_{\delta}$ for some $\delta >0$.



In both cases, we can obtain $f \in W^{2, 2}_{\delta_0}$ for some $\delta_0 >0$. By weighted elliptic estimate in Proposition $\ref{prop: weighted elliptic estimate}$, we then have $f \in W^{k, 2}_{\delta_0}$ for any $k\in \mathbb{N}$. Finally, by the asymptotic control for functions in weighted Sobolev space on manifold with isolated conical singularities (see Lemma 8.1 in \cite{DW18}), we obtain $|\nabla^i f| = o(r^{\delta_0 - i})$ as $r \rightarrow 0$, for all $i \in \mathbb{N}$.
\end{proof}

We now use the asymptotic estimate obtained in Proposition \ref{prop: asymptotic for potential function of steady soliton} to study steady and expanding gradient Ricci solitons with isolated conical singularities. First, we recall a basic fact about the potential function of gradient Ricci soliton that we need in the proofs of Theorems \ref{thm: steady soliton is ricci flat} and \ref{thm: expander is Einstein}.
\begin{lem}\label{lem: basic fact of potential function}
Let $(M^n, g, f)$ be a $n$-dimensional (either smooth or singular) gradient Ricci soliton, that is, on the regular part of $M$,
\begin{equation}
Ric + \nabla^2 f = \Lambda g,
\end{equation}
holds, for some constant $\Lambda$. Then the potential function $f$ satisfies the differential equation:
\begin{equation}\label{eqn: basic fact of potential function}
|\nabla f|^2 - \Delta f + n\Lambda -2 \Lambda f - C = 0,
\end{equation}
on the regular part of $M$, for some constant $C$.
\end{lem}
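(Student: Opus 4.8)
The plan is to derive the standard gradient Ricci soliton identity, which is a purely local computation on the regular part of $M$ where the metric is smooth; hence nothing from the singular structure is needed, and the conical singularity plays no role. First I would take the trace of the soliton equation $Ric + \nabla^2 f = \Lambda g$ to get $R + \Delta f = n\Lambda$, so that $\Delta f = n\Lambda - R$. Next, I would recall the classical first-order identity of Hamilton: on any gradient Ricci soliton one has $R + |\nabla f|^2 - 2\Lambda f = C$ for some constant $C$. The cleanest way to establish this is to show that the function $F := R + |\nabla f|^2 - 2\Lambda f$ has vanishing gradient. Using the contracted second Bianchi identity $\nabla R = 2\,\mathrm{div}(Ric)$ together with the soliton equation (which gives $\mathrm{div}(Ric) = -\mathrm{div}(\nabla^2 f) + 0$ and then, via the commutation of derivatives, $\mathrm{div}(\nabla^2 f) = \nabla \Delta f + Ric(\nabla f, \cdot)$), one computes $\nabla_i F = 0$ after cancellation; this is the standard computation and I would not grind through the index manipulations here.

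Granting $R + |\nabla f|^2 - 2\Lambda f = C$, I then substitute $R = n\Lambda - \Delta f$ from the traced equation to obtain
\begin{equation*}
n\Lambda - \Delta f + |\nabla f|^2 - 2\Lambda f = C,
\end{equation*}
which is exactly the claimed identity $|\nabla f|^2 - \Delta f + n\Lambda - 2\Lambda f - C = 0$ on the regular part of $M$.

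The only point requiring a word of care is the connectedness needed to promote "$\nabla F \equiv 0$" to "$F$ is a constant": the regular part $\mathring{M} = M \setminus \{o\}$ (or $M \setminus \partial M$ in the non-isolated case) is connected since $M$ is a connected manifold with boundary of dimension $\geq 2$, so $F$ is indeed globally constant there. Thus the only genuine obstacle is bookkeeping in the Bianchi-type computation for $\nabla F = 0$; there is no analytic difficulty, since all quantities involved are smooth on $\mathring{M}$ and the identity is pointwise.
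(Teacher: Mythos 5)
Your proof is correct and follows exactly the standard route: trace the soliton equation, invoke Hamilton's gradient identity $R + |\nabla f|^2 - 2\Lambda f = C$ (proved via contracted second Bianchi plus the Ricci commutation formula for $\operatorname{div}\nabla^2 f$), and eliminate $R$. This is precisely the argument from Proposition 1.1.1 in Cao--Zhu that the paper cites, so you have reproduced the paper's (deferred) proof, including the correct observation that everything is a pointwise computation on the connected regular part.
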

The conclusion of Lemma \ref{lem: basic fact of potential function} is a pointwise differential equation $(\ref{eqn: basic fact of potential function})$ on the regular part of the manifold. Its proof for the singular manifolds is the same as for smooth manifolds, which can be found in literature, see e.g. the proof of Proposition 1.1.1 in \cite{CZ06}.

\begin{thm}\label{thm: steady soliton is ricci flat}
Let $(M^{n}, g, f)$ be a compact gradient steady Ricci soliton with isolated conical singularities. If cross section of model cone at each singularity has scalar curvature  $R_{\hat{g}}=(n-1)(n-2)$, then $(M^{n}, g)$ is a Ricci flat manifold with isolated conical singularities. 
\end{thm}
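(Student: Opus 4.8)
The plan is to combine the asymptotic decay of the potential function $f$ obtained in Proposition \ref{prop: asymptotic for potential function of steady soliton} with the standard integration-by-parts argument that forces a compact steady gradient Ricci soliton to be Ricci flat, while carefully checking that the conical singularities contribute no boundary terms. Since we are in the steady case, $\Lambda = 0$, so the soliton equation reads $\Ric_g + \nabla^2 f = 0$ on $\mathring{M}$, and Lemma \ref{lem: basic fact of potential function} gives $|\nabla f|^2 - \Delta f - C = 0$ on $\mathring{M}$ for some constant $C$. First I would apply Proposition \ref{prop: asymptotic for potential function of steady soliton}: since $R_{\hat{g}} = (n-1)(n-2)$ at every singular point, we get $|\nabla^i f| = o(r^{\delta_0 - i})$ as $r \to 0$ for some $\delta_0 > 0$ and all $i \in \mathbb{N}_0$; in particular $f \to 0$, $|\nabla f| \to 0$, and $\Delta f \to 0$ near the singularities. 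Feeding this into $|\nabla f|^2 - \Delta f - C = 0$ forces $C = 0$, so $\Delta f = |\nabla f|^2$ on $\mathring{M}$.

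Next I would integrate $\Delta f = |\nabla f|^2$ over $M$. The subtlety, stressed in the paper's introduction, is that $M$ has conical singularities and we must justify $\int_{\mathring M} \Delta f \, d\vol_g = 0$ without an a priori asymptotic hypothesis on $f$ — but this is exactly what Proposition \ref{prop: asymptotic for potential function of steady soliton} supplies. Concretely, I would take the cutoff functions $\varphi_{r_0}$ as in the proof of Proposition \ref{prop: surjectivity} (equal to $0$ on $B_{r_0}(o)$, equal to $1$ outside $B_{2r_0}(o)$, with $|d\varphi_{r_0}| \le 10/r_0$), integrate $\varphi_{r_0}\,\Delta f$, and bound the error term $\int (\Delta \varphi_{r_0}) f + 2\int \langle \nabla \varphi_{r_0}, \nabla f\rangle$; near a conical point the volume element behaves like $r^{n-1}\,dr$, so each error integral is controlled by $r_0^{-2}\cdot o(r_0^{\delta_0})\cdot r_0^{n} \to 0$ (and similarly for the gradient term), using $\delta_0 > 0$ and $n \ge 4$. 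Hence $\int_{\mathring M} |\nabla f|^2 \, d\vol_g = \int_{\mathring M} \Delta f \, d\vol_g = 0$, so $\nabla f \equiv 0$, $f$ is constant, and the soliton equation collapses to $\Ric_g = 0$ on $\mathring{M}$.

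The main obstacle I anticipate is not the integration by parts itself — that is routine once the decay estimate is in hand — but rather making sure every ingredient is legitimately available: (i) that Proposition \ref{prop: asymptotic for potential function of steady soliton} genuinely applies here (it does, as its hypotheses are precisely a conical singularity plus $R_{\hat g} = (n-1)(n-2)$ on the cross sections); (ii) that Lemma \ref{lem: basic fact of potential function}'s pointwise identity holds on all of $\mathring M$ (the lemma is stated for the regular part, so no singular correction is needed); and (iii) that the cutoff-exhaustion argument is uniform over the finitely many conical points. I would therefore organize the write-up as: reduce to $\Delta f = |\nabla f|^2$ via Lemma \ref{lem: basic fact of potential function} and the $f \to 0$ conclusion of Proposition \ref{prop: asymptotic for potential function of steady soliton}; run the cutoff integration to get $\int |\nabla f|^2 = 0$; conclude $f$ constant and $\Ric_g \equiv 0$, so $(M^n, g)$ is a Ricci flat manifold with isolated conical singularities, which is the statement of Theorem \ref{thm: soliton} in the steady case.
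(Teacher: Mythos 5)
Your overall plan matches the paper's: use Proposition \ref{prop: asymptotic for potential function of steady soliton} for decay, Lemma \ref{lem: basic fact of potential function} to get $\Delta f - |\nabla f|^2 = C$, justify the integration by parts with the cutoffs, and conclude $f$ is constant. The integration-by-parts estimate you sketch (annulus volume $\sim r_0^n$, $|d\varphi_{r_0}| \lesssim r_0^{-1}$, $|\nabla f| = o(r_0^{\delta_0-1})$) is indeed adequate to show $\int_M \Delta f\, d\vol_g = 0$ and $\int_M \Delta(e^{\pm f})\, d\vol_g = 0$.

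However, there is a genuine gap in the step where you deduce $C=0$. You assert that Proposition \ref{prop: asymptotic for potential function of steady soliton} gives ``$|\nabla f|\to 0$ and $\Delta f\to 0$ near the singularities.'' But the proposition only yields $|\nabla^i f| = o(r^{\delta_0-i})$ for \emph{some} $\delta_0 > 0$, which is not claimed to exceed $1$, let alone $2$. If $\delta_0 \le 1$, $|\nabla f| = o(r^{\delta_0-1})$ permits $|\nabla f|$ to blow up; if $\delta_0 \le 2$, so can $\Delta f$. This is not a formality: tracing the steady soliton equation gives $\Delta f = -R_g$ on $\mathring M$, and with $R_{\hat g} = (n-1)(n-2)$ one only has $R_g = o(r^{\alpha-2})$, which is unbounded whenever $\alpha < 2$. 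The ``In particular'' clause of Proposition \ref{prop: asymptotic for potential function of steady soliton} accordingly only records $f \to 0$, not the decay of $\nabla f$ or $\Delta f$, so the pointwise limit argument does not determine $C$.

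Nor does the direct integration you then perform resolve it: $\int_M \Delta f = 0$ together with $\Delta f - |\nabla f|^2 = C$ gives $C\,\vol(M) = -\int_M |\nabla f|^2 \le 0$, i.e.\ only $C \le 0$, which does not yet force $\nabla f \equiv 0$. What is missing is the reverse inequality. The paper supplies it with an exponential weight: since
\[
\Delta\!\left(e^{-f}\right) = e^{-f}\left(|\nabla f|^2 - \Delta f\right) = -C\, e^{-f},
\]
and the same decay estimate $|\nabla f| = o(r^{\delta_0-1})$ justifies $\int_M \Delta(e^{-f})\, d\vol_g = 0$ (using also the boundedness of $f$, hence of $e^{-f}$), one gets $C\int_M e^{-f}\, d\vol_g = 0$ and therefore $C = 0$. (The paper writes $e^{f}$ at this point, but the identity only closes with $e^{-f}$ --- a sign typo; the substance of the argument is what you should adopt.) Once $C=0$ is in hand, your final integration $\int_M |\nabla f|^2 = \int_M \Delta f = 0$ and the conclusion $\nabla f \equiv 0$, $\Ric_g \equiv 0$ are correct.
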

\begin{proof}
Recall steady gradient Ricci soliton equation:
\begin{equation*}
Ric + \nabla^{2}f=0.
\end{equation*}
Let $\Lambda = 0 $ in Lemma \ref{lem: basic fact of potential function}, then the equation $(\ref{eqn: basic fact of potential function})$ becomes:
\begin{equation}\label{eqn: potential function equation}
\Delta f - |\nabla f|^{2} = C,
\end{equation}
on the regular part $\mathring{M}$, for some constant $C$. 

Then by the asymptotic estimate for the potential function $f$ obtained in Proposition \ref{prop: asymptotic for potential function of steady soliton}, i.e.,
\begin{equation}
|\nabla f| = o(r^{\delta-1}), \ \ \text{as} \ \ r\rightarrow 0, \ \ \text{for some} \ \ \delta>0,
\end{equation}
one obtains 
\begin{equation}
\int_{M} \Delta(e^{f}) d\vol_{g} =0.
\end{equation}
Thus,
\begin{equation}
C\int_{M} e^{f}d\vol_{g} = \int_{M} (\Delta f - |\nabla f|^{2})e^{f} d\vol_{g} = \int_{M} \Delta (e^{f})d\vol_{g}=0.
\end{equation}
Consequently, $C=0$, and again by the asymptotic estimate of $f$ one also has $\int_{M} \Delta f=0$. 
So by integrating the equation (\ref{eqn: potential function equation}), one obtains that $f$ has to be constant function, and so the metric $g$ is Ricci flat.
\end{proof}

Similarly, for expanding solitons, we have: 
\begin{thm}\label{thm: expander is Einstein}
Let $(M^{n}, g, f)$ be a compact expanding gradient Ricci soliton with isolated conical singularities. If 
$R_{\hat{g}}=(n-1)(n-2)$, then $(M^{n}, g)$ is an Einstein manifold with isolated conical singularities. 
\end{thm}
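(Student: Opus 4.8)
The plan is to follow the structure of the proof of Theorem~\ref{thm: steady soliton is ricci flat}, replacing the single integration by parts used there by a \emph{two-step} weighted integration that absorbs the extra linear term $2\Lambda f$ arising when $\Lambda\neq 0$. Throughout, write $\Lambda<0$ for the expanding constant in $\Ric_g+\nabla^2 f=\Lambda g$ on $\mathring M$. The first ingredient is Proposition~\ref{prop: asymptotic for potential function of steady soliton}, applicable since $R_{\hat g}=(n-1)(n-2)$: it gives $|\nabla^i f|=o(r^{\delta_0-i})$ as $r\to 0$ for some $\delta_0>0$ and all $i$, so $f$ is bounded on $M$, $f\to 0$ at the singular set, and every integration by parts below has vanishing contribution over the spheres $\{r=\epsilon\}$ (whose volume is $O(\epsilon^{n-1})$, against integrands of size $o(\epsilon^{\delta_0-1})$). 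The second ingredient is Lemma~\ref{lem: basic fact of potential function}, which produces a constant $C$ with $|\nabla f|^2-\Delta f+n\Lambda-2\Lambda f-C=0$ on $\mathring M$; setting $C':=n\Lambda-C$ this is
\begin{equation}\label{eqn: expander-potential-equation}
\Delta f-|\nabla f|^2+2\Lambda f=C'.
\end{equation}

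First I would multiply \eqref{eqn: expander-potential-equation} by $e^{-f}$, rewrite it as $\mathrm{div}(e^{-f}\nabla f)+2\Lambda f e^{-f}=C' e^{-f}$, and integrate over $M$; the divergence term contributes nothing by the decay of $f$, giving
\begin{equation}\label{eqn: expander-relation-1}
2\Lambda\int_M f\,e^{-f}\,d\vol_g=C'\int_M e^{-f}\,d\vol_g.
\end{equation}
Next I would multiply \eqref{eqn: expander-potential-equation} by $f e^{-f}$, use the identity $f\,\mathrm{div}(e^{-f}\nabla f)=\mathrm{div}(f e^{-f}\nabla f)-e^{-f}|\nabla f|^2$, and integrate again to obtain
\begin{equation}\label{eqn: expander-relation-2}
-\int_M e^{-f}|\nabla f|^2\,d\vol_g+2\Lambda\int_M f^2 e^{-f}\,d\vol_g=C'\int_M f\,e^{-f}\,d\vol_g.
\end{equation}
Eliminating $C'$ between \eqref{eqn: expander-relation-1} and \eqref{eqn: expander-relation-2}, and writing $\bar f:=\big(\int_M f\,e^{-f}\big)\big/\big(\int_M e^{-f}\big)$ for the mean of $f$ with respect to the finite measure $e^{-f}\,d\vol_g$, one finds
\begin{equation*}
\int_M e^{-f}|\nabla f|^2\,d\vol_g=2\Lambda\int_M (f-\bar f)^2 e^{-f}\,d\vol_g.
\end{equation*}
Since $\Lambda<0$ the right-hand side is $\le 0$ while the left-hand side is $\ge 0$, so both vanish; hence $\nabla f\equiv 0$ on $\mathring M$, $f$ is constant, and the soliton equation gives $\Ric_g=\Lambda g$, i.e.\ $(M^n,g)$ is Einstein with isolated conical singularities. (The same computation with $\Lambda=0$ recovers the steady case: \eqref{eqn: expander-relation-1} then forces $C'=0$ and \eqref{eqn: expander-relation-2} yields $\int_M e^{-f}|\nabla f|^2=0$ directly.)

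The only point requiring genuine care is the justification that the boundary terms in the two integrations by parts vanish and that $\int_M e^{-f}|\nabla f|^2\,d\vol_g<\infty$. Both follow routinely from Proposition~\ref{prop: asymptotic for potential function of steady soliton}: near a conical point the volume element is comparable to $r^{n-1}\,dr\,d\vol_{\hat g}$, so $\int_0 r^{2\delta_0-2}r^{n-1}\,dr<\infty$ and the flux of $f e^{-f}\nabla f$ (resp.\ $e^{-f}\nabla f$) through $\{r=\epsilon\}$ is $o(\epsilon^{n-2+\delta_0})\to 0$ for $n\ge 2$, since $f$ is bounded. Everything else is elementary algebra and the Cauchy--Schwarz inequality implicit in $\int_M (f-\bar f)^2 e^{-f}\ge 0$.
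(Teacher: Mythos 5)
Your proposal is correct, and it takes a genuinely different route from the paper. The paper's proof establishes the pointwise identity $n\Lambda-2\Lambda f-C\equiv0$ via a maximum-principle argument: using the boundedness of $f$ and the fact that $f\to0$ at the singular set (both from Proposition~\ref{prop: asymptotic for potential function of steady soliton}) it locates interior global maximum/minimum points of $f$, evaluates $(\ref{eqn: equation1 for potential function})$ there, and argues through three cases depending on the sign of $f$; only afterwards does it integrate $\Delta f=|\nabla f|^2$ to conclude $f$ is constant. Your argument replaces the case analysis by a second weighted integration by parts (with weights $e^{-f}$ and $fe^{-f}$), eliminates the unknown constant $C'$, and lands directly on the variance identity
\[
\int_M e^{-f}|\nabla f|^2\,d\vol_g=2\Lambda\int_M(f-\bar f)^2e^{-f}\,d\vol_g,
\]
whose two sides have opposite signs because $\Lambda<0$. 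This is cleaner in that it avoids any pointwise evaluation at extrema of $f$ and any case analysis; it also treats the steady case $\Lambda=0$ uniformly (recovering Theorem~\ref{thm: steady soliton is ricci flat} when $\Lambda=0$, where $(\ref{eqn: expander-relation-1})$ forces $C'=0$), whereas the paper handles steady and expanding solitons in separate theorems. The price is that you rely on the same decay estimates from Proposition~\ref{prop: asymptotic for potential function of steady soliton} at two places rather than one, but these are the same input the paper already needs, and your boundary-flux and integrability checks near the cone tip (using $|\nabla f|=o(r^{\delta_0-1})$ against the $r^{n-1}$ volume element) are sound. Your observation that the algebra is an $L^2(e^{-f}\,d\vol_g)$ variance bound is exactly the structure that makes the sign argument transparent. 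In short: correct, and a tidier proof than the one in the paper.
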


\begin{proof}
Recall the expanding gradient Ricci soliton equation: 
\begin{equation}
Ric + \nabla^2 f = \Lambda g, \ \ \text{for a  constant} \ \ \Lambda < 0.
\end{equation}
By Lemma $\ref{lem: basic fact of potential function}$, the potential function $f$ satisfies:
\begin{equation}\label{eqn: equation1 for potential function}
|\nabla f|^2 - \Delta f + n\Lambda - 2\Lambda f - C = 0,
\end{equation}
on the regular part $\mathring{M}$, for some constant $C$.

Now the same as in the proof of Theorem \ref{thm: steady soliton is ricci flat}, by the asymptotic estimate in Proposition \ref{prop: asymptotic for potential function of steady soliton}, we obtain
\begin{equation}\label{eqn: integral of laplacian of exponent of potential function}
\int_{M} (\Delta f - |\nabla f|^{2})e^{f} d\vol_{g} = \int_{M} \Delta (e^{f})d\vol_{g}=0.
\end{equation}
Therefore, by (\ref{eqn: equation1 for potential function}) and (\ref{eqn: integral of laplacian of exponent of potential function}), we obtain
\begin{equation}\label{eqn: integral equation for potential function}
\int_{M} (n\Lambda - 2\Lambda f - C)e^{f} =0.
\end{equation}

Recall that in Proposition \ref{prop: asymptotic for potential function of steady soliton} we also obtain the boundedness of the potential function and $f\rightarrow 0$ as approaching the singular points. Using this, we will prove in a moment that
\begin{equation}\label{eqn: equation2 for potential function}
n\Lambda - 2 \Lambda f - C \equiv 0.
\end{equation}
Granted, we obtain from $(\ref{eqn: equation2 for potential function})$, (\ref{eqn: equation1 for potential function}),
\begin{equation}\label{eqn: equation3 for potential function}
|\nabla f|^2 - \Delta f =0.
\end{equation}
By integrating $(\ref{eqn: equation3 for potential function})$, we find that $f$ must be constant, since $\int_{M} \Delta f =0$ (this follows from classical Stokes theorem and asymptotic estimate for $f$ obtained in Proposition \ref{prop: asymptotic for potential function of steady soliton}). 

Now it remains to prove (\ref{eqn: equation2 for potential function}). We separate the discussion into the following three cases.

{\bf Case 1}. Assume $f\geq 0$ on $M$. Because $f$ is bounded on $M$ and $f\rightarrow 0$ at the singular points, there exists a global maximal point $x_{\max}\in \mathring{M}$ of $f$ such that
\begin{equation*}
\nabla f (x_{\max}) =0, \quad \Delta f(x_{\max}) \leq 0,
\end{equation*}
and 
\begin{equation*}
f(x) \leq f(x_{\max}), \quad \forall x \in M.
\end{equation*}
Thus by evaluating the equation (\ref{eqn: equation1 for potential function}) at $x_{\max}$, we have
\begin{equation*}
n\Lambda - 2\Lambda f(x_{\max}) - C \leq 0.
\end{equation*}
On the other hand, $\Lambda <0$. Hence 
\begin{equation}\label{eqn: expanding soliton case1}
n\Lambda - 2 \Lambda f(x) - C  \leq 0, \quad \forall x \in M.
\end{equation}
Combining $(\ref{eqn: expanding soliton case1})$ with the integral equation (\ref{eqn: integral equation for potential function}) gives (\ref{eqn: equation2 for potential function}).

{\bf Case 2}. Assume $f \leq 0$ on $M$. Similarly, there exists a global minimal point $x_{\min} \in \mathring{M}$ of $f$ such that
\begin{equation*}
\nabla f (x_{\min}) =0, \quad \Delta f(x_{\min}) \geq 0,
\end{equation*}
and 
\begin{equation*}
f(x) \geq f(x_{\min}), \quad \forall x \in M.
\end{equation*}
Thus by evaluating the equation (\ref{eqn: equation1 for potential function}) at $x_{\min}$, we have
\begin{equation*}
n\Lambda - 2\Lambda f(x_{\min}) - C \geq 0 .
\end{equation*}
But $\Lambda <0$. Hence in fact
\begin{equation}\label{eqn: expanding soliton case2}
n\Lambda - 2 \Lambda f(x) - C  \geq 0, \quad \forall x \in M.
\end{equation}
Combining the inequality $(\ref{eqn: expanding soliton case2})$ with the integral equation (\ref{eqn: integral equation for potential function}) gives the equation (\ref{eqn: equation2 for potential function}).

{\bf Case 3}. Assume $f$ dose not have definite sign on $M$. Again because $f$ is bounded on $M$ and $f\rightarrow 0$ at the singular points, there exists a global maximal point $x_{\max}\in \mathring{M}$ and a global minimal point $x_{\min}\in \mathring{M}$ of $f$ such that
\begin{equation*}
\nabla f (x_{\max}) = \nabla f(x_{\min})=0, \quad \Delta f(x_{\max}) \leq 0 \leq \Delta f(x_{\min}),
\end{equation*}
and 
\begin{equation*}
f(x_{\min}) \leq f(x) \leq f(x_{\max}), \quad \forall x \in M.
\end{equation*}
Thus by evaluating the equation (\ref{eqn: equation1 for potential function}) at $x_{\max}$ and $x_{\min}$, we have
\begin{equation*}
n\Lambda - 2\Lambda f(x_{\max}) - C \leq 0 \leq n \Lambda - 2 \Lambda f(x_{\min}) - C.
\end{equation*}
On the other hand, $\Lambda <0$, and hence
\begin{equation*}
n\Lambda - 2\Lambda f(x_{\max}) - C \geq n \Lambda - 2 \Lambda f(x_{\min}) - C.
\end{equation*}
Then we must have (\ref{eqn: equation2 for potential function}).
\end{proof}






\begin{thebibliography}{XXXXX9}

\bibitem[Bar86]{Bar86} R. Bartnik, {\sl The mass of an asymptotically flat manifold}, Pure Appl. Math. vol {\bf XXXIX}, 661-693 (1986).

\bibitem[Bam21]{Bam21} R. Bamler, {\sl Recent developments in Ricci flows}, Notices AMS {\bf 68}, 1486-1498 (2021).

\bibitem[Beh13]{Beh13} T. Behrndt, {\sl On the Cauchy problem for the heat equation on Riemannian manifolds with conical singularities}, Quart. J. Math. {\bf 64}, 981-1007 (2013).

\bibitem[Bes87]{Bes87} A.L. Besse, Einstein manifolds. Springer, Berlin (1987).

\bibitem[BP03]{BP03} B. Botvinnik and S. Preston, {\sl Conformal Laplacian and conical singularities},  in: Proceeding of the School on High-Dimensional Manifold Topology, ICTP, Trieste, Italy, Word Scientific (2003), 22-79.

\bibitem[CZ06]{CZ06} H.-D. Cao and X.-P. Zhu, {\sl A complete proof of the Poincar\'e and geometrization conjectures - application of the Hamilton-Perelman theory of the Ricci flow}, Asian J. Math., Vol. 10, No. 2, 165-492, (2006).

\bibitem[Che80]{Che80} J. Cheeger, On the Hodge theory of Riemannian pseudomanifolds. {\sl  Geometry of the Laplace operator} (Proc. Sympos. Pure Math., Univ. Hawaii, Honolulu, Hawaii, 1979), pp. 91-146, Proc. Sympos. Pure Math., XXXVI, Amer. Math. Soc., Providence, R.I., 1980.

\bibitem[CDS15]{CDS} X. Chen, S. Donaldson, and S. Sun,  {\sl K\"ahler-Einstein metrics on Fano manifolds}, I-III,J.Amer.Math.Soc.28(2015),183-197,199-234,235-278.

\bibitem[CCG+07]{C07} B. Chow, S.-C. Chu, D. Glickenstein, C. Guenther, J. Isenberg, T. Ivey, D. Knopf, P. Lu, F. Luo, and L. Ni, {\sl the RIcci flow: techniques and applications. Part I}, Mathematical Surveys and Monographs, {\bf 135}. American Mathematical Society, Providence, 2007.

\bibitem[CLW12]{CLW12} H. Chen, X. Liu, and Y. Wei, {\sl Dirichlet problem for semilinear edge-degenerate ellitptic equations with singular potential term}, J. Differential Equation, {\bf 252}, 4289-4314 (2012).

\bibitem[DW18]{DW18} X. Dai and C. Wang, {\sl Perelman's $\lambda$-functional on manifolds with conical singualrities}, J. Geom. Anal. {\bf 28}(4), 3657-3689 (2018).

\bibitem[DW20]{DW20} X. Dai and C. Wang, {\sl Perelman's $W$-functional on manifolds with conical singularities}, Math. Res. Let. {\bf 27}(3), 665-685 (2020).

\bibitem[DWZ18]{DWZ18} X. Dai, G. Wei, and Z. Zhang, {\sl Local Sobolev constant estimate for integral Ricci curvature bounds}, Adv. Math. {\bf 325}, 1-33 (2018).

\bibitem[DY]{DY} X. Dai and K. Yoshikawa, {\sl Analytic torsion for log-Enriques surfaces and Borcherds product},  Forum Math. Sigma {\bf 10} (2022), Paper No. e77, 54pp.

\bibitem[Eva]{Eva} L. C. Evans, Partial Differential Equation, Grad. Stud. Math., 19,  {\sl American Mathematical Society, Providence, RI}, 2010, xxii+749 pp.

\bibitem[EK96]{EK96} Y. Egorov and V. Kondratiev, On spectral theory of elliptic operators. Birkh\"auser, Basel (1996).

\bibitem[FIN05]{FIN05} M. Feldman, T. Ilmanen, L. Ni, {\sl Entropy and reduced distance for Ricci expanders}, J. Geom. Anal. {\bf 15}(1), 49–62, (2005).

\bibitem[GS18]{GS18} P. Gianniotis, F. Schulze, {\sl Ricci flow from spaces with isolated conical singularities}
Geom. Topol. 22(7): 3925-3977 (2018). DOI: 10.2140/gt.2018.22.3925

\bibitem[Gro21]{G21} M. Gromov, {\sl Four Lectures on Scalar Curvature}.  arXiv:1908.10612v6 [math.DG].

\bibitem[HLP34]{HLP34} G. H. Hardy, J. E. Littlewood, and G. P\'olya, Inequalities. Cambridge (1934).

\bibitem[HP85]{HP} W. Hsiang and V. Pati, {\sl $L^2$-cohomology of normal algebraic surfaces. I}, Invent. Math. 81(1985), 395-412.

\bibitem[KL14]{KL14} B. Kleiner, J. Lott, {\sl Geometrization of three-dimensional orbifolds via Ricci flow}, Ast\'erisque, no. 365 (2014), 77 pages.

\bibitem[KV20]{KV21b} K. Kr\"oncke, B. Vertman, {\sl A survey on the Ricci flow on singular spaces}. Differential Geometry in the Large, pp 118-140. Cambridge University Press, 2020, DOI: https://doi.org/10.1017/9781108884136.007.

\bibitem[KV21]{KV21} K. Kr\"oncke, B. Vertman, {\sl Perelman's entropy for manifolds with conical singularities}, Tran. Amer. Math. Soc. {\bf 374}(4), 2873-2908 (2021).

\bibitem[LT95]{LT95} P. Li, G. Tian, {\sl On the heat kernel of the Bergmann metric on algebraic varieties}, J. Amer. Math. Soc. {\bf 8}(4), 857-887 (1995).

\bibitem[LZ17]{LZ} J. Liu and X. Zhang, {\sl Conical Kähler–Ricci flows on Fano manifolds}, Adv. Math. 307 (2017), 1324–1371.

\bibitem[MM15]{MM15} C. Mantegazza, R. M\"uller, {\sl Perelman's Entropy Functional at Type I Singularities of the Ricci flow}. J. Reine Angew. Math. (Crelle's Journal) Vol. 703 (2015), 173-199
DOI: 10.1515/crelle-2013-0039.

\bibitem[Maz91]{Maz91} R. Mazzeo, {\sl Elliptic theory of differential edge operators}. I. Comm. Partial Differetial Equations {\bf 16}(10), 1615-1664, (1991).

\bibitem[Mel93]{Mel93}  R. B. Melrose, The Atiyah-Patodi-Singer index theorem, Research Notes in Mathematics, vol. 4, A K Peters, Ltd., Wellesley, MA, 1993.

\bibitem[MW17]{MW17} O. Munteanu, J. Wang, {\sl Conical structure for shrinking Ricci solitons}, J. Eur. Math. Soc. 19 (2017), no. 11, pp. 3377–3390
DOI: 10.4171/JEMS/741

\bibitem[Oik20]{Oik20} D. Oikonomopoulos, {\sl Functional inequalities on simple edge spaces}, J. Geom. Phys. {\bf 158} (2020), 103863, 16 pp.

\bibitem[ONe66]{ONe66} B. O'Neill, {\sl The fundamental equations of a submersion}, Mich. Math. J. \textbf{13}, 459-469 (1966).

\bibitem[Ozu20]{Ozu20} T. Ozuch, {\sl Perelman's functionals on cones and construction of type \uppercase\expandafter{\romannumeral3} Ricci flows coming out of cones}, J. Geom. Anal. \textbf{30}, 1-53 (2020).

\bibitem[Per02]{Per02} G. Perelman, {\sl The entropy formula for the Ricci flow and its geometric applications}, arXiv:math/0211159 (2002).

\bibitem[Sch17]{S17} R. Schoen, {\sl Topics in Scalar Curvature}, Notes by C. Li.

\bibitem[S18]{S} L. Sheng, {\sl Maximal time existence of unnormalized conical K\"ahler–Ricci flow},  Journal für die reine und angewandte Mathematik (Crelles Journal) (2018)

\bibitem[ST18]{ST} Y. Shi and L. Tam, {\sl Scalar curvature and singular metrics}, Pacific J. Math., 293(2018), 427–470.

\bibitem[T15]{T} G. Tian, {\sl K-stability and K\"ahler-Einstein metrics}, Comm. Pure Appl. Math. 68(7) (2015), 1085-1156.

\bibitem[Ver21]{Ver21} Vertman B., {\sl Ricci de Turck Flow on Singular Manifolds}, J. Geom. Anal. {\bf 31}(4), 3351-3404 (2021).
\end{thebibliography}
\end{document}